\definecolor{darkgreen}{rgb}{0.5,0.25,0}
\definecolor{darkblue}{rgb}{0,0,1}
\definecolor{answerblue}{rgb}{0,0,0.75}
\newcommand*{\mailto}[1]{\href{mailto:#1}{\nolinkurl{#1}}}
\newcommand{\ep}{\varepsilon}
\newcommand{\eps}{\varepsilon}
\newcommand{\pd}{\partial}
\newcommand{\LL}{\langle}
\newcommand{\RR}{\rangle}
\newcommand{\Ex}{\mathbb{E}}
\renewcommand{\d}{\mathrm{d}}
\newcommand{\X}{\mathfrak{X}}
\newcommand{\sgn}{\mathrm{sgn}}
\newcommand{\loc}{\mathrm{loc}}
\newcommand{\doublehookrightarrow}{
    \lhook\joinrel\relbar\mspace{-12mu}\hookrightarrow
}
\newcommand{\one}[1]{\mathds{1}_{\{#1\}}}
\newcommand{\weak}{\rightharpoonup}
\newcommand{\R}{\mathbb{R}}
\newcommand{\T}{{\mathbb{S}^1}}
\newcommand{\N}{\mathbb{N}}
\newcommand{\Z}{\mathbb{Z}}
\newcommand{\prob}{\mathbb{P}}
\newcommand{\abs}[1]{\left | #1 \right |}
\newcommand{\norm}[1]{\left\| #1 \right\|}
\newcommand{\normb}[1]{\bigl\| #1 \bigr\|}
\newcommand{\bk}[1]{ \left(  #1 \right)}
\newcommand{\cS}{\mathcal{S}}
\newcommand{\cL}{\mathcal{L}}
\newcommand{\cD}{\mathcal{D}}
\newcommand{\bD}{\Bbb{D}}
\newcommand{\toeps}{\xrightarrow{\eps\downarrow 0}}
\newcommand{\totau}{\xrightarrow{\tau\downarrow 0}}
\newcommand{\todelta}{\xrightarrow{\delta\downarrow 0}}
\newcommand{\ton}{\xrightarrow{n\uparrow \infty}}
\newcommand{\toj}{\xrightarrow{j\uparrow \infty}}
\newcommand{\tok}{\xrightarrow{k\uparrow \infty}}
\newcommand{\toepsweak}{\xrightharpoonup{\eps\downarrow 0}}
\newcommand{\tonweak}{\xrightharpoonup{n\uparrow \infty}}
\newcommand{\tojweak}{\xrightharpoonup{j\uparrow \infty}}
\newcommand{\toell}{\xrightarrow{\ell\uparrow \infty}}
\theoremstyle{theorem}
\newtheorem{thm}{Theorem}[section]
\newtheorem{prop}[thm]{Proposition}
\newtheorem{lem}[thm]{Lemma}
\theoremstyle{definition}
\newtheorem{defin}[thm]{Definition}
\theoremstyle{theorem}
\newtheorem{rem}[thm]{Remark}
\numberwithin{equation}{section}
\title[stochastic Camassa-Holm equation]
{Global existence of dissipative solutions 
to the Camassa--Holm equation with transport noise}
\author[Galimberti]{L. Galimberti}
\address[Luca Galimberti]{Department of Mathematical Sciences\\
NTNU Norwegian University of Science and Technology\\
NO-7491 Trondheim\\ Norway}
\email{\mailto{luca.galimberti@kcl.ac.uk}}
\author[Holden]{H. Holden}
\address[Helge Holden]{Department of Mathematical Sciences\\
NTNU Norwegian University of Science and Technology\\
NO-7491 Trondheim\\ Norway}
\email{\mailto{helge.holden@ntnu.no}}
\urladdr{\url{https://www.ntnu.edu/employees/holden}}
\author[Karlsen]{K. H. Karlsen}
\address[Kenneth H. Karlsen]{Department of Mathematics\\
University of Oslo\\
NO-0316 Oslo\\ Norway}
\email{\mailto{kennethk@math.uio.no}}
\author[Pang]{P.H.C. Pang}
\address[Peter H.C. Pang]{Department of Mathematics\\
University of Oslo\\
NO-0316 Oslo\\ Norway}
\email{\mailto{ptr@math.uio.no}}
\dedicatory{Dedicated to Gui-Qiang Chen 
on the occasion of his sixtieth birthday}
\subjclass[2020]{Primary: 35R60, 35G25; Secondary: 35A01, 35D30}
\keywords{Shallow water equation; Camassa--Holm equation; 
stochastic perturbation; transport noise; existence; viscous approximation} 
\date{\today}
\begin{document}

\begin{abstract}
We consider a nonlinear stochastic partial 
differential equation (SPDE) that takes the form 
of the Camassa--Holm equation perturbed by 
a convective, position-dependent, noise term. 
We establish the first global-in-time existence result 
for dissipative weak martingale solutions to this SPDE, 
with general finite-energy initial data. The solution is 
obtained as the limit of classical 
solutions to parabolic SPDEs. 
The proof combines model-specific 
statistical estimates with stochastic 
propagation of compactness techniques, 
along with the systematic use of tightness 
and a.s.~representations of random variables on 
specific quasi-Polish spaces. 
The spatial dependence of the noise function 
makes more difficult the analysis of a priori estimates 
and various renormalisations, giving rise to nonlinear 
terms induced by the martingale part of 
the equation and the second-order 
Stratonovich--It\^{o} correction term.
\end{abstract}

\maketitle

\setcounter{secnumdepth}{2}
\setcounter{tocdepth}{2}
{\small \tableofcontents}

\section{Introduction}\label{sec:intro}

\subsection{Background and main result}
We are interested in global weak solutions of the 
initial-value problem for the stochastic 
parabolic-elliptic system 
\begin{equation}\label{eq:SCH-tmp}
	\begin{aligned}
		& 0=\d u+ \bigl[ u\, \pd_x u 
		+ \pd_x P\bigr] \, \d t 
		+\sigma \pd_x u \circ \d W,
		\\ & -\pd_{xx}^2 P+P = u^2 
		+\frac{1}{2} \abs{\pd_x u}^2,
		\quad \text{for $(t,x)\in (0,T)\times \T$}, 
	\end{aligned}
\end{equation}
where $\T=\R/(2\pi\Z)$ is the 1D torus (circle), 
$T$ is a positive final time, $\sigma=\sigma(x)\in 
W^{2,\infty}(\T)$ is a position-dependent noise function, 
and $W$ is a 1D Wiener process defined 
on a standard filtred probability 
space $\cS=\bigl(\Omega, \mathcal{F},
\{\mathcal{F}_t\}_{t\in [0,T]},\mathbb{P}\bigr)$, 
henceforth called a \textit{stochastic basis}. 
Formally, by the It\^{o}--Stratonovich 
conversion formula, the Stratonovich differential 
$\sigma\,\pd_x u\circ \d W$ in \eqref{eq:SCH-tmp}---known 
in the literature as a gradient, transport 
or convection noise term---can be expanded 
into the operational form $- \frac12 \sigma(x) 
\pd_x \bk{\sigma(x)\pd_x {u}}\,\d t
+\sigma(x) \pd_x {u} \, \d W$. Moreover, the 
elliptic equation for $P$ can be solved to supply
\begin{equation}\label{eq:P-def}
	P = P[u]:=K*\left(u^2+\frac{1}{2}
	\abs{\pd_x u}^2\right),
	\quad 
	K(x)=\frac{\cosh\left(x-2\pi
	\operatorname{int}\!\left(\tfrac{x}{2\pi}\right)
	-\pi\right)}{2\sinh(\pi)},
\end{equation}
where $K$ is the Green's function of 
$1-\pd_{xx}^2$ on $\T$, $\operatorname{int}(x)$ is 
the integer part of $x$, and $*$ means convolution in $x$. 
Consequently, \eqref{eq:u_ch_ep} takes the form of 
the nonlinear nonlocal SPDE
\begin{equation}\label{eq:u_ch}
	\begin{aligned}
		0 &=\d u + \bigl[u\, \pd_x u
		+\pd_x P \bigr] \, \d t 
		-\frac12\sigma \pd_x \bk{\sigma \pd_x u}\,\d t
		+\sigma \pd_x u \, \d W,\\
		P & = K*\bk{u^2 + \frac12\abs{\pd_x u}^2}.
	\end{aligned}
\end{equation}

We recover the deterministic Camassa--Holm 
(CH) equation by setting $\sigma\equiv 0$ 
in \eqref{eq:u_ch}. Since its introduction in the 
early 1980s \cite{Camassa:1993zr,Fuchssteiner:1981fk}, 
the CH equation has received much attention 
from the mathematical community. 
The CH equation, a nonlinear dispersive 
PDE modelling shallow-water waves, 
is nonlocal, completely integrable and may be written in 
(bi-)Hamiltonian form in terms of the 
momentum variable $m:=\bigl(1-\pd_{xx}^2\bigr)u$. 
Much of the excitement of the CH 
equation is related to its supercritical 
nature---coming from the competition between the dispersive 
and nonlinear terms---which leads to 
the development of singularities in finite 
time (blow-up via wave breaking). 
The question of global well-posedness 
of the CH equation, in different classes of 
appropriately defined weak solutions, is 
widely studied, see for example 
\cite{Bressan:2007fk,Bressan:2006nx,
Coclite:2005tq,Holden:2007wq,Holden:2009aa,Xin:2000qf} (and 
the references therein). 
Indeed, there are two natural classes of $H^1$ 
weak solutions, \textit{dissipative} and 
\textit{conservative}, which differ in how they 
continue the solution past the blow-up time.
Conservative solutions (see, e.g., \cite{Bressan:2007fk}) 
ask that the PDE holds weakly and that the 
total energy is preserved. In contrast, dissipative 
solutions (see, e.g., \cite{Xin:2000qf}) 
are characterized by a drop in the total 
energy at the time of blow-up. 
Starting from general finite-energy 
data $u|_{t=0}=u_0\in H^1$, the CH solution 
operator formally preserves the $H^1$ norm, and 
$H^1$ regularity is also needed to make distributional 
sense of the equation. The solution 
space $H^1$ allows for wave breaking, in the sense  
that the solution $u$ remains bounded while 
its $x$-derivative $\pd_x u$ becomes (negatively) 
unbounded \cite{Camassa:1993zr}. 

\medskip

Stochastic effects, in terms of transport, forcing, or 
uncertain system parameters, are vital for 
developing models of many 
phenomena in fluid dynamics. 
The work of Holm \cite{Holm:2015tc} 
proposes a general approach to deriving SPDEs 
for fluid dynamics from geometric 
mechanics and a stochastic variational principle. 
In particular, he argues that 
``physically relevant" noise arises from 
a suitable perturbation of the integrated Hamiltonian 
of the dynamical system. The corresponding 
stochastic perturbation of the CH equation leads 
to nonlinear SPDEs like \eqref{eq:u_ch}, see 
\cite{Crisan:2018aa} and \cite{Bendall:2021tx}. 
The works \cite{Bendall:2021tx,Crisan:2018aa} 
also investigate blow-up of regular solutions. 
For the related stochastic Hunter--Saxton equation, 
see \cite{Holden:2020aa,Holden:2021vw}.  
We refer to Appendix \ref{sec:derivation-SCH} 
for a short formal derivation of 
the stochastic CH equation \eqref{eq:u_ch}.

\medskip

Let us now turn to the mathematical analysis 
of the stochastic CH equation \eqref{eq:u_ch}. 
Currently, only a few local well-posedness 
results are available. Most 
of them concern the stochastic forcing case, which 
corresponds to \eqref{eq:SCH-tmp} with the transport 
noise  $\sigma(x) \pd_x u \circ \d W$ replaced by 
a lower order It\^{o} term $\sigma(x,u)\, \d W $, either in 
additive ($\sigma(x) \, \d W $) or multiplicative 
($\sigma(u) \, \d W$) form, see the 
works \cite{Chen:2016aa,Chen:2012aa,Chen:2020wp,
Huang:2013ab,Lv:2019wt,Rohde:2021aa,Tang:2018aa,
Tang:2020vf,Zhang:2020vy,Zhang:2021wl}. 
See also \cite{Chen:2021tr} for a global existence 
result if $\sigma\equiv u$ and $m(0)\ge 0$. 

For the CH equation perturbed by transport noise, like 
the term $\sigma \pd_x u \circ \d W$ appearing 
in \eqref{eq:u_ch}, we refer to Albeverio, Brze\'{z}niak, 
and Daletskii \cite{Albeverio:2021uf} for the first local 
well-posedness result (up to wave-breaking). The idea 
in \cite{Albeverio:2021uf} is to transform the equation 
into a PDE with random coefficients and 
apply Kato's operator theory. The work of 
Alonso-Or\'an, Rohde, and Tang \cite{Alonso-Oran:2021wt} 
extends this result to a stochastic 
two-component CH system with transport 
noise (for smooth noise functions $\sigma$). 
Let us also draw attention to a recent study \cite{CL2022} that
investigates the existence of weak solutions for a
two-component CH equation affected
by Markus pure-jump noise. A general Marcus SDE is structured
as follows: $du = a \, \d s+ b \circ \d W+c[u]\, \diamond \, \d L$,
where $L$ represents a pure jump-Lévy process,
and $c[u]\, \diamond \, \d L$ is interpreted within the Markus framework.
The study \cite{CL2022} zeroes in on the pure jump 
component $c[u]\, \diamond \, \d  L$
in this decomposition, particularly when $c[u]=\pd_x u$.
Aside from the fact that examining this case is more
straightforward than dealing with the Wiener noise
$\sigma(x) \pd_x u \circ \d W$, which is the focus of our paper, 
the critical difference lies in the solution class
for analyzing the stochastic (two-component) CH equation.
In their work, the authors of \cite{CL2022} devise solutions in
which $\pd_x u$ is a bounded function. However, when this is
confined to the context of the CH equation, such a solution
class becomes overly restrictive, essentially necessitating
that the initial data satisfy $u_0-\pd_{xx}^2u_0\ge 0$.
This limitation omits crucial solutions involving
peakon-antipeakon interactions, where $\pd_x u$
could potentially blow up or become unbounded.
In contrast, our result is general, applicable to 
any $u_0\in H^1$ (where $\pd_x u$ may not be a bounded function).
Yet, this wide applicability entails a significantly more complex
analytical approach, which we will
elaborate upon later in our discussion.

\medskip

The global existence of properly 
defined weak solutions for the stochastic 
CH equation \eqref{eq:u_ch} is an open problem, 
addressed in this paper for the first time. 
We develop an existence theory for dissipative 
weak solutions for rather general ``non-smooth" noise 
functions $\sigma\in W^{2,\infty}$.  Our main result is 
the following theorem:

\begin{thm}[existence of dissipative solution]
\label{thm:main}
Let $\sigma \in W^{2, \infty}(\T)$, and 
fix some $p_0>4$. For any initial probability 
distribution $\Lambda$ supported 
on $H^1(\T)$, satisfying 
$$
\int_{H^1(\T)}
\norm{v}^{p_0}_{H^1(\T)}{\Lambda}(\d v)<\infty,
$$
there exists a dissipative weak martingale solution 
$\bigl(\tilde{\cS},\tilde{u},\tilde{W}\bigr)$ to the stochastic 
CH equation \eqref{eq:u_ch} 
with random initial data {$\tilde u_0$} distributed 
according to $\Lambda$ (${\tilde u_0}\sim \Lambda$), 
where $\tilde{\cS}=\bigl(\tilde{\Omega}, \tilde{\mathcal{F}},
\{\tilde{\mathcal{F}}_t\}_{t\in [0,T]},
\tilde{\mathbb{P}}\bigr)$ is a stochastic 
basis. Besides, the following energy inequality holds 
$\tilde{\mathbb{P}}$--a.s., for a.e.~$s\in [0,T)$ and every 
$t$ with $s< t\leq T$,
\begin{equation}\label{eq:energybalance-sch}
	\begin{aligned}
		&\int_\T \tilde u^2 + \abs{\pd_x \tilde u}^2
		\,\d x \bigg|_s^t \\
		& \qquad \le  \int_s^t 
		\int_\T \frac14 \pd_{xx}^2 \sigma^2  \tilde u^2
		+\bk{\abs{\pd_x \sigma}^2
		-\frac14 \pd_{xx}^2\sigma^2}
		\,\abs{\pd_x \tilde u}^2
		\,\d x \,\d t' \\
		&\qquad \qquad  
		+\int_s^t \int_\T \pd_x 
		\sigma\bk{\tilde u^2-\abs{\pd_x \tilde u}^2}
		\,\d x \, \d \tilde{W}.
	\end{aligned}
\end{equation}
Specifically, it holds for $s=0$ and any $t\in (0,T]$, with 
$\int_\T \tilde u^2 + \abs{\pd_x \tilde u}^2\,\d x \big|_{s=0}$
replaced by $\int_\T\tilde u_0^2+\abs{\pd_x \tilde u_0}^2\,\d x$.
\end{thm}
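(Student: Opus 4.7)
The plan is to construct $\tilde u$ as the vanishing-viscosity limit of classical solutions to parabolic SPDE regularisations, combined with stochastic compactness via Skorokhod--Jakubowski on quasi-Polish spaces and a renormalisation/compensated-compactness argument at the level of $\pd_x u$.

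\textbf{Viscous regularisation and uniform estimates.}
For each $\eps>0$, smooth the initial distribution to $\Lambda^\eps$ (supported on $H^s(\T)$ with $s$ large), and solve on the original basis $\cS$ the parabolic SPDE obtained by adding $\eps\,\pd_{xx}^2 u\,\d t$ to \eqref{eq:u_ch}. Standard semilinear parabolic SPDE theory yields strong analytical solutions $u^\eps$. Applying It\^o's formula to $\frac12\int (u^\eps)^2+\abs{\pd_x u^\eps}^2\,\d x$, integrating by parts, and combining the drift from the Stratonovich--It\^o correction $-\frac12\sigma\pd_x(\sigma\pd_x u^\eps)$ with the quadratic variation of the martingale $\sigma\pd_x u^\eps\,\d W$ produces exactly the right-hand side of \eqref{eq:energybalance-sch}, together with the non-positive viscous term $-\eps\int \abs{\pd_x u^\eps}^2+\abs{\pd_{xx}^2 u^\eps}^2\,\d x$. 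Burkholder--Davis--Gundy and a Gronwall argument, leveraging the $p_0$-moment hypothesis on $\Lambda$, yield the uniform bounds
\begin{equation*}
\Ex \sup_{t\in[0,T]}\norm{u^\eps(t)}_{H^1(\T)}^{p_0}+\eps\,\Ex\int_0^T \norm{\pd_{xx}^2 u^\eps}_{L^2(\T)}^2\,\d t \le C,
\end{equation*}
and time regularity in a negative fractional Sobolev space is read off directly from the equation.

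\textbf{Tightness and a.s.\ representation.}
The uniform bounds imply tightness of the laws of $(u^\eps, W)$ on a quasi-Polish path space combining $C([0,T];H^1_{\text{weak}}(\T))\cap L^2(0,T;H^1(\T))$ with $C([0,T])$; the use of the weak topology on $H^1$ forces the recourse to the Jakubowski extension of Skorokhod's representation. This yields a new basis $\tilde\cS$ carrying random variables $\tilde u^\eps\to\tilde u$ (in the path topology) and $\tilde W^\eps\to\tilde W$ almost surely, with each $\tilde u^\eps$ satisfying the viscous equation relative to $\tilde W^\eps$ after the standard martingale-identification step. Linear terms pass to the limit by weak convergence, while the Burgers-type nonlinearity $\tilde u^\eps\pd_x\tilde u^\eps=\frac12\pd_x(\tilde u^\eps)^2$ is handled once Aubin--Lions gives strong $L^2_tL^2_x$ convergence of $\tilde u^\eps$.

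\textbf{Main obstacle: identifying the $\abs{\pd_x u}^2$ terms.}
The real difficulty is the quadratic derivative contribution $K*\abs{\pd_x\tilde u^\eps}^2$ inside $P^\eps$, together with the closely related $\abs{\pd_x\tilde u^\eps}^2$-contributions in the Stratonovich--It\^o drift and in the quadratic variation of the transport martingale. Only weak $L^2_{t,x}$ convergence $\pd_x\tilde u^\eps\weak\pd_x\tilde u$ is available, so the weak limit $\mu$ of $\abs{\pd_x\tilde u^\eps}^2$ only dominates $\abs{\pd_x\tilde u}^2$; the nonnegative defect $\mu-\abs{\pd_x\tilde u}^2$ is precisely what is responsible for the energy drop and the inequality sign in \eqref{eq:energybalance-sch}. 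To close the argument I would derive, for $\tilde u^\eps$, a stochastic renormalised equation for $\eta(\pd_x\tilde u^\eps)$ along a family of $C^2$ truncations $\eta$, carefully tracking the commutator between $\pd_x$ and the spatially dependent noise operator $\sigma(x)\pd_x\cdot$ (whose It\^o bracket contributes nonvanishing quadratic corrections precisely because $\sigma$ depends on $x$). Passing $\eps\downarrow 0$ produces a transport-type stochastic evolution for the Young-measure limit $\overline{\eta(\pd_x\tilde u)}$. Choosing $\eta$ to be a smooth approximation of $(\xi^-)^2$ (as in the deterministic Xin--Zhang \cite{Xin:2000qf} dissipative theory) and exploiting the one-sided sign of the Riccati-type $\pd_x P$-term forces \emph{one-sided} strong compactness $\overline{\eta(\pd_x\tilde u)}=\eta(\pd_x\tilde u)$ almost surely, which suffices both to identify $P$ in the limit and to preserve \eqref{eq:energybalance-sch} with the correct inequality sign. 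Managing the commutator errors in the renormalised SPDE, and in particular controlling the stochastic corrections that survive because $\sigma$ is non-constant, constitutes the principal new technical challenge relative to the deterministic dissipative theory and to the smooth-coefficient stochastic compactness literature.
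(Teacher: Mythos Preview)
Your overall architecture---vanishing viscosity, Skorokhod--Jakubowski compactness on quasi-Polish spaces, and a renormalisation/defect-measure argument for $\pd_x u$---matches the paper's, but two of the steps you sketch would fail as written.

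First, passing to the limit in the renormalised equation requires handling products such as $S'(\tilde q^{\eps})\,\tilde P^{\eps}$, where $S'(\tilde q^{\eps})$ converges only weakly; hence $\tilde P^{\eps}$ must converge \emph{strongly}. In the deterministic theory one gets this from a uniform $W^{1,1}_{t,x}$ bound on $P^{\eps}$, but that bound is unavailable stochastically (the time derivative contains a white-noise term). Your proposal does not address this. The paper's solution is to prove a temporal translation estimate for $q_{\eps}^{2}$ (Proposition~\ref{prop:translation-time}) and to introduce the quasi-Polish space $L^{r}\bigl([0,T];L^{r}(\T)-w\bigr)$, strong in $t$ and weak in $x$, together with a new tightness criterion there (Lemmas~\ref{lem:new-compact}--\ref{lem:tightness-LpLpx-weak}). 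This is what forces $\tilde q_{\eps}^{2}\to\overline{q^{2}}$ in $L^{r}_{t}(H^{-1}_{x})$ and hence $\tilde P^{\eps}\to\tilde P$ strongly. Relatedly, you omit the higher-integrability bound $q_{\eps}\in L^{2+\alpha}_{\omega,t,x}$ (Proposition~\ref{thm:2plusalpha_apriori}), without which $\overline{q^{2}}$ could be a measure rather than an $L^{1}$ function.

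Second, your closing step---``exploiting the one-sided sign of the Riccati-type $\pd_x P$-term'' as in Xin--Zhang---implicitly leans on the Oleinik-type bound $\pd_x u^{\eps}\lesssim 1+t^{-1}$ that underpins the deterministic argument. No such bound is known for the stochastic equation (the paper states this explicitly), and the standard $W^{2,\infty}$ cutoffs of $v^{2}$ also fail because the noise-induced second-order correction terms alter the structure of the renormalised SPDE. The paper circumvents this in Section~\ref{sec:wk_limits_section} by: (i) using carefully designed $W^{3,\infty}$ entropies $S_{\ell}$ (see \eqref{eq:entropies_S_ell}) so that the aggregate error term is convex and hence has a sign after taking weak limits; (ii) treating $\mathbb{D}^{+}=\tfrac12(\overline{q_{+}^{2}}-\tilde q_{+}^{2})$ directly via a Coclite--Karlsen observation that bypasses the Oleinik bound; and (iii) adding the $\mathbb{D}^{+}$ and $\mathbb{D}^{-}_{\ell}$ inequalities so that the dangerous $(\overline{q_{\pm}}-\tilde q_{\pm})(\tilde P-\tilde u^{2})$ terms cancel, then killing the residual error via the strong convergence of $\tilde P^{\eps}$ obtained in the first point above.
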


Roughly speaking, by a solution to \eqref{eq:u_ch} 
we mean a collection $\bigl(\tilde \cS,\tilde u,\tilde W)$, 
where $\tilde \cS$ is a stochastic basis, $\tilde W$ 
is a Wiener process, and $(\omega,t)\mapsto 
{\tilde u}(\omega,t,\cdot)$ takes values in $H^1(\T)$ and satisfies 
the SPDE \eqref{eq:u_ch} in the weak sense in $x$, 
see Definition~\ref{def:dissp_sol} for details. 
Note that the solutions constructed in 
Theorem \ref{thm:main} are weak 
in the probabilistic sense, as the stochastic 
basis $\tilde \cS$ and the Wiener process $\tilde W$ 
are parts of the unknown solution. We 
refer to these solutions as dissipative 
weak martingale solutions.  
The term ``weak" in the quantifier ``dissipative weak" 
indicates that the solutions are considered 
weak solutions in the PDE sense.  Furthermore, at least in the 
deterministic case ($\sigma=0$), the solutions possess 
the additional property that the total energy decreases over time, 
specifically at a wave breaking time $t_0$. 
The term ``dissipative" also alludes to the methodology 
employed to construct these solutions, which is the vanishing 
viscosity method.

A manifestation of the dissipative nature 
of the solutions is that the total energy 
inequality encodes a fundamental 
right-continuity property; namely, we will 
prove that $\tilde u(t)\to \tilde{u}(t_0)$ 
in $H^1(\T)$, a.s., as $t\downarrow t_0\in [0,T)$. 
In the deterministic setting $\sigma=0$, 
Theorem \ref{thm:main} recovers the main result of Xin 
and Zhang \cite{Xin:2000qf}.

\subsection{Outline of main ideas}
Let us end this introduction by briefly 
expounding the main ideas behind the 
proof of Theorem \ref{thm:main}. 
Although the proof makes use of the vanishing viscosity method 
and weak convergence techniques, there are many substantial 
differences between the deterministic and stochastic situations.  
Adding the viscosity term $\eps \pd_{xx}^2 u$ to \eqref{eq:u_ch}, we first 
construct a regular solution $u_\eps$ to
\begin{equation}\label{eq:u_ch_ep}
	\begin{aligned}
		& 0 = \d u_\ep+ \left[u_\ep \, \pd_x u_\ep 
		+ \pd_x P_\ep - \ep \pd_{xx}^2 u_\ep \right] \, \d t 
		- \frac12 \sigma_\ep \pd_x 
		\bk{\sigma_\ep \pd_x u_\ep}\,\d t
		+\sigma_\ep \pd_x u_\ep \, \d W,
		\\ 
		& P_\ep = P[u_\ep] 
		:=  K*\left(u^2_\ep +\frac{1}{2}
		\abs{\pd_x u_\ep}^2\right).
	\end{aligned}
\end{equation}
This is a non-standard (nonlinear and nonlocal) 
parabolic SPDE. Its global-in-time 
well-posedness does not follow from standard 
parabolic SPDE theory. In \cite{HKP-viscous}, we 
prove the existence and uniqueness of pathwise $H^m_x$ 
solutions for arbitrary $m\in \N$ (as long as the initial data are smooth). 
Notice that in \eqref{eq:u_ch_ep} we have 
replaced $\sigma$ of \eqref{eq:SCH-tmp} with 
$\sigma_\ep \in C^\infty(\T)$, which we require to converge to $\sigma$ 
in $W^{2,\infty}(\T)$ as $\ep \downarrow 0$. This is 
necessary as the well-posedness of $H^m_x$ solutions require 
coefficients $\sigma_\ep \in W^{m + 1, \infty}(\T)$ 
\cite[Theorem 1.2]{HKP-viscous}.

The relevant results from \cite{HKP-viscous} are collected in 
Theorem \ref{thm:bounds1} below. 
In particular, only a few 
$\eps$-uniform statistical estimates are available 
(starting from smooth finite-energy initial data), including
\begin{equation}\label{eq:intro-est-tmp}
	\begin{split}
		& \Ex \norm{u_\eps}_{C_t^\theta L^2_x}^r
		\lesssim 1, \quad
		\text{for some $r>2$ and small $\theta$}, 
		\\ & 
		\norm{q_\eps}_{L^{2+\alpha}_{\omega,t,x}}
		\lesssim 1, \quad 
		\text{for any $\alpha\in [0,1)$},
	\end{split}
\end{equation}
see Sections \ref{sec:sol_def} 
and \ref{sec:apriori}, where the spatial 
gradient $q_\eps:=\pd_x u_\eps $ 
satisfies the nonlinear, second-order transport-type SPDE
\begin{equation}\label{eq:intro-qest-tmp}
	\begin{split}
		 0 & = \d q_{\ep} 
		 +\left(\pd_x \bk{u_{\ep} \, q_{\ep}}
		 -\frac12 q_{\ep}^2+
		 P_\eps-u_{\ep}^2  
		 -\ep\, \pd_{xx}^2 q_{\ep} \right) \,\d t
		 \\ & \qquad \qquad
		 - \frac12 \pd_x \bk{\sigma_\ep\,
		 \pd_x\bk{\sigma_\ep\,q _{\ep}}} \,\d t
		 + \pd_x\bk{\sigma_\ep \, q_{\ep}}\,\d W.
	\end{split}
\end{equation}

The starting point for deducing 
$\eps$-uniform estimates is the SPDE 
satisfied by the total energy $\frac12 \bigl(u_\ep^2
+q_\ep^2\bigr)$, which is formally 
obtained by testing---via the 
temporal (It\^{o}) and spatial chain rules, 
the SPDE \eqref{eq:u_ch_ep} with $u_\eps$ 
and the SPDE \eqref{eq:intro-qest-tmp} with $q_\eps$, 
and then adding the resulting equations, 
noticing some crucial cancellations involving 
cubic terms of $q_\eps$. The end result is
\begin{align}
	& \d \left(\frac{u_\eps^2+q_\eps^2}{2}\right)
	+\pd_x \left[u_\eps \frac{u_\eps^2+q_\eps^2}{2} 
	+u_\eps P_\eps-\frac{u_\eps^3}{2}
	-\frac14 \pd_x\sigma_\ep^2
	\frac{q_\eps^2-u_\eps^2}{2}\right] \, \d t
	\label{eq:intro-visc-energy} 
	\\ & \quad 
	-\pd_{xx}^2\left[ \left (\frac12\sigma_\ep^2
	+\eps\right)\frac{u_\ep^2+q_\ep^2}{2}\right]\, \d t 
	+\left[\pd_x\left(\sigma_\ep \frac{u_\eps^2+q_\eps^2}{2}\right)
	+\pd_x \sigma_\ep \left(\frac{q_\eps^2-u_\eps^2}{2}\right)\right] \,\d W
	\notag \\ & \quad \quad 
	=\frac14\pd_{xx}^2\sigma_\ep^2
	\frac{u_\eps^2}{2} \,\d t
	+\left(\abs{\pd_x\sigma_\ep}^2
	-\frac14 \pd_{xx}^2\sigma_\ep^2\right)
	\frac{q_\eps^2}{2}\,\d t
	-\eps \left(\abs{\pd_x u_\eps}^2
	+\abs{\pd_x q_\eps}^2\right) \, \d t.
	\notag
\end{align}

The second estimate in \eqref{eq:intro-est-tmp} 
implies, passing if necessary to a subsequence,
\begin{equation}\label{eq:intro-weak-conv}
	q_\eps \toepsweak q 
	\,\, \text{in $L^p_{\omega,t,x}$, $p\in [1,3)$}, 
	\quad 
	q_\eps^2 \toepsweak \overline{q^2} 
	\,\, \text{in $L^p_{\omega,t,x}$, $p\in [1,3/2)$}, 
\end{equation}
for some weak limits $q,\overline{q^2}$. 
Throughout this paper, we use overbars to denote weak limits, 
in spaces that often must be understood from the context. 
Only equipped with weak convergence 
of $\bigl\{q_\eps^2=\abs{\pd_x u_\eps}^2
\bigr\}_{\eps>0}$---because of the nonlinearity--- 
it is not possible to pass to the limit $\eps\to 0$ 
in \eqref{eq:u_ch_ep}, \eqref{eq:intro-qest-tmp} 
to obtain a solution of the stochastic 
CH equation \eqref{eq:u_ch}; strong $L^2$ 
convergence of $\left\{q_\eps\right\}_{\eps>0}$ is 
called for. 

An effective (deterministic) strategy  for 
improving the weak convergence to the 
required strong one is to start from a strongly 
convergent sequence of initial data and then 
attempt to propagate that strong convergence through time. 
This ``propagation of compactness" argument is typically 
implemented in the context of DiPerna--Lions 
renormalised solutions \cite{DiPerna:1989aa}; for 
some applications of this strategy, see 
\cite{Feireisl:2004oe,Lions:1998ga} 
(compressible Navier--Stokes equations) and 
\cite{Coclite:2015aa,Coclite:2005tq,Xin:2000qf} (CH equation).

The tailoring of the propagation of 
compactness argument to the stochastic 
CH equation \eqref{eq:u_ch} is 
rather involved. Let us explain 
some of the reasons for this. First, 
we need to use the few available 
estimates \eqref{eq:intro-est-tmp} 
to extract some strong (almost sure) compactness 
in the probability variable $\omega$. 
Indeed, a feature of our approach is that most results 
are derived in a pathwise context, meaning that 
equations and inequalities hold almost 
surely (not only in the weaker statistical mean sense). 
The natural strategy for achieving a.s.~convergence 
is to invoke some nontrivial results of Skorokhod, linked 
to the tightness (weak compactness) 
of probability measures and 
a.s.~representations of random variables, see 
\cite[Theorem 2.4]{DaPrato:2014aa} and, e.g., 
\cite{Bensoussan:1995aa,Debussche:2011aa,
Flandoli:1995aa,Glatt-Holtz:2014aa,Hausenblas:2013aa} 
for some applications of this approach to SPDEs. 
Applying this strategy to the laws $\cL(u_\eps)$ of 
$u_\eps$---defined on the Polish space 
$C([0,T];L^2(\T))$ and whose tightness 
is guaranteed by the first estimate 
in \eqref{eq:intro-est-tmp}---we obtain 
new random variables $\tilde u_\eps$---defined 
on a new probability space and with the same 
laws as the original variables $u_\eps$---which 
converge almost surely to some $\tilde u$:
\begin{equation}\label{eq:intro-ueps-conv}
	\tilde u_\eps \toeps \tilde u 
	\quad \text{in $C([0,T];L^2(\T))$, 
	almost surely.} 
\end{equation}

Next, we wish to apply this strategy to 
improve the ($\omega,t,x$) weak convergence 
\eqref{eq:intro-weak-conv} to a.e.~convergence 
in $\omega$, weak in $(t,x)$. The original Skorokhod 
construction applies to processes taking values in a Polish 
(complete separable metric) space. In our context the 
Skorokhod theorem is not directly applicable, 
because we have to work in spaces 
equipped with the weak topology, like 
$L^p_{t,x} - w$, which are not Polish. Therefore we use 
a recent version of the Skorokhod theorem---due 
to Jakubowski \cite{Jakubowski:1997aa}---that 
applies to so-called quasi-Polish spaces, where 
\textit{quasi-Polish} refers to a Hausdorff space  
that exhibits a continuous injection into a Polish space. 
It turns out that separable Banach spaces 
equipped with the weak topology as well as dual 
spaces of separable Banach spaces (equipped with the 
weak-star topology) are quasi-Polish. 
For relevant background material on 
quasi-Polish spaces, see Appendix \ref{sec:qpolish}.
We refer to Brze\'zniak and Ondrej\'at 
\cite{Ondrejat:2010aa,Brzezniak:2013ab} and 
\cite{Breit:2016aa,Brzezniak:2013aa,Brzezniak:2011aa,
Punshon-Smith:2018aa,SmithTrivisa:2018,Wang-StochNS:2015} 
for some applications of the Skorokhod--Jakubowski theorem 
to different SPDEs (this list is far from complete).

The second estimate in \eqref{eq:intro-est-tmp} implies 
that the laws $\cL(q_\eps)$ and $\cL(q_\eps^2)$ are tight
as probability measures on the quasi-Polish 
space $L^p([0,T]\times \T)-w$, 
respectively for $p\in [1,3)$ ($q_\eps$) and 
$p\in [1,3/2)$ ($q_\eps^2$). An application of 
the Skorokhod--Jakubowski theorem 
supplies new random variables $\tilde q_\eps$ and 
$\tilde q_\eps^2$ defined on the same 
probability space as $\tilde u_\eps$ and with 
the same laws as the original variables $q_\eps$ 
and $q_\eps^2$, such that (extracting a 
subsequence if necessary and for the same 
values of $p$ as before)
\begin{equation}\label{eq:intro-weak-conv2}
	\tilde q_\eps \toepsweak \tilde q 
	\,\, \text{in $L^p_{t,x}$, a.s.}, 
	\quad 
	\tilde q_\eps^2 \toepsweak \overline{\tilde q^2} 
	\,\, \text{in $L^p_{t,x}$, a.s.},
\end{equation}
for some limits $\tilde q$ and $\overline{\tilde q^2}$, 
see Section \ref{sec:Jak-Skor}. 

It is of vital importance to us that products like 
$S'(\tilde q_\ep)\tilde P_\ep$ converge weakly, for a 
suitable class of linearly growing nonlinearities $S(\cdot)$, 
where $\tilde P_\ep$ is defined in \eqref{eq:u_ch_ep}. 
Since $S'(\tilde q_\ep)$ converges weakly, $\tilde P_\ep$ 
must converge strongly. This strong convergence does not follow 
from \eqref{eq:intro-weak-conv2}, as we are missing 
strong temporal compactness for $\tilde q_\eps^2$. 
In the deterministic theory \cite{Xin:2000qf}, 
one establishes directly uniform $W^{1,1}_{t,x}$ estimates 
for $\tilde P_\ep$, which implies strong convergence. 
This strategy does not work in the stochastic setting. A natural 
modification of this strategy, based on the derivation 
of uniform H\"older continuity in $t$, does not 
seem to accomplish the task either, even if the spatial topology is weak. 
As a result, we cannot apply the often-used compactness approach 
based on tightness in the (quasi-Polish) 
space $C([0,T];L^p(\T)-w)$, used by many of the 
references above. 

These obstructions have 
motivated us to introduce the locally convex space
$L^p\bigl(L^p_w\bigr)=L^p([0,T];L^p(\T)-w\bigr)$, 
which is quasi-Polish (see Appendix \ref{sec:qpolish}). 
The space $L^p\bigl(L^p_w\bigr)$ can account for 
strong temporal and weak spatial convergence of the energy variable 
$\tilde q_{\ep_n}^2$. To this end, we 
formulate a new tightness criterion in 
$L^p\bigl(L^p_w\bigr)$, which we believe is 
of independent interest: the probability laws 
of a sequence $\left\{Q_n\right\}_{n\in \N}$ 
of random variables is tight 
on $L^p\bigl(L^p_w\bigr)$ provided
\begin{align*}
	&\mathrm{(i)}\,\, \, 
	\Ex \norm{Q_n}_{L^p([0,T];L^p(\T))}
	\lesssim 1,
	\\ &
	\mathrm{(ii)}\,\, \, 
	\Ex \norm{Q_n}_{L^{\bar p}([0,T];L^1(\T))}
	\lesssim 1, \quad \text{for some $\bar p>p$},
\end{align*}
and, for all $\varphi\in C^\infty(\T)$ and $\vartheta>0$,  
\begin{align*}
	\mathrm{(iii)}\, \, \, \Ex \sup_{\tau\in (0,\vartheta)}
	\int_0^{T-\tau}\abs{\, \int_{\T} \varphi(x)\bigl(Q_n(t+\tau,x)
	-Q_n(t,x)\bigr)\, \d x}\, \d t 
	\lesssim_\varphi \vartheta^\alpha,
\end{align*}
for some $\alpha\in (0,1)$. 
We verify these conditions for the energy 
variable $\tilde q_\ep^2$, thereby supplying 
the following critical improvement over \eqref{eq:intro-weak-conv2}: 
$\tilde q_\eps^2 \to \overline{\tilde q^2}$ 
in $L^p\bigl(L^p_w\bigr)$ a.s.; thus, passing to a subsequence, 
$\tilde q_\eps^2$ converges weakly in $x$ and pointwise 
in $(\tilde \omega,t)$. We refer to 
Sections \ref{sec:apriori} and \ref{sec:Jak-Skor} 
for the details.

In Section \ref{sec:equality-of-laws}, we 
prove several results that transfer 
the available a priori estimates and 
the SPDE \eqref{eq:u_ch_ep} to the new probability 
space (for the new variables $\tilde u_\eps$, 
$\tilde q_\eps$, $\tilde W_\eps$ and their limits). 
Equipped with \eqref{eq:intro-ueps-conv} 
and \eqref{eq:intro-weak-conv2}, we  
send $\eps\to 0$ in the SPDE 
\eqref{eq:u_ch_ep} (on the new probability space) to 
produce a solution $\tilde u$ of an SPDE that looks like 
the stochastic CH equation \eqref{eq:u_ch} 
but with the nonlinearity $\overline{\tilde q^2}$ instead 
of the required one $\tilde q^2=
\abs{\pd_x \tilde u}^2 $, see Section \ref{sec:existence}. 

The final Section \ref{sec:wk_limits_section} 
is devoted to the proof that $\overline{\tilde q^2}=\tilde{q}^2$ 
a.e.~in $(\omega,t,x)$, and thereby the validity 
of Theorem \ref{thm:main}. The proof amounts 
to upgrading the $(t,x)$ weak convergence \eqref{eq:intro-weak-conv2} 
to strong convergence via a study of the \textit{defect measure}
\begin{equation}\label{eq:intro-defect}
	\bD=\bD(\omega,t,x)=
	\frac12\left(\overline{\tilde q^2}-\tilde q^2\right)\geq 0.
\end{equation}
The idea is to derive a transport-type SPDE 
(up to an inequality) for the evolution of $\bD$, so that 
if $\bD$ is time-continuous at $t=0$ with 
$\bD(0)=0$ (assuming strong compactness at $t=0$), 
then $\bD(t)$ is zero at all later times $t>0$.  
Roughly speaking, an SPDE (up to an inequality) for 
$\frac12\bigl(\tilde u^2+\overline{\tilde q^2}\bigr)$ 
is obtained using \eqref{eq:intro-ueps-conv}, 
\eqref{eq:intro-weak-conv2} to pass to 
the limit in the total energy balance 
\eqref{eq:intro-visc-energy} (again 
written on the new probability space). On the other hand, 
by formally repeating the derivation of 
the energy balance \eqref{eq:intro-visc-energy} 
for the limits $\tilde u$, $\tilde q$, relying 
on the SPDEs obtained by sending $\eps\downarrow 0$ 
in \eqref{eq:u_ch_ep}, \eqref{eq:intro-qest-tmp}, we 
arrive at an SPDE for $\frac12 \bigl( \tilde u^2
+\tilde q^2\bigr)$, and therefore an inequality for 
the defect measure $\bD$, which takes the form
\begin{equation}\label{eq:intro-defect-SPDE}
	\begin{split}
		& \pd_t  \bD 
		+\pd_x \left(\tilde u\, \bD 
		-\frac14 \pd_x\sigma^2\,\bD\right)
		-\frac12 \pd_{xx}^2\bigl(\sigma^2\,\bD\bigr)
		+\bigl[\pd_x\left(\sigma\, \bD\right)
		+\pd_x \sigma \, \bD \bigr] \, \dot{\tilde W} 
		\\ & \qquad\quad 
		\leq \left(\abs{\pd_x\sigma}^2
		-\frac14 \pd_{xx}^2\sigma^2
		-\pd_x \tilde u\right)\, \bD
		\quad \text{in $\cD^\prime_{t,x}$, 
		almost surely},
	\end{split}	
\end{equation}
where $\cD^\prime_{t,x}=\cD^\prime([0,T)\times \T)$ 
denotes the space of distributions on $[0,T)\times \T$.

Unfortunately, the arguments leading 
up to \eqref{eq:intro-defect-SPDE} are only formal. 
Recalling \eqref{eq:intro-est-tmp}, we do not 
have enough integrability on $\tilde q$, $\overline{\tilde q^2}$ 
to give sense to the terms $\tilde q^3$ and 
$\tilde q \, \overline{\tilde q^2}$ arising during 
the derivation of \eqref{eq:intro-defect-SPDE}.
The way to overcome this difficulty is 
to work with renormalised formulations of 
the SPDEs for $\tilde q_\eps, \tilde q$ 
based on linearly growing approximations 
$S_\ell(v)$ of $v^2$ and eventually send $\ell\to \infty$. 
More precisely, we split $v$ into 
its positive $v_+$ and negative parts $v_-$ 
(so that $v^2=v_+^2+v_-^2)$ and then work with 
the SPDEs satisfied by the nonlinear compositions 
$S_\ell\bigl((\tilde q_\eps)_\pm\bigr)$, 
$S_\ell\bigl(\tilde q_\pm\bigr)$. 
In passing, let us mention that this forces 
us to accomodate a countable product of quasi-Polish 
spaces, as we need to apply the Skorokhod--Jakubowski 
procedure to all members of the 
sequence $\left\{S_\ell\bigl((q_\eps)_{\pm}
\bigr)\right\}_{\ell\in \N}$ simultaneously. 
Countable products of quasi-Polish spaces are discussed
in Appendix \ref{sec:qpolish}. 

Again drawing an analogy to the deterministic theory 
\cite{Coclite:2005tq,Coclite:2015aa,Xin:2000qf}, here 
we run into another difficulty linked to the stochastic 
part of the problem. Namely, the temporal irregularity 
of the noise induces structural changes in 
the equation that make it impossible to work 
with the familiar $W^{2,\infty}_{\operatorname{loc}}(\R)$ 
approximations $S_\ell(v)=v^2 \mathds{1}_{\{\abs{v}\leq \ell\}}
+\ell\bk{2\abs{v}-\ell}\mathds{1}_{\{\abs{v}>\ell\}}$ of 
$v\mapsto v^2$. This adds further complications to the 
analysis. See Section \ref{sec:Jak-Skor} for further details.

A further intricacy arising during the 
derivation of \eqref{eq:intro-defect-SPDE} is 
the passage to the limit in stochastic integrals of the form 
$\int_0^t \int_{\T} S'(\tilde q_\ep)\tilde q_\ep \, \d x 
\, \d \tilde W_\ep$, for some class of nonlinear 
functions $S(\cdot)$. Here, $\tilde W_\ep$ is a sequence
of Wiener processes converging uniformly to a limit 
process $\tilde W$, a.s., while $\int_{\T} S'(\tilde q_\ep)
\tilde q_\ep \, \d x$ converges just weakly in $L^p_t$, a.s., 
towards $\int_{\T} \overline{S'(\tilde q)\tilde q} \, \d x$. 
The absence of strong temporal compactness hinders the 
application of Lemma 2.1 of \cite{Debussche:2011aa}, 
which is regularly used to certify convergence 
of stochastic integrals. We manage this issue by 
once more making vital use of the quasi-Polish space 
$L^p\bigl(L^p_w\bigr)$ and the tightness criterion 
provided by the conditions (i), (ii), and (iii). 
The details are worked out in 
Section \ref{sec:wk_limits_section}. 

The renormalised SPDEs are derived by regularising 
non-smooth processes via convolution 
against a spatial mollifier $J_\delta(x)$. 
Sending $\delta \to 0$, we handle most of the error 
terms using standard DiPerna--Lions 
estimates \cite{DiPerna:1989aa}, except for some 
unique terms coming from the interconnection between the 
martingale part of the equations and second-order 
Stratonovich--It\^{o} correction terms. 
The corresponding commutator estimates---collected 
in Appendix \ref{sec:commutator-est}---are proved 
in the paper \cite{HKP-viscous} by the last three authors. 
Similar estimates have been used recently in 
\cite{Punshon-Smith:2018aa} and \cite{Holden:2020aa}.

It remains to send $\ell\to \infty$ to recover 
a useful version of the SPDE inequality 
\eqref{eq:intro-defect-SPDE} for the defect measure $\bD$. 
The renormalisations $S_\ell(v_\pm)$ 
give rise to a number of intricate 
error terms involving the approximation 
parameter $\ell$, several of them linked 
to the stochastic nature of the problem. 
For the deterministic CH equation \cite{Xin:2000qf}, this 
part of the analysis relies crucially on knowing that 
the viscous solutions $q_\eps$ obey a one-sided 
gradient bound of the Oleinik-type: 
$q_\eps(t)=\pd_x u_\eps(t) \lesssim 1+\tfrac{1}{t}$ 
(that is independent of $\eps$), a further 
reflection of the dissipative nature of the solutions. 
No such bound is currently known for 
the stochastic CH equation.  However, let us 
mention that recently \cite{Holden:2021vw} 
it was discovered that dissipative solutions of the related 
stochastic Hunter--Saxton equation \cite{Holden:2020aa} 
satisfy a one-sided gradient bound of the form  
$\pd_x u(\omega,t,x) \le K(\omega, t)$, 
where the process $K(\omega,t )>0$ 
exhibits an exponential moment bound in the 
sense that $\Ex \exp\bigl(p\int_t^T K(s)\,\d s\bigr) 
\lesssim {t^{-2p}}$ for small times $t$, for some $p\ge 1$. 
We have not been able to establish a similar bound for 
the stochastic CH equation. Here we will instead  
rely on an observation due to the third author 
and Coclite \cite{Coclite:2015aa} for the deterministic 
CH equation, which makes it possible 
to rigorously derive an SPDE inequality 
for the ``positive part" of the defect measure,
$\bD_+=\frac12\bigl(\overline{\tilde q_+^2}
-\tilde q_+^2\bigr)$, without using an Oleinik bound. 
The detailed analysis of the defect measure is found in 
Section \ref{sec:wk_limits_section}.

The remaining part of the paper is divided 
into six sections and three appendices, which 
together establishes Theorem \ref{thm:main}. 

\section{Preliminaries and solution concepts}
\label{sec:sol_def}

We refer to \cite[Chapter 1]{Chow:2015aa} for 
notation and background material on 
stochastic analysis and SPDEs, including 
stochastic integrals, It\^o's chain rule, and 
martingale inequalities like the one of 
Burkholder--Davis--Gundy (BDG). For a more general context 
of cylindrical Wiener processes, see \cite{DaPrato:2014aa}. 
For some key concepts linked to probability measures (on 
topological spaces), weak compactness and tightness, see 
the book \cite{Bogachev-measures:2018}.
For basic properties of Bochner spaces like $L^p(\Omega;X)
= L^p\bigl(\Omega,\mathcal{F},\mathbb{P};X\bigr)$, 
where $X$ is a Banach space, we refer 
to \cite[Chapters 1 \& 2]{BanachI:2016}. 
On several occasions we will use \cite[Lemma 2.1]{Debussche:2011aa} to 
lay the foundations for the convergence 
of stochastic integrals. The reader can find 
a primer on quasi-Polish spaces and the 
Skorokhod--Jakubowski theorem \cite{Jakubowski:1997aa} 
in Appendix \ref{sec:qpolish}. Quick background reading can be found 
in, e.g., \cite{Brzezniak:2013ab,Brzezniak:2016wz,Ondrejat:2010aa}, 
some results from which are quoted 
in the aforementioned appendix. The definition and properties of the 
space $C\bigl([0,T];H^1(\T)-w\bigr)$, which 
is quasi-Polish and used herein, can be found in 
\cite{Brzezniak:2013ab,Brzezniak:2016wz,Ondrejat:2010aa}.

\medskip

This section presents the solution concept used 
in Theorem \ref{thm:main} and the one used 
for the viscous SPDE \eqref{eq:u_ch_ep}, starting 
with the notion of a $H^m$-regular martingale solution 
of the viscous equation. Here $\eps>0$ is fixed, and therefore 
we write $u$ instead of $u_\eps$ for the solution of 
\eqref{eq:u_ch_ep}.  

\begin{defin}[$H^m$ martingale solution of 
viscous SPDE]\label{def:wk_sol_visc} 
Fix any integer $m\ge 1$ and some $p_0>4$. 
Let $\Lambda$ be a probability 
measure on $H^{m}(\T)$, satisfying
\begin{equation}\label{eq:u0-cond-Hmp0}
	\int_{H^{m}(\T)} 
	\norm{v}_{H^{m}(\T)}^{p_0} 
	 \Lambda(\d v)<\infty.
\end{equation}
The triple $\bigl(\mathcal{S},u,W \bigr)$ 
is a $H^m$ martingale solution of \eqref{eq:u_ch_ep} with 
initial law $\Lambda$ if the following conditions hold:
\begin{itemize}
	\item[(a)] $\mathcal{S}=\bigl(\Omega, 
	\mathcal{F},\{\mathcal{F}_t\}_{t \ge 0},
	\mathbb{P}\bigr)$ is a stochastic basis;
	
	\item[(b)] $W$ is a standard Wiener 
	process on $\mathcal{S}$;

	\item[(c)] $u:\Omega\times[0,T]\to H^1(\T)$ is 
	adapted, with $u\in L^{p_0}\left(\Omega;
	C([0,T];H^1(\T))\right)$. 
	Moreover,  $u\in L^2([0,T];H^{m+1}(\T))
	\cap L^\infty([0,T];H^m(\T))$ a.s.~and
	$$
	u\in L^2\left(\Omega;L^2([0,T];H^2(\T))\right).
	$$
	
	\item[(d)] initial data --- the law of $u_0:=u(0)$ 
	on $H^{m}(\T)$ is $\Lambda$, 
	i.e., $\bk{u(0)}_*\mathbb{P}=\Lambda $;
	
	\item[(e)] for all $t \in [0,T]$ and all 
	$\varphi \in C^1(\T)$, the following equation 
	holds $\mathbb{P}$-almost surely (in the sense of It\^o):
	\begin{equation}\label{eq:u_ep_weakeq}
		\begin{aligned}
  			&\int_\T u(t) \varphi \,\d x
  			-\int_\T u_0 \varphi\,\d x\\
  			& \qquad 
  			= \int_0^t\int_\T - u\,\pd_x u \,\varphi 
  			+  \bigl[P -\ep \pd_x u \bigr]
  			\pd_x \varphi \,\d x\, \d s
  			\\ & \qquad\qquad  
  			-\frac12\int_0^t\int_\T 
  			\sigma \pd_x u
			\pd_x\bk{\sigma \varphi} 
  			 \,\d x \,\d s
  			-\int_0^t\int_\T 
  			\sigma \pd_x u
			\varphi\,\d x \, \d W(s),
  			\\  
  			& P = P[u]:=K*\left(u^2+\frac{1}{2}
  			\abs{\pd_x u}^2\right).
		\end{aligned}
	\end{equation}
\end{itemize}
\end{defin}

If $\bigl(\mathcal{S},W\bigr)$ is not a part 
of the unknown solution but fixed in advance, 
we speak of a probabilistic strong or pathwise solution. 
According to the famous Yamada--Watanabe principle, 
a martingale solution of an SPDE is probabilistic strong if 
the SPDE exhibits a pathwise uniqueness result.

\begin{defin}[strong $H^m$ solution of viscous equation]
\label{def:st_sol}
Let $u_0 \in L^{p_0}(\Omega;H^{m}(\T))$ for 
some $p_0>4$, and consider a fixed stochastic basis  
$\mathcal{S} = \bigl(\Omega, \mathcal{F},
\{\mathcal{F}_t\}_{t \in [0,T]}, \mathbb{P}\bigr)$. 
We say that $u$, defined on $\mathcal{S}$, is 
a strong $H^m$ solution to \eqref{eq:u_ch_ep} with initial 
data $u(0)=u_0$ if, for a given Wiener 
process $W$ defined on $\cS$, the triple 
$\bigl(\mathcal{S},u,W \bigr)$ constitutes 
a $H^m$ martingale solution to \eqref{eq:u_ch_ep} 
with initial distribution $\Lambda
=\bk{u_0}_*\mathbb{P}=P\circ u_0^{-1}$.
\end{defin}

The viscous equation \eqref{eq:u_ch_ep} is 
strongly well-posed \cite{HKP-viscous}. 
The following theorem gathers the main results 
from \cite{HKP-viscous}.

\begin{thm}[strong well-posedness of viscous SPDE]
\label{thm:bounds1}
Fix $\eps>0$. Suppose $u_0 \in L^{p_0}(\Omega;H^m(\T))$ 
for some $p_0>4$. There exists a unique strong $H^m$ solution 
to \eqref{eq:u_ch_ep} with initial condition $u_0$. 
Denoting this solution by $u_\eps$, the 
following properties and $\eps$-uniform bounds hold:
\begin{itemize}
	\item[(i)] Total energy balance --- 
	for any $0\le s\leq t\leq T$, 
	\begin{equation}\label{eq:energybalance}
		\begin{aligned}
			&\int_\T u_\ep^2 + 
			\abs{\pd_x u_\ep}^2\,\d x \bigg|_s^t 
 			+2\ep \int_s^t \int_\T \abs{\pd_x u_\ep}^2
 			+\abs{\pd_{xx}^2 u_\ep}^2\,\d x\,\d t'
 			\\ &\qquad = \int_s^t \int_\T 
 			\frac14 \pd_{xx}^2 \sigma_\ep^2  u_\ep^2
 			+\bk{\abs{\pd_x \sigma_\ep}^2
 			-\frac14 \pd_{xx}^2 \sigma_\ep^2}
 			\,\abs{\pd_x u_\ep}^2 \,\d x \,\d t'
 			\\ &\qquad \qquad 
			+\int_s^t \int_\T \pd_x \sigma_\ep\bk{ u^2 
 			-\abs{\pd_x u_\ep}^2}\,\d x \, \d W, 
 			\quad \text{$\tilde{\mathbb{P}}$--almost surely}.
 		\end{aligned}
	\end{equation}
	Furthermore, there exists an 
	$\ep$-independent positive constant
	$$
	C=C\bk{p_0,T,\norm{\sigma}_{W^{2,\infty}(\T)},
	\norm{u_0}_{L^{p_0}(\Omega;H^1(\T))}}
	$$ 
	such that
	\begin{equation}\label{eq:energybound}
		\begin{split}
			& \Ex \norm{u_{\ep}}_{L^\infty([0,T];H^1(\T))}^{p_0}
			\leq C \quad \text{and} \quad
			\\ & \quad
			\Ex \abs{2\eps \int_0^T\int_{\T}
			\abs{\pd_x u_{\ep}}^2
			+\abs{\pd_{xx}^2 u_{\ep}}^2
			\,\d x\, \d t}^{\frac{p_0}{2}}\le C.
		\end{split}
	\end{equation}

	\item[(ii)] For any $\theta \in 
	\bigl[0,\frac{p-2}{4p}\bigr)$, $p\in [2,p_0]$, 
	there exists an $\ep$-independent constant 
	$C=C\bk{\theta,T,\norm{\sigma}_{W^{2,\infty}(\T)},
	\norm{u_0}_{L^2(\Omega;H^1(\T))}}>0$ 
	such that
	\begin{equation}\label{eq:Holder-viscous}
		\Ex \norm{u_{\ep}}_{C^\theta([0,T];
		L^2(\T))}^{2/(1-4 \theta)} \le C.
	\end{equation}

	\item[(iii)] The laws of $\left\{u_\eps\right\}_{\eps>0}$ 
	form a (uniformly in $\eps$) tight sequence of 
	probability measures on the space 
	$C\bigl([0,T];H^1(\T)-w\bigr)$.
\end{itemize}
\end{thm}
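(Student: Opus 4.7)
The strong well-posedness is established in the companion paper \cite{HKP-viscous}; I sketch how the bounds (i)-(iii) arise from the SPDE \eqref{eq:u_ch_ep}. For the energy identity \eqref{eq:energybalance}, I would apply It\^o's chain rule to the functional $u\mapsto \int_\T u^2\,\d x$ for \eqref{eq:u_ch_ep} and to $q\mapsto \int_\T q^2\,\d x$ for the derived SPDE \eqref{eq:intro-qest-tmp} satisfied by $q_\eps=\pd_x u_\eps$, and then sum. After integration by parts on $\T$, the cubic terms in $q_\eps$---originating from $\int u_\eps^2 \pd_x u_\eps\,\d x$ in the $u$-equation and from $\int \bigl(\pd_x(u_\eps q_\eps)-\tfrac12 q_\eps^2\bigr)q_\eps\,\d x$ in the $q$-equation---assemble into perfect divergences and vanish. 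The It\^o corrections from the transport noise combine with the Stratonovich--It\^o drift $-\tfrac12\sigma_\eps\pd_x(\sigma_\eps\pd_x u_\eps)$ (and its $q$-analogue) to produce exactly the $\sigma_\eps$-dependent right-hand side of \eqref{eq:energybalance}. The moment bound \eqref{eq:energybound} then follows by raising to the $p_0/2$-th power, taking $\sup_{t\in[0,T]}$ and $\Ex$, applying BDG to the martingale term, absorbing the stochastic contribution via Young's inequality into the left-hand side, and closing via Gr\"onwall.

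For (ii), I would rewrite the SPDE in the dual space $H^{-1}(\T)$, where every drift term is $\eps$-uniformly controlled by $C\bigl(1+\norm{u_\eps}_{H^1(\T)}^2\bigr)$: indeed $\norm{u_\eps\pd_x u_\eps}_{H^{-1}}\lesssim\norm{u_\eps^2}_{L^2}\lesssim\norm{u_\eps}_{H^1}^2$; $\norm{\pd_x P_\eps}_{H^{-1}}\le\norm{P_\eps}_{L^2}\lesssim\norm{u_\eps}_{H^1}^2$ (since $K\in L^\infty(\T)$); $\eps\norm{\pd_{xx}^2 u_\eps}_{H^{-1}}\lesssim\eps\norm{u_\eps}_{H^1}$; and a duality/integration-by-parts argument handles $\sigma_\eps\pd_x(\sigma_\eps\pd_x u_\eps)$. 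The stochastic integrand $\sigma_\eps\pd_x u_\eps$ is controlled in $L^2(\T)$ by $\norm{\sigma_\eps}_{L^\infty}\norm{u_\eps}_{H^1}$. Combining these with BDG and \eqref{eq:energybound} yields
\begin{equation*}
\Ex\norm{u_\eps(t)-u_\eps(s)}_{H^{-1}(\T)}^p\lesssim \abs{t-s}^{p/2},\quad p\in[2,p_0].
\end{equation*}
Kolmogorov's continuity theorem then supplies a H\"older modulus of exponent arbitrarily close to $1/2-1/p$ in $H^{-1}(\T)$ together with the corresponding moment bound. Finally, interpolating with the $L^\infty_t H^1_x$-control from (i) via $\norm{v}_{L^2}\le\norm{v}_{H^{-1}}^{1/2}\norm{v}_{H^1}^{1/2}$ delivers \eqref{eq:Holder-viscous} with H\"older exponent $\theta<(p-2)/(4p)$ and moment exponent $2/(1-4\theta)$.

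For part (iii), I would invoke a standard tightness criterion for $C([0,T];H^1(\T)-w)$ (see \cite{Brzezniak:2013ab,Ondrejat:2010aa}): tightness holds provided the $H^1$-norms are uniformly bounded in probability---supplied by \eqref{eq:energybound} and Chebyshev's inequality---and the sample paths are equicontinuous in a weaker norm in probability---supplied by the H\"older continuity in $L^2(\T)$ from (ii). Separability of $H^1(\T)$ ensures that bounded balls in the weak topology are metrizable and compact, completing the argument. The principal obstacle is the derivation of (i): the precise matching of the cubic $q_\eps$-terms between the two It\^o expansions, together with the careful bookkeeping of the Stratonovich--It\^o correction and the transport-noise quadratic variation, requires dedicated work. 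Once (i) is secured, (ii) and (iii) are relatively routine, modulo the key observation that one should work in $H^{-1}(\T)$ rather than $L^2(\T)$ in (ii) so as to circumvent the viscous and Stratonovich--It\^o drift terms whose $L^2_x$ norms are not uniformly bounded in $\eps$.
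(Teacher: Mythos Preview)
The paper does not itself prove Theorem~\ref{thm:bounds1}; it explicitly states that this theorem ``gathers the main results from \cite{HKP-viscous}'' and provides no proof here. You correctly acknowledge this and sketch the expected argument. Your outline for (i) matches the informal description given in the paper's introduction around \eqref{eq:intro-visc-energy}: apply It\^o to $\frac12\int u_\eps^2\,\d x$ and $\frac12\int q_\eps^2\,\d x$, sum, and exploit the cancellation of the cubic $q_\eps$ terms; the moment bound \eqref{eq:energybound} then follows by BDG and Gr\"onwall. Your $H^{-1}$-plus-interpolation route for (ii) and the Ascoli-type tightness argument for (iii) are standard and plausible.

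One point to watch in your sketch of (ii): the drift contains the quadratically nonlinear terms $u_\eps\pd_x u_\eps$ and $\pd_x P_\eps$, whose $H^{-1}$ norms are controlled by $\norm{u_\eps}_{H^1}^2$ rather than $\norm{u_\eps}_{H^1}$. Consequently, bounding $\Ex\bigl\|\int_s^t(\text{drift})\,\d t'\bigr\|_{H^{-1}}^p$ requires a $2p$-th moment of $\norm{u_\eps}_{L^\infty_t H^1_x}$, which via \eqref{eq:energybound} forces $2p\le p_0$, i.e., $p\le p_0/2$. This yields a slightly smaller range of $\theta$ than the stated $\theta<(p-2)/(4p)$ with $p\in[2,p_0]$. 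To recover the full range you would need either to argue more carefully with the drift (e.g., treat the Lebesgue time integral as $O(|t-s|)$ rather than $O(|t-s|^{1/2})$ and balance moments accordingly) or consult the precise bookkeeping in \cite{HKP-viscous}. Apart from this exponent accounting, your proposal is sound and aligned with the approach the paper signals.
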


Finally, we define the solution concept 
used in Theorem \ref{thm:main} for 
the stochastic CH equation \eqref{eq:u_ch}. 

\begin{defin}[dissipative weak martingale 
solution]\label{def:dissp_sol}
Let $\Lambda$ be a probability measure 
on $H^{1}(\T)$ with finite $p_0$th 
moment for some $p_0>4$, i.e., 
$$
\int_{H^{1}(\T)} \norm{v}_{H^{1}(\T)}^{p_0}
\Lambda(\d v) < \infty.
$$
The triple $\bigl(\mathcal{S},u,W \bigr)$ 
is a dissipative weak martingale 
solution to the stochastic CH equation 
\eqref{eq:u_ch} with initial 
distribution $\Lambda$ if:

\begin{itemize}
	\item[(a)] $\mathcal{S}=\bigl(\Omega, 
	\mathcal{F},\{\mathcal{F}_t\}_{t \ge 0},
	\mathbb{P}\bigr)$ is a stochastic basis;

	\item[(b)] $W$ is a standard Wiener process on $\cS$;

	\item[(c)] $u:\Omega\times [0,T]\to L^2(\T)$ is 
	a progressively measurable stochastic process 
	with paths $u(\omega)\in C([0,T];L^2(\T)) 
	\cap C([0,T];H^1(\T)-w)$, 
	for $\mathbb{P}$-a.e.~$\omega\in \Omega$. 
	Moreover, $u$ belongs to the space 
	$L^2\bigl(\Omega;L^\infty([0,T];H^1(\T))\bigr)$;

	\item[(d)] initial data --- $\bk{u(0)}_*\mathbb{P}=\Lambda $;

	\item[(e)] the following equation holds in the sense 
	of It\^o, $\mathbb{P}$-almost surely, for 
	all $t \in [0,T]$ and for all $\varphi \in C^2(\T)$,
	\begin{equation}\label{eq:u_weakeq}
		\begin{aligned}
			& \d \int_\T u\varphi \,\d x 
			= \int_\T \left[\frac12 u^2 + P\right] \pd_x \varphi
			\,\d x\, \d t \\
			& \qquad +\frac12  \int_\T  
			u \pd_x\bk{\pd_x \bk{\sigma \varphi}\sigma}\,\d x \,\d t 
			+ \int_\T u \pd_x\bk{\sigma \varphi}\,\d x \, \d W ,
			\\ & 
			P = P[u] := K*\left(u^2
			+\frac{1}{2} \abs{\pd_x u}^2\right);
		\end{aligned}
	\end{equation}

	\item[(f)] temporal right-continuity 
	in $H^1(\T)$ --- for a.e.~$(\omega,t_0) 
	\in\Omega \times [0,T]$,
	$$
	\lim_{t \downarrow t_0} 
	\norm{u(t) - u(t_0)}_{H^1(\T)} = 0.
	$$
\end{itemize}
\end{defin}

At a time $t=t_0$ of wave breaking, a dissipative 
solution $u$ is not going to be time-continuous 
in $H^1$, but merely right-continuous.  
The right-continuity condition (f) in 
Definition \ref{def:dissp_sol} manifests 
the energy inequality \eqref{eq:energybalance-sch} 
and the dissipative nature of the considered solution class. 

Currently, no pathwise uniqueness result is known 
for the stochastic CH equation \eqref{eq:u_ch_ep}. 
As a result, we cannot rely on the 
Yamada--Watanabe principle to upgrade 
martingale solutions to strong solutions. 

\section{Some a-priori estimates}\label{sec:apriori}

Recall that $u_\eps$ denotes the $H^m$ regular solution 
of the viscous SPDE \eqref{eq:u_ch_ep} with 
initial data $u(0)=u_0$, whose existence, uniqueness 
and basic properties are given by Theorem \ref{thm:bounds1}, 
under the assumptions that $\sigma \in W^{2,\infty}$ 
and $u_0 \in L^{p_0}_{\omega}H^m_x$ for some $p_0>4$.  
This section collects some straightforward 
consequences of the $\eps$-uniform bounds 
listed in Theorem \ref{thm:bounds1}, 
which will be used in Section \ref{sec:wk_limits_section}. 

\begin{lem}[basic estimates]\label{thm:P-u2_bound}
Let $u_\eps$ be the $H^m$ regular solution 
of the viscous SPDE \eqref{eq:u_ch_ep} with 
initial data $u(0)=u_0$ satisfying 
\eqref{eq:u0-cond-Hmp0} (for an arbitrary $m>1$). 
There exists a constant 
$$
C=C\bk{T,\norm{\sigma}_{W^{2,\infty}(\T)},
\norm{u_0}_{L^{p_0}(\Omega;H^1(\T))}},
$$ 
independent of $\ep>0$, such that
\begin{align*}
	\Ex \norm{u_\ep}_{L^\infty([0,T]\times \T)}^{p_0}
	\leq C, \quad
	\Ex \norm{P_\eps}_{L^\infty([0,T]\times \T)}^p\leq C,
\end{align*}
for any $p\in \bigl[1,p_0/2\bigr]$, where $P_\ep=P[u_\ep]$ is 
defined in \eqref{eq:u_ch_ep}.
\end{lem}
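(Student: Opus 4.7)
The plan is to deduce both bounds directly from the $H^1$ energy estimate \eqref{eq:energybound} of Theorem~\ref{thm:bounds1} by exploiting the one-dimensional Sobolev embedding $H^1(\T) \hookrightarrow L^\infty(\T)$ together with Young's convolution inequality applied to the nonlocal operator $K*(\cdot)$. Nothing new about the SPDE is needed; only the structure of $P[u]$ matters.

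For the first bound, I would invoke the continuous embedding $H^1(\T) \hookrightarrow L^\infty(\T)$, which yields a constant $C_S>0$ such that $\norm{u_\ep(t,\cdot)}_{L^\infty(\T)} \leq C_S \norm{u_\ep(t,\cdot)}_{H^1(\T)}$ for every $t\in[0,T]$, $\omega\in\Omega$. Taking the supremum in $t$ and then the $p_0$-th moment gives
$$
\Ex \norm{u_\ep}_{L^\infty([0,T]\times\T)}^{p_0}
\leq C_S^{p_0}\,\Ex \norm{u_\ep}_{L^\infty([0,T];H^1(\T))}^{p_0},
$$
and the right-hand side is bounded by \eqref{eq:energybound}.

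For the second bound, I would exploit that $K\in L^\infty(\T)$ (with $\norm{K}_{L^\infty}\leq \coth(\pi)/2$) so that Young's inequality gives, pointwise in $(\omega,t)$,
$$
\norm{P_\ep(t,\cdot)}_{L^\infty(\T)}
\leq \norm{K}_{L^\infty(\T)} \Bigl\| u_\ep^2 + \tfrac12 \abs{\pd_x u_\ep}^2\Bigr\|_{L^1(\T)}
\leq \norm{K}_{L^\infty(\T)} \,\norm{u_\ep(t,\cdot)}_{H^1(\T)}^2.
$$
Taking the supremum over $t\in[0,T]$ and then the $p$-th moment yields
$$
\Ex \norm{P_\ep}_{L^\infty([0,T]\times\T)}^{p}
\lesssim \Ex \norm{u_\ep}_{L^\infty([0,T];H^1(\T))}^{2p},
$$
which is finite and $\ep$-independent provided $2p\leq p_0$, i.e.\ $p\in[1,p_0/2]$; this is where the restriction on $p$ originates. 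There is no real obstacle here: the argument is a two-line consequence of Theorem~\ref{thm:bounds1}(i), the Sobolev embedding in dimension one, and the fact that $K$ is bounded on $\T$. The only care needed is to track the powers so that the final exponent on $\norm{u_\ep}_{L^\infty_t H^1_x}$ does not exceed $p_0$.
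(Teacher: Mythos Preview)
Your proposal is correct and matches the paper's proof essentially line for line: both use the one-dimensional embedding $H^1(\T)\hookrightarrow L^\infty(\T)$ together with \eqref{eq:energybound} for the first bound, and the boundedness of $K$ (equivalently, Young's inequality) plus \eqref{eq:energybound} for the second, with the constraint $2p\le p_0$ arising exactly as you identify.
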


\begin{proof}
The first part is a direct consequence of 
the $L^{p_0}_\omega L^\infty_tH^1_x$ 
bound \eqref{eq:energybound}
and the one-dimensional embedding 
$H^1(\T) \hookrightarrow L^\infty (\T)$. 
By the definition of $P_\eps$ and because 
$\abs{K(x)}\lesssim 1$ for all $x\in \T$,
\begin{align*}
	\abs{P_\eps(\omega,t,x)}^p
	& \lesssim \norm{u_\eps (\omega,t,\cdot)}_{L^2(\T)}^{2p}
	+\norm{\pd_x u_\eps (\omega,t,\cdot)}_{L^2(\T)}^{2p}
	\\ & \lesssim \norm{u_\eps(\omega,\cdot,
	\cdot)}_{L^\infty([0,T];H^1(\T))}^{2p}.
\end{align*}
The second estimate now follows by 
taking the expectation and again 
using \eqref{eq:energybound}, recalling 
the assumption $p\leq p_0/2$.
\end{proof}

Consider any function 
$S\in W^{2,\infty}_{\operatorname{loc}}(\R)$ that satisfies
\begin{equation}\label{eq:entropies}
	\begin{aligned}
		& \abs{S(v)}\lesssim \abs{v}^2, 
		\quad 
		\abs{S'(v)}\lesssim \abs{v}, 
		\quad
		\abs{S''}\lesssim 1, 
		\\ & 
		\quad \text{and} \quad 
		\abs{S(v)v-\frac12 S'(v)v^2}\lesssim \abs{v}^2, 
		\quad \forall v\in \R.
	\end{aligned}
\end{equation}
The goal is to compute the differential $\d S(q_\ep)$,
recalling that $q_\eps=\pd_x u_\eps$ 
is the spatial gradient of $u_\eps$.
This requires us to apply the It\^o formula to
\eqref{eq:intro-qest-tmp}. However, $q_\ep$
is known to have continuous paths only in the
infinite dimensional space $L^2(\T)$, and the
equation involves terms such as $q_\ep^2$ which
brings it outside the scope where standard
Hilbert space-valued It\^o formulas apply.
Instead, we convolve \eqref{eq:u_ch_ep} by taking
$\varphi$ in \eqref{eq:u_ep_weakeq} to be a spatial Friedrichs
mollifier $J_\delta$. This gives us the equation for
$u_{\ep,\delta} = u_\ep *J_\delta$, with $u_{\ep,\delta}$ continuous 
in $t$ for each fixed $x$.  Taking a classical
spatial derivative gives us an equation for $q_{\ep, \delta} = q_\ep *J_\delta$,
which is \eqref{eq:intro-qest-tmp} mollified against $J_\delta$. 
These equations can be interpreted pointwise in $x$, and 
the real-valued It\^o formula can be applied for each fixed $x$.

The It\^o formula is classically stated for $C^2$ nonlinearities,
but it can be extended by approximation to functions in $W^{2,\infty}$
\cite[Theorem 71]{Protter-book:2005} (and, in fact, to even
rougher functions in some cases, like in the Tanaka formula).
Throughout the paper, we will be applying the It\^{o} formula
to nonlinearities $S$ from the $W^{2,\infty}$ class 
satisfying \eqref{eq:entropies}. 

The entire argument, including taking the mollification 
limit ($\delta \downarrow 0$), is executed for a similar equation 
across \eqref{eq:mollifying_example-q} -- \eqref{eq:S_equation_limit} 
in the forthcoming Lemma \ref{thm:limiteq_2}.  That argument 
can be applied here to any nonlinear function $S$ satisfying 
\eqref{eq:entropies}, recalling that $q_\ep$ is 
more regular (integrable) than the solution in 
Lemma \ref{thm:limiteq_2}. For a fixed $\ep>0$, we have 
$q_\ep \in L^2([0,T];H^{m -1}(\T))$, a.s., for any finite $m$. 
The only terms here not present in Lemma \ref{thm:limiteq_2} 
are $\ep S'(q_{\ep,\delta}) \pd_{xx}^2 q_{\ep,\delta}$
and $S'(q_{\ep,\delta}) P_\ep*J_\delta$. In view of 
the regularity of $q_\ep$, we have $S'(q_{\ep,\delta}) 
\to S'(q_{\ep})$, $\pd_{xx}^2 q_{\ep, \delta}
\to \pd_{xx}^2 q_{\ep}$ a.s.~in $L^2([0,T]\times \T)$.
A similar reasoning applies to the other term: since $P_\ep$ 
belongs a.s.~to $L^\infty([0,T]\times \T)$, 
it follows that the convolution $P_\ep*J_\ep$ 
converges to $P_\ep$ a.s.~in $L^2([0,T] \times \T)$.
Apart from these terms, the steps are the same,
and we will not repeat them here.  Similar arguments are 
also carried out for $\d \bk{u_\ep^2 + q_\ep^2}$ 
and for the squared-difference of two 
solutions in \cite[Theorem 7.6, Lemma 7.7]{Holden:2021vw} 
(see also Lemma \ref{thm:cauchy_CtH1_2}).

This argument, which combines mollification 
with the real-valued It\^o formula, leads us to the SPDE
\begin{equation}\label{eq:SPDE-S(qe)-tmp1}
	\begin{split}
		0 & = \d S(q_{\ep})
		+\left(S'(q_{\ep})\pd_x \bk{u_{\ep} \, q_{\ep}}
		-\frac12 S'(q_{\ep})q_{\ep}^2 \right)\, \d t
		+ S'(q_{\ep}) \bk{P_\eps-u_{\ep}^2}\, \d t  
		\\ & \qquad
		-\ep S'(q_{\ep})\pd_{xx}^2 q_{\ep}\,\d t
		-\frac12 S'(q_{\ep})\pd_x \bk{\sigma_\ep\,
		\pd_x\bk{\sigma_\ep\,q _{\ep}}} \,\d t
		\\ & \qquad 
		+S'(q_{\ep})\pd_x\bk{ \sigma_\ep \, q_{\ep}}\,\d W
		-\frac12 S''(q_{\ep})\,
		\bigl|\pd_x\bk{\sigma_\ep \, q_{\ep}}\bigr|^2\,\d t.
	\end{split}	
\end{equation}
In this paper, we make  
repeated use of the following identities:
\begin{equation}\label{eq:basic-identities}
	\begin{split}
		S'(q_{\ep})\pd_{xx}^2 q_{\ep} & =\pd_{xx}^2 S(q_\ep)
		-S''(q_{\ep}) \abs{\pd_x q_\ep}^2,
		\\ 
		S'(q_{\ep})\pd_x \bigl(u_\ep q_\ep\bigr)
		& =\pd_x \bigl(u_\ep S(q_\ep)\bigr)
		-\bigl(S(q_\ep)-S'(q_\ep)q_\ep\bigr)\pd_x u_\ep,
		\\ 
		S'(q_{\ep})\pd_x \bigl(\sigma_\ep q_\ep\bigr)
		& =\pd_x \bigl(\sigma_\ep S(q_\ep)\bigr)
		-\bigl(S(q_\ep)-S'(q_\ep)q_\ep\bigr)\pd_x \sigma_\ep,
		\\ 
		S'(q_{\ep})\pd_x\bigl(\sigma_\ep 
		\pd_x(\sigma_\ep q_{\ep})\bigr)
		& =\pd_{xx}^2\bigl(\sigma_\ep^2 S(q_{\ep}) \bigr)
		-\pd_x\left(\frac12\pd_x \sigma_\ep^2 
		\bigl(3S(q_{\ep})-2S'(q_\ep)q_\ep\bigr)\right)
		\\ & \qquad 
		+ \frac12\pd_{xx}^2\sigma_\ep^2
		\bigl(S(q_{\ep})-S'(q_{\ep}) q_{\ep}\bigr) 
		\\ & \qquad 
		-S''(q_\ep)\left(\abs{\pd_x \bigl(\sigma_\ep\, q_\ep\bigr)}^2
		-\abs{\pd_x \sigma_\ep\, q_\ep}^2\right).
	\end{split}
\end{equation}
Inserting \eqref{eq:basic-identities} into 
\eqref{eq:SPDE-S(qe)-tmp1}, we obtain
\begin{equation}\label{eq:SPDE-S(qe)}
	\begin{split}
		0 & = \d S(q_{\ep})
		+\pd_x \left[u_\ep\, S(q_\ep)
		+\frac14 \pd_x \sigma_\ep^2 
		\bigl(3S(q_{\ep})-2S'(q_\ep)q_\ep\bigr) \right]\, \d t
		\\ & \qquad 
		-\pd_{xx}^2\left[ \left (\frac12\sigma_\ep^2
		+\eps\right)S(q_\ep)\right]\, \d t
		+S''(q_{\ep})\, \ep\abs{\pd_x q_\ep}^2 \, \d t
		\\ & \qquad +\Biggl[ S'(q_{\ep}) \bk{P_\eps-u_{\ep}^2}
		-\left(S(q_{\ep})q_{\ep}-\frac12 S'(q_{\ep})q_{\ep}^2
		\right)
		\\ & \qquad \qquad 
		-\frac14 \pd_{xx}^2\sigma_\ep^2 
		\bigl(S(q_{\ep})-S'(q_{\ep}) q_{\ep}\bigr)
		-\frac12 \abs{\pd_x \sigma_\ep}^2
		\, S''(q_\ep) \,q_\ep^2\Biggr] \, \d t
		\\ & \qquad
		+\Bigl[\pd_x \bigl(\sigma_\ep \, S(q_\ep)\bigr)
		-\pd_x\sigma_\ep \, \bigl(S(q_\ep)-S'(q_\ep)q_\ep\bigr)
		\Bigr]\, \d W.
	\end{split}	
\end{equation}
noticing the cancellation of the two terms 
involving $\bigl|\pd_x\bk{\sigma_\ep \, q_{\ep}}\bigr|^2$. 

For use in upcoming sections, let us also state 
the SPDE satisfied by $S(u_e)$:
\begin{equation}\label{eq:SPDE-S(ue)}
	\begin{split}
		0 & = \d S(u_\ep)+\pd_x\left[u_\ep \, S(u_\ep)
		+S'(u_\ep)P_\eps-\int^{u_\ep}S(\xi)\, \d\xi
		+ \frac14 \pd_x\sigma_\ep^2S(u_{\ep})\right] \, \d t
		\\ & \qquad 
		-\pd_{xx}^2\left[ \left (\frac12\sigma_\ep^2
		+\eps\right)S(u_\ep)\right]\, \d t
		+S''(u_\ep)\, \ep \abs{\pd_x u_\ep}^2\, \d t
		\\ & \qquad
		+\left[\frac14\pd_{xx}^2 \sigma_\ep^2 \bigl(S(u_{\ep})
		-S'(u_{\ep}) u_{\ep}\bigr)
		- S''(u_\ep)\, q_\ep P_\ep\right]\, \d t
		\\ & \qquad
		+\Bigl[\pd_x \bigl(\sigma_\ep S(u_\ep)\bigr)
		- \pd_x \sigma_\ep \, S(u_\ep) \Bigr] \, \d W. 
	\end{split}
\end{equation}
This equation can be derived as before, using 
\eqref{eq:basic-identities} with $q_\ep$ replaced by 
$u_\ep$, rewriting the last identity \eqref{eq:basic-identities} as
\begin{equation*}
	\begin{split}
		S'(u_{\ep})\pd_x\bigl(\sigma_\ep \, 
		\pd_x(\sigma_\ep\, u_{\ep})\bigr)
		& = \pd_{xx}^2\bigl(\sigma_\ep^2 S(u_{\ep}) \bigr)
		-\pd_x\left(\frac12 \pd_x \sigma_\ep^2S(q_{\ep})\right)
		\\ & \quad 
		-\frac12 \pd_{xx}^2 \sigma_\ep^2
		\bigl(S(u_{\ep})-S'(u_{\ep}) u_{\ep}\bigr)
		-S''(u_\ep)\abs{\sigma_\ep \pd_x u_\ep}^2.
	\end{split}	
\end{equation*}
The SPDE \eqref{eq:intro-visc-energy} for the 
total energy balance follows from \eqref{eq:SPDE-S(qe)} 
and \eqref{eq:SPDE-S(ue)}.

\medskip

We are now in a position to derive a higher integrability 
property of $q_\eps=\pd_x u_\ep$. This property will ensure that the 
weak limit $\overline{q^2}$ in \eqref{eq:intro-weak-conv} 
does not concentrate into a measure but remains 
(at least) in $L^1_{\omega,t,x}$.

\begin{prop}[higher integrability]
\label{thm:2plusalpha_apriori}
Let $u_\eps$ be the $H^m$ regular solution 
of the viscous SPDE \eqref{eq:u_ch_ep} with 
initial data $u(0)=u_0$ satisfying 
\eqref{eq:u0-cond-Hmp0} (for an arbitrary $m>1$), 
and denote by $q_\ep=\pd_x u_\eps$ the spatial 
gradient of $u_\ep$. For fixed $\alpha\in (0,1)$, there 
exists a constant
$$
C=C\bk{\alpha,T,\norm{\sigma}_{W^{2,\infty}(\T)},
\norm{u_0}_{L^{p_0}(\Omega;H^1(\T))}},
$$ 
independent of $\ep>0$, such that
\begin{equation}\label{eq:higher-int}
	\Ex \norm{q_\ep}_{L^{2+\alpha}
	([0,T]\times \T)}^{2 + \alpha}\leq C.
\end{equation}
\end{prop}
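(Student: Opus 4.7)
The plan is to apply a renormalized It\^o--chain-rule argument to the $q_\ep$-equation \eqref{eq:intro-qest-tmp} with a carefully chosen \emph{odd} nonlinearity $\Phi(v) = \frac{v|v|^\alpha}{1+\alpha}$, whose derivative is $\Phi'(v) = |v|^\alpha$. This specific choice is designed so that, after integrating by parts in $x$ (using $q_\ep = \pd_x u_\ep$), the transport-plus-source combination yields
\[
\int_\T \Bigl(\Phi(q_\ep)\, q_\ep - \tfrac12 \Phi'(q_\ep)\, q_\ep^2\Bigr)\,\d x = \frac{1-\alpha}{2(1+\alpha)}\int_\T |q_\ep|^{2+\alpha}\,\d x,
\]
whose coefficient is strictly positive precisely because $\alpha\in (0,1)$. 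This delivers the desired $L^{2+\alpha}_{\omega,t,x}$ control on $q_\ep$.

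Since $\Phi \notin C^2$ at the origin, I would first replace $\Phi$ by the regularization $\Phi_\delta(v) = \frac{v(v^2+\delta)^{\alpha/2}}{1+\alpha} \in C^2(\R)$ and apply the pathwise real-valued It\^o formula to $\Phi_\delta(q_\ep)$ via the mollification template that precedes \eqref{eq:SPDE-S(qe)}, relying on $q_\ep \in L^2_t H^{m-1}_x$ a.s.\ from Theorem \ref{thm:bounds1}. The algebraic identities \eqref{eq:basic-identities} remain valid with $S = \Phi_\delta$ (their derivation does not invoke the quadratic growth imposed in \eqref{eq:entropies}), so the resulting SPDE for $\Phi_\delta(q_\ep)$ has precisely the structural form \eqref{eq:SPDE-S(qe)}. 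Integrating in $x$ (killing the divergence-form transport and viscous terms), integrating in $t$ from $0$ to $T$, and taking expectation (which kills the Wiener integral thanks to $|\Phi_\delta'(v)|\lesssim |v|^\alpha \lesssim 1 + v^2$, $\sigma_\ep \in W^{1,\infty}$, and \eqref{eq:energybound}), I arrive at
\[
\tfrac{1-\alpha}{2(1+\alpha)}\,\Ex\int_0^T\!\!\int_\T |q_\ep|^{2+\alpha}\,\d x\,\d t \le \text{boundary} + \text{pressure} + \text{noise correction} + \text{viscous},
\]
where each term on the right is bounded uniformly in $\ep$ and $\delta$ using Theorem \ref{thm:bounds1}, Lemma \ref{thm:P-u2_bound}, and the elementary inequality $|v|^\alpha \lesssim 1 + v^2$.

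The main obstacle is the It\^o correction $\tfrac12 \Phi_\delta''(q_\ep)|\pd_x(\sigma_\ep q_\ep)|^2$ arising from the nonlinear quadratic variation: it is signed (because $\Phi_\delta$ is odd, hence not globally convex) and its size \emph{a priori} diverges as $\delta\downarrow 0$ due to the $|q_\ep|^{\alpha-1}$ singularity of $\Phi''$. I would manage this by combining it with the Stratonovich--It\^o drift $-\tfrac12\Phi_\delta'(q_\ep)\pd_x\bigl(\sigma_\ep\,\pd_x(\sigma_\ep q_\ep)\bigr)$ via the last (purely algebraic) identity in \eqref{eq:basic-identities}, exhibiting a cancellation that leaves only the regular residuals $\tfrac14\pd_{xx}^2\sigma_\ep^2 \,(\Phi_\delta(q_\ep) - q_\ep\Phi_\delta'(q_\ep))$ and $\tfrac12 |\pd_x\sigma_\ep|^2 \Phi_\delta''(q_\ep)\, q_\ep^2$, both bounded by $C|q_\ep|^{1+\alpha}\lesssim 1 + q_\ep^2$ uniformly in $\delta$. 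A similar argument handles the viscous correction $\ep\, \Phi_\delta''(q_\ep)|\pd_x q_\ep|^2$, controlled for fixed $\delta>0$ through $\|\Phi_\delta''\|_{L^\infty}\cdot \ep\,\Ex\int_0^T\!\int |\pd_x q_\ep|^2\,\d x\,\d t\lesssim 1$ from \eqref{eq:energybound}. Finally, sending $\delta \downarrow 0$ via Fatou on the left and dominated convergence on the right yields \eqref{eq:higher-int}.
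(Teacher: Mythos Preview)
Your overall strategy---apply the It\^o chain rule to an odd sublinear nonlinearity so that the combination $S(q_\ep)q_\ep-\tfrac12 S'(q_\ep)q_\ep^2$ produces $|q_\ep|^{2+\alpha}$---is exactly the right idea, and all terms except one are handled correctly. The gap is in the viscous term $\ep\,\Phi_\delta''(q_\ep)\,|\pd_x q_\ep|^2$. Your bound $\norm{\Phi_\delta''}_{L^\infty}\cdot \ep\,\Ex\int_0^T\!\int_\T|\pd_x q_\ep|^2\,\d x\,\d t$ is not uniform in $\delta$: a direct computation gives
\[
\Phi_\delta''(v)=\frac{\alpha\, v\,(v^2+\delta)^{\alpha/2-2}\bigl[(1+\alpha)v^2+3\delta\bigr]}{1+\alpha},
\]
so $\norm{\Phi_\delta''}_{L^\infty}\sim \delta^{(\alpha-1)/2}\to\infty$ as $\delta\downarrow 0$ (the supremum is attained near $|v|\sim\sqrt\delta$). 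Since $\Phi_\delta$ is odd, $\Phi_\delta''$ changes sign at the origin, so the term cannot be discarded by a sign argument; and unlike the Stratonovich--It\^o pairing there is no companion drift to cancel it (the first identity in \eqref{eq:basic-identities} leaves precisely $S''(q_\ep)\,\ep|\pd_x q_\ep|^2$ after integration over $\T$). Hence your ``dominated convergence on the right'' step fails for this term: the only available $\delta$-uniform dominant is $C|q_\ep|^{\alpha-1}\ep|\pd_x q_\ep|^2$, whose $(\omega,t,x)$-integral is not controlled by \eqref{eq:energybound}.

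The paper sidesteps this by choosing $S(v)=v(|v|+1)^\alpha$ rather than $v|v|^\alpha/(1+\alpha)$. The ``$+1$'' regularises the origin, and because $\alpha<1$ one has
\[
|S''(v)|=\alpha(|v|+1)^{\alpha-2}\bigl(2+(\alpha+1)|v|\bigr)\le 2\alpha
\quad\text{for all }v\in\R,
\]
so the viscous term is bounded directly by $\norm{S''}_{L^\infty}\cdot\ep\,\Ex\int|\pd_x q_\ep|^2\lesssim 1$ via \eqref{eq:energybound}, uniformly in $\ep$, with no auxiliary $\delta$-limit. One still has $S(v)v-\tfrac12 S'(v)v^2\ge\tfrac{1-\alpha}{2}|v|^{2+\alpha}$, so the coercive term is unaffected. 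Your argument is repaired precisely by replacing $\Phi$ with any odd $S$ such that $S''\in L^\infty(\R)$ and $S(v)v-\tfrac12 S'(v)v^2\gtrsim|v|^{2+\alpha}$.
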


\begin{proof}
Consider the function 
$S(v):= v\bk{\abs{v}+1}^{\alpha}$, which satisfies
\begin{align*}
	S'(v) &= \bk{\abs{v}+1}^{\alpha}
	+\alpha \abs{v} \bk{\abs{v}+1}^{\alpha - 1},
	\intertext{and} 
	S''(v) &= \alpha  \,\sgn(v) 
	\bk{\abs{v}+1}^{\alpha-2}
	\bk{2+\bk{ \alpha + 1} \abs{v}}.
\end{align*}
Clearly, $\abs{S''(v)} \le C$ for all $v\in \R$.

Integrating the SPDE {\eqref{eq:SPDE-S(qe)}} 
for $S(q_\ep)$ over $x\in \R$ gives
\begin{equation}\label{eq:2alpha_1}
	L\, \d t  = \d \int_\T S(q_{\ep}) \,\d x
	+\bk{I_1 + I_2} \,\d t+I_3\,\d W,
\end{equation}
where
\begin{align*}
	L & =\int_\T \left(S(q_{\ep})q_{\ep}
	-\frac12 S'(q_{\ep})q_{\ep}^2\right)\,\d x,
	\\ 
	I_1 & = \int_{\T} S'(q_{\ep}) \bk{P_\eps-u_{\ep}^2}
	+S''(q_{\ep})\, \ep\abs{\pd_x q_\ep}^2 \, \d x,
	\\ 
	I_2 & = -\int_\T \bigl(S(q_{\ep})-S'(q_{\ep}) q_{\ep}\bigr)
	\pd_x \bigl(\sigma_\ep \, \pd_x \sigma_\ep\bigr)
	+\frac12 S''(q_\ep)\abs{\pd_x \sigma_\ep\, q_\ep}^2\, \d x,
	\\ 
	I_3 & = -\int_\T \bigl(S(q_\ep)-S'(q_\ep)q_\ep\bigr)
	\pd_x\sigma_\ep\, \d x.
\end{align*}

One can verify that 
$I_3 \in L^2(\Omega\times [0,T])$, so 
that $\Ex \int_0^t I_3 \,\d W=0$. Let us argue 
in some more detail for the square-integrability of $I_3$, 
observing first that
\begin{equation}\label{eq:higher-int-tmp1}
	\abs{S(v)-S'(v)v}= \abs{\alpha v \abs{v}
	\bk{\abs{v}+1}^{\alpha-1}}
	\lesssim_\alpha 1+\abs{v}^{1+\alpha}
	\lesssim 1+\abs{v}^2,
\end{equation}
so that
\begin{align*}
	&\Ex \int_0^T \abs{\int_{\T} I_3 \, \d x}^2\, \d t
	\lesssim_{\sigma,\alpha,T}
	1+\Ex \norm{q_\eps}_{L^\infty([0,T];L^2(\T))}^4
	\overset{\eqref{eq:energybound}}{\lesssim} 1.
\end{align*}

Continuing, 
\begin{align*}
	I_1 & \leq \int_{\T} \ep  S''(q_{\ep}) 
	\,\abs{\pd_x q_{\ep}}^2\,\d x   
	+ \frac12\int_\T \abs{S'(q_{\ep})}^2\,\d x 
	+ \frac12\int_\T\bk{P_\ep-u^2_{\ep}}^2\,\d x.
\end{align*}
By $\abs{S''} \lesssim 1$, \eqref{eq:energybound} and 
Lemma \ref{thm:P-u2_bound}, we thus arrive at
\begin{equation}\label{eq:hi-I1}
	\Ex \int_0^t \abs{I_1}\,\d s \lesssim 1.
\end{equation}

Next, by \eqref{eq:higher-int-tmp1}, 
$\abs{S''} \lesssim 1$ and \eqref{eq:energybound}, we obtain
\begin{equation}\label{eq:hi-I2}
	\Ex \int_0^t \abs{I_2} \,\d s 
	\lesssim_{\sigma,\alpha,T}
	1+\Ex \int_0^t\int_\T 
	\abs{q_{\ep}}^{2}\,\d x\,\d s
	\lesssim_{\sigma,\alpha,T} 1.
\end{equation}

Finally, regarding $L$ in \eqref{eq:2alpha_1}, note that 
$$
S(v)v-\frac12 S'(v)v^2
=\frac12 v^2 \bk{\abs{v}+1}^\alpha 
-\frac{\alpha}2 \abs{v}^3 
\bk{\abs{v}+1}^{\alpha-1}
\geq \frac{1 - \alpha}{2} \abs{v}^{2+\alpha};
$$
hence
$$
L\ge \frac{1-\alpha}{2} 
\int_\T \abs{q_{\ep}}^{2+\alpha}\,\d x.
$$
After integrating \eqref{eq:2alpha_1} 
in time, making use of the estimates 
\eqref{eq:hi-I1}, \eqref{eq:hi-I2} 
and also $S(v)\lesssim_\alpha 
1+ \abs{v}^2$, we arrive at
\begin{align*}
	&\frac{1-\alpha}2 \Ex \int_0^T \int_\T 
	\abs{q_\ep}^{2+\alpha}\,\d x 
	\\ & \qquad 
	\lesssim_{\sigma,\alpha,T} 
	1+\Ex \int_\T \abs{q_\ep(T,x)}^2\,\d x
	+\Ex \int_\T \abs{q_\eps(0,x)}^2\,\d x
	\overset{\eqref{eq:energybound}}{\lesssim} 1.
\end{align*}
This concludes the proof.
\end{proof}

The next result has no counterpart in the 
deterministic theory, see \cite{Coclite:2005tq,Coclite:2015aa,Xin:2000qf}. 
It is going to play an important role in the 
upcoming convergence analysis, as it will 
allow passing to the limit in products like $q_\ep P_\ep$ 
towards $q P$, where $P_\ep = K*\bigl(u^2_\ep
+\frac{1}{2}\abs{q_\ep}^2\bigr)$ and 
$P=K*\bigl(u^2_\ep +\frac{1}{2}\overline{q^2}\bigr)$.  
The deterministic approach of establishing $\ep$-uniform
$W^{1,1}_{t,x}$ estimates---to enforce strong convergence 
of $P_\ep$---does not work in the stochastic setting. 
Besides, the nonlinear quantity $q_\ep^2$ does 
not exhibit weak temporal H\"older continuity 
(uniformly in $\ep$), which would be needed 
for a traditional stochastic compactness argument.

\begin{prop}[temporal translation estimate]
\label{prop:translation-time}
Let $u_\eps$ be the $H^m$ solution 
of the viscous SPDE \eqref{eq:u_ch_ep} with 
initial data $u(0)=u_0$ satisfying \eqref{eq:u0-cond-Hmp0} ($m>1$), 
and set $q_\ep=\pd_x u_\eps$. Fix a nonlinear 
function $S\in W^{2,\infty}_{\operatorname{loc}}(\R)$ 
that satisfies \eqref{eq:entropies}. Set $Q_\ep=S(q_{\ep})$. 
Then, for all $\vartheta\in (0,T\wedge 1)$ and 
$\varphi\in C^\infty(\T)$, 
\begin{equation}\label{eq:translation-time}
	\Ex \sup_{\tau\in (0,\vartheta)}
	\int_0^{T-\tau} \abs{\, 
	\int_{\T} \varphi(x) 
	\bigl(Q_\ep(t+\tau)-Q_\ep(t)\bigr)\, \d x} \, \d t
	\lesssim \norm{\varphi}_{C^2(\T)}
	\, \vartheta^{\frac12}.
\end{equation}
\end{prop}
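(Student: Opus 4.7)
The plan is to represent the increment $Q_\ep(t+\tau,\cdot)-Q_\ep(t,\cdot)$ as a time-integrated drift plus an It\^o martingale, using the SPDE \eqref{eq:SPDE-S(qe)} tested against $\varphi$. First, I test \eqref{eq:SPDE-S(qe)} against $\varphi\in C^\infty(\T)$, integrate by parts to shift all derivatives onto $\varphi$, and obtain, for each fixed $(t,\tau)$,
\begin{equation*}
	\int_\T \varphi \bk{Q_\ep(t+\tau)-Q_\ep(t)}\,\d x
	= \int_t^{t+\tau} A_\ep(s)[\varphi]\,\d s
	+ \int_t^{t+\tau} B_\ep(s)[\varphi]\,\d W(s),
\end{equation*}
where $A_\ep(s)[\varphi]$ collects the drift terms ($u_\ep S(q_\ep)$, $\pd_x \sigma_\ep^2(3S-2S'q_\ep)$, $(\tfrac12\sigma_\ep^2+\ep)S(q_\ep)$, $S'(q_\ep)(P_\ep-u_\ep^2)$, $(S(q_\ep)q_\ep-\tfrac12 S'(q_\ep)q_\ep^2)$, $\pd_{xx}^2\sigma_\ep^2(S-S'q_\ep)$, $|\pd_x\sigma_\ep|^2 S''(q_\ep)q_\ep^2$, and $\ep S''(q_\ep)\abs{\pd_x q_\ep}^2$, each paired with $\varphi$ or a derivative thereof), and $B_\ep(s)[\varphi] = \int_\T [-\sigma_\ep S(q_\ep)\pd_x\varphi -\pd_x\sigma_\ep(S(q_\ep)-S'(q_\ep)q_\ep)\varphi]\,\d x$.

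Next, I split the estimate into a drift piece and a martingale piece. For the drift, I use the crude bound $\sup_{\tau\in(0,\vartheta)}\abs{\int_t^{t+\tau} A_\ep(s)\,\d s}\le \int_t^{t+\vartheta} \abs{A_\ep(s)}\,\d s$ and then apply Fubini in $(t,s)$ to rewrite $\int_0^{T-\tau}\int_t^{t+\vartheta}\abs{A_\ep(s)}\,\d s\,\d t\le \vartheta \int_0^T \abs{A_\ep(s)}\,\d s$. The $\ep$-uniform growth hypotheses \eqref{eq:entropies}---in particular $\abs{S(v)}\lesssim v^2$, $\abs{S'(v)v}\lesssim v^2$, $\abs{S''}\lesssim 1$, and the cancellation $\abs{S(v)v-\tfrac12 S'(v)v^2}\lesssim v^2$---combined with the $L^{p_0}_\omega L^\infty_t H^1_x$ bound \eqref{eq:energybound}, Lemma \ref{thm:P-u2_bound} (for $P_\ep$ and $u_\ep^2$), and the uniform estimate $\Ex\bigl|\ep\int_0^T\!\int_\T\abs{\pd_{xx}^2 u_\ep}^2\,\d x\,\d t\bigr|\lesssim 1$ from \eqref{eq:energybound} for the viscous term $\ep S''(q_\ep)\abs{\pd_x q_\ep}^2$, yield $\Ex\int_0^T\abs{A_\ep(s)[\varphi]}\,\d s \lesssim \norm{\varphi}_{C^2(\T)}$. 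Hence the drift contribution is $\lesssim \norm{\varphi}_{C^2}\,\vartheta \le \norm{\varphi}_{C^2}\,\vartheta^{1/2}$ on $(0,T\wedge 1)$.

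For the martingale piece, I first bring the $L^1_t$ norm under the expectation and pointwise apply the BDG inequality for each fixed $t$:
\begin{equation*}
	\Ex \sup_{\tau\in(0,\vartheta)} \abs{\int_t^{t+\tau} B_\ep(s)[\varphi]\,\d W(s)}
	\lesssim \Ex\left(\int_t^{t+\vartheta} \abs{B_\ep(s)[\varphi]}^2\,\d s\right)^{\!1/2}.
\end{equation*}
The integrand obeys $\abs{B_\ep(s)[\varphi]}\lesssim \norm{\varphi}_{C^1(\T)}\norm{\sigma}_{W^{1,\infty}}\norm{q_\ep(s)}_{L^2(\T)}^2$ by \eqref{eq:entropies}; so Cauchy--Schwarz and \eqref{eq:energybound} give $\Ex(\int_t^{t+\vartheta}\abs{B_\ep(s)[\varphi]}^2\,\d s)^{1/2}\lesssim \norm{\varphi}_{C^1}\vartheta^{1/2}$, uniformly in $t$. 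Integrating over $t\in[0,T]$ and swapping the integral and expectation via Fubini (all quantities are nonnegative) yields the bound $\lesssim \norm{\varphi}_{C^2}\,\vartheta^{1/2}$. Summing the drift and martingale contributions and noting that $\sup_\tau$ distributes over the sum proves \eqref{eq:translation-time}.

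The only delicate point is the handling of $\sup_{\tau\in(0,\vartheta)}$ inside the $L^1_t\Ex$ norm. For the drift this is trivial by monotonicity. For the martingale piece it is crucial that we first pull the $\int_0^{T-\tau}\d t$ outside (bounding by $\int_0^T\d t$ with the obvious indicator) so that BDG is applied \emph{pointwise in $t$} to the running maximum of a true martingale in $\tau$; this is what yields the sharp $\vartheta^{1/2}$ rather than $\vartheta^{1/4}$ that a naive application would produce.
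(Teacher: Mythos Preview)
Your proof is correct and follows essentially the same approach as the paper: test the SPDE \eqref{eq:SPDE-S(qe)} for $S(q_\ep)$ against $\varphi$, split into drift and martingale contributions, bound the drift by $\vartheta\cdot\Ex\int_0^T|A_\ep|\,\d s\lesssim\vartheta$ using \eqref{eq:entropies}, \eqref{eq:energybound} and Lemma~\ref{thm:P-u2_bound}, and handle the martingale via the swap $\sup_\tau\int_0^{T-\tau}\,\d t\le\int_0^T\sup_\tau\,\d t$ followed by a pointwise-in-$t$ BDG estimate and $\|q_\ep\|_{L^\infty_tL^2_x}^2\in L^1_\omega$. The paper's term-by-term bookkeeping (its $I_\ep^{(1)},\dots,I_\ep^{(11)}$) matches your list exactly, with your combined $S'(q_\ep)(P_\ep-u_\ep^2)$ split into two pieces there.
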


\begin{rem}
In view of the regularity of $Q_\varepsilon$ 
(which follows from Lemma \ref{thm:cauchy_CtH1_2}), 
the function
$\tau \mapsto \int_0^{T-\tau} \abs{\, \int_{\T} \varphi(x) 
\bigl(Q_\ep(t+\tau)-Q_\ep(t)\bigr)\, \d x} \, \d t$ 
is a.s.~continuous. Therefore, it follows that 
the supremum can be equivalently taken 
over $\mathbb Q \cap (0,\vartheta)$, so that 
the resulting object is a random variable.
\end{rem}

\begin{proof}
The nonlinear composition $Q_\ep=S(q_{\ep})$ 
satisfies the SPDE \eqref{eq:SPDE-S(qe)}. For 
any $t\in [0,T]$ and $\tau>0$ with $t+\tau\leq T$, we 
obtain (easily via integration by parts) 
\begin{align*}
	& \abs{\int_{\T} \varphi(x) 
	\bigl(Q_\ep(t+\tau)-Q_\ep(t)\bigr)\, \d x}
	\\ & \qquad 
	\leq  \sum_{i=1}^{9} \int_t^{t+\tau}\int_{\T} I_\ep^{(i)}
	\, \d x \, \d s
	+\sum_{i=10}^{11}\abs{\,\int_t^{t+\tau}\int_{\T} I_\ep^{(i)}
	\, \d x \, \d W},
\end{align*}
where 
\begin{align*}
	& I_\ep^{(1)}=\abs{u_\ep}\abs{S(q_\ep)}\abs{\pd_x \varphi},
	\quad 
	I_\ep^{(2)}=\frac14 \abs{\pd_x \sigma_\ep^2} 
	\abs{3S(q_\ep)-2S'(q_\ep) q_\ep}\abs{\pd_x \varphi},
	\\ & 
	I_\ep^{(3)}=\abs{\frac12\sigma_\ep^2+\eps}\abs{S(q_\ep)}
	\abs{\pd_{xx}^2 \varphi},\quad 
	I_\ep^{(4)} = \abs{S''(q_\ep)}\,
	\ep\abs{\pd_x q_\ep}^2\abs{\varphi},
	\\ & 
	I_\ep^{(5)}=\abs{S'(q_{\ep})}\abs{P_{\ep}}\abs{\varphi},
	\quad 
	I_\ep^{(6)}=\abs{S'(q_{\ep})}\abs{u_{\ep}^2}\abs{\varphi},
	\\ & 
	I_\ep^{(7)}=\abs{S(q_{\ep})q_\ep
	-\frac12 S'(q_\ep)q_\ep^2}\abs{\varphi},
	\\ &
	I_\ep^{(8)}=\frac14 \abs{\pd_{xx}^2\sigma_\ep^2} 
	\abs{S(q_\ep)-S'(q_\ep) q_\ep}\abs{\varphi},
	\quad 
	I_\ep^{(9)}=\frac12 \abs{\pd_x \sigma_\ep}^2
	\abs{S''(q_\ep)} \abs{q_\ep^2}\abs{\varphi},
	\\ &
	I_\ep^{(10)}=\sigma_\ep\, S(q_\ep)\, \pd_x \varphi,
	\quad
	I_\ep^{(11)}=\pd_x\sigma_\ep\, 
	\bigl(S(q_\ep)-S'(q_\ep)q_\ep\bigr)\,\varphi.
\end{align*}
This implies that 
\begin{align}
	& \Ex \sup_{\tau\in (0,\vartheta)}
	\int_0^{T-\tau} \abs{\, 
	\int_{\T} \varphi(x)\bigl(Q_\ep(t+\tau)-Q_\ep(t)\bigr)
	\, \d x} \, \d t
	\label{eq:translation-time-tmp0}
	\\ & \qquad 
	\lesssim \vartheta \sum_{i=1}^{9} 
	\Ex \norm{I_\ep^{(i)}}_{L^1([0,T]\times \T)}
	\notag 
	\\ & \qquad \qquad
	+\sum_{i=10}^{11}\int_0^{T-\vartheta}
	\Ex \sup_{\, \tau\in (0,\vartheta)}
	\abs{\, \int_t^{t+\tau}\int_{\T} I_\ep^{(i)}
	\, \d x \, \d W(s)}\,\d t.
	\notag
\end{align} 

In what follows, we use the trivial fact that 
$$
\norm{\varphi}_{L^\infty(\T)},
\norm{\pd_x \varphi}_{L^\infty(\T)},
\norm{\pd_{xx}^2 \varphi}_{L^\infty(\T)}
\leq \norm{\varphi}_{C^2(\T)}, 
$$

For $i\in \{2,3,8,9\}$, recalling $\sigma\in W^{2,\infty}(\T)$ 
and \eqref{eq:entropies}, 
$$
I_\ep^{(i)}\lesssim \abs{q_\ep}^2\norm{\varphi}_{C^2(\T)},
$$
and thus, by \eqref{eq:energybound} (as $L^\infty_tL^2_x
\hookrightarrow L^2_{t,x}$), 
$$
\Ex \norm{I_\ep^{(i)}}_{L^1([0,T]\times \T)}
\lesssim \Ex \left[\norm{q_\ep}_{L^2([0,T]\times \T)}^2\right]
\norm{\varphi}_{C^2(\T)}
\lesssim \norm{\varphi}_{C^2(\T)}.
$$

Similarly, for $i=7$,
$$
\Ex \norm{I_\ep^{(7)}}_{L^1([0,T]\times \T)}
\lesssim \Ex \left[\norm{q_\ep}_{L^2([0,T]\times \T)}^2\right]
\norm{\varphi}_{C^2(\T)}
\lesssim \norm{\varphi}_{C^2(\T)}.
$$

For $i=5$, we use \eqref{eq:entropies}, 
Lemma \ref{thm:P-u2_bound} and \eqref{eq:energybound} 
(as $L^\infty_tL^2_x\hookrightarrow L^1_{t,x}$),
\begin{align*}
	& \Ex \norm{I_\ep^{(5)}}_{L^1([0,T]\times \T)}
	\lesssim \Ex\left[ \int_0^T \int_\T 
	\abs{q_\ep} \, \d x\, \d t 
	\norm{P_\ep}_{L^\infty([0,T]\times\T)}\right]
	\norm{\varphi}_{C^2(\T)}
	\\ & \quad \leq 
	\left(\Ex \norm{q_\ep}_{L^1([0,T]\times \T)}^2\right)^{1/2}
	\left(\Ex \norm{P_\ep}_{L^\infty([0,T]\times\T)}^2
	\right)^{1/2} \norm{\varphi}_{C^2(\T)} 
	\\ & \quad \lesssim \norm{\varphi}_{C^2(\T)}.
\end{align*}

Similarly, for $i=6$,
\begin{align*}
	& \Ex \norm{I_\ep^{(6)}}_{L^1([0,T]\times \T)}
	\\ & \quad \lesssim 
	\left(\Ex \norm{q_\ep}_{L^1([0,T]\times \T)}^2\right)^{1/2}
	\left(\Ex \norm{u_\ep}_{L^\infty([0,T]\times\T)}^4
	\right)^{1/2} \norm{\varphi}_{C^2(\T)}
	\\ & \quad \lesssim \norm{\varphi}_{C^2(\T)}.
\end{align*}

For $i=1$, by  Lemma \ref{thm:P-u2_bound} 
and \eqref{eq:energybound} (as $L^\infty_tL^2_x
\hookrightarrow L^2_{t,x}$), 
\begin{align*}
	& \Ex \norm{I_\ep^{(1)}}_{L^1([0,T]\times \T)}
	\lesssim \Ex\left[\norm{u_\ep}_{L^\infty([0,T]\times \T)}
	\norm{q_\ep}_{L^2([0,T]\times \T)}^2\right]
	\norm{\varphi}_{C^2(\T)}
	\\ & \quad \leq 
	\left(\Ex\norm{u_\ep}_{L^\infty([0,T]\times \T)}^2\right)^{1/2}
	\left(\Ex\norm{q_\ep}_{L^2([0,T]\times \T)}^4\right)^{1/2}
	\norm{\varphi}_{C^2(\T)}
	\\ & \quad \lesssim \norm{\varphi}_{C^2(\T)}.
\end{align*}

For $i=4$, by the second part of \eqref{eq:energybound},
\begin{align*}
	& \Ex \norm{I_\ep^{(4)}}_{L^1([0,T]\times \T)}
	\lesssim \Ex \int_0^T \int_\T \ep\abs{\pd_{xx}^2 u_\ep}^2
	\, \d x \, \d t \norm{\varphi}_{C^2(\T)}
	\lesssim \norm{\varphi}_{C^2(\T)}.
\end{align*}

Finally, we turn to the stochastic 
integrals ($i=11,12$). By $\sigma\in W^{2,\infty}(\T)$, 
\eqref{eq:entropies} and the BDG inequality,
\begin{align*}
	& \Ex \sup_{\, \tau\in (0,\vartheta)}
	\abs{\, \int_t^{t+\tau}\int_{\T} I_\ep^{(i)}
	\, \d x \, \d W(s)}\,\d t
	\\ & \quad \lesssim \Ex \left[\left(\int_t^{t+\vartheta}
	\left(\int_{\T} q_\ep^2\, \d x\right)^2\,ds\right)^{1/2}
	\right] \norm{\varphi}_{C^2(\T)}
	\\ & \quad \leq \vartheta^{\frac12}
	\Ex \norm{q_\ep}_{L^\infty([0,T];L^2(\T))}^2
	\norm{\varphi}_{C^2(\T)}
	\overset{\eqref{eq:energybound}}{\lesssim} 
	\vartheta^{\frac12}\norm{\varphi}_{C^2(\T)}.
\end{align*}

Given \eqref{eq:translation-time-tmp0}, 
the above estimates yield \eqref{eq:translation-time}.
\end{proof}

\section{Tightness and a.s.~representations}\label{sec:Jak-Skor}

\subsection{Renormalisations}
As explained in the introduction, we 
wish to use the $\eps$-uniform a 
priori estimates \eqref{eq:energybound}, 
\eqref{eq:Holder-viscous}, \eqref{eq:higher-int} 
to extract a.s.~convergence properties of 
$u_\ep$ and of the spatial gradient $q_\eps=\pd_x u_\ep$, as 
well as of nonlinear quantities like $q_\eps^2$. 
Verifying the tightness of the different 
probability laws, among which the one for $q_\eps^2$ 
is the most challenging, we construct 
Skorokhod a.s.~representations 
of $u_\ep$, whose laws are defined on the Polish space 
$C_tL^2_x$, and Jakubowski a.s.~representations 
of $q_\ep$ and several infinite 
sequences of nonlinear compositions 
of $q_\eps$, whose laws are defined on suitable 
quasi-Polish spaces like $L^p_{t,x}-w$. 
In addition, for the energy variable $q_\ep^2$, 
we construct representations in 
the quasi-Polish spaces $L^p\bigl(L^p_w\bigr)$, 
for some $p$, which supplies a crucial strong 
convergence property in $t$. 
We refer to Section \ref{sec:qpolish} for 
quasi-Polish spaces and their properties. 

In what follows, we fix a sequence $\{\ep_n\}_{n=1}^\infty$ 
of positive numbers such that $\ep_n \to 0$ 
as $n\to \infty$. Let us introduce 
the random mappings:
\begin{equation}\label{eq:random-variables-I}
	\begin{split}	
		& F_{\ep_n}^{q}=
		\left(q_{\ep_n},\bigl(q_{\ep_n}\bigr)_+,
		\bigl(q_{\ep_n}\bigr)_-\right),
		\quad 
		F_{\ep_n}^{q^2}=
		\left(q_{\ep_n}^2,\bigl(q_{\ep_n}\bigr)^2_{+},
		\bigl(q_{\ep_n}\bigr)^2_{-}\right).
	\end{split}
\end{equation}
Here, we denote by $f_+$ and $f_-$ the positive and 
negative parts of a function $f$, so that 
$f(v)=f_+(v)+f_-(v)=\max(f(v),0)+\min(f(v),0)$. 
Furthermore, the notation $\bigl(q_{\ep_n}\bigr)^2_{\pm}$ is 
a concise representation of $\bigl((q_{\ep_n})_{\pm}\bigr)^2$.

To execute various renormalisation procedures, we 
shall need to take limits as $n \to \infty$ of 
infinite sequences of nonlinear compositions 
of $q_{\ep_n}$, like $S_\ell(q_{\ep_n})$, where 
$\left\{S_\ell(v)\right\}_{\ell\in \N}$
is a sequence that approximates $\frac12 v^2$ up to some 
cut-off $\abs{v} \le \ell$. The Skorokhod--Jakubowski 
procedure extracts \textit{simultaneously} 
the a.s.~convergence of all these variables. 
As a preparation, we introduce several sequences 
of random mappings that are built from the approximation 
$\left\{S_\ell(v)\right\}_{\ell=1}^\infty$ 
of $\frac12 v^2$ on $\R$:
\begin{align}\label{eq:entropies_S_ell}
	S_\ell(v)
	=
	\begin{cases}
		\frac12 v^2, & \abs{v} \le \ell 
		\\
		-\frac1{6\ell}\abs{v}^3+v^2
		-\frac12 \ell \abs{v} 
		+\frac16\ell^2,  & \ell<\abs{v}<2 \ell
		\\
		\frac32\ell \abs{v}-\frac76 \ell^2, 
		& \abs{v}\ge 2\ell
	\end{cases}.
\end{align}
Each function $S_\ell$ is convex and satisfies
\begin{align}\label{eq:entrop1}
	S'_\ell(v)
	=
	\begin{cases}
		v, & \abs{v} \le \ell
		\\
		\sgn(v) \bk{2\abs{v}-\frac1{2\ell}v^2-\frac12 \ell},
		& \ell<\abs{v}<2 \ell
		\\ 
		\frac{3}2 \sgn(v) \ell,  
		& \abs{v} \ge 2 \ell
	\end{cases}
\end{align}
and
\begin{equation}\label{eq:Sell-2nd-der}
	S''_\ell(v)= 
	\begin{cases} 
		1, & \abs{v} \le \ell 
		\\ 
		\frac1\ell \bk{2\ell-\abs{v}}, 
		& \ell<\abs{v}<2 \ell 
		\\
		0, & \abs{v} \ge 2 \ell
	\end{cases}.	
\end{equation}

The random mappings that we introduce below 
are motivated by the need to pass to the weak limit in 
various nonlinear compositions of $q_{\ep_n}$, based on 
\begin{equation}\label{eq:bf_F2}
	\begin{aligned}
		& S_\ell(v_\pm),
		\quad 
		S_\ell(v_\pm)',
		\quad 
		S_\ell(v_\pm)''v^2,
		\\ & 
		\quad
		S_\ell(v_\pm)-S_\ell(v_\pm)' v,
		\quad 
		S_\ell(v_\pm) v
		-\frac12 S_\ell(v_\pm)'v^2.
	\end{aligned}
\end{equation}

Clearly, $\bk{v_\pm}'=\one{\abs{v_\pm}>0}$, $\bk{v_\pm^2}'
=2v_\pm$, $\bk{v_\pm^2}''=2\one{\abs{v_\pm}>0}$,
and so $S(v)=v_\pm^2$ belongs to 
$W^{2,\infty}_{\operatorname{loc}}(\R)$ 
and satisfies \eqref{eq:entropies}. 
Using \eqref{eq:entropies_S_ell}, 
\eqref{eq:entrop1}, \eqref{eq:Sell-2nd-der} 
and the chain rule, we can readily compute 
the following nonlinear compositions:
\begin{equation}\label{eq:entrop2}
	\begin{aligned}
		&S_\ell(v_\pm)'=S'_\ell(v_\pm),
		\qquad 
		S_\ell(v_\pm)''=S''_\ell(v_\pm)\one{\abs{v_\pm}>0},
		\\ &
		S_\ell(v_\pm)-S_\ell(v_\pm)'v
		= 
		\begin{cases}
			-\frac12 v_\pm^2, & \abs{v_\pm} \le \ell
			\\
			\frac1{3\ell}\abs{v_\pm}^3-{v_\pm}^2
			+\frac16 \ell^2, 
			& \ell<\abs{v_\pm}<2\ell
			\\
			- \frac76\ell^2, 
			& \abs{v_\pm}\ge 2\ell
		\end{cases},
		\\
		& 3S_\ell(v_\pm)-2S_\ell(v_\pm)'v 
		\\ & \qquad =
		\begin{cases}
			-\frac12 v_\pm^2, & \abs{v_\pm} \le \ell
			\\
			\frac1{2\ell}\abs{v_\pm}^3-{v_\pm}^2
			-\frac12  \abs{v_\pm}\ell+\frac12\ell^2,
			& \ell<\abs{v_\pm}<2 \ell
			\\ 
			\frac32\ell \abs{v_\pm}-\frac72\ell^2, 
			& \abs{v_\pm} \ge 2 \ell
		\end{cases},
		\\ & 
		S_\ell(v_\pm) v-\frac12S_\ell(v_\pm)'v^2
		\\ & \qquad = 
		\begin{cases}
			0,& \abs{v_\pm} \le \ell
			\\ 
			\frac1{12 \ell} \abs{v_\pm}^3 v_\pm
			-\frac14 \abs{v_\pm}v_\pm\ell
			+\frac16 v_\pm\ell^2, 
			& \ell<\abs{v_\pm}<2 \ell
			\\ 
			\frac34 \abs{v_\pm}v_\pm\ell-\frac76 v_\pm \ell^2, 
			& \abs{v_\pm} \ge 2 \ell
		\end{cases}.
	\end{aligned}
\end{equation}
In particular, this implies that 
$S_\ell \in W^{3,\infty}_{\operatorname{loc}}(\R)$, 
$\abs{S_\ell(v)}\lesssim_\ell \abs{v}$, 
$\abs{S'_\ell(v)}\lesssim_\ell 1$, $\abs{S''_\ell(v)}\lesssim 
\mathds{1}_{\{\abs{v}\le 2\ell\}}$, and 
$\abs{S_\ell(v)v-\frac12 S'_\ell(v) v^2}
\lesssim_\ell \abs{v}^2$, so that 
\eqref{eq:entropies} is satisfied with $S=S_\ell$.  
The nonlinear compositions $v\mapsto S_\ell(v_\pm)$ belong 
to $W^{2,\infty}_{\operatorname{loc}}(\R)$ 
and cater to similar bounds, see also 
Remark \ref{rem:pos-neg-parts}. 

Notice that $v\mapsto S_\ell(v_\pm)''v^2= S''_\ell(v_\pm)v^2$ 
is a continuous function (but $S_\ell(v_\pm)''$ is not). 
Later, we will also need to know that the function
$\beta(v)=S_\ell(v_\pm)'v$ belongs to 
$W^{2,\infty}_{\operatorname{loc}}(\R)$ (although $S_\ell(v_\pm)'$
does not) and satisfies \eqref{eq:entropies} 
with $S=\beta$. Indeed,
\begin{equation}\label{eq:beta-def}
	\begin{split}
		& \beta'(v)= S''_\ell(v_\pm)\one{\abs{v_\pm}>0}v+S'_\ell(v_\pm),
		\\ & \beta''(v)
		=S'''_\ell(v_\pm)\one{\abs{v_\pm}>0}v
		+2S''_\ell(v_\pm)\one{\abs{v_\pm}>0},
	\end{split}
\end{equation}
so that $\abs{\beta(v)}\lesssim_\ell \abs{v}$,
$\abs{\beta'(v)}\lesssim_\ell 1$, 
and $\abs{\beta''(v)}\lesssim_\ell 1$.

\begin{rem}
One may wonder about the specific choice \eqref{eq:entropies_S_ell} 
of renormalisations (entropies), which admittedly comes 
across as complicated. At this point, we run into a new difficulty 
compared to the deterministic CH equation \cite{Xin:2000qf}. 
The particular form of the noise in the stochastic CH 
equation \eqref{eq:u_ch} leads to some key 
structural changes in the equation satisfied by $S_\ell(q_\ep)$, 
which prevents us from using the simple entropies of 
\cite{Xin:2000qf} (linearly growing $W^{2,\infty}$ 
approximations of $v^2$). The entropies \eqref{eq:bf_F2} are 
carefully constructed to allow for the control of some 
delicate error terms involving weak limits 
linked to the defect measure \eqref{eq:intro-defect}, 
see Remark \ref{rem:delicate}.
\end{rem}

\subsection{Random mappings and path spaces}
For $\ell\in \N$, we introduce the random mappings
\begin{equation}\label{eq:random-variables-II}
	F_{\ep_n}^{\xi_\ell,\pm}
	=\xi_\ell\big|_{v=q_{\ep_n}},
	\quad \xi_\ell \in \mathbb{S}_\ell^\pm,
\end{equation}
where $\mathbb{S}_\ell^\pm$ denote the collections
\begin{equation}\label{eq:nonlinear-collection}
	\mathbb{S}_\ell^\pm
	=\Bigl\{
	S_\ell(v_\pm),
	\, S_\ell'(v_\pm)v, 
	\, S_\ell''(v_\pm)v^2, 
	\, S_\ell(v_\pm)v,\, S_\ell'(v_\pm)v^2,
	\, S_\ell'(v_\pm)
	\Bigr\}
\end{equation}
of nonlinear functions. We also make 
use of $F_{\ep_n}^{\mathbb{S}}$ as a notation 
for the gathering of all these $\ell$-dependent mappings:
\begin{equation}\label{eq:random-variables-III}
	F_{\ep_n}^{\mathbb{S}}
	=\left\{
	\left\{F_{\ep_n}^{\xi_\ell,+},
	\,\, \xi_\ell\in \mathbb{S}_\ell^+
	\right\}_{\ell\in \N},
	\, \,
	\left\{F_{\ep_n}^{\xi_\ell,-},
	\,\, \xi_\ell\in \mathbb{S}_\ell^-
	\right\}_{\ell\in \N}
	\right\}.
\end{equation}

Finally, we use $X_n$ as a collective symbol for 
\textit{all} the random mappings just introduced:
\begin{equation}\label{eq:Xn-def}
	X_n=\left(u_{\ep_n},
	F_{\ep_n}^{q},
	F_{\ep_n}^{q^2},
	W,z_n,
	F_{\ep_n}^{\mathbb{S}}
	\right),
\end{equation} 
where $W$ is the Wiener process appearing 
in \eqref{eq:u_ch_ep} and $\left\{z_n\right\}_{n=1}^\infty$ 
is a sequence of $C^\infty$ approximations of the initial 
data $u_0$, satisfying 
\begin{equation}\label{eq:u0-approx}
	z_n \in L^{p_0}\bigl(\Omega;C^\infty(\T)\bigr),
	\quad z_n \ton u_0 \quad \text{in 
	$L^{p_0}\bigl(\Omega;H^1(\T)\bigr)$},
\end{equation} 
recalling the assumption $p_0>4$ from 
Theorem \ref{thm:bounds1}. 

The goal is to establish the tightness of the 
joint probability laws $\mu_n=\cL(X_n)$ 
of the random mappings $X_n:\bigl(\Omega,\mathcal{F},
\mathbb{P}\bigr) \to \bigl(\mathcal{X},
\mathcal{B}_{\mathcal{X}} \bigr)$. 
To this end, we need to specify 
$\mathcal{X}$---the path space for $\mu_n$---and the
$\sigma$-algebra $\mathcal{B}_\mathcal{X}$. 
Denote the factors of the infinite vector $X_n$ 
by $X_n^{(l)}$ and the corresponding factor spaces 
by $\mathcal{X}_{l}$, $l\in \N$. For example, 
$X_n^{(1)}=u_{\ep_n}$ and $X_n^{(5)}=q_{\ep_n}^2$, 
cf.~\eqref{eq:random-variables-I}. 
Whenever needed, we also use superscript symbols 
on $X_n$ to identify the corresponding 
factor of $X_n$, for example, $X_n^{u}=X_n^{(1)}$ 
and $X_n^{q^2}=X_n^{(5)}$, while $X_n^{S_\ell(v_+)}$ 
would refer to $X_n^{(7+\ell)}$ 
with $X_n^{(7)}=\bigl(q_{\ep_n}\bigr)^2_-$, 
see \eqref{eq:random-variables-II} and 
\eqref{eq:random-variables-I}. 
Denote by $ \mu^{(l)}$ the corresponding 
marginals of $\mu_n$, defined on $\bigl(\mathcal{X}_{l},
\mathcal{B}_{\mathcal{X}_{l}}\bigr)$. Similarly, 
we will write $\mu_n^u$ instead of $\mu_n^{(1)}$ 
for the marginal linked to $X_n^{(1)}=u_{\ep_n}$, 
and so forth, and the same for the factor spaces $\mathcal{X}_l$. 

\begin{rem}
The notation just introduced may appear overwhelming. 
Fortunately, most of it will be utilised only in this section.
\end{rem}

For a fixed number $r\in \bigl[1,\frac32\bigr)$ (close to $3/2$), 
we specify the following spaces for the marginals:
\begin{equation}\label{eq:pathspaces}
	\begin{split}
		& \mathcal{X}_u= C_tL^2_x,
		\quad \mathcal{X}_W= C_t, 
		\quad \mathcal{X}_{u_0}= H^1_x,
		\\ &
		\mathcal{X}_q=L^{2r}_{t,x}-w,
		\quad
		\mathcal{X}_{q_\pm}=L^{2r}_{t,x}-w,
		\\ &
		\mathcal{X}_{q^2}=L^r\bigl(L^r_w\bigr),
		\quad
		\mathcal{X}_{q_\pm^2}=L^r\bigl(L^r_w\bigr),
		\\ & 
		\mathcal{X}_{\xi}=L^{2r}(L^{2r}_w),
		\,\, \xi=S_\ell(v_\pm),\, S_\ell(v_\pm)'v,
		\, \, \ell\in \N, 
		\\ & 
		\mathcal{X}_{\xi}=L^{2r}_{t,x}-w,
		\,\, \xi=\, S_\ell(v_\pm)',
		\, \, \ell\in \N, 
		\\ & 
		\mathcal{X}_{\xi}=L^r_{t,x}-w,
		\,\, \xi= S_\ell(v_\pm)''v^2,
		\, S_\ell(v_\pm)v,\, S_\ell(v_\pm)'v^2,
		\,\, \ell\in \N.		
	\end{split}
\end{equation}

Here, $C_t=C([0,T])$, $H^1_x=H^1(\T)$, 
and $C_tL^2_x=C([0,T];L^2(\T))$ are all 
Polish spaces. Furthermore, $L^p_{t,x}-w=L^p([0,T]\times \T)-w$, 
for any $p\in [1,\infty)$, denotes the $L^p$ space 
equipped with the weak topology, which is quasi-Polish. 
For the energy variables $q_{\ep_n}^2$ and $(q_{\ep_n})_\pm^2$, we use 
the space $L^r\bigl(L^r_w\bigr)=L^r\bigl([0,T];L^r(\T)-w\bigr)$, 
which is quasi-Polish as well, see 
Section \ref{sec:qpolish} and \eqref{eq:LptLpx-weak} 
for details. Notice that the topology of 
$L^r\bigl(L^r_w\bigr)$ is strong in $t$ and weak in $x$. 
Similarly, we use $L^{2r}(L^{2r}_w)$ for 
the variables $S_\ell\bigl((q_{\ep_n})_\pm\bigr)$ 
and $S_\ell'\bigl((q_{\ep_n})_\pm\bigr)q_{\ep_n}$ (linearly 
growing approximations of $\frac12 (q_{\ep_n})_\pm^2$).

\begin{rem}
The spaces prescribed in \eqref{eq:pathspaces} reflect 
some minimum requirements for convergence in 
Section \ref{sec:wk_limits_section}. 
The significance of the peculiar ``strong-weak" spaces 
$L^r\bigl(L^r_w\bigr)$, $L^{2r}\bigl(L^{2r}_w\bigr)$ 
will become clear during the proofs of 
Lemmas \ref{lem:energy-of-weak-limit}, 
\ref{lem:strong-conv-tPn}, and \ref{thm:limiteq_1}. 
Roughly speaking, these spaces will allow us to 
pass to the limit in delicate product terms like 
$S'(\tilde q_{\ep_n})\, \tilde P_{\ep_n}$ as well as
in various stochastic integrals.
\end{rem}
		
The path space $\mathcal{X}$ for the joint 
laws $\left\{\mu_n\right\}_{n\in \N}$ is taken as 
\begin{equation}\label{eq:pathspaces-joint}
	\mathcal{X}=\prod_{l=1}^\infty \mathcal{X}_l,
	\qquad \mathcal{B}_{\mathcal{X}}
	=\mathcal{B}(\mathcal{X}),
\end{equation}
which carries the product topology 
for its infinitely many factors.

\begin{rem}
Each factor space $\mathcal{X}_l$ in $\mathcal{X}$ is 
either Polish or quasi-Polish. 
Polish spaces are quasi-Polish and countable 
products of quasi-Polish spaces are quasi-Polish, 
see Lemma \ref{thm:a_0}. Generally, for 
a quasi-Polish space $\mathcal{Y}$ there 
are two natural candidates for 
the $\sigma$-algebra $\mathcal{B}_{\mathcal{Y}}$, 
the Borel $\sigma$-algebra $\mathcal{B}_{\mathcal{Y}}
=\mathcal{B}(\mathcal{Y})$ or the $\sigma$-algebra 
$\mathcal{B}_{\mathcal{Y}}=\mathcal{B}_f$
generated by the separating sequence 
$f=\left\{f_l\right\}_{l\in \N}$ defining 
the space, see Definition \ref{def:quasipolish}.  
In general, $\mathcal{B}_f\subset 
\mathcal{B}(\mathcal{Y})$, see Lemma \ref{thm:a_4}.
However, for each space in \eqref{eq:pathspaces} 
we use the Borel $\sigma$-algebra, as it happens to 
coincide with the one generated by the separating 
sequence (see Lemma \ref{lem:qpolish-borel-sigma-algebra}). 
The space $\mathcal{X}$ for the joint laws 
is equipped with the product topology. For 
a quasi-Polish product space like $\mathcal{X}$, the 
Borel $\sigma$-algebra $\mathcal{B}(\mathcal{X})$ 
for the product topology is likely to differ from 
the product of the individual Borel $\sigma$-algebras 
(although they do coincide if $\mathcal{X}$ is Polish), see  
Lemma \ref{thm:a_4}. However, as is shown 
in \cite{Jakubowski:1997aa}, this is not a problem as 
long as we work with random mappings with tight laws. 
To be specific, we take $\mathcal{B}_{\mathcal{X}}
=\mathcal{B}(\mathcal{X})$.
\end{rem}

Consider the random map 
$X_n=\bigl\{X_n^{(l)}\bigr\}_{l\in \N}
:\bigl(\Omega,\mathcal{F},
\mathbb{P}\bigr) \to \bigl(\mathcal{X},
\mathcal{B}_{\mathcal{X}} \bigr)$ defined by 
\eqref{eq:Xn-def}. By Theorem \ref{thm:bounds1}, one can 
check that each factor $X_n^{(l)}$ is a random 
variable (Borel measurable). We only prove this 
for the nonlinear parts of $X_n$ involving $q_{\ep_n}$ 
(the other parts are simpler). By construction, 
$\Omega\overset{q_{\varepsilon_n}}{\xrightarrow{\hspace*{0.3cm}}} 
C([0,T];H^1(\T))$ is a random variable. 
Since all of our nonlinear functions 
or entropies (here denoted by the 
generic placeholder $\beta$) are continuous 
real-valued functions and satisfy (at least) the 
bound $\abs{\beta(v)}\lesssim \abs{v}^{3-1}$, Nemytskii 
theory ensures then that these entropies $\beta$, when 
viewed as operators, are bounded and continuous 
from $L^3_{t,x}$ into $L^{3/2}_{t,x}$. 
Because $L^{3/2}_{t,x}$ embeds continuously in 
$L^r_{t,x}-w$ and $C_tH^1_x$ embeds continuously 
in $L^3_{t,x}$, the composition 
$\Omega\overset{\beta(q_{\varepsilon_n})}{\xrightarrow{\hspace*{0.3cm}}}
L^r_{t,x}-w$ is a random variable.

\subsection{Compactness and tightness criteria}

The goal is to establish the tightness 
of the joint laws of $X_n$. The most difficult 
part is to verify the tightness of the laws 
of the energy variables $q_{\ep_n}^2$ and 
$(q_{\ep_n})_{\pm}^2$---and 
similarly also $S_\ell\bigl((q_{\ep_n})_\pm\bigr)$, 
$S_\ell'\bigl((q_{\ep_n})_\pm\bigr)q_{\ep_n}$---which 
take values in a quasi-Polish space of the form 
$L^{p_1}\bigl(L^{p_2}_w\bigr)$, for some $p_1,p_2\in (1,\infty)$, 
see \eqref{eq:LptLpx-weak}. This space encodes strong temporal and 
weak spatial compactness. The strong  
$t$-compactness of $q_{\ep_n}^2$ is essential 
for our analysis, noting that there is no hope of establishing 
uniform H\"older continuity in $t$, even if the spatial 
topology is weak. This excludes the traditional compactness approach 
based on tightness in the space $C([0,T];L^p(\T)-w)$, 
used by many of the references listed in Section \ref{sec:intro}.
Indeed, the space $L^{p_1}\bigl(L^{p_2}_w\bigr)$ was 
carefully singled out to resolve this particular 
predicament of the energy variable. 

The following result, which is of independent interest, provides
general criteria for compactness in $L^{p_1}\bigl(L^{p_2}_w\bigr)$.
These criteria will be later used in the analysis of tightness.
For the space $L^{p_1}\bigl(L^{p_2}_w\bigr)$ there exists a sequence
of continuous functionals that separate points and generate
the Borel $\sigma$-algebra. This fact is discussed in
Appendix \ref{sec:qpolish} and can be found in \eqref{eq:LptLpx-weak}.
Based on this, Jakubowski \cite[page 169]{Jakubowski:1997aa}
states that the notions of compactness and
sequential compactness are equivalent.

The lemma stated below identifies conditions that ensure
the \textit{relative} sequential compactness of a subset in
$L^{p_1}\bigl(L^{p_2}_w\bigr)$. In Appendix \ref{sec:SJThm_appendix}, we show
that the notions of \textit{relative compactness}
and \textit{relative sequential compactness} are also the same
in quasi-Polish spaces like $L^{p_1}\bigl(L^{p_2}_w\bigr)$.
Whence, the closure $\overline{\mathcal{K}}$ 
of a relatively sequentially compact
set $\mathcal{K}$ can be used to verify the tightness
condition of Jakubowski's theorem \cite{Jakubowski:1997aa}
(see Theorem \ref{thm:Jakubowski}).

\begin{lem}[compactness criterion]\label{lem:new-compact}
Fix some integrability indices $p_1,p_2\in (1,\infty)$,
and consider the space $L^{p_1}\bigl(L^{p_2}_w\bigr)
= L^{p_1}\bigl([0,T];L^{p_2}(\T)-w\bigr)$,
cf.~\eqref{eq:LptLpx-weak}. Let $\mathcal{K}$ be a subset
of $L^{p_1}\bigl(L^{p_2}_w\bigr)$ for which
the following conditions hold uniformly in $Q\in \mathcal{K}$:
\begin{align*}
        & \mathrm{(i)}\, \, \,
        \norm{Q}_{L^{p_1}([0,T];L^{p_2}(\T))}\lesssim 1,
        \\ &
        \mathrm{(ii)}\, \, \,
        \norm{Q}_{L^{\bar p_1}([0,T];L^1(\T))}\lesssim 1,
        \quad \text{for some $\bar p_1>p_1$},
        \\ &
        \mathrm{(iii)}\, \, \,
        \int_0^{T-\tau} \abs{\,\int_{\T}
        \varphi(x) \bigl(Q(t+\tau,x)-Q(t,x)\bigr)\, \d x}\, \d t
        \totau 0, \quad \forall \varphi\in C^\infty(\T).
\end{align*}
Then $\mathcal{K}$ is relatively sequentially compact 
in $L^{p_1}\bigl(L^{p_2}_w\bigr)$.
\end{lem}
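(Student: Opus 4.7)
The plan is to show that any sequence $\{Q_n\}_{n\in\N}\subset\mathcal{K}$ has a subsequence that converges in $L^{p_1}\bigl(L^{p_2}_w\bigr)$. Recalling that the topology on this space is generated by the linear functionals $Q\mapsto\int_0^T g(t)\int_\T\varphi(x)Q(t,x)\,\d x\,\d t$ with $g\in L^{p_1'}([0,T])$ and $\varphi\in L^{p_2'}(\T)$ (and, concretely, via a countable separating family of test pairs as in \eqref{eq:LptLpx-weak}), convergence $Q_n\to Q$ in $L^{p_1}\bigl(L^{p_2}_w\bigr)$ amounts to strong $L^{p_1}([0,T])$ convergence of the duality pairings
$f_n^\varphi(t):=\int_\T\varphi(x)Q_n(t,x)\,\d x$ toward $f^\varphi(t):=\int_\T\varphi(x)Q(t,x)\,\d x$, for every $\varphi$ in a countable dense subset of $L^{p_2'}(\T)$.

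First I would use the reflexivity of $L^{p_1}\bigl([0,T];L^{p_2}(\T)\bigr)$ together with condition (i) to extract a (not relabelled) subsequence $Q_n\weak Q$ weakly in $L^{p_1}([0,T];L^{p_2}(\T))$, which already yields weak convergence of $f_n^\varphi$ to $f^\varphi$ in $L^{p_1}([0,T])$ for each $\varphi\in L^{p_2'}(\T)$. The task then reduces to \emph{upgrading} this weak convergence to strong convergence in $L^{p_1}([0,T])$ for each fixed $\varphi\in C^\infty(\T)$. For this I would verify the hypotheses of the Kolmogorov--Riesz--Fr\'echet theorem for the scalar family $\{f_n^\varphi\}$. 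Uniform $L^{p_1}([0,T])$ boundedness follows from (i) and H\"older's inequality in $x$. The remaining requirement is the uniform-in-$n$ equicontinuity
$$
\lim_{\tau\downarrow 0}\,\sup_{n}\,\norm{f_n^\varphi(\cdot+\tau)-f_n^\varphi(\cdot)}_{L^{p_1}([0,T-\tau])}=0.
$$

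The main obstacle is precisely the verification of this equicontinuity, since condition (iii) only supplies an $L^1_t$-type translation control, whereas we need an $L^{p_1}_t$-type control. The key idea is to interpolate, using (ii) to supply the higher integrability: by log-convexity of $L^p$ norms, with $\theta\in(0,1)$ determined by $\tfrac{1}{p_1}=\tfrac{\theta}{\bar p_1}+(1-\theta)$,
$$
\norm{f_n^\varphi(\cdot+\tau)-f_n^\varphi(\cdot)}_{L^{p_1}}
\le \norm{f_n^\varphi(\cdot+\tau)-f_n^\varphi(\cdot)}_{L^{\bar p_1}}^{\theta}
\norm{f_n^\varphi(\cdot+\tau)-f_n^\varphi(\cdot)}_{L^{1}}^{1-\theta}.
$$
The first factor is bounded uniformly in $n$ and $\tau$ by $2\norm{\varphi}_{L^\infty}\norm{Q_n}_{L^{\bar p_1}([0,T];L^1(\T))}\lesssim 1$ thanks to (ii), while the second factor is exactly the quantity in (iii) and tends to zero as $\tau\downarrow 0$, uniformly in $n$ by the uniformity hypothesis on $\mathcal{K}$. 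Hence the product tends to zero uniformly in $n$, establishing the required equicontinuity. Applying Kolmogorov--Riesz--Fr\'echet yields relative compactness of $\{f_n^\varphi\}$ in $L^{p_1}([0,T])$, and combining with the already established weak convergence, $f_n^\varphi\to f^\varphi$ strongly in $L^{p_1}([0,T])$.

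Finally, I would fix a countable set $\{\varphi_k\}_{k\in\N}\subset C^\infty(\T)$ that is dense in $L^{p_2'}(\T)$ and apply a standard diagonal extraction to produce a single subsequence along which $f_n^{\varphi_k}\to f^{\varphi_k}$ strongly in $L^{p_1}([0,T])$ for every $k$. Extension to arbitrary $\varphi\in L^{p_2'}(\T)$ then follows by density: for any $\varphi$ and any approximant $\varphi_k$, H\"older's inequality combined with (i) gives $\sup_n\norm{f_n^\varphi-f_n^{\varphi_k}}_{L^{p_1}}\lesssim\norm{\varphi-\varphi_k}_{L^{p_2'}}$, and analogously for $f^\varphi-f^{\varphi_k}$, so a $3\eps$-argument closes the gap. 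This delivers convergence in $L^{p_1}\bigl(L^{p_2}_w\bigr)$ and hence the relative sequential compactness of $\mathcal{K}$.
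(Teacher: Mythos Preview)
Your proof is correct and takes a genuinely different, more elementary route than the paper. The paper proceeds by spatial mollification: it sets $Q_{n,\delta}=Q_n*J_\delta$, uses (iii) with $\varphi=J_\delta(y-\cdot)$ to get an $L^1_{t,x}$ translation estimate on the mollified objects, combines (i) and (ii) to place $Q_{n,\delta}$ in $L^{\bar p_1}_tL^1_x\cap L^1_tW^{1,1}_x$, and then invokes Simon's Bochner-space compactness theorem \cite[Theorem~4]{Simon:1987vn} to get strong convergence of $Q_{n,\delta}$ in $L^{p_1}_tL^1_x$ for each fixed $\delta$. A final three-term splitting (mollification error in the test function via (i), strong convergence of mollified quantities, and mollifier removal on the limit) closes the argument.

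Your approach bypasses both the mollification layer and the vector-valued compactness machinery: you reduce immediately to the scalar families $t\mapsto\int_\T\varphi\,Q_n(t,\cdot)\,\d x$ and observe that the interpolation $\norm{\cdot}_{L^{p_1}}\le\norm{\cdot}_{L^{\bar p_1}}^\theta\norm{\cdot}_{L^1}^{1-\theta}$ directly upgrades the $L^1_t$ translation control (iii) to $L^{p_1}_t$ using the higher integrability (ii), after which the scalar Kolmogorov--Riesz--Fr\'echet theorem suffices. This is cleaner and makes the role of (ii) completely transparent: it is exactly what allows the interpolation step. The paper's route has the minor advantage that it shows en passant that the mollified sequence converges strongly in a Bochner space, which could be useful elsewhere, but for the stated lemma your argument is more direct. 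One small note: the diagonal extraction over the countable $\{\varphi_k\}$ is unnecessary---once you have fixed the weakly convergent subsequence, relative compactness of $\{f_n^{\varphi_k}\}$ in $L^{p_1}$ plus uniqueness of the weak limit already forces strong convergence of the full (sub)sequence for each $k$.
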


\begin{rem}
Note carefully how, in $\mathrm{(ii)}$, some higher temporal
integrability is traded for low spatial integrability.
This flexibility is important for us. However, in other applications,
if one is happy with the temporal integrability provided by $\mathrm{(i)}$,
then $\mathrm{(ii)}$ can be dropped at the expense of getting
compactness in $L^p\bigl(L^{p_2}_w\bigr)$, $\forall p<p_1$.
\end{rem}

\begin{proof}
Consider a subset $\mathcal{K}\subset L^{p_1}\bigl(L^{p_2}_w\bigr)$
for which (i), (ii) and (iii) hold. To establish the lemma, we must
demonstrate that for any sequence $\left\{Q_n\right\}_{n\in \N}$
in $\mathcal{K}$, it is possible to find a subsequence that
converges in $L^{p_1}\bigl(L^{p_2}_w\bigr)$.

By (i), there exists a subsequence
$\left\{Q_{n_j}\right\}_{j\in \N}$
of $\left\{Q_n\right\}_{n\in \N}$
that converges weakly to some $Q$
in $L^{p_1}([0,T];L^{p_2}(\T))$:
\begin{equation}\label{eq:weak-limit-Q}
        \int_0^T \int_\T \psi(t)\varphi(x)Q_{n_j}(t,x)\, \d x\, \d t
        \toj \int_0^T \int_\T \psi(t)\varphi(x)Q(t,x)\, \d x\, \d t,
\end{equation}
for all $\psi\in L^{p_1'}([0,T])$ and $\varphi\in L^{p_2'}(\T)$,
$\frac{1}{p_1}+\frac{1}{p_1'}= \frac{1}{p_2}+\frac{1}{p_2'}=1$.

Let $J_\delta$ be a standard (Friedrichs)
mollifier in $x$ and set
$$
Q_{n_j,\delta}=Q_{n_j}*J_\delta, \quad
Q_{\delta}=Q*J_\delta.
$$
Then
\begin{equation}\label{eq:conv-weak-conv}
        \begin{split}
                & Q_{n_j,\delta}\tojweak Q_{\delta}\quad
                \text{in $L^{p_1}([0,T];L^{p_2}(\T))$,
                for each fixed $\delta$},
                \\ &
                Q_{\delta}\todelta Q
                \quad \text{in $L^{p_1}([0,T];L^{p_2}(\T))$},
        \end{split}
\end{equation}
where the second convergence comes from basic
properties of mollifiers (in $x$) and, via
(i), Lebesgue's dominated convergence theorem in $t$.
The first convergence can be proved using a basic
property of the convolution product. Indeed, we have
\begin{align*}
        & \abs{\int_0^T \int_\T \psi(t)\varphi(x)
        \left(Q_\delta(t,x)-Q_{n_j,\delta}(t,x)\right)\, \d x\, \d t}
        \\ & \qquad =
        \abs{\int_0^T \int_\T \psi(t)\varphi_\delta(x)
        \left(Q(t,x)-Q_{n_j}(t,x)\right)\, \d x\, \d t}
        \toj 0,
\end{align*}
for each fixed $\delta>0$, recalling that the
algebraic tensor product $L^{p_1'}\otimes L^{p_2'}$
is dense in $L^{p_1'}(L^{p_2'})$. 

By the translation estimate (iii) with
$\varphi(x) =J_\delta(y-x)$, for any $y\in \T$,
\begin{align*}
        \int_0^{T-\tau} \abs{Q_{n_j,\delta}(t+\tau,y)
        -Q_{n_j,\delta}(t,y)}\, \d t
        \totau 0,
\end{align*}
uniformly in $j$. Using (i) and
Vitali's convergence theorem (in $y$),
\begin{equation}\label{eq:temp-translation-y}
        \int_0^{T-\tau}\!\!\int_\T\abs{Q_{n_j,\delta}(t+\tau,y)
        -Q_{n_j,\delta}(t,y)} \,\d y\, \d t\totau 0,
\end{equation}
uniformly in $j$.

Next, by (i), we have
$\norm{\pd_x Q_{n_j,\delta}}_{L^{p_1}([0,T];L^1(\T))}
\lesssim_\delta 1$ and thus
$$
\norm{Q_{n_j,\delta}}_{L^{p_1}([0,T];W^{1,1}(\T))}
\lesssim_\delta 1.
$$
By (ii), we also deduce that
$$
\norm{Q_{n_j,\delta}}_{L^{\bar{p}_1}([0,T];L^1(\T))}
\lesssim 1, \quad \text{where $\bar{p}_1>p_1$}.
$$

Consider the compact embedding
$$
W^{1,1}(\T)\doublehookrightarrow L^1(\T),
$$
and now note that $\left\{Q_{n_j,\delta}\right\}_{j\in \N}$
is bounded in
$$
L^{\bar{p}_1}([0,T];L^1(\T))\bigcap
L^1([0,T];W^{1,1}(\T)),
$$
uniformly in $j$, for each fixed $\delta$.
Besides, from \eqref{eq:temp-translation-y},
$$
\norm{Q_{n_j,\delta}(\cdot+\tau,\cdot)
-Q_{n_j,\delta}}_{L^1([t_1,t_2];L^1(\T))}
\totau 0,
$$
for all $0<t_1<t_2<T$, uniformly in $j$,
for each fixed $\delta$.
By \cite[Theorem 4]{Simon:1987vn}, we may
therefore assume that there exists a
limit $\overline{Q}_\delta\in L^{p_1}([0,T];L^1(\T))$
such that
\begin{equation}\label{eq:conv-strong-conv}
        Q_{n_j,\delta}\toj \overline{Q}_\delta
        \quad \text{in $L^{p_1}([0,T];L^1(\T))$},
\end{equation}
for each fixed $\delta$. However, by the
uniqueness of the weak limit
in \eqref{eq:conv-weak-conv},
$$
\overline{Q}_{\delta}=Q_{\delta}=Q*J_\delta.
$$
In fact, all subsequences extracted
from $\left\{Q_{n_j,\delta}\right\}_{j\in \N}$ have further
subsequences that converge to the same
limit $Q*J_\delta$, and therefore
the original sequence also converges to that limit.

Let us verify that $Q_{n_j}\toj Q$
in $L^{p_1}\bigl(L^{p_2}_w\bigr)$,
where $Q$ is defined in \eqref{eq:weak-limit-Q}.
Fix any $\varphi\in C^\infty(\T)$.
We proceed as follows:
\begin{align*}
        I_\varphi(j) & =\int_0^T \abs{\, \int_\T \varphi(x)
        \bigl( Q(t,x)-Q_{n_j}(t,x)\bigr)\, \d x }^{p_1}\, \d t
        \\ & \lesssim
        \underbrace{\int_0^T \abs{\, \int_\T
        \varphi(x)\bigl( Q_\delta(t,x)-Q_{n_j,\delta}(t,x)\bigr)
        \, \d x }^{p_1}\, \d t}_{=:I_1(j,\delta)}
        \\ & \qquad
        + \underbrace{\int_0^T \abs{\int_\T \varphi(x)
        \bigl(Q(t,x)-Q_\delta(t,x)\bigr)
        \, \d x }^{p_1}\, \d t}_{=:I_2(\delta)}
        \\ & \qquad
        +\underbrace{\int_0^T \abs{\int_\T \varphi(x)\bigl(
        Q_{n_j}(t,x)-Q_{n_j,\delta}(t,x)\bigr)
        \, \d x }^{p_1}\, \d t}_{=:I_3(\delta,j)}.
\end{align*}
By \eqref{eq:conv-strong-conv},
\begin{align*}
        I_1(j,\delta)
        \leq \norm{\varphi}_{L^\infty}
        \norm{Q_\delta-Q_{n_j,\delta}}_{L^{p_1}([0,T];L^1(\T))}^{p_1}
        \toj 0,
\end{align*}
for each fixed $\delta>0$. Next, we show that
$I_2$ and $I_3$ tend to zero as $\delta\to 0$,
uniformly in $j$. By H\"older's inequality,
\eqref{eq:conv-weak-conv}, and
$C^\infty(\T)\hookrightarrow L^{p_2'}(\T)$
\begin{align*}
        I_2(\delta)
        & \leq
        \norm{Q-Q_{\delta}}_{L^{p_1}([0,T];L^{p_2}(\T))}^{p_1}
        \norm{\varphi}_{L^{p_2'}(\T)}^{p_1}
        \todelta 0,
\end{align*}
uniformly in $j$. Finally, by (i), a basic property
of the convolution product, H\"older's inequality,
and $C^\infty(\T)\hookrightarrow L^{p_2'}(\T)$,
it follows that
\begin{align*}
        I_3(j,\delta) & =
        \int_0^T \abs{\int_\T Q_{n_j}(t,x)
        \bigl(\varphi(x)-\varphi_\delta(x)\bigr)\, \d x }^{p_1}\, \d t
        \\ & \leq
        \norm{Q_{n_j}}_{L^{p_1}([0,T];L^{p_2}(\T))}^{p_1}
        \norm{\varphi_\delta-\varphi}_{L^{p_2'}(\T)}^{p_1}
        \\ & \lesssim
        \norm{\varphi_\delta-\varphi}_{L^{p_2'}(\T)}^{p_1}
        \todelta 0,
        \quad \text{uniformly in $j$}.
\end{align*}

Summarising, to any given $\kappa>0$, we can choose $\delta\leq\delta_0$,
for a small enough $\delta_0=\delta_0(\kappa)$, such that
$I_2(\delta)+I_3(\delta,j)\leq \kappa/2$ for all $j$,
and then choose an integer $j_0=j_0(\delta_0)$ such that $j\geq j_0$
implies $I_1(j,\delta_0)\leq \kappa/2$, and thus
$I_\varphi(j)\leq \kappa$ for all $j\ge j_0$.
In other words, $I_\varphi(j)\toj 0$, for any $\varphi\in C^\infty(\T)$.
By density of $C^\infty(\T)$ in $L^{p_2'}(\T)$, this convergence
holds for all $\varphi\in L^{p_2'}(\T)$, which concludes the proof.
\end{proof}

We use the previous lemma to formulate a 
tightness criterion in $L^{p_1}\bigl(L^{p_2}_w\bigr)$.

\begin{lem}[tightness criterion]\label{lem:tightness-LpLpx-weak}
Fix $p_1,p_2\in (1,\infty)$ and consider 
the quasi-Polish space $L^{p_1}\bigl(L^{p_2}_w\bigr)$, 
cf.~\eqref{eq:LptLpx-weak}. Let $\left\{Q_n\right\}_{n\in \N}$ 
be a sequence of random variables, defined 
on a standard probability space 
$\bigl(\Omega,\mathcal{F},\mathbb{P}\bigr)$, 
that take values in $L^{p_1}\bigl(L^{p_2}_w\bigr)$. 
Suppose the following conditions 
hold (uniformly in $n\in \N$):
\begin{align*}
	&\mathrm{(i)}\,\, \, 
	\Ex \norm{Q_n}_{L^{p_1}([0,T];L^{p_2}(\T))}
	\lesssim 1,
	\\ &
	\mathrm{(ii)}\,\, \, 
	\Ex \norm{Q_n}_{L^{\bar p_1}([0,T];L^1(\T))}
	\lesssim 1, \quad \text{for some $\bar p_1>p_1$},
\end{align*}
and, for all $\varphi\in C^\infty(\T)$ 
and $\vartheta\in \bigl(0,T\wedge 1\bigr)$,  
\begin{align*}
	\mathrm{(iii)} \, \, \, \Ex \sup_{\tau\in (0,\vartheta)}
	\int_0^{T-\tau}\abs{\, \int_{\T} 
	\varphi(x)\bigl(Q_n(t+\tau,x)-Q_n(t,x)\bigr)\, \d x}\, \d t 
	\leq  C_\varphi\vartheta^\alpha,
\end{align*}
for some $\alpha\in (0,1)$ and a 
constant $C_\varphi$ independent of $n$. Then the sequence 
$\left\{\mathcal{L}(Q_n)\right\}_{n\in \N}$ 
of probability laws is tight on $L^{p_1}\bigl(L^{p_2}_w\bigr)$.
\end{lem}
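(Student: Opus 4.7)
The strategy is to construct, for each $\eta>0$, a relatively sequentially compact set $K_\eta\subset L^{p_1}\bigl(L^{p_2}_w\bigr)$ (via Lemma \ref{lem:new-compact}) such that $\mathbb{P}(Q_n\in K_\eta)\geq 1-\eta$ uniformly in $n$; taking the closure $\overline{K_\eta}$ then yields a compact set satisfying Jakubowski's tightness condition. First I would fix a countable family $\{\varphi_k\}_{k\in\N}\subset C^\infty(\T)$ that is dense in $L^{p_2'}(\T)$, where $p_2'$ is the conjugate exponent to $p_2$. The key quantitative construction is then: for given $\eta>0$, pick $R_1,R_2>0$ so that, by Markov's inequality applied to (i) and (ii),
\begin{equation*}
	\mathbb{P}\bigl(\norm{Q_n}_{L^{p_1}([0,T];L^{p_2}(\T))}>R_1\bigr)\leq \eta/4,
	\qquad
	\mathbb{P}\bigl(\norm{Q_n}_{L^{\bar p_1}([0,T];L^1(\T))}>R_2\bigr)\leq \eta/4;
\end{equation*}
and, for each pair $(k,j)\in\N^2$, choose $m_{k,j}\in\N$ large enough that, using (iii) and Markov,
\begin{equation*}
	\mathbb{P}\!\left(\sup_{\tau\in(0,1/m_{k,j})}\!\int_0^{T-\tau}\!\abs{\,\int_\T \varphi_k(x)\bigl(Q_n(t+\tau,x)-Q_n(t,x)\bigr)\d x}\d t > 1/j\right)\leq \eta\, 2^{-k-j-3}.
\end{equation*}
This is possible because the left-hand side is controlled by $j\,C_{\varphi_k}\,m_{k,j}^{-\alpha}$, which we can make arbitrarily small by choosing $m_{k,j}\geq \bigl(j\,C_{\varphi_k}\,2^{k+j+3}/\eta\bigr)^{1/\alpha}$.

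Next I would define
\begin{equation*}
	K_\eta=\Bigl\{Q:\ \norm{Q}_{L^{p_1}(L^{p_2})}\leq R_1,\ \norm{Q}_{L^{\bar p_1}(L^1)}\leq R_2,\ \text{and for all $k,j\in\N$, } \Psi_{k,j}(Q)\leq 1/j\Bigr\},
\end{equation*}
where $\Psi_{k,j}(Q):=\sup_{\tau\in\mathbb{Q}\cap(0,1/m_{k,j})}\int_0^{T-\tau}\abs{\int_\T \varphi_k(Q(t+\tau)-Q(t))\d x}\d t$; restricting the supremum to rationals guarantees measurability (cf.~the remark after Proposition \ref{prop:translation-time}). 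A union bound yields $\mathbb{P}(Q_n\in K_\eta)\geq 1 - \eta/4 - \eta/4 - \sum_{k,j}\eta 2^{-k-j-3}\geq 1-\eta$, uniformly in $n$.

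Finally I would verify the hypotheses of Lemma \ref{lem:new-compact} on $K_\eta$: conditions (i) and (ii) there hold with constants $R_1,R_2$ by construction, and condition (iii) must be extended from the countable family $\{\varphi_k\}$ to all of $C^\infty(\T)$. For any $\varphi\in C^\infty(\T)$ and $\epsilon>0$, the split
\begin{equation*}
	\int_0^{T-\tau}\abs{\,\int_\T \varphi\bigl(Q(t+\tau)-Q(t)\bigr)\d x}\d t\leq \Psi(\varphi_k;Q,\tau)+2\,T^{1/p_1'}R_1\,\norm{\varphi-\varphi_k}_{L^{p_2'}(\T)},
\end{equation*}
combined with density of $\{\varphi_k\}$ in $L^{p_2'}$ and bound (a) of $K_\eta$, lets one pick first $k$ making the second summand at most $\epsilon/2$, then $j$ large so $1/j\leq\epsilon/2$, then $\tau<1/m_{k,j}$ to render the first summand at most $1/j$. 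Hence $K_\eta$ is relatively sequentially compact; its closure $\overline{K_\eta}$ is compact in the quasi-Polish space $L^{p_1}\bigl(L^{p_2}_w\bigr)$ (using the equivalence of relative compactness and relative sequential compactness discussed in Appendix \ref{sec:SJThm_appendix}), and $\mathcal{L}(Q_n)(\overline{K_\eta})\geq 1-\eta$ uniformly in $n$, which is the required tightness. The only delicate point I anticipate is ensuring measurability of the functionals $\Psi_{k,j}$ (handled by restricting to $\tau\in\mathbb{Q}$) and applying Markov cleanly to the supremum in (iii); everything else is a bookkeeping exercise.
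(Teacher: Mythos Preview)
Your proposal is correct and follows essentially the same approach as the paper: construct a set on which the three conditions of Lemma~\ref{lem:new-compact} hold with uniform constants, control its complement via Chebyshev/Markov applied to (i)--(iii), and pass to the closure. The paper's bookkeeping is organised a bit differently---it uses a single scale parameter $a$ together with auxiliary sequences $\{b_k\},\{\nu_l\},\{\vartheta_l\}$ rather than your separate thresholds $R_1,R_2$ and double-indexed $m_{k,j}$---but the substance is identical; your explicit treatment of measurability (rational $\tau$) and of the density step from $\{\varphi_k\}$ to general $\varphi$ are points the paper leaves implicit or handles inside the proof of Lemma~\ref{lem:new-compact}.
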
 

\begin{proof}
We will verify the tightness of the laws 
$\mu_n=\mathcal{L}(Q_n)$ on $L^{p_1}\bigl(L^{p_2}_w\bigr)$ 
by using Lemma \ref{lem:new-compact} to 
produce, for each $\kappa>0$, a compact 
set $\mathcal{K}_{\kappa}$ in $L^{p_1}\bigl(L^{p_2}_w\bigr)$ 
such that $\mu_n\bigl(\mathcal{K}_{\kappa}^c\bigr)
=\prob\bigl(X_n\in \mathcal{K}_{\kappa}^c\bigr)\le \kappa$, 
uniformly in $n$.

For arbitrary sequences $\left\{b_k\right\}_{k\in \N}$, 
$\left\{\nu_l\right\}_{l\in \N}$, 
$\left\{\vartheta_l\right\}_{l\in \N}$ of 
positive numbers, with $\nu_l,\vartheta_l\to 0$ 
as $l\to\infty$, and an arbitrary function sequence 
$\left\{\varphi_k\right\}_{k\in \N}$ that is 
dense in $C^\infty(\T)$ (for the 
uniform topology), we introduce the set
\begin{align*}
	K_a & = \bigcap_{k=1}^\infty
	\Biggl\{Q\in L^{p_1}\bigl(L^{p_2}_w\bigr): 
	\norm{Q}_{L^{p_1}([0,T];L^{p_2}(\T))}+ 
	\norm{Q}_{L^{\bar{p}_1}([0,T];L^1(\T))}
	\\ & \qquad\quad
	+\sup_{l\in \N} \frac{1}{\nu_l}
	\sup_{\tau\in (0,\vartheta_l)}
	\int_0^{T-\tau} \abs{\, \int_{\T} 
	\varphi_k(x)\bigl(Q(t+\tau,x)-Q(t,x)\bigr)\, \d x}\, \d t
	\leq a\, b_k\Biggr\},
\end{align*}
for $a>0$. By Lemma \ref{lem:new-compact}, the 
set $K_a$ is relatively compact in $L^{p_1}\bigl(L^{p_2}_w\bigr)$, 
for each $a>0$. By the Chebyshev inequality and the assumptions 
(i), (ii) and (iii),
\begin{multline*}
	\mathbb{P}\bigl( Q_n \in \mathcal{K}_a^c\bigr)
	\leq\sum_{k=1}^\infty{\frac3{\, a\,b_k}}\Ex 
	\norm{Q_n}_{L^{p_1}([0,T];L^{p_2}(\T))}
	+\sum_{k=1}^\infty\frac{3}{\, a\, b_k}
	\Ex\norm{Q_n}_{L^{\bar{p}_1}([0,T];L^1(\T))}
	\\  
	+\sum_{k,l=1}^\infty\frac3{\, a\, b_k \, \nu_l}
	\Ex \sup_{\tau\in (0,\vartheta_l)}
	\int_0^{T-\tau} \abs{\,\int_{\T} 
	\varphi_k\bigl(Q_n(t+\tau,x)-Q_n(t,x)\bigr)\, \d x}\, \d t
	\\ 
	\leq \frac{C_1}{a}\sum_{k=1}^\infty\frac{1}{b_k}
	+\frac{1}{a}\left(\, \sum_{k=1}^\infty 
	\frac{C_{\varphi_k}}{b_k}\right)
	\left(\, \sum_{l=1}^\infty 
	\frac{\vartheta_l^\alpha}{\nu_l}\right),
\end{multline*}
for some $n$-independent constant $C_1$. 

Particularising $b_k=2^{k+1}\max\left(C_1,C_{\varphi_k}\right)$, 
and $\tau_l,\vartheta_l\to 0$ with
$\nu_l=\vartheta_l^\alpha\, 2^l$, we obtain 
$\mathbb{P}\bigl(Q_n \in \mathcal{K}_a^c\bigr)<\frac{1}{a}$, 
which can be made $\leq \kappa$ by taking $a$ large. 
As a result, we can specify the required compact 
$\mathcal{K}_\kappa$ as the closure of $K_a$, 
for some $a=a(\kappa)$, such that 
$\mu_n\bigl(\mathcal{K}_\kappa^c\bigr)\leq \kappa$.
\end{proof}

\subsection{Tightness and a.s.~representations}

We are now in a position to verify 
the crucial tightness property of $X_n$.

\begin{lem}[tightness]\label{thm:jointlaw_tightness}
Consider the random variables 
$X_n:\bigl(\Omega,\mathcal{F},
\mathbb{P}\bigr) \to \bigl(\mathcal{X},
\mathcal{B}_{\mathcal{X}} \bigr)$ defined by 
\eqref{eq:Xn-def} and 
\eqref{eq:random-variables-I}, 
\eqref{eq:random-variables-II}, 
\eqref{eq:random-variables-III}, 
\eqref{eq:u0-approx}, 
\eqref{eq:pathspaces}, 
\eqref{eq:pathspaces-joint}. 
Then the sequence $\left\{\mu_n=\mathcal{L}(X_n)\right\}_{n\in \N}$ 
of joint laws is tight, as probability measures 
defined on $\bigl(\mathcal{X},\mathcal{B}_{\mathcal{X}} \bigr)$.
\end{lem}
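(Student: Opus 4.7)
The plan is to establish tightness of each marginal law separately, then assemble them to get tightness of the joint law on the product space. For this assembly step, recall that $\mathcal{X}=\prod_{l\in\N}\mathcal{X}_l$ carries the product topology and that each $\mathcal{X}_l$ is quasi-Polish, so that $\mathcal{X}$ itself is quasi-Polish by Lemma \ref{thm:a_0}. Given $\kappa>0$, if $K_l\subset\mathcal{X}_l$ is a (sequentially) compact set with $\mathcal{L}(X_n^{(l)})(K_l^c)\leq \kappa/2^l$ uniformly in $n$, then by Tychonoff $K=\prod_l K_l$ is compact in $\mathcal{X}$ and $\mu_n(K^c)\leq \sum_l \mathcal{L}(X_n^{(l)})(K_l^c)\leq \kappa$. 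Hence it suffices to bound each marginal.

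For the Polish marginals, the law $\mathcal{L}(W)$ is a single measure on $C_t$ and therefore trivially tight; the laws $\mathcal{L}(z_n)$ are tight on $H^1_x$ thanks to the $L^{p_0}(\Omega;H^1)$-convergence assumed in \eqref{eq:u0-approx}; and $\mathcal{L}(u_{\ep_n})$ is tight on $C_tL^2_x$ by combining the $L^{p_0}(\Omega;L^\infty_tH^1_x)$ bound \eqref{eq:energybound} with the Hölder estimate \eqref{eq:Holder-viscous}, invoking the compact embedding $H^1(\T)\doublehookrightarrow L^2(\T)$ together with Arzelà--Ascoli (alternatively, directly from Theorem \ref{thm:bounds1}(iii) combined with an interpolation/Aubin--Lions argument). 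For the marginals taking values in weakly-topologised spaces $L^{p}_{t,x}-w$ (namely $q_{\ep_n}$, $(q_{\ep_n})_\pm$, $S_\ell'((q_{\ep_n})_\pm)$, $S_\ell''((q_{\ep_n})_\pm) q_{\ep_n}^2$, $S_\ell((q_{\ep_n})_\pm) q_{\ep_n}$, $S_\ell'((q_{\ep_n})_\pm)q_{\ep_n}^2$), I choose $r$ close enough to $3/2$ so that $2r<2+\alpha$ for some $\alpha\in(0,1)$, and then use Proposition \ref{thm:2plusalpha_apriori} together with the explicit polynomial-growth bounds \eqref{eq:entrop2} on each $\xi_\ell\in\mathbb{S}_\ell^\pm$ to conclude that each such variable is uniformly bounded in $L^{p}(\Omega;L^{p}_{t,x})$ for some $p$ strictly larger than the target exponent. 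Norm-bounded balls in reflexive $L^p$ are weakly compact (Banach--Alaoglu), and Chebyshev's inequality delivers tightness in the weak topology.

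The core difficulty is the tightness of the energy-type marginals, taking values in the ``strong-weak'' space $L^r(L^r_w)$ or $L^{2r}(L^{2r}_w)$. Here I apply the new tightness criterion, Lemma \ref{lem:tightness-LpLpx-weak}. Take for instance $Q_n=q_{\ep_n}^2$ on $L^r(L^r_w)$: condition (i) follows from Proposition \ref{thm:2plusalpha_apriori} since $2r<2+\alpha$; condition (ii) follows with $\bar p_1=p_0/2>2>r$ from the pointwise bound $\|q_{\ep_n}^2(t,\cdot)\|_{L^1_x}=\|q_{\ep_n}(t,\cdot)\|_{L^2_x}^2\leq \|u_{\ep_n}(t,\cdot)\|_{H^1_x}^2$ and \eqref{eq:energybound}; and condition (iii), the all-important temporal translation estimate, is precisely Proposition \ref{prop:translation-time} applied to the admissible nonlinearity $S(v)=v^2$, which satisfies \eqref{eq:entropies}. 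The same procedure with $S(v)=v_\pm^2$ handles the marginals $(q_{\ep_n})_\pm^2$, and with $S=S_\ell(v_\pm)$ or $S=S_\ell'(v_\pm)v$ (both of which satisfy \eqref{eq:entropies}, see \eqref{eq:entrop2} and \eqref{eq:beta-def}) handles the nonlinear compositions appearing in $F_{\ep_n}^{\mathbb{S}}$ that take values in $L^{2r}(L^{2r}_w)$; in each case the bounds are uniform in $n$ (for $\ell$ fixed), so Lemma \ref{lem:tightness-LpLpx-weak} yields marginal tightness.

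Finally, gluing the marginal compact sets into a product compact set in $\mathcal{X}$ as described in the first paragraph completes the proof. The principal obstacle, already addressed by Lemmas \ref{lem:new-compact}--\ref{lem:tightness-LpLpx-weak} and Proposition \ref{prop:translation-time}, was the lack of uniform temporal H\"older control on $q_{\ep_n}^2$ (even in a weak spatial topology), which rules out the more familiar tightness framework on $C([0,T];L^p-w)$ and necessitates working in $L^{p_1}(L^{p_2}_w)$ with the mixed strong-in-$t$/weak-in-$x$ criterion.
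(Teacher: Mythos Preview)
Your proof is correct and follows essentially the same route as the paper: reduce to marginal tightness via Tychonoff, handle the Polish factors with Arzel\`a--Ascoli/convergence/Radon arguments, the $L^p_{t,x}-w$ factors with Banach--Alaoglu plus Chebyshev, and the strong-weak $L^{p_1}(L^{p_2}_w)$ factors via Lemma \ref{lem:tightness-LpLpx-weak}, verifying its three hypotheses exactly as you do (higher integrability, the $L^\infty_tL^2_x$ energy bound for condition (ii), and Proposition \ref{prop:translation-time} for (iii)). One small caution: your parenthetical alternative for $\mathcal{L}(u_{\ep_n})$ via Theorem \ref{thm:bounds1}(iii) is not quite on target, since that item gives tightness on $C\bigl([0,T];H^1(\T)-w\bigr)$ rather than $C_tL^2_x$; your primary Arzel\`a--Ascoli argument is the right one and matches the paper.
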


\begin{proof}
For each $\kappa>0$, we must produce a compact 
set $\mathcal K_{\kappa}\subset \mathcal{X}$ such that
\begin{equation}\label{eq:tight-qpolish}
	\mu_n\bigl(\mathcal K_{\kappa}\bigr)
	>1-\kappa \quad 
	\Longleftrightarrow \quad
	\mu_n\bigl(\mathcal K_{\kappa}^c\bigr)
	=\prob\bigl(X_n\in \mathcal K_{\kappa}^c\bigr)
	\le \kappa,
\end{equation}
uniformly in $n$. By Tychonoff's theorem, 
the tightness of the joint laws on $\mathcal{X}$ 
follows from the tightness of the product measures 
$\bigotimes_{l\in \N} \mu_n^{(l)}$ on $\mathcal{X}$ (with the 
product $\sigma$-algebra). In other words, to 
prove \eqref{eq:tight-qpolish} it is sufficient to find compact sets 
$\mathcal{K}_{l,\kappa}\subset \mathcal{X}_l$ such that 
$\mu^{(l)}_n\bigl(\mathcal{K}_{l,\kappa}^c\bigr)\leq \kappa$, 
for arbitrary $\kappa>0$, for each $l\in \N$.

As most of the cases can be treated similarly, 
we will carry out the tightness analysis of $\mu^\xi_n$ only for 
$$
\xi=u,\,q,\, q^2, \, u_0,\, W,\, S_\ell(q_+),
\, S_\ell(q_+)'q, \, S_\ell(q_+)q,
$$
thereby making up for each path space 
in \eqref{eq:pathspaces} at least once, 
see also \eqref{eq:random-variables-I}, 
\eqref{eq:random-variables-II}, 
\eqref{eq:random-variables-III}, 
and \eqref{eq:u0-approx}.

First, we verify the tightness 
on $\mathcal{X}_u=C_tL^2_x$ of the laws 
$\mu^u_n$ of $u_{\eps_n}$ using the $L^{p_0}_\omega L^\infty_t 
H^1_x$ bound \eqref{eq:energybound} and 
the H\"older continuity estimate \eqref{eq:Holder-viscous}. 
It is enough to verify tightness 
via relative compactness. For $a>0$, set
\begin{align}
	K(a) & = \Bigl\{f \in C([0,T];L^2(\T))
	\bigcap L^\infty([0,T];H^1(\T)):
	\notag \\ & \qquad\qquad 
	\norm{f}_{L^\infty([0,T];H^1(\T))} 
	+\norm{f}_{C^\theta([0,T];L^2(\T))} \leq a 
	\Bigr\},
	\label{eq:Ka-def}
\end{align}
where $\theta \in (0,1/4)$ is fixed (and constrained 
by (ii) of Theorem \ref{thm:bounds1}). By 
the Arzel\`a--Ascoli theorem \cite[Lemma 1]{Simon:1987vn}, 
$K(a)$ is relatively compact in $C([0,T];L^2(\T))$. 
By the Chebyshev inequality,
\begin{align*}
	\mathbb{P}\bigl( u_{\eps_n} \in 
	K(a)^c\bigr)
	& < \frac{1}{a}\Ex 
	\norm{u_{\eps_n}}_{L^\infty([0,T];H^1(\T))}
	\\ & \qquad +\frac{1}{a}
	\Ex \norm{u_{\eps_n}}_{C^\theta([0,T];L^2(\T))}
	\overset{\eqref{eq:energybound}, 
	\eqref{eq:Holder-viscous}}{\lesssim} 
	\frac{1}{a},
\end{align*}
which can be made $\leq \kappa$ 
by taking $a$ large. Hence, we can specify 
the required compact $\mathcal{K}_{u,\kappa}$ 
as the closure of $K(a)$, for some $a=a(\kappa)$, 
such that $\mu^u_n\bigl(\mathcal{K}_{u,\kappa}^c\bigr)
\leq \kappa$.

Second, we consider $\mu^\xi_n$, 
$\xi=S_\ell(q_+)q$, $\ell\in \N$. 
By \eqref{eq:entropies_S_ell}, $\abs{S_\ell(v_+)v} 
\lesssim_\ell \abs{v}^2$. In view of 
Proposition \ref{thm:2plusalpha_apriori},
\begin{equation}\label{eq:F-pm-i-l-est}
	\Ex \norm{S_\ell\bigl((q_{\ep_n})_+
	\bigr)q_{\ep_n}}_{L^{r}([0,T]\times \T)}^{r}
	\lesssim 1, \quad \ell\in \N,
\end{equation}
where the integrability index $r$ 
appears in \eqref{eq:pathspaces}. 
Let $a$ be a positive number and 
consider the set
\begin{align*}
	K(a) = \Bigl\{f \in L^{r}([0,T]\times \T): 
	\norm{f}_{L^{r}([0,T]\times \T))}^{r}\leq a\Bigr\}.
\end{align*}
By the Banach--Alaoglu theorem and 
reflexivity of $L^{r}_{t,x}$, $K(a)$ 
is a compact subset of $\mathcal{X}_\xi
=L^{r}_{t,x}-w$, $\xi=S_\ell(q_+)q$, 
cf.~\eqref{eq:pathspaces}. 
By Chebyshev's inequality and \eqref{eq:F-pm-i-l-est},
\begin{align*}
	\mathbb{P}\Bigl(S_\ell\bigl((q_{\ep_n})_+\bigr)
	q_{\ep_n} \in K(a)^c\Bigr)<\frac{1}{a}\Ex 
	\norm{S_\ell\bigl((q_{\ep_n})_+
	\bigr)q_{\ep_n})}_{L^{r}([0,T]\times \T)}^{r} 
	\lesssim \frac{1}{a},
\end{align*}
which can be made $\leq \kappa$ for large $a$.  
Thus, we pick $K(a)$, for some $a=a(\kappa)$, as 
the required compact $\mathcal{K}_{\xi,\kappa}$ for which 
$\mu^\xi_n\bigl(\mathcal{K}_{\xi,\kappa}^c\bigr)
\leq \kappa$, for $\xi=S_\ell(q_+)q$, $\ell\in \N$.  
Similarly, we can construct a compact subset
$\mathcal{K}_{q,\kappa}$ of $\mathcal{X}_q
=L^{2r}_{t,x}-w$ such that 
$\mu^q_n\bigl(\mathcal{K}_{q,\kappa}^c\bigr)
\leq \kappa$. 

Since the law of $W$ is tight as a 
Radon measure on the (Polish) space $C([0,T])$, 
there is a compact subset $\mathcal{K}_{W,\kappa}$ 
of $C([0,T])$ such that $\mu^W_n
\bigl(\mathcal{K}_{W,\kappa}^c\bigr)\leq \kappa$.

By the hypothesis \eqref{eq:u0-approx}, 
$\Ex \norm{z_n - u_0}_{H^1(\T)}^{p_0} \ton 0$. 
Therefore, by Chebyshev's inequality, we deduce 
the tightness of the laws of $z_n$, 
that is, there exists a compact set $\mathcal{K}_{u_0,\kappa}$ 
in the space $H^1(\T)$ such that $\mu^{u_0}_n
\bigl(\mathcal{K}_{u_0,\kappa}^c\bigr)\leq \kappa$.

Next, let us consider the tightness of $\mu^{q^2}_n$ on 
the ``strong in $t$ and weak in $x$" path 
space $\mathcal{X}_{q^2}=L^r\bigl(L^r_w\bigr)$. 
We will apply Lemma \ref{lem:tightness-LpLpx-weak} to $Q_n=q_{\ep_n}^2$ 
with $p_1=p_2=r$, recalling that $r<3/2$ is fixed 
in \eqref{eq:pathspaces}. Note that the first condition (i) 
of the lemma is satisfied by the higher integrability 
estimate \eqref{eq:higher-int}. To verify 
(ii), we use the estimate \eqref{eq:energybound} as follows:
\begin{multline*}
	\left(\Ex \norm{Q_n}_{L^p([0,T];L^1(\T))}\right)^p
	=\Ex \int_0^T \norm{Q_n(t)}_{L^1(\T)}^p\, \d t
	\\ = \Ex \int_0^T \norm{q_{\ep_n}(t)}_{L^2(\T)}^{2p}\, \d t
	\lesssim_T 
	\Ex \norm{q_{\ep_n}}_{L^\infty([0,T];L^2(\T))}^{2p}
	\lesssim 1,	
\end{multline*}
for any $p\in \bigl[1,p_0/2\bigr]$, where $p_0>4$ is 
fixed in Theorem \ref{thm:main}. The final condition 
(iii) is satisfied by Proposition \ref{prop:translation-time} 
with $S(v)=v^2$. 

Similarly, applying Lemma \ref{lem:tightness-LpLpx-weak} 
to $Q_n=S_\ell\bigl((q_{\ep_n})_+\bigr)$ with $p_1=p_2=2r$, 
we deduce the tightness 
of $\mu^{S_\ell(q_+)}_n$ on $\mathcal{X}_{S_\ell(q_+)}
=L^{2r}\bigl(L^{2r}_w\bigr)$. The first condition (i) 
is satisfied by the higher integrability 
estimate \eqref{eq:higher-int}, recalling that 
$\abs{S_\ell\bigl((q_{\ep_n})_+\bigr)}
\lesssim_\ell \abs{q_{\ep_n}}$, 
cf.~\eqref{eq:entropies_S_ell}.
To verify (ii), note that
\begin{align*}
	\left(\Ex \norm{Q_n}_{L^p([0,T];L^1(\T))}\right)^p
	& \lesssim_\ell 
	\Ex \int_0^T \norm{q_{\ep_n}(t)}_{L^1(\T)}^{p}\, \d t
	\\ & 
	\lesssim_{T} 
	\Ex \norm{q_{\ep_n}}_{L^\infty([0,T];L^2(\T))}^{p}
	\lesssim 1,	
\end{align*}
for any $p\in [1,p_0]$ (keep in mind that $p_0>4$ and $2r<3$). 
The condition (iii) is satisfied by 
Proposition \ref{prop:translation-time}, 
which can be applied because $v\mapsto S_\ell(v_+)
\in W^{2,\infty}_{\loc}(\R)$, see 
\eqref{eq:entrop2}. Likewise, we can 
apply Lemma \ref{lem:tightness-LpLpx-weak} 
to $Q_n=S_\ell\bigl((q_{\ep_n})_+\bigr)'q_{\ep_n}$ 
(still with $p_1=p_2=2r$), to deduce the tightness 
of $\mu^{S_\ell(q_+)'q}_n$ on $\mathcal{X}_{S_\ell(q_+)'q}
=L^{2r}\bigl(L^{2r}_w\bigr)$. Here, note carefully that 
Proposition \ref{prop:translation-time} applies 
owing to the fact that the map $v\mapsto S_\ell(v_+)'v$ 
belongs to $W^{2,\infty}_{\loc}(\R)$, 
see \eqref{eq:beta-def}.
\end{proof}

Given Lemma \ref{thm:jointlaw_tightness} (tightness), the 
following theorem is an immediate consequence 
of the main result of Jakubowski \cite{Jakubowski:1997aa}, recalled 
as Theorem \ref{thm:Jakubowski} in the appendix. 
We refer to \cite{Brzezniak:2013aa,Brzezniak:2011aa,
Brzezniak:2013ab,Ondrejat:2010aa} for the first 
applications of the Jakubowski theorem to SPDEs. We rely on the 
Jakubowski version of Skorokhod's 
representation theorem because of the non-metrisable weak 
topologies in \eqref{eq:pathspaces-joint}. 

\begin{prop}[Skorokhod--Jakubowski representations]\label{thm:skorohod}
Fix a sequence $\{\ep_n\}$ of positive numbers 
with $\ep_n \to 0$ as $n\to \infty$, and 
consider the corresponding strong $H^m$ 
solutions $u_{\ep_n}$ of the viscous 
SPDE \eqref{eq:u_ch_ep} with initial data 
$u_{\ep_n}(0)=z_n$, cf.~\eqref{eq:u0-approx}. 
Denote the spatial gradient by $q_{\ep_n}=\pd_x u_{\ep_n}$. 
Consider the random mappings 
$X_n:\bigl(\Omega,\mathcal{F},
\mathbb{P}\bigr) \to \bigl(\mathcal{X},
\mathcal{B}_{\mathcal{X}} \bigr)$ defined by 
\eqref{eq:Xn-def} and 
\eqref{eq:random-variables-I}, 
\eqref{eq:random-variables-II}, 
\eqref{eq:random-variables-III}, 
\eqref{eq:u0-approx}, 
\eqref{eq:pathspaces}, 
\eqref{eq:pathspaces-joint}. 
There exist a new probability space 
$\bigl(\tilde{\Omega},\tilde{\mathcal{F}},
\tilde{\mathbb{P}}\bigr)$ and 
$\mathcal{X}$-valued random variables
\begin{equation}\label{eq:tilde-Xn}
	\begin{split}
		& \tilde X_n=
		\left(
		\tilde u_n,
		\tilde F_n^{q},
		\tilde F_n^{q^2},
		\tilde W_n, \tilde u_{0,n},
		\tilde F_n^{\mathbb{S}}
		\right),
		\quad
		\tilde X=
		\left(
		\tilde u,
		\overline{\tilde F^{q}},
		\overline{\tilde F^{q^2}},
		\tilde W, \tilde u_0,
		\overline{\tilde F^{\mathbb{S}} \,}
		\right),
	\end{split}
\end{equation}
defined on $\bigl(\tilde{\Omega},\tilde{\mathcal{F}},
\tilde{\mathbb{P}}\bigr)$, such that along a 
subsequence (notationally not relabelled) 
the joint laws of $X_n$ and $\tilde X_n$ coincide 
for all $n$, and $\tilde X_n \ton \tilde X$ 
almost surely, in the product topology on $\mathcal{X}$. 
In the first line of \eqref{eq:tilde-Xn}, we have
\begin{equation}\label{eq:SJ-nonlinear}
	\begin{split}
		& \tilde F_{\ep_n}^{\xi_\ell,\pm}
		=\xi_\ell\big|_{v=\tilde q_{\ep_n}},
		\quad \xi_\ell \in \mathbb{S}_\ell^\pm, \quad \ell\in \N,
		\\ &
		\tilde F_{\ep_n}^{\mathbb{S}}
		=\left\{
		\left\{
		\tilde F_{\ep_n}^{\xi_\ell,+},
		\, \, \xi_\ell\in \mathbb{S}_\ell^+
		\right\}_{\ell\in \N},
		\, \,
		\left\{
		\tilde F_{\ep_n}^{\xi_\ell,-},
		\, \, \xi_\ell\in \mathbb{S}_\ell^-
		\right\}_{\ell\in \N}
		\right\},
	\end{split}
\end{equation}
where $\mathbb{S}_\ell^\pm$ denote 
the collections of nonlinearities 
given by \eqref{eq:nonlinear-collection}.
In the second line of \eqref{eq:tilde-Xn}, 
the ``overline" should be understood as sitting 
over each component of the overlined quantity; 
for example, $\overline{\tilde F^{q}}=
\left(\, \tilde q,\overline{\tilde q_+},
\overline{\tilde q_-}\, \right)$ and 
$\overline{\tilde F^{q^2}}=\left(\, \overline{\tilde q^2},
\overline{\tilde q_+^2},\overline{\tilde q_-^2}\right)$, 
with $\overline{\tilde q}=\tilde q$. 
More explicitly, we have the 
following $\tilde{\mathbb{P}}$--a.s.~convergences: 
\begin{equation}\label{eq:weak-conv-tilde}
	\begin{split}
		&\tilde u_n \ton \tilde u \, \, \text{in $C_tL^2_x$}, 
		\quad 
		\tilde W_n \ton \tilde W \,\,
		\text{in $C_t$}, \,\, 
		\tilde u_{0,n} \ton \tilde u_0 \,\,
		\text{in $H^1_x$},
		\\ & 
		\tilde q_n \tonweak \tilde q, 
		\,\, \bigl(\tilde q_n\bigr)_\pm \tonweak 
		\overline{\tilde q_\pm} \,\, 
		\text{in $L^{2r}_{t,x}$},
		\\ & 
		\tilde q_n^2\ton \overline{\tilde q^2} 
		\,\, 
		\text{in $L^{r}\bigl(L^r_w\bigr)$},
		\quad
		(\tilde q_n)_\pm^2
		\tonweak \overline{{\tilde q_\pm}^2} 
		\,\, \text{in $L^{r}\bigl(L^r_w\bigr)$},
		\\ & 
		S_\ell\bigl((\tilde q_n)_\pm\bigr)
		\ton  
		\overline{S_\ell\bigl(\tilde q_\pm\bigr)}
		\,\,
		\text{in $L^{2r}(L^{2r}_w)$}, \quad \ell\in \N,
		\\ &
		S_\ell\bigl((\tilde q_n)_\pm\bigr)' \tilde q_n
		\ton  
		\overline{S_\ell\bigl(\tilde q_\pm\bigr)'\tilde q}
		\, \,
		\text{in $L^{2r}(L^{2r}_w)$}, \quad \ell\in \N,
		\\ &
		S_\ell\bigl((\tilde q_n)_\pm\bigr)''\tilde q_n^2
		\tonweak 
		\overline{S_\ell\bigl(\tilde q_\pm\bigr)''\tilde q^2} \,\,
		\text{in $L^r_{t,x}$}, \quad \ell\in \N,
		\\ &
		S_\ell\bigl((\tilde q_n)_\pm\bigr)\tilde q_n
		\tonweak 
		\overline{S_\ell\bigl(\tilde q_\pm\bigr)\tilde q} \,\,
		\text{in $L^r_{t,x}$}, \quad \ell\in \N,
		\\ &
		S_\ell\bigl((\tilde q_n)_\pm\bigr)'\tilde q_n^2
		\tonweak 
		\overline{S_\ell\bigl(\tilde q_\pm\bigr)'\tilde q^2} \,\,
		\text{in $L^r_{t,x}$},
		\quad \ell\in \N,
		\\ & 
		S_\ell\bigl((\tilde q_n)_\pm\bigr)'
		\tonweak 
		\overline{S_\ell\bigl(\tilde q_\pm\bigr)'}
		\,\,
		\text{in $L^{2r}_{t,x}$}, \quad \ell\in \N.
	\end{split}
\end{equation}
\end{prop}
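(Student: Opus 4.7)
The plan is to apply the Jakubowski version of Skorokhod's representation theorem (Theorem \ref{thm:Jakubowski}) to the joint laws $\mu_n=\mathcal{L}(X_n)$. Both ingredients needed are essentially already in place: Lemma \ref{thm:jointlaw_tightness} provides the tightness of $\{\mu_n\}$, while the path space $\mathcal{X}$ is quasi-Polish. Indeed, each factor $\mathcal{X}_l$ in \eqref{eq:pathspaces} is either Polish (the spaces $C_tL^2_x$, $C_t$, $H^1_x$) or of the form $L^p_{t,x}-w$ or $L^{p_1}(L^{p_2}_w)$, both shown to be quasi-Polish in Appendix \ref{sec:qpolish}. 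By Lemma \ref{thm:a_0}, the countable product $\mathcal{X}=\prod_l \mathcal{X}_l$ equipped with the product topology is again quasi-Polish. Theorem \ref{thm:Jakubowski} then delivers the probability space $\bigl(\tilde\Omega,\tilde{\mathcal{F}},\tilde{\mathbb{P}}\bigr)$, the subsequence (not relabelled), and the $\mathcal{X}$-valued random variables $\tilde X_n$, $\tilde X$ with $\mathcal{L}(\tilde X_n)=\mathcal{L}(X_n)$ for every $n$ and $\tilde X_n\to\tilde X$ almost surely in the product topology.

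The next step is to read off the componentwise convergences in \eqref{eq:weak-conv-tilde}. Convergence in the product topology on $\mathcal{X}$ is equivalent to convergence in every factor. In the Polish factors this is strong convergence, yielding $\tilde u_n \to \tilde u$ in $C_tL^2_x$, $\tilde W_n \to \tilde W$ in $C_t$, and $\tilde u_{0,n}\to \tilde u_0$ in $H^1_x$. In the quasi-Polish factors $L^p_{t,x}-w$ and $L^{p_1}\bigl(L^{p_2}_w\bigr)$ convergence is by definition weak (respectively, strong in $t$ and weak in $x$), producing all the remaining items in \eqref{eq:weak-conv-tilde}. Each overlined symbol in \eqref{eq:tilde-Xn} is simply a name for the limit of the corresponding sequence on $\tilde\Omega$; in particular, $\overline{\tilde q}=\tilde q$ by uniqueness of the weak limit of $\tilde q_n$.

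The main point requiring an argument is the structural relation \eqref{eq:SJ-nonlinear}, namely that $\tilde F_n^{\xi_\ell,\pm}=\xi_\ell\big|_{v=\tilde q_n}$ $\tilde{\mathbb{P}}$-almost surely for every $\ell\in\N$. On the original space the identity $F_{\ep_n}^{\xi_\ell,\pm}=\xi_\ell(q_{\ep_n})$ holds by construction. Each $\xi_\ell\in\mathbb{S}_\ell^\pm$ is continuous with at most quadratic growth in $v$, and the measurability discussion preceding Section 4.3 shows that the associated Nemytskii map is Borel measurable from $L^{2r}_{t,x}$ into the appropriate $L^p$-type target space (using $C_tH^1_x\hookrightarrow L^3_{t,x}$ and the continuous Nemytskii operator $L^3_{t,x}\to L^{3/2}_{t,x}\hookrightarrow L^r_{t,x}-w$). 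Therefore the graph $\{(a,b):b=\xi_\ell(a)\}$ is Borel in the product $\mathcal{X}_q\times\mathcal{X}_{\xi_\ell}$, and the equality of joint laws $\mathcal{L}(\tilde X_n)=\mathcal{L}(X_n)$ transfers the identity to the new probability space. Combined with the componentwise convergences, this completes the proof.

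The main obstacle I anticipate is precisely this last measure-theoretic point: handling Borel measurability in the nonmetrisable weak topologies $L^p_{t,x}-w$ and $L^{p_1}\bigl(L^{p_2}_w\bigr)$. The route above sidesteps it by composing a \emph{continuous} Nemytskii map on strong $L^p$-spaces with a continuous inclusion into the weak topology, so the map $q\mapsto \xi_\ell(q)$ remains Borel measurable for the topologies actually used in $\mathcal{X}$.
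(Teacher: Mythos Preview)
Your proposal is correct and follows the same approach as the paper: invoke Jakubowski's theorem on the tight joint laws (from Lemma \ref{thm:jointlaw_tightness}) over the quasi-Polish product $\mathcal{X}$, then verify the nonlinear-composition identity \eqref{eq:SJ-nonlinear} via equality of laws and Borel measurability of the Nemytskii maps. The paper packages the latter step as Lemma \ref{lem:SJ-nonlinear} (whose proof hinges on the diagonal of a quasi-Polish space lying in the product $\sigma$-algebra, Lemma \ref{thm:a_1}), which is precisely the ``graph is Borel'' argument you sketch inline; your anticipated obstacle is real but is handled exactly as you describe, since the Borel $\sigma$-algebras of $L^p_{t,x}$ and $L^p_{t,x}-w$ coincide (Lemma \ref{lem:qpolish-borel-sigma-algebra}).
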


\begin{proof}
An application of Theorem \ref{thm:Jakubowski} 
supplies all the claims of the proposition, except 
the one that the nonlinear composition 
variables take the the explicit 
form \eqref{eq:SJ-nonlinear}. However, this 
follows from Lemma \ref{lem:SJ-nonlinear}.
\end{proof}

\begin{rem}\label{rem:weak-notation}
As we shall henceforth be working 
in the new probability space, for brevity, we 
drop the tilde under the overline indicating a weak limit. 
For example,  $\overline{\tilde F^{q}}=
\left(\, \tilde q,\overline{q_+},
\overline{q_-}\, \right)$ and 
$\overline{\tilde F^{q^2}}=\left(\, \overline{q^2},
\overline{q_+^2},\overline{q_-^2}\right)$. 
Similarly, instead of $\overline{S_\ell(\tilde q_\pm)}$, 
we write $\overline{S_\ell(q_\pm)}$, and so forth 
with the other nonlinear compositions.
\end{rem}

\section{Properties of a.s.~representations}
\label{sec:equality-of-laws}
The strong $H^m$ solution $u_{\ep_n}$ of the 
SPDE \eqref{eq:u_ch_ep} possesses several consequential 
bounds, see Theorem \ref{thm:bounds1}, Lemma \ref{thm:P-u2_bound} 
and Proposition \ref{thm:2plusalpha_apriori}. 
In this section, we wish to transfer these  
bounds to $\tilde u_n$ (the Skorokhod--Jakubowski 
representation from Proposition \ref{thm:skorohod}). 
At the moment, we do not have the SPDE satisfied by 
$\tilde u_n$, so we cannot derive them as before. 
Instead, as is often done in the literature, we will 
appeal to the fact that $u_{\ep_n}$ and $\tilde u_n$ share 
the same probability law and invoke the Kuratowski--Lusin--Souslin 
(KLS) theorem. We refer to \cite[Corollary A.2]{Ondrejat:2010aa} and 
\cite[Proposition C.2]{Brzezniak:2013ab} for 
the quasi-Polish version of this theorem (cf.~Lemma 
\ref{thm:LS4qP}). The KLS theorem allows 
one to assert that spaces of higher integrability/regularity 
are Borel subsets of the postulated path spaces \eqref{eq:pathspaces}. 
The law shared by $u_{\ep_n}$ and $\tilde u_n$ can then 
be integrated against over these better function spaces to derive 
bounds for $\tilde u_n$ from those of $u_{\ep_n}$. 

\begin{lem}[spatial gradient]\label{thm:lim_iden_simple}
Let $\tilde{u}_n,\tilde{q}_n,\tilde{u},\tilde{q}$ 
be the Skorokhod--Jakubowski representations 
from Proposition \ref{thm:skorohod}. 
There is an event $\tilde{\Omega}_0$,  with
$\tilde{\mathbb{P}}(\tilde{\Omega}_0)=1$,
such that for any $\tilde{\omega}\in
\tilde{\Omega}_0$ there exist 
sets $E_n\bigl(\tilde{\omega}\bigr), 
E\bigl(\tilde{\omega}\bigr)\subset [0,T]\times \T$ 
of full measure on which the weak spatial 
derivatives of $\tilde{u}_n$, $\tilde{u}$ 
are $\tilde{q}_n$, $\tilde{q}$, respectively, i.e., 
for $\tilde \omega\in \tilde \Omega_0$,
\begin{equation}\label{eq:spatial-grad}
	\begin{split}
		& \pd_x \tilde u_n(\tilde \omega,t,x) 
		=\tilde q_n(\tilde \omega,t,x)
		\,\,\, \text{for a.e.~$(t,x)\in E_n(\tilde \omega)$},
		\\ & \pd_x \tilde u(\tilde \omega,t,x) 
		=\tilde q(\tilde \omega,t,x)
		\,\,\, \text{for a.e.~$(t,x)\in E(\tilde \omega)$}.
	\end{split}
\end{equation}
\end{lem}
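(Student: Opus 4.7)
The plan is to lift the pathwise derivative relation $\pd_x u_{\ep_n}=q_{\ep_n}$---which holds on the original probability space by construction of $u_{\ep_n}$ as a smooth solution---to the Skorokhod--Jakubowski representations $(\tilde u_n,\tilde q_n)$ via equality of laws, and then to pass to the limit $n\to\infty$ to handle $(\tilde u,\tilde q)$.

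I would first fix a countable dense family $\Phi\subset C_c^\infty\bigl((0,T)\times\T\bigr)$ and introduce, for each $\varphi\in\Phi$, the subset
\begin{equation*}
    A_\varphi=\Bigl\{(u,q)\in\mathcal{X}_u\times\mathcal{X}_q:
    \int_0^T\!\!\int_\T u\,\pd_x\varphi\,\d x\,\d t
    +\int_0^T\!\!\int_\T q\,\varphi\,\d x\,\d t=0\Bigr\}.
\end{equation*}
For fixed $\varphi$ the linear functional $u\mapsto \int u\,\pd_x\varphi$ is continuous on $\mathcal{X}_u=C_tL^2_x$ since $\pd_x\varphi$ is bounded, while $q\mapsto\int q\,\varphi$ is continuous on $\mathcal{X}_q=L^{2r}_{t,x}-w$, being literally one of the seminorms generating the weak topology. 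Hence $A_\varphi$ is closed, so $A:=\bigcap_{\varphi\in\Phi}A_\varphi$ is a closed set which, by density of $\Phi$ in $C_c^\infty$, characterises precisely those pairs with $\pd_x u=q$ in $\cD^\prime\bigl((0,T)\times\T\bigr)$.

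Since $q_{\ep_n}=\pd_x u_{\ep_n}$ by definition, one has $(u_{\ep_n},q_{\ep_n})\in A$ $\mathbb{P}$-almost surely. Borel measurability of $A$ combined with equality of joint laws (Proposition \ref{thm:skorohod}) would force $\tilde{\mathbb{P}}\bigl((\tilde u_n,\tilde q_n)\in A\bigr)=1$ for every $n$, and I would define $\tilde\Omega_0$ as the intersection over $n\in\N$ of these events, further intersected with the a.s.~convergence event from Proposition \ref{thm:skorohod}, so that $\tilde{\mathbb{P}}(\tilde\Omega_0)=1$. On $\tilde\Omega_0$, fixing $\tilde\omega$ and $\varphi\in\Phi$, I would use $\tilde u_n(\tilde\omega)\ton\tilde u(\tilde\omega)$ in $C_tL^2_x$ together with $\tilde q_n(\tilde\omega)\tonweak\tilde q(\tilde\omega)$ in $L^{2r}_{t,x}$ to pass to the limit in the linear identity defining $A_\varphi$, concluding $(\tilde u(\tilde\omega),\tilde q(\tilde\omega))\in A$ as well. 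The pointwise identifications on subsets $E_n(\tilde\omega),E(\tilde\omega)\subset [0,T]\times\T$ of full Lebesgue measure would then follow from the fundamental lemma of the calculus of variations, applied to the locally integrable representatives of the two sides of each distributional identity.

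The main obstacle is confirming that $A$ is Borel in the mixed strong--weak product $\mathcal{X}_u\times\mathcal{X}_q$; once this is clear---and it reduces to continuity of the two linear functionals on their respective factors---the remainder is a routine transfer of a linear distributional identity under equality of laws followed by weak/strong convergence.
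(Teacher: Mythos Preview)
Your proposal is correct and follows essentially the same route as the paper: define the distributional identity $\pd_x u=q$ through a countable dense family of test functions, exploit continuity of the two linear functionals on $\mathcal{X}_u\times\mathcal{X}_q$ to transfer the identity from $(u_{\ep_n},q_{\ep_n})$ to $(\tilde u_n,\tilde q_n)$ via equality of joint laws, take a countable intersection to obtain $\tilde\Omega_0$, and then pass to the limit using the a.s.~convergences from Proposition~\ref{thm:skorohod}. The only cosmetic differences are that the paper phrases the transfer step in terms of continuous real-valued maps $F_j$ (invoking Remark~\ref{rem:cont-map-measurable} for measurability in the quasi-Polish product) rather than closed level sets $A_\varphi$, and uses $C([0,T];C^1(\T))$ as the test-function reservoir instead of $C_c^\infty\bigl((0,T)\times\T\bigr)$.
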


\begin{proof}
We first show that $\tilde{\mathbb{P}}$--a.s., 
for every $\psi \in C([0,T];C^1(\T))$, 
\begin{equation}\label{eq:spatial-grad-tmp1}
	-\int_0^T\int_\T \tilde{u}_n \pd_x \psi \,\d x \,\d t 
	=\int_0^T \int_\T \tilde{q}_n \psi \,\d x\,\d t,
\end{equation}
which implies the first claim in \eqref{eq:spatial-grad}.
Let $\{\psi_j\}_{j = 1}^\infty$ be a countable dense subset 
of $C([0,T];C^1(\T))$, and consider the continuous 
mappings 
\begin{align*}
	& F_j: C([0,T];L^2(\T)) \times 
	\bigl(L^{2r}([0,T] \times \T)-w \bigr)\to \R, 
	\\ &
	F_j(u,q)=\int_0^T \int_\T u\, \pd_x \psi_j \, \d x \,\d t
	+\int_0^T \int_\T q\, \psi_j\,\d x \,d t.
\end{align*}
By continuity of $F_j$, Remark 
\ref{rem:cont-map-measurable}, and the equality 
of joint laws, 
\begin{align*}
	\tilde{\mathbb{P}}\left(\left\{F_j\bigl(\tilde{u}_n,
	\tilde{q}_n\bigr)=0\right\}\right) 
	=\mathbb{P}\left(\left\{F_j\bigl(u_{\ep_n},
	q_{\ep_n}\bigr) = 0\right\}\right) = 1.
\end{align*}
Since there are countably many pairs $(n,j)$, 
there is a set $\tilde \Omega_0$ of full 
$\tilde{\mathbb{P}}$--measure such that 
$F_j\bigl(\tilde{u}_n(\tilde\omega),
\tilde{q}_n(\tilde \omega)\bigr)=0$ for all $(n,j)$, 
$\tilde \omega\in \tilde \Omega_0$. This 
implies \eqref{eq:spatial-grad-tmp1}.

Proposition \ref{thm:skorohod} gives the
$\tilde{\mathbb{P}}$--a.s.~convergences 
$\tilde{u}_n \ton \tilde{u}$ in $C([0,T];L^2(\T))$ and
$\tilde{q}_n\tonweak\tilde{q}$ in $L^{2r}([0,T]\times\T)$ jointly. 
For a fixed $j$, we obtain a.s.~that 
$\int_0^T \int_\T \tilde{q}_n \psi_j\,\d x\,\d t  
\ton \int_0^T \int_\T \tilde{q}\, \psi_j \,\d x\,\d t $. 
Similarly, we have the a.s.~convergence 
$\int_0^T \int_\T \tilde{u}_n\pd_x \psi_j \,\d x \,\d t 
\ton \int_0^T \int_\T \tilde{u}\, \pd_x \psi_j\,\d x \,\d t$. 
By density,  we thus arrive at
\begin{align*}
	\int_0^T \int_\T \psi\, \tilde{q}\, \d x \,\d t
	 =-\int_0^T \int_\T \pd_x \psi\, \tilde{u}\,\d x\,\d t, 
	 \quad \text{$\tilde{\mathbb{P}}$--a.s.},
\end{align*}
for any $\psi \in C([0,T] ; C^1(\T))$. 
This establishes the second claim 
in \eqref{eq:spatial-grad}.
\end{proof}

\begin{lem}[regularity]\label{thm:smoothness_n}
Let $\tilde{u}_n$, $\tilde{q}_n$ be the Skorokhod--Jakubowski 
representations from Proposition \ref{thm:skorohod}. 
Then $\tilde{u}_n \in L^2([0,T];H^m(\T))$, 
$\tilde{\mathbb{P}}$--a.s., for any $m\geq 1$, 
and thus $\tilde{q}_n(t,x)=\pd_x \tilde{u}_n(t,x)$ 
pointwise in $(t,x)$, $\tilde{\mathbb{P}}$--a.s.
\end{lem}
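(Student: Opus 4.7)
The plan is to transfer the higher spatial regularity that $u_{\ep_n}$ enjoys by Theorem~\ref{thm:bounds1} to its Skorokhod--Jakubowski representation $\tilde u_n$, by exploiting the fact that the two random variables share the same law on the Polish path space $\mathcal{X}_u=C([0,T];L^2(\T))$. Concretely, for each fixed $m\ge 1$ I would introduce the set
\[
A_m:=\left\{f\in C([0,T];L^2(\T)) : \int_0^T \norm{f(t)}_{H^m(\T)}^2\,\d t<\infty\right\}
\]
and show that $A_m$ is a Borel subset of $C_tL^2_x$. Expanding in the standard Fourier basis $\{e_k\}_{k\in\Z}$ of $L^2(\T)$, for each truncation level $N\in\N$ the map
\[
f\longmapsto \int_0^T\sum_{|k|\le N}(1+k^2)^m\abs{\langle f(t),e_k\rangle}^2\,\d t
\]
is continuous on $C_tL^2_x$. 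Taking the supremum over $N$ gives a lower semicontinuous functional on $C_tL^2_x$, and $A_m$ is precisely the sublevel set where this supremum is finite, hence an $F_\sigma$ set and in particular Borel. Alternatively one can invoke the Kuratowski--Lusin--Souslin theorem (Lemma~\ref{thm:LS4qP}) applied to the continuous injection of the Polish space $L^2([0,T];H^m(\T))\cap C([0,T];L^2(\T))$ into $C([0,T];L^2(\T))$.

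With $A_m$ identified as Borel, Theorem~\ref{thm:bounds1}(c) gives $\mathbb{P}(u_{\ep_n}\in A_m)=1$ for every $m\ge 1$. Since Proposition~\ref{thm:skorohod} ensures that $\mathcal{L}(\tilde u_n)=\mathcal{L}(u_{\ep_n})$ as probability measures on $C_tL^2_x$, we conclude $\tilde{\mathbb{P}}(\tilde u_n\in A_m)=1$ for every $m\ge 1$. Intersecting the countable collection $\{A_m\}_{m\in\N}$ yields
\[
\tilde u_n\in \bigcap_{m\ge 1}L^2([0,T];H^m(\T)) \quad \tilde{\mathbb{P}}\text{-a.s.}
\]

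The pointwise identification is then a direct corollary. Lemma~\ref{thm:lim_iden_simple} already supplies $\tilde q_n = \pd_x\tilde u_n$ in the distributional sense almost surely. Combined with the $L^2_tH^m_x$ regularity for every $m$, Sobolev embedding in $x$ forces $\tilde u_n(t,\cdot)\in C^\infty(\T)$ for a.e.~$t\in[0,T]$, so weak and classical spatial derivatives coincide. After selecting the suitable a.e.~representative, $\tilde q_n(t,x)=\pd_x\tilde u_n(t,x)$ holds pointwise in $(t,x)$. The main (modest) obstacle is the Borel-measurability of $A_m$ inside $C_tL^2_x$; this must be checked explicitly because the canonical embedding $L^2_tH^m_x\hookrightarrow C_tL^2_x$ is not itself continuous, but lower semicontinuity of the $L^2_tH^m_x$-norm (or, equivalently, the KLS theorem) disposes of this issue cleanly.
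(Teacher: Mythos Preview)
Your proposal is correct and follows essentially the same approach as the paper: both arguments transfer the $L^2_tH^m_x$ regularity of $u_{\ep_n}$ to $\tilde u_n$ via equality of laws on $C_tL^2_x$, using that the set $A_m$ is Borel (the paper invokes the KLS theorem for the continuous injection of the Polish intersection space $L^2_tH^m_x\cap C_tL^2_x$ into $C_tL^2_x$, which you also mention as an alternative to your Fourier/lower-semicontinuity argument), and then both deduce the pointwise identification from Lemma~\ref{thm:lim_iden_simple} together with Sobolev embedding. One small wording point: the embedding $L^2_tH^m_x\hookrightarrow C_tL^2_x$ is not merely ``not continuous'' but does not exist as an inclusion; what is continuous is the identity map from the \emph{intersection} space into $C_tL^2_x$, which is exactly what KLS needs.
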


\begin{proof}
Recall that $u_{\ep_n}$ is the strong solution 
of the viscous SPDE \eqref{eq:u_ch_ep} with initial data 
$u_{\ep_n}(0)=z_n$, cf.~\eqref{eq:u0-approx}. 
By Definition \ref{def:wk_sol_visc}, $u_{\ep_n}$ 
belongs to $L^2([0,T];H^{m}(\T))$, a.s., for any $m$. 
Since the intersection $L^2([0,T];H^{m}(\T))\cap C([0,T];L^2(\T))$ 
injects continuously into the path space $C([0,T];L^2(\T))$,
cf.~\eqref{eq:pathspaces}, under the identity map, its 
image under the injection is Borel in $C([0,T];L^2(\T))$, 
according to the KLS theorem (cf.~Lemma \ref{thm:LS4qP}). 
Therefore, by the equality of laws, also the variable $\tilde{u}_n$ 
belongs to $L^2([0,T];H^{m}(\T))$, a.s., for any $m$. 
Since $\tilde{q}_n$ is the weak $x$-derivative 
of $\tilde{u}_n$ (cf.~Lemma \ref{thm:lim_iden_simple}), 
and we have the inclusion $H^m(\T) \hookrightarrow C^{m-1/2}(\T)$, 
this $x$-derivative is classical.
\end{proof}

\begin{lem}[a priori estimates]\label{thm:q_bounds_skorohod}
Let $p_0>4$ be as specified in Theorem \ref{thm:bounds1} 
and $r\in[1,3/2)$ as fixed in \eqref{eq:pathspaces}. 
Let $\tilde{u}_n,\tilde{q}_n$ be the Skorokhod--Jakubowski 
representations from Proposition \ref{thm:skorohod}. There exists a 
constant $C$, independent of $n$, such that 
\begin{align*}
	& \tilde{\Ex}\norm{\tilde u_n}_{L^\infty([0,T];H^1(\T))}^{p_0}
	\le C, \quad
	\tilde{\Ex}\norm{\tilde{q}_n}_{L^\infty([0,T];L^2(\T))}^{p_0}\leq C,
	\\ & \quad \text{and} \quad
	\tilde{\Ex}\norm{\tilde{q}_n}_{L^{2r}([0,T]\times\T)}^{2r}\leq C.
\end{align*}
\end{lem}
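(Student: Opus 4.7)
The plan is to transfer the $n$-uniform estimates available at the level of $u_{\ep_n}, q_{\ep_n}$ (Theorem \ref{thm:bounds1} and Proposition \ref{thm:2plusalpha_apriori}) onto the Skorokhod--Jakubowski representations $\tilde u_n, \tilde q_n$, using the equality of joint laws from Proposition \ref{thm:skorohod} combined with the Kuratowski--Lusin--Souslin (KLS) theorem recorded as Lemma \ref{thm:LS4qP}. This is the standard template for pushing bounds through a Skorokhod-type construction; the only subtlety lies in the fact that some of the postulated path spaces carry weak (non-metrisable) topologies.

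First I record the two relevant upstairs bounds: $\Ex\norm{u_{\ep_n}}_{L^\infty_tH^1_x}^{p_0}\le C$ is contained in Theorem \ref{thm:bounds1}(i), and Proposition \ref{thm:2plusalpha_apriori} applied with $\alpha=2r-2\in[0,1)$ (admissible since $r\in[1,3/2)$) yields $\Ex\norm{q_{\ep_n}}_{L^{2r}_{t,x}}^{2r}\le C$, both with constants independent of $n$.

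To promote these to the new probability space, I observe that the identity inclusions
\begin{equation*}
  L^\infty([0,T];H^1(\T))\cap C([0,T];L^2(\T))
  \hookrightarrow \mathcal{X}_u=C_tL^2_x,
  \qquad
  L^{2r}([0,T]\times\T)\hookrightarrow \mathcal{X}_q=L^{2r}_{t,x}-w,
\end{equation*}
are continuous injections between (quasi-)Polish spaces, so by the KLS theorem their images are Borel subsets of the postulated path spaces. The stronger norms, extended by $+\infty$ outside these Borel images, are Borel measurable on $\mathcal{X}_u$ and $\mathcal{X}_q$: one writes them as countable suprema of continuous linear functionals. For instance,
\begin{equation*}
  \norm{v}_{L^\infty_tH^1_x}^2
  =\sup_{t\in\mathbb{Q}\cap[0,T]}\sup_{j\in\N}
  \left(\abs{\int_\T v(t)\varphi_j\,\d x}^2
  +\abs{\int_\T v(t)\pd_x\varphi_j\,\d x}^2\right),
  \qquad v\in C_tL^2_x,
\end{equation*}
with $\{\varphi_j\}\subset C^\infty(\T)$ countable dense in the unit ball of $L^2(\T)$, where time-continuity of $v$ into $L^2$ legitimises reducing to rational times; a similar countable representation handles the $L^{2r}_{t,x}$ norm on $L^{2r}_{t,x}-w$. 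Together with the equality of marginal laws, this delivers
\begin{equation*}
  \tilde{\Ex}\norm{\tilde u_n}_{L^\infty_tH^1_x}^{p_0}
  =\Ex\norm{u_{\ep_n}}_{L^\infty_tH^1_x}^{p_0}\le C,
  \qquad
  \tilde{\Ex}\norm{\tilde q_n}_{L^{2r}_{t,x}}^{2r}
  =\Ex\norm{q_{\ep_n}}_{L^{2r}_{t,x}}^{2r}\le C,
\end{equation*}
which are the first and third of the claimed bounds.

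The remaining middle bound on $\norm{\tilde q_n}_{L^\infty_tL^2_x}$ then comes essentially for free: Lemmas \ref{thm:lim_iden_simple} and \ref{thm:smoothness_n} identify $\tilde q_n=\pd_x\tilde u_n$ pointwise $\tilde{\mathbb{P}}$--a.s., so $\norm{\tilde q_n}_{L^\infty_tL^2_x}\le \norm{\tilde u_n}_{L^\infty_tH^1_x}$ almost surely, and the first bound closes the estimate. The only mildly delicate point is the Borel measurability step above, a standard technicality in Skorokhod--Jakubowski arguments on quasi-Polish path spaces, which presents no substantive obstruction.
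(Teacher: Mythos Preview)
Your proposal is correct and follows essentially the same approach as the paper: invoke the KLS theorem (Lemma \ref{thm:LS4qP}) together with equality of laws to transfer the bounds \eqref{eq:energybound} and \eqref{eq:higher-int} to the new probability space, and then deduce the middle estimate from the first via Lemmas \ref{thm:lim_iden_simple}--\ref{thm:smoothness_n}. The only cosmetic difference is that the paper takes the Polish space $C([0,T];H^1(\T))$ (where $u_{\ep_n}$ lives by Definition \ref{def:wk_sol_visc}(c)) as the source of the KLS injection into $\mathcal{X}_u$, which sidesteps the need for your explicit countable-supremum representation of the $L^\infty_tH^1_x$ norm; note also that your displayed formula should carry two independent suprema (one for the $L^2$ part of $v$ and one for $\pd_x v$) rather than a single $\sup_j$, but this is a trivial fix.
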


\begin{proof}
By the continuous injection of the Polish space 
$\mathcal{Y}=C([0,T];H^1(\T))$ into the path space 
$\mathcal{X}^u=C([0,T];L^2(\T))$, the KLS theorem ensures that 
$\mathcal{Y}$ is a Borel subset of $\mathcal{X}^u$, and 
thus the equality of laws implies the first estimate:
\begin{align*}
	\tilde{\Ex}\norm{\tilde{u}_n}_{L^\infty([0,T];H^1(\T))}^{p_0}
	& = \int_{\mathcal{Y}} \norm{v}_{L^\infty([0,T];H^1(\T))}^{p_0}
	\, \d \mu_n^{u}(v)
	\\ & =\Ex\norm{u_{\ep_n}}_{L^\infty([0,T];H^1(\T))}^{p_0}
	\overset{\eqref{eq:energybound}}{\lesssim} 1,
\end{align*}
recalling that $\mu_n^{u}$ denotes the 
law of $u_{\ep_n}$, cf.~Section \ref{sec:Jak-Skor}.

The second estimate is a consequence of the first 
and Lemmas \ref{thm:lim_iden_simple}, \ref{thm:smoothness_n}. 
Since the injection $L^{2r}([0,T]\times \T)
\hookrightarrow L^{2r}([0,T] \times \T)-w$ 
is continuous, we can use the KLS theorem on quasi-Polish spaces 
(Lemma \ref{thm:LS4qP}) and the equality of laws to deduce 
the third estimate: 
$\tilde{\Ex}\norm{\tilde{q}_n}_{L^{2r}([0,T]\times \T)}^{2r}
=\Ex\norm{q_{\ep_n}}_{L^{2r}([0,T]\times \T)}^{2r}
\overset{\eqref{eq:higher-int}}{\lesssim} 1$.
\end{proof}

By the a.s.~convergence \eqref{eq:weak-conv-tilde} 
and a weak compactness argument (in $\omega,t,x$), it follows 
that the limit $\tilde q=\pd_x \tilde u$ continues 
to satisfy the third estimate of Lemma \ref{thm:q_bounds_skorohod}. 
Because of non-reflexivity, the other ($L^\infty$) 
estimates are more delicate. We have the following result: 

\begin{lem}[a priori estimates for limits]\label{thm:u_in_Linfty}
Let $\tilde{u}$, $\tilde{q}$ be the a.s.~limits 
from Proposition \ref{thm:skorohod}, $p_0>4$ be as 
specified in Theorem \ref{thm:bounds1}, 
and $r\in [1,3/2)$ as fixed in \eqref{eq:pathspaces}. 
There exists a constant $C$ such that 
\begin{equation}\label{eq:tu-Lp-Linfty-H1}
	\tilde{\Ex}\norm{\tilde u}_{L^\infty([0,T];H^1(\T))}^{p_0}
	\leq C.
\end{equation}
Besides, $\tilde u\in C\bigl([0,T];H^1(\T)-w\bigr)$, 
$\tilde{\mathbb{P}}$--almost surely. Finally, 
\begin{equation}\label{eq:tu-Lp-Linfty-H1-new}
	\tilde{\Ex}\norm{\tilde{q}}_{L^\infty([0,T];L^2(\T))}^{p_0}\leq C,
	\quad
	\tilde{\Ex}\norm{\tilde{q}}_{L^{2r}([0,T]\times\T)}^{2r}\leq C.
\end{equation}
\end{lem}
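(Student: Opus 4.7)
The strategy is to transfer each of the $n$-uniform bounds on $\tilde u_n$, $\tilde q_n$ from Lemma \ref{thm:q_bounds_skorohod} to the limits $\tilde u$, $\tilde q$ by combining the a.s.~convergences of Proposition \ref{thm:skorohod} with weak lower semicontinuity of norms and Fatou's lemma. For \eqref{eq:tu-Lp-Linfty-H1}, I would first fix a $\tilde{\mathbb{P}}$-full event on which $\tilde u_n(\tilde\omega)\to \tilde u(\tilde\omega)$ in $C([0,T];L^2(\T))$. For each such $\tilde\omega$ and each $t\in [0,T]$ with $L(t):=\liminf_n \norm{\tilde u_n(\tilde\omega,t)}_{H^1(\T)}<\infty$, reflexivity of $H^1(\T)$ together with Banach--Alaoglu produces a subsequence $\tilde u_{n_k}(\tilde\omega,t)$ converging weakly in $H^1$ to some $v$, which must equal $\tilde u(\tilde\omega,t)$ by the full-sequence strong $L^2$ convergence. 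Weak lower semicontinuity then gives
\begin{equation*}
  \norm{\tilde u(\tilde\omega,t)}_{H^1(\T)}
  \le L(t)
  \le \liminf_n \norm{\tilde u_n(\tilde\omega)}_{L^\infty([0,T];H^1(\T))}.
\end{equation*}
Taking the essential supremum in $t$ on the left, raising to the $p_0$-th power, and applying Fatou's lemma in $\tilde\omega$ yields \eqref{eq:tu-Lp-Linfty-H1} from the uniform bound in Lemma \ref{thm:q_bounds_skorohod}.

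For the weak temporal continuity $\tilde u\in C([0,T];H^1(\T)-w)$, I would invoke the classical Strauss/Lions--Magenes lemma: any element of $L^\infty([0,T];H^1(\T))\cap C([0,T];L^2(\T))$ automatically belongs to $C([0,T];H^1(\T)-w)$. Both hypotheses are satisfied a.s.~by $\tilde u$ (the first by \eqref{eq:tu-Lp-Linfty-H1} just established, the second by Proposition \ref{thm:skorohod}), and the conclusion follows by a two-$\varepsilon$ argument using the density of $L^2$ in $H^{-1}$ together with the uniform $H^1$ bound: for $\varphi\in H^{-1}$, approximate by $\varphi_\kappa\in L^2$ and control $|\langle\varphi,\tilde u(t)-\tilde u(s)\rangle|$ by $\norm{\varphi_\kappa}_{L^2}\norm{\tilde u(t)-\tilde u(s)}_{L^2}+\kappa\,\norm{\tilde u}_{L^\infty_t H^1_x}$.

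For \eqref{eq:tu-Lp-Linfty-H1-new}, the first bound is immediate: Lemma \ref{thm:lim_iden_simple} identifies $\tilde q=\pd_x \tilde u$ a.s., so $\norm{\tilde q(t)}_{L^2(\T)}\le \norm{\tilde u(t)}_{H^1(\T)}$ and the moment bound is inherited from \eqref{eq:tu-Lp-Linfty-H1}. For the $L^{2r}_{t,x}$ bound, I would use the a.s.~weak convergence $\tilde q_n\rightharpoonup \tilde q$ in $L^{2r}([0,T]\times\T)$ from \eqref{eq:weak-conv-tilde}, invoke weak lower semicontinuity of the $L^{2r}$ norm pathwise, and then apply Fatou in $\tilde\omega$ against the uniform bound of Lemma \ref{thm:q_bounds_skorohod}. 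The main technical obstacle is the measurability of the various $\liminf$ of norms so that Fatou's lemma can be applied; this is secured because the norms in question are Borel measurable on the relevant (Polish or quasi-Polish) path spaces, by the same Kuratowski--Lusin--Souslin reasoning that underlies Lemma \ref{thm:q_bounds_skorohod}, which allows the pathwise subsequence extractions above to be integrated against the law.
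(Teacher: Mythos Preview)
Your proof is correct and takes a genuinely different, more elementary route than the paper on two of the three claims.

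For \eqref{eq:tu-Lp-Linfty-H1}, the paper works in the reflexive spaces $L^{p_0}\bigl(\tilde\Omega;L^{\bar r}([0,T];H^1(\T))\bigr)$ for finite $\bar r$, extracts a weak limit $v$ via Kakutani, sends $\bar r\to\infty$ by monotone convergence to reach $L^\infty$ in time, and then separately identifies $v$ with $\tilde u$ via a Vitali argument. Your pointwise-in-$(\tilde\omega,t)$ use of weak lower semicontinuity of $\norm{\cdot}_{H^1}$ followed by Fatou is shorter and avoids the detour through finite $\bar r$.

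For the weak temporal continuity, the paper does not invoke Strauss/Lions--Magenes; instead it returns to the tightness sets $K(a_L)$ of \eqref{eq:Ka-def} and, via a limsup-of-events argument, shows that $\tilde u_n\to\tilde u$ in $C\bigl([0,T];H^1(\T)-w\bigr)$ almost surely, whence $\tilde u$ inherits membership in that space. Your appeal to the classical lemma (membership in $L^\infty_t H^1_x\cap C_t L^2_x$ implies $C_t(H^1_x-w)$) is cleaner and fully adequate for the statement of Lemma \ref{thm:u_in_Linfty}. Note, however, that the paper's longer argument produces a strictly stronger byproduct---the a.s.~convergence $\tilde u_n\to\tilde u$ in $C_t(H^1_x-w)$---which is later invoked in the proof of Lemma \ref{thm:temp_cont_0}. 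Your route does not deliver that convergence, so if you adopt your proof you would need to supply it separately at that later stage (the Strauss lemma gives you $\tilde u\in C_t(H^1_x-w)$ but says nothing about how $\tilde u_n$ approaches $\tilde u$ in that topology).

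For \eqref{eq:tu-Lp-Linfty-H1-new}, your argument matches the paper's.
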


\begin{proof}
The first part of \eqref{eq:tu-Lp-Linfty-H1-new} 
comes from \eqref{eq:tu-Lp-Linfty-H1} and 
Lemma \ref{thm:lim_iden_simple}. The second part 
is a corollary of the corresponding estimate 
in Lemma \ref{thm:q_bounds_skorohod} 
and the considerations given 
before Lemma \ref{thm:u_in_Linfty}.

The rest of the proof is devoted to 
\eqref{eq:tu-Lp-Linfty-H1} and the claim 
about weak time-continuity. By Lemma \ref{thm:q_bounds_skorohod}, 
$\tilde{\Ex}\norm{\tilde{u}_n}_{L^{\bar r}([0,T];H^1(\T))}^{p_0}
\leq CT^{p_0/\bar r}$, for any $\bar r\in [1,\infty)$. 
In other words, $\left\{\tilde{u}_n\right\}_{n\in \N}
\subset_b L^{p_0}\bigl(\tilde{\Omega};L^{\bar r}([0,T];H^1(\T))\bigr)$ 
for any finite $\bar r$. By standard duality theory in 
Lebesgue--Bochner spaces (see, e.g., 
\cite[page 98, Theorem 1]{Diestel:1977aa}), 
$$
\bigl(L^{p_0}\bigl(\tilde{\Omega};
L^{\bar r}([0,T];H^1(\T))\bigr)\bigr)^\ast 
=L^{p_0'}\bigl(\tilde{\Omega};
L^{\bar r'}([0,T];H^{-1}(\T))\bigr),
$$
for any $\bar r\in (1,\infty)$, 
$\bar r'=\frac{\bar r}{\bar r-1}$, $p_0'=\frac{p_0}{p_0-1}$. 
The space $L^{p_0}\bigl(\tilde{\Omega};L^{\bar r}([0,T];H^1(\T))\bigr)$ 
is reflexive. Thus, by Kakutani's theorem on reflexive spaces, 
up to subsequences,
\begin{align}\label{eq:kakutani}
	\tilde{u}_n \tonweak v^{(\bar r)}\quad
	\text{in $L^{p_0}\bigl(\tilde{\Omega};
	L^{\bar r}([0,T];H^1(\T))\bigr)$},
\end{align}
for $\bar r\in (1,\infty)$, where the limit $v^{(\bar r)}$ 
depends possibly on $\bar r$. Besides, 
\begin{equation*}
	\tilde{\Ex} \norm{v^{(\bar r)}}^{p_0}_{L^{\bar r}([0,T];H^1(\T))} 
	\leq \liminf_{n\to\infty} 
   \tilde{\Ex}\norm{\tilde{u}_n}^{p_0}_{L^{\bar r}([0,T];H^1(\T))}
   \leq C T^{p_0/\bar r}.
\end{equation*}
The continuity of the embedding  
$$
L^{p_0}\bigl(\tilde{\Omega};L^{r_2}([0,T];H^1(\T))\bigr) 
\hookrightarrow L^{p_0}\bigl(\tilde{\Omega};
L^{r_1}([0,T];H^1(\T))\bigr),
$$ 
for $1<r_1<r_2<\infty$, implies that $v^{(\bar r)}$ does not, in 
fact, depend on $\bar r$. Therefore, we will write $v$ instead of 
$v^{(\bar r)}$ in the following.

By the monotone convergence theorem,
$$
\tilde{\Ex} \lim_{\bar r\to\infty}
\norm{v}_{L^{\bar r}([0,T];H^1(\T))}^{p_0} \leq C.
$$
Since the $L^{\bar r}_t$ norm depends continuously on the 
index $\bar r$ for any measurable function $f:[0,T]\to H^1(\T)$ for which 
$\lim\limits_{\bar r\to\infty}\left(\int_0^T
\norm{f(t)}^{\bar r}_{H^1(\T)}\, ¨
\d t \right)^{1/\bar r}<\infty$, it follows that
\begin{equation}\label{eq:almost-there}
	\tilde{\Ex} \norm{v}_{L^\infty([0,T];H^1(\T))}^{p_0}\leq C
	\quad \text{and} \quad
	\text{$v \in L^\infty([0,T];H^1(\T))$, 
	$\tilde{\mathbb{P}}$-a.s.}
\end{equation}

It remains to identify $v$ with the 
$\tilde{\mathbb{P}}$--almost sure 
Skorokhod--Jakubowski limit $\tilde{u}$ 
in $C([0,T];L^2(\T))$, see \eqref{eq:weak-conv-tilde}. 
Consider the following test functions:
\begin{equation}\label{eq:testfunctions1}
	\phi(\tilde{\omega},t,x)
	=\psi(\tilde{\omega})\vartheta(t,x),
	\quad \psi\in L^\infty(\tilde{\Omega}), 
	\, \, \vartheta\in L^{\bar r'}([0,T];L^2(\T)).
\end{equation}
From \eqref{eq:kakutani},
$$
\tilde{\Ex}\bk{ \psi
\int_0^T \int_\T \vartheta(t,x) 
\bigl(\tilde{u}_n(t,x)-v(t,x)\bigr)\,\d x\,\d t}\ton 0.
$$
On the other hand, by \eqref{eq:weak-conv-tilde}, 
$$
\psi
\int_0^T \int_\T \vartheta(t,x)
\bigl(\tilde{u}_n(t,x)-\tilde{u}(t,x)\bigr)
\,\d x\,\d t\ton 0, \quad 
\text{$\tilde{\prob}$--a.s.}
$$
By Lemma \ref{thm:q_bounds_skorohod}, 
we have the moment bound
\begin{align*}
	&\tilde{\Ex}\abs{\psi 
	\int_0^T\int_\T 
	\tilde{u}_n(\tilde{\omega},t,x)\vartheta(t,x)
	\,\d x\,\d t}^{p_0}
	\\ & \quad \leq  
	\norm{\psi}_{L^\infty(\tilde{\Omega})}^{p_0}
	\norm{\vartheta}^{p_0}_{L^1([0,T];L^2(\T))}
	\tilde{\Ex}\norm{\tilde{u}_n}^{p_0}_{L^\infty([0,T];H^1(\T))}
	\leq C\bigl(\psi,\vartheta\bigr),
\end{align*}
and so, by Vitali's convergence theorem, 
$$
\tilde{\Ex}\abs{\psi\int_0^T \int_\T 
\vartheta(t,x) \bigl(\tilde{u}_n(t,x)
-\tilde{u}(t,x)\bigr) \,\d x\,\d t}^{p}
\ton 0,
$$
for any $1\leq p<p_{0}$. Consequently,
\begin{equation}\label{eq:almost-there2}
	\tilde{\Ex} \bk{\psi 	 
	\int_0^T \int_\T \vartheta(t,x) 
	\bigl(\tilde{u}(t,x)-v(t,x)\bigr)\,\d x\,\d t}=0,
\end{equation}
for $\psi$, $\vartheta$ as 
in \eqref{eq:testfunctions1}.

We use $I_z(\vartheta)$ as short-hand for
$\int_0^T\int_\T \vartheta(t,x) 
z(\cdot,t,x) \,\d x\,\d t$, where $z=\tilde u,v$. 
Clearly, by \eqref{eq:almost-there}, 
$I_v(\vartheta)\in L^p(\tilde{\Omega})$, for any
$1\leq p<p_{0}$ Since \eqref{eq:almost-there2} implies that 
$I_{\tilde u}(\vartheta)=I_v(\vartheta)$, almost surely, it 
follows that also $I_{\tilde u}(\vartheta)\in L^p(\tilde{\Omega})$, 
for each fixed $\vartheta$. We conclude that for 
any $\vartheta \in L^{\bar r'}([0,T];L^2(\T))$, 
with $1<\bar r'<\infty$, there exists 
a full $\tilde{\mathbb{P}}$--measure 
set $\tilde{\Omega}_\vartheta$ on which 
$I_{\tilde u}(\vartheta)=I_{v}(\vartheta)$. 
By separability of $L^{\bar r'}([0,T];L^2(\T))$, 
we deduce that for any $1<\bar r'<\infty$ 
there exists a full $\tilde{\mathbb{P}}$--measure 
set on which the identity 
$I_{\tilde u}(\vartheta)=I_{v}(\vartheta)$ 
holds for all $\vartheta \in L^{\bar r'}([0,T];L^2(\T))$. 
We can take this set to 
be the countable intersection of 
$\tilde{\Omega}_\vartheta$ associated 
with a countable dense subset of 
$\vartheta$ in $L^{r'}([0,T];L^2(\T))$. 
This shows that $\tilde{u} = v$, 
$\tilde{\mathbb{P}}\otimes \d t \otimes \d x$--almost 
everywhere. We also have \eqref{eq:almost-there} 
for $\tilde{u}=v$, thereby concluding the proof 
of \eqref{eq:tu-Lp-Linfty-H1}. 

Finally, let us prove the claim that $\tilde u$ is 
weakly time-continuous. By Lemma \ref{thm:jointlaw_tightness} 
(tightness), see also \eqref{eq:Ka-def}, for any $L\in\N$ 
there exists an $a_L>0$ such that $\inf\limits_{n\in \N}\prob
\bigl(\left\{u_{\ep_n}\in K(a_L)\right\}\bigr)>1-1/L$. 
Thus, by the equality of laws,
$$
\inf_{n\in \N}
\tilde{\prob}\bigl( 
\left\{\tilde{u}_n\in K(a_L)\right\}\bigr)>1-1/L.
$$
Pick an arbitrary subsequence $\{n_j\}_{j\in\N}$ and 
set $A_{j,L}=\left\{\tilde{u}_{n_j}\in K(a_L)\right\}$.
Then $\liminf_j\tilde{\prob}\left(A_{j,L}\right)>1-1/L$ and so
$$
\tilde{\prob}\left(\limsup_j A_{j,L}\right)>1-1/L,
$$
where $\limsup_j A_{j,L}=\bigcap_{J=1}^\infty\bigcup_{j>J} A_{j,L}$.
Introduce the $C_tL^2_x$ convergence set
$$
F=\left\{\tilde{\omega}\in\tilde{\Omega}: 
\tilde{u}_n(\tilde{\omega})
\ton \tilde{u}(\tilde{\omega})
\,\, \text{in $C_tL^2_x$}\right\}.
$$ 
By the first part of \eqref{eq:weak-conv-tilde}, 
$\tilde{\prob}(F)=1$. Therefore,
$$
\tilde{\prob}\left(F \cap \limsup_j A_{j,L}\right)>1-1/L.
$$
Select an arbitrary $\tilde{\omega}_0\in F\cap 
\limsup_j A_{j,L}$. By construction,
there is a subsequence $\left\{n_{j_k}\right\}_{k\in \N}$ 
(depending on $\tilde{\omega}_0$) such that 
$\tilde{u}_{n_{j_k}}(\tilde{\omega}_0)
\in K(a_L)$ for all $k\in \N$. 
Besides, we have $\tilde{u}_{n_{j}}(\tilde{\omega}_0)
\toj \tilde{u}(\tilde{\omega}_0)$ in $C_tL^2_x$. 
This implies that $\tilde{u}_{n_{j_k}}(\tilde{\omega}_0)
\tok \tilde{u}(\tilde{\omega}_0)$ in $C_tL^2_x$ and 
whence $\tilde{u}_{n_{j_k}}(\tilde{\omega}_0)\tok 
\tilde{u}(\tilde{\omega}_0)$ in $C_tH^1_x-w$. 
Since $\left\{n_j\right\}_{j\in \N}$ was arbitrary, 
and $K(a_L)$ is metrisable in $C_tH^1_x-w$, this leads 
us to conclude that 
$$
\tilde{u}_n(\tilde{\omega}_0)\ton 
\tilde{u}(\tilde{\omega}_0)
\,\, \text{in $C_tH^1_x-w$}
\quad \text{and} \quad
\tilde{u}(\tilde{\omega}_0)\in K(a_L). 
$$
In other words, the convergence set
$$
M_L=\left\{\tilde{\omega} \in \tilde{\Omega}: 
\tilde{u}_n(\tilde{\omega})\ton 
\tilde{u}(\tilde{\omega}) 
\,\, \text{in $C_tH^1_x-w$}, 
\,\,\tilde{u}(\tilde{\omega})\in K(a_L)\right\} 
$$
satisfies $M_L\supseteq F \cap \limsup_j A_{j,L}$, 
and so $\tilde{\prob}(M_L)>1 - 1/L$, for any $L\in\N$. 
Set 
$$
M=\left\{\tilde{\omega} \in \tilde{\Omega}: 
\tilde{u}_n(\tilde{\omega})\ton 
\tilde{u}(\tilde{\omega}) 
\,\, \text{in $C_tH^1_x-w$}\right\}.
$$
Then 
$M\supseteq M_L \supseteq 
F \cap \limsup_j A_{j,L} \in \tilde{\mathcal{F}}$, 
for any $L\in \N$, and therefore we obtain 
$M \supseteq \cup_{L=1}^\infty \bigl( 
F \cap \limsup_j A_{j,L}\bigr)
\in \tilde{\mathcal{F}}$. This implies that
$$
\tilde{\prob}\left(\cup_{L=1}^\infty \bigl( 
F \cap \limsup_j A_{j,L}\bigr)\right)
\geq 
\tilde{\prob}\left( 
F \cap \limsup_j A_{j,L}\right)
> 1-1/L,
$$
for each $L\in \N$, i.e., 
$\tilde{\prob}\left(\cup_{L=1}^\infty \bigl( 
F \cap \limsup_j A_{j,L}\bigr)\right)=1$. 
By the completeness of 
$\bigl(\tilde{\Omega},\tilde{\mathcal{F}},
\tilde{\mathbb{P}}\bigr)$, it follows that 
$M\in \tilde{\mathcal{F}}$ and  $\tilde{\mathbb{P}}(M)=1$; 
thus the second claim of the lemma 
follows: $\tilde u\in C_tH^1_x-w$, $\tilde{\prob}$--a.s.
\end{proof}

\begin{rem}
The use of the path space 
$\mathcal{X}_{u,\mathrm{new}}= C([0,T];H^1(\T)-w)$ 
instead of $\mathcal{X}_u = C([0,T];L^2(\T))$ could have 
simplified some of the work in Lemma \ref{thm:u_in_Linfty} and 
other places. However, we would 
have needed to provide additional steps to establish the 
tightness on $\mathcal{X}_{u,\mathrm{new}}$. 
It is also possible to operate with two different path spaces for $u$, 
each reflecting different topologies. 
However, this approach would necessitate 
additional steps to ensure that the two  
Skorokhod--Jakubowski representations of $u_n$ (and their limits) 
in these path spaces can be properly identified. 
While using $\mathcal{X}_{u,\mathrm{new}}$ 
instead of $\mathcal{X}_u$ might have 
simplified certain aspects of the analysis, there is 
a trade-off between simplicity and the 
added complexity in establishing the tightness and 
identifying the limits in the chosen path spaces.
Therefore, we have opted to use the space 
$\mathcal{X}_u$.
\end{rem}

The next result is a product of Lemmas \ref{thm:q_bounds_skorohod} 
and \ref{thm:u_in_Linfty}, see also Lemma \ref{thm:P-u2_bound}.

\begin{lem}[additional a priori estimates]
\label{thm:P-u2_bound-tilde}
Let $\tilde{u}_n$, $\tilde{u}$, and $\overline{q^2}$ 
be the Skorokhod--Jakubowski representations 
from Proposition \ref{thm:skorohod}, see also 
Remark \ref{rem:weak-notation}. There exists a 
constant $C$, independent of $n$, such that 
$\Ex \norm{\tilde u_n}_{L^\infty([0,T]\times \T)}^{p_0}\leq C$, 
where $p_0>4$ is fixed in Theorem \ref{thm:bounds1}, 
and $\Ex \norm{P[\tilde u_n]}_{L^\infty([0,T]\times \T)}^2\leq C$, 
where $P[\cdot]$ is defined in \eqref{eq:u_ch_ep}. 
In particular, we have
$$
\tilde{\Ex} \norm{K*\bk{\tilde u_n^2+\tfrac12
\tilde q_n^2}}_{L^\infty([0,T]\times \T)}^p \le C,
\quad p\in \bigl[1,p_0/2\bigr].
$$
The same bounds hold with $\tilde u_n$ 
replaced by its a.s.~limit $\tilde u$. 
\end{lem}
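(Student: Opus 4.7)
The plan is to piggyback on the estimates already transferred to $\tilde u_n$ and $\tilde u$ in Lemmas \ref{thm:q_bounds_skorohod} and \ref{thm:u_in_Linfty}, and then recycle the pointwise computation used in the proof of Lemma \ref{thm:P-u2_bound}. The claim is essentially a corollary of those three lemmas together with the one-dimensional Sobolev embedding, so I do not anticipate a serious obstacle.

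For the $L^{p_0}_{\tilde\omega}L^\infty_{t,x}$ bound on $\tilde u_n$, I would invoke the continuous embedding $H^1(\T)\hookrightarrow L^\infty(\T)$, which gives pathwise
$$
\norm{\tilde u_n}_{L^\infty([0,T]\times\T)}
\lesssim \norm{\tilde u_n}_{L^\infty([0,T];H^1(\T))}.
$$
Raising to the $p_0$-th power, taking $\tilde\Ex$, and using the bound from Lemma \ref{thm:q_bounds_skorohod} delivers the first assertion. The identical reasoning, this time applied to $\tilde u$ via the corresponding bound in Lemma \ref{thm:u_in_Linfty}, yields the analogous estimate for $\tilde u$.

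For the Green's function estimate on $P[\tilde u_n]$ I would repeat verbatim the short calculation appearing in the proof of Lemma \ref{thm:P-u2_bound}. Lemma \ref{thm:smoothness_n} identifies the weak spatial derivative $\pd_x\tilde u_n$ with $\tilde q_n$ classically, so using $\abs{K(x)}\lesssim 1$ uniformly on $\T$ I obtain pointwise
$$
\abs{P[\tilde u_n](\tilde\omega,t,x)}^p
\lesssim \norm{\tilde u_n(\tilde\omega,t,\cdot)}_{L^2(\T)}^{2p}
+\norm{\pd_x\tilde u_n(\tilde\omega,t,\cdot)}_{L^2(\T)}^{2p}
\lesssim \norm{\tilde u_n(\tilde\omega,\cdot,\cdot)}_{L^\infty([0,T];H^1(\T))}^{2p}
$$
for any $p\ge 1$. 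Taking the sup in $(t,x)$ and then the expectation, and invoking once more Lemma \ref{thm:q_bounds_skorohod}, the bound follows as long as $2p\le p_0$, i.e., for $p\in[1,p_0/2]$. In particular, $p=2$ gives the stated $L^2_{\tilde\omega}L^\infty_{t,x}$ estimate, and a general $p\in[1,p_0/2]$ yields the third assertion on $K\ast\bigl(\tilde u_n^2+\tfrac12\tilde q_n^2\bigr)$. For the limit $\tilde u$, Lemma \ref{thm:lim_iden_simple} furnishes the a.e.\ identity $\pd_x\tilde u=\tilde q$ and Lemma \ref{thm:u_in_Linfty} supplies the required $L^{p_0}_{\tilde\omega}L^\infty_t H^1_x$ bound, so the same chain of inequalities applies verbatim.

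The only point meriting a moment's care is the classical identification $\pd_x\tilde u_n=\tilde q_n$ (and a.e.\ $\pd_x\tilde u=\tilde q$), which ensures that the convolution with $K$ really is against $\tilde u_n^2+\tfrac12\tilde q_n^2$ (in $L^1_x$) and thus reduces to an $H^1$-norm expression. Once that identification is in hand, the rest is a straightforward application of Young's convolution inequality and the already-established uniform energy bounds.
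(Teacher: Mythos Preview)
Your proposal is correct and follows essentially the same approach as the paper, which simply states that the result is a product of Lemmas \ref{thm:q_bounds_skorohod} and \ref{thm:u_in_Linfty} together with Lemma \ref{thm:P-u2_bound}. You have filled in precisely the expected details: the Sobolev embedding $H^1(\T)\hookrightarrow L^\infty(\T)$ combined with the transferred $L^{p_0}_{\tilde\omega}L^\infty_t H^1_x$ bounds, and the pointwise estimate on the convolution from the proof of Lemma \ref{thm:P-u2_bound}.
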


The final lemma of this section collects some 
integrability estimates (in $\tilde \omega,t,x)$ 
for the a.s.~weak limit $\overline{q^2}$. The 
second estimate will play a role in upcoming discussions 
about the martingale property of 
a stochastic integral and the weak convergence of 
of some specific product terms.

\begin{lem}[additional a priori estimates for limits]
\label{thm:P-u2_weak}
Let $\overline{q^2}= \overline{q^2}(\tilde \omega,t,x)$ 
be the Skorokhod--Jakubowski representation 
from Proposition \ref{thm:skorohod}. There is a 
constant $C$ such that 
\begin{equation}\label{eq:Lr-HnegEst}
	\tilde{\Ex}\int_0^T\int_\T
	\abs{\overline{q^2}}^r\, \d x\, \d t\leq C, 
	\quad 
	\tilde{\Ex}\int_0^T 
	\norm{\overline{q^2}(t)}_{H^{-1}(\T)}^p\,\d t \leq C, 
\end{equation}
for any $p\in \bigl[1,p_0/2\bigr]$, where $p_0>4$ and $r\in [1,3/2)$ are 
specified in Theorem \ref{thm:bounds1} and 
\eqref{eq:pathspaces}, respectively.
Furthermore, 
\begin{equation}\label{eq:Kstar-weak-q2}
	\tilde{\Ex} \int_0^T\norm{K*\bk{\tilde{u}^2+\tfrac12
	\overline{q^2}}(t)}_{L^\infty(\T)}^p\, \d t \leq C,
	\quad p\in \bigl[1,p_0/2\bigr].
\end{equation}
\end{lem}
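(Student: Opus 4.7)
The plan is to deduce each of the three estimates from uniform bounds on the prelimit $\tilde{q}_n^2$, transferred across the equality of laws and then preserved under the a.s.\ weak limit in $L^r(L^r_w)$ by lower semicontinuity.

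For the $L^r$-bound in \eqref{eq:Lr-HnegEst}, Proposition \ref{thm:2plusalpha_apriori} applied with $\alpha=2r-2 \in (0,1)$ yields $\Ex \norm{q_{\ep_n}}_{L^{2r}([0,T]\times\T)}^{2r}\leq C$, hence $\Ex\norm{q_{\ep_n}^2}_{L^r_{t,x}}^r\leq C$. Because $L^r([0,T]\times\T)$ embeds continuously into the quasi-Polish space $L^r(L^r_w)$, its image is Borel by the KLS theorem (Lemma \ref{thm:LS4qP}) and the $L^r$-norm (extended by $+\infty$ outside $L^r$) is Borel measurable on $L^r(L^r_w)$. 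Equality of laws then gives $\tilde{\Ex}\norm{\tilde{q}_n^2}_{L^r_{t,x}}^r\leq C$. Combining this uniform bound with the $L^r(L^r_w)$ a.s.~convergence from Proposition \ref{thm:skorohod}, a weakly convergent subsequence in the reflexive $L^r([0,T]\times\T)$ must have weak limit $\overline{q^2}$, and weak lower semicontinuity of $\norm{\cdot}_{L^r}$ yields the first part of \eqref{eq:Lr-HnegEst}.

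For the $H^{-1}$-bound, we use the 1D embedding $L^1(\T) \hookrightarrow H^{-1}(\T)$ (dual to $H^1\hookrightarrow L^\infty$), so $\norm{q_{\ep_n}^2(t)}_{H^{-1}} \lesssim \norm{q_{\ep_n}^2(t)}_{L^1} = \norm{q_{\ep_n}(t)}_{L^2}^2$. Taking $L^p_t$, raising to the $p$th power and using \eqref{eq:energybound} gives
$$
\Ex \int_0^T \norm{q_{\ep_n}^2(t)}_{H^{-1}(\T)}^p \,\d t
\lesssim T\,\Ex \norm{q_{\ep_n}}_{L^\infty_t L^2_x}^{2p} \leq C,
$$
whenever $2p \leq p_0$. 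KLS together with equality of laws (noting that $L^p([0,T];H^{-1}(\T)) \cap L^r_{t,x}$ embeds continuously into $L^r(L^r_w)$) transfers this bound to $\tilde{q}_n^2$. To pass to the limit, exploit reflexivity of $L^p\bigl(\tilde\Omega;L^p([0,T];H^{-1}(\T))\bigr)$ for $1<p<\infty$: a subsequence converges weakly therein to some $v$. Testing against product functions $\phi(\tilde\omega)\psi(t)\varphi(x)$ with $\phi\in L^\infty(\tilde\Omega)$, $\psi\in L^{p'}([0,T])$, $\varphi\in C^\infty(\T) \subset H^1(\T)$, and using the a.s.\ $L^r(L^r_w)$-convergence $\tilde q_n^2 \to \overline{q^2}$ together with Vitali's convergence theorem (made available by the uniform moment bounds), one identifies $v = \overline{q^2}$. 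Weak lower semicontinuity of the norm then yields the second part of \eqref{eq:Lr-HnegEst}.

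Finally, \eqref{eq:Kstar-weak-q2} follows from the mapping property $\norm{K*f}_{L^\infty(\T)} \lesssim \norm{f}_{H^{-1}(\T)}$: if $u=K*f$ then $u - \pd_{xx}^2 u = f$ on $\T$; testing against $u$ gives $\norm{u}_{H^1}^2 \leq \norm{f}_{H^{-1}}\norm{u}_{H^1}$, and hence $\norm{u}_{L^\infty} \lesssim \norm{u}_{H^1} \lesssim \norm{f}_{H^{-1}}$ by the 1D embedding. Combining $\norm{\tilde u^2}_{H^{-1}} \lesssim \norm{\tilde u^2}_{L^1} = \norm{\tilde u}_{L^2}^2 \lesssim \norm{\tilde u}_{H^1}^2$ with Lemma \ref{thm:u_in_Linfty} and the $H^{-1}$-bound on $\overline{q^2}$ just established, one bounds the $L^p_t$ norm of $\norm{K*(\tilde u^2 + \tfrac12\overline{q^2})}_{L^\infty}$ by a constant depending on $p \leq p_0/2$. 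The main obstacle is the identification step in the middle paragraph: reconciling the reflexive-space weak limit $v$ with the a.s.\ limit $\overline{q^2}$ obtained in the weaker strong-in-$t$-weak-in-$x$ topology, which requires verifying that uniform integrability allows one to exchange expectation with the a.s.~pairing limit against smooth test functions.
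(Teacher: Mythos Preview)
Your proof is correct and follows essentially the same approach as the paper: both obtain the $L^r_{\tilde\omega,t,x}$ bound by transferring the higher integrability estimate and using weak lower semicontinuity, both obtain the $H^{-1}$ bound via $\norm{\tilde q_n^2(t)}_{H^{-1}}\lesssim \norm{\tilde q_n(t)}_{L^2}^2$ combined with the $L^{p_0}_\omega L^\infty_t H^1_x$ energy estimate and reflexive weak compactness, and both derive \eqref{eq:Kstar-weak-q2} from the mapping property $\norm{K*f}_{L^\infty}\lesssim \norm{f}_{H^{-1}}$. The paper is simply terser about the identification step (writing ``by a weak compactness argument and \eqref{eq:weak-conv-tilde}'') where you spell out the Vitali argument; both are valid, and your concern in the final sentence is not a genuine obstacle since the uniform $L^p_{\tilde\omega}$ moment bound with $p>1$ supplies exactly the uniform integrability needed.
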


\begin{proof}
By Lemma \ref{thm:q_bounds_skorohod}, 
$\left\{\tilde q_n^2\right\}_{n\in \N}$ is uniformly bounded in 
$L^r(\tilde \Omega\times [0,T]\times\T)$. Thus, by a weak 
compactness argument and \eqref{eq:weak-conv-tilde}, 
we may assume that
\begin{equation}\label{eq:Hneg-bound-oq2}
	\overline{q^2}\in L^r(\tilde \Omega \times [0,T]\times \T)
	\quad \text{and} \quad
	\tilde q_n^2\tonweak \overline{q^2} 
	\quad \text{in $L^r(\tilde \Omega\times [0,T]\times\T)$},
\end{equation}
which implies that the first part of \eqref{eq:Lr-HnegEst} holds.

Next, by \eqref{eq:weak-conv-tilde}, 
$\tilde q_n^2 \ton \overline{q^2}$ 
in $L^r\bigl(L^r_w\bigr)$ a.s. Since $L^r_w(\T) 
\hookrightarrow H^{-1}(\T)$, this also implies 
the convergence 
\begin{equation}\label{eq:Hneg-conv}
	\tilde q_n^2 \ton \overline{q^2} 
	\quad \text{in $L^r_t\bigl(H^{-1}_x\bigr)
	=L^r([0,T];H^{-1}(\T))$, a.s.}  
\end{equation}
We can use Lemma \ref{thm:q_bounds_skorohod} 
to deduce the $n$-uniform bound
\begin{align}
	& \norm{\tilde q_n^2}_{L^p(\tilde \Omega 
	\times [0,T];H^{-1}(\T))}^p
	=\tilde \Ex \int_0^T \norm{\tilde q_n^2(t)}_{H^{-1}(\T)}^p\, \d t
	\label{eq:Hneg-bound-qn2}
	\\ & \qquad
	=\tilde \Ex \int_0^T \sup_\varphi 
	\abs{\int_{\T} \tilde q_n^2\, \varphi
	\, \d x}^p \, \d t
	\lesssim_T \tilde \Ex 
	\norm{\tilde q_n}_{L^\infty([0,T];L^2(\T))}^{2p}\, \d t
	\lesssim 1,
	\notag
\end{align}
for any $p\in \bigl[1,p_0/2\bigr]$. Here, the supremum 
runs over all $\varphi \in H^1(\T)$ 
for which $\norm{\varphi}_{H^1(\T)}\leq 1$. Since 
$H^1(\T)\hookrightarrow L^\infty(\T)$, we have 
used that $\norm{\varphi}_{L^\infty(\T)}\lesssim 1$ 
for such functions. In other words, the sequence 
$\left\{\tilde q_n^2\right\}_{n\in \N}$ is bounded in 
the reflexive Banach space
$$
L^p_{\tilde \omega,t}\bigl(H^{-1}_x\bigr)
=L^p\bigl(\tilde \Omega\times [0,T];H^{-1}(\T)\bigr).
$$
Hence, by a weak compactness argument 
and \eqref{eq:weak-conv-tilde}, 
we may assume that 
\begin{equation}\label{eq:Hneg-bound-oq3}
	\overline{q^2}\in L^p_{\tilde \omega,t}\bigl(H^{-1}_x\bigr)
	\quad \text{and} \quad
	\tilde q_n^2\tonweak \overline{q^2} 
	\quad \text{in $L^p_{\tilde \omega,t}
	\bigl(H^{-1}_x\bigr)$}, \quad 
	p\in \bigl[1,p_0/2\bigr],
\end{equation}
This implies the last part of \eqref{eq:Lr-HnegEst}.

Finally, we prove \eqref{eq:Kstar-weak-q2}. In view 
of Lemma \ref{thm:P-u2_bound-tilde}, it is enough 
to consider the $\overline{q^2}$ part of \eqref{eq:Kstar-weak-q2}.
Noting that
$$
\abs{K*\overline{q^2}(t,x)}
\leq \norm{K(x-\cdot)}_{H^1(\T)}
\norm{\overline{q^2}(t)}_{H^{-1}(\T)}
\lesssim \norm{\overline{q^2}(t)}_{H^{-1}(\T)},
$$
we arrive at
\begin{align*}
	\tilde{\Ex} \int_0^T 
	\norm{K*\overline{q^2}(t)}_{L^\infty(\T)}^p\, \d t
	\lesssim
	\tilde{\Ex} \int_0^T 
	\norm{\overline{q^2}(t)}_{H^{-1}(\T)}^p\, \d t
	\overset{\eqref{eq:Lr-HnegEst}}{\lesssim} 1,
\end{align*}
from which we infer that \eqref{eq:Kstar-weak-q2} holds.
\end{proof}

\begin{rem}
Notice how high integrability in $(\tilde \omega,t)$ 
is traded for low ``integrability" in $x$ in the second 
estimate in \eqref{eq:Lr-HnegEst}.
\end{rem}

\section{Existence of martingale solutions}
\label{sec:existence}

Recall that $\tilde X_n$ and $\tilde X$, cf.~\eqref{eq:tilde-Xn}, 
are collective symbols for all the Skorokhod--Jakubowski 
representations, which are built from a sequence 
$\left\{u_{\ep_n}\right\}_{n\in \N}$ of 
strong solutions to the viscous SPDE \eqref{eq:u_ch_ep} 
with initial data $u_{\ep_n}(0)=z_n$, cf.~\eqref{eq:u0-approx}, 
where the viscosity coefficients $\ep_n$ are positive 
numbers with $\ep_n \to 0$ as $n\to \infty$. 

We need filtrations linked to each $\tilde X_n$ 
and the a.s.~limit $\tilde X$. To this end, let 
us first introduce some notations. 
For $t \in [0,T]$, let $f \mapsto f|_{[0,t]}$ denote 
the restriction to the interval $[0,t]$ of a function $f$ 
defined on $[0,T]$. Moreover, we denote by $\Sigma(E)$ the smallest 
$\sigma$-algebra containing a collection $E$ 
of subsets of $\tilde{\Omega}$.  We specify 
$\bigl\{\tilde{\mathcal{F}}_t^n\bigr\}_{t\in [0,T]}$ 
and $\bigl\{\tilde{\mathcal{F}}_t\bigr\}_{t\in [0,T]}$ to be 
the $\tilde{\mathbb{P}}$--augmented canonical filtrations 
of the processes $\tilde X_n$ and $\tilde X$, 
respectively. More precisely, for $\tilde X$ 
the filtration and corresponding stochastic basis 
are defined as
\begin{align}\label{eq:sigma_algebra}
	&\tilde{\mathcal{F}}_t
	=\Sigma\bk{\Sigma\bigl(\tilde X|_{[0,t]}\bigr) 
	\bigcup 
	\bigl\{N \in \tilde{\mathcal{F}}: 
	\tilde{\mathbb{P}}(N) = 0\bigr\}}, 
	\quad t\in [0,T],
	\\ & \quad \text{and} \quad
	\tilde{\cS}=\bigl(\tilde{\Omega},\tilde{\mathcal{F}},
	\bigl\{\tilde{\mathcal{F}}_t\bigr\}_{t\in [0,T]},
	\tilde{\mathbb{P}}\bigr).
	\notag
\end{align}
For $\tilde X_n$ the filtration $\tilde{\mathcal{F}}_t^n$ 
and stochastic basis $\tilde{\cS}^n$ are defined similarly, 
with $\tilde X_n$ replacing $\tilde X$ and $\tilde{\mathcal{F}}_t^n$  
replacing $\mathcal{F}_t$. By construction, the processes 
$\tilde{X}_n$ and $\tilde{X}$ are adapted to 
their canonical filtrations.

By the equality of laws and the 
L\'{e}vy martingale characterization of 
a Wiener process, it is clear 
that $\tilde W_n$ is a Wiener processes with respect to its 
own canonical filtration. Furthermore, $\tilde W_n$ 
is a Wiener process relative to the filtration 
$\bigl\{\tilde{\mathcal{F}}_t^n\bigr\}$ defined in 
\eqref{eq:sigma_algebra}. To prove this, we must 
verify that $\tilde W_n(t)$ is $\tilde{\mathcal{F}}_t^n$--measurable 
and $\tilde W_n(t)-\tilde W_n(s)$ is independent of 
$\tilde{\mathcal{F}}_s^n$, for all $s< t$. 
However, these properties hold because $\tilde W_n$ and $W$ share 
the same law, and $W(t)$ is $\mathcal{F}_t$--measurable 
and $W(t)-W(s)$ is independent of $\mathcal{F}_s$, 
recalling that the unique $H^m$ solution of the viscous 
SPDE \eqref{eq:u_ch_ep}, by construction, 
depends measurably on the initial data 
and the Wiener process \cite{HKP-viscous}.

A standard argument reveals that 
the a.s.~limit $\tilde{W}$ of $\tilde W_n$, 
see \eqref{eq:weak-conv-tilde}, is a Wiener process relative to 
$\bigl\{\tilde{\mathcal{F}}_t\bigr\}_{t\in [0,T]}$
(see, e.g., \cite[Lemma 4.8]{Debussche:2016aa}). 

\begin{lem}[$\tilde W$ is a Wiener process]
\label{thm:W_BM}
The a.s.~representation $\tilde{W}$ 
from Proposition \ref{thm:skorohod} is a Wiener 
process defined on the stochastic basis 
$\tilde{\mathcal{S}}$, cf.~\eqref{eq:sigma_algebra}.
\end{lem}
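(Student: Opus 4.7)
The plan is to apply L\'evy's martingale characterization: it suffices to verify that $\tilde W$ is a continuous process with $\tilde W(0)=0$ a.s.\ and that both $\tilde W(t)$ and $\tilde W(t)^2 - t$ are $\{\tilde{\mathcal F}_t\}$-martingales. Continuity and the starting value are immediate from the a.s.\ convergence $\tilde W_n \to \tilde W$ in $C([0,T])$ supplied by Proposition \ref{thm:skorohod}, combined with the fact that each $\tilde W_n$ is itself a Wiener process.

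For the martingale properties, I would fix $0 \le s < t \le T$ and an arbitrary bounded continuous function $h$ on $\mathcal{X}|_{[0,s]}$, the restriction of the path space $\mathcal{X}$ to $[0,s]$. The paragraph preceding the lemma statement records that $\tilde W_n$ is already a Wiener process relative to $\{\tilde{\mathcal F}_t^n\}$, and $\tilde X_n$ is adapted to this filtration by construction, so $h\bigl(\tilde X_n|_{[0,s]}\bigr)$ is $\tilde{\mathcal F}_s^n$-measurable and bounded. Consequently,
\begin{equation*}
\tilde{\Ex}\!\bigl[\bigl(\tilde W_n(t) - \tilde W_n(s)\bigr) h\bigl(\tilde X_n|_{[0,s]}\bigr)\bigr] = 0,
\end{equation*}
\begin{equation*}
\tilde{\Ex}\!\bigl[\bigl(\tilde W_n(t)^2 - \tilde W_n(s)^2 - (t-s)\bigr) h\bigl(\tilde X_n|_{[0,s]}\bigr)\bigr] = 0.
\end{equation*}
The a.s.\ convergence $\tilde X_n \to \tilde X$ in the product topology of $\mathcal X$ (Proposition \ref{thm:skorohod}) and the continuity of $h$ give a.s.\ convergence of the integrands, while uniform integrability follows from the fact that $\tilde W_n$ inherits the law of the original $W$: all moments $\tilde{\Ex}\,|\tilde W_n(t)|^p$ are finite uniformly in $n$, and $h$ is bounded. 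Vitali's convergence theorem therefore passes the limit inside both expectations, yielding the corresponding identities for $\tilde W$ and $\tilde X$.

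A standard monotone class argument would then extend these identities from bounded continuous $h$ to all bounded $\tilde{\mathcal F}_s$-measurable test functions, thereby delivering the martingale property with respect to $\{\tilde{\mathcal F}_t\}$; L\'evy's characterization then concludes the proof. The main delicate point is this extension step, because the path space $\mathcal X$ is only a countable product of (quasi-)Polish factors, so the usual density of bounded continuous functions in bounded measurable ones on Polish spaces does not apply verbatim. However, the separating-sequence characterization recalled in Appendix \ref{sec:qpolish} ensures that $\tilde{\mathcal F}_s$ is generated by a countable family of continuous functionals of $\tilde X|_{[0,s]}$, which suffices to run a $\pi$-$\lambda$ argument on the algebra they generate.
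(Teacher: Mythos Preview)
Your proposal is correct and follows essentially the same route as the paper: test the martingale increments against bounded continuous functionals of the restricted path $\tilde X_n|_{[0,s]}$, use uniform-in-$n$ moment bounds on $\tilde W_n$ together with the a.s.\ convergence $\tilde X_n\to\tilde X$ to pass to the limit via Vitali, and conclude by L\'evy's characterisation. The only cosmetic differences are that the paper transfers the $n$-level identities back to the original probability space via the equality of joint laws (rather than invoking, as you do, the already-recorded fact that $\tilde W_n$ is a Wiener process for $\{\tilde{\mathcal F}^n_t\}$), and the paper treats only the increment $\tilde W_n(t)-\tilde W_n(s)$ explicitly, absorbing the quadratic-variation condition into the equality-of-laws observation; your explicit handling of $\tilde W^2-t$ and of the monotone-class extension in the quasi-Polish setting is a welcome elaboration but not a different argument.
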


\begin{proof}
By the equality of laws and L\'evy's characterisation 
theorem (see, e.g., \cite[Theorem IV.3.6]{Revuz:1999wi}), it 
remains to prove that $\tilde{W}$ is a 
$\tilde{\mathcal{F}}_t$ martingale.

Let $\gamma:\mathcal{X}|_{[0,s]}\to [0,1]$ 
be a continuous function, where $\mathcal{X}$ 
is the (countable product) path space defined 
by \eqref{eq:pathspaces}, \eqref{eq:pathspaces-joint} 
and by $\mathcal{X}|_{[0,s]}$ we understand the same space 
but with $[0,T]$ replaced by $[0,s]$. Clearly, $\mathcal{X}|_{[0,s]}$ 
is quasi-Polish (with the product topology), 
and the restriction operator $\mathcal{R}_s:\mathcal{X}\to\mathcal{X}|_{[0,s]}$ 
is continuous (as each single component is trivially continuous). 
Hence, $X_n|_{[0,s]}=\mathcal{R}_s \circ X_n$ is 
$\mathcal{F}_s\, /\otimes_{l\in \N}\mathcal{B}_{\mathcal{X}_{l}}
\big|_{[0,s]}$ measurable, where the countable 
vector $X_n$ is defined in \eqref{eq:Xn-def} and 
$\mathcal{B}_{\mathcal{X}_{l}}\big|_{[0,s]} $ 
denotes the Borel $\sigma$-algebra of 
the ``restricted" space $\mathcal{X}_l|_{[0,s]}$.

Now, by the equality of laws and the $\{\mathcal{F}_t\}$-martingale 
property of the original Wiener process $W$, 
for any $0\leq s<t\le T$ and for any $n \in \mathbb{N}$,
\begin{align*}
	& \tilde{\Ex}\left[\gamma\bigl(\tilde{X}_n|_{[0,s]}\bigr)
	\bigl(\tilde{W}_n(t)-\tilde{W}_n(s)\bigr)\right]
	=\Ex\left[\gamma\bigl(X_n|_{[0,s]}\bigr)
	\bigl(W(t)-W(s)\bigr)\right]=0,
\end{align*}
where $X_n$ is defined in \eqref{eq:Xn-def}. 
The lemma follows if we can pass to limit $n\to\infty$ in the 
left-hand side of the above identity. 
By \eqref{eq:weak-conv-tilde}, $\tilde{W}_n\to \tilde{W}$ 
in $C([0,T])$, $\tilde{\mathbb{P}}$--a.s. 
Moreover, by the equality of laws,
$\tilde \Ex \norm{\tilde{W}_n}_{C([0,T])}^p
= \Ex \norm{W}_{C([0,T])}^p\leq C(T,p)$, 
for any finite $p$, where the last estimate comes 
from the BDG inequality. Hence, by Vitali's convergence theorem, 
$\tilde{\Ex}\left[\gamma\bigl(\tilde{X}|_{[0,s]}\bigr)
\bigl(\tilde{W}(t)-\tilde{W}(s)\bigr)\right]=0$.
\end{proof}

The process $u_{\ep_n}$ satsfies 
the viscous SPDE \eqref{eq:u_ch_ep} with initial 
data $u_{\ep_n}(0)=z_n$, cf.~\eqref{eq:u0-approx}.
The next result shows that the Skorokhod--Jakubowski representation 
$\tilde u_n$ satisfies the same SPDE on the new 
probability space. There exist several approaches to proving this, see 
for example \cite{Bensoussan:1995aa,Brzezniak:2011aa,Ondrejat:2010aa}.
Here we are going to rely on a simple but general 
method discovered by Brze\'zniak and 
Ondrej\'at \cite{Brzezniak:2011aa,Ondrejat:2010aa}, 
and then used in several other works analysing different SPDEs, see for 
example \cite{Hofmanova:2013aa,Debussche:2016aa,Breit:2016aa}. 
To describe the idea, consider the following functional, defined 
for $(u,v,z)\in \mathcal{X}_u\times \mathcal{X}_{q^2}
\times \mathcal{X}_{u_0}$, cf.~\eqref{eq:pathspaces}, and $t\in [0,T]$:
\begin{align}\label{eq:mart_def1}
	&M_n[u,v,z](t)=\int_\T \varphi\, u(t)\,\d x
	-\int_\T \varphi \, z\,\d x 
	-\ep_n \int_0^t \int_\T \pd_{xx}^2\varphi \, u\,\d x\,\d s
	\\ \notag & \quad
	-\int_0^t \int_\T \pd_x \varphi
	\left(\frac12 \,u^2+P[u,v]\right)\,\d x\,\d s
	-\frac12 \int_0^t \int_\T 
	\pd_x\bk{\pd_x \bk{\sigma_{\ep_n}
	\, \varphi}\sigma_{\ep_n}}u \,\d x \,\d s,
\end{align}
viewing the test function $\varphi \in C^\infty(\T)$ 
as fixed. Here, we have augmented our usual 
notation, cf.~\eqref{eq:u_ch_ep}, to accommodate 
the weak limit of $\tilde q_n^2$, by setting 
$P[u,v]=K*\bk{u^2+\frac12 v}$.	
The $x$-weak formulation of the SPDE for 
$u_{\ep_n}$, cf.~\eqref{eq:u_ep_weakeq}, reads
$$
D_n=M_n\bigl[u_{\ep_n},q_{\ep_n}^2,z_n\bigr](t)
-\int_0^t \int_\T \pd_x\bk{\varphi \sigma_{\ep_n}}
u_{\ep_n} \,\d x\,\d W=0,
\quad q_{\ep_n}=\pd_x u_{\ep_n}.
$$ 
Replacing $u_{\ep_n}$, $W$ by $\tilde u_n$, $\tilde W_n$, respectively, 
we denote the corresponding quantity by $\tilde D_n$. 
The aim is to show that $D_n=0$ implies $\tilde D_n=0$. 
By the equality of laws, the real-valued stochastic process 
$\tilde D_n$ is a martingale (starting at $0$), and if 
one establishes that the quadratic variation of $\tilde D_n$ 
is zero, then $\tilde D_n$ is zero. Since 
$\tilde D_n$ is of the form $\tilde{D}_n^{(1)}
-\tilde{D}_n^{(2)}$, this boils down to 
computing the quadratic variation 
$\bigl \langle \tilde D_n\bigr \rangle$ as 
$\bigl \langle \tilde D_n^{(1)}\bigr \rangle 
-2 \bigl \langle \tilde D_n^{(1)}, D_n^{(2)}\bigr
\rangle +\bigl \langle \tilde D_n^{(2)}\bigr \rangle$, 
where $\bigl \langle \tilde D_n^{(2)}\bigr \rangle(t)
= \int_0^t \abs{\int_\T \pd_x\bk{\varphi \sigma_{\ep_n}}
\tilde u_n \,\d x}^2\,\d s$, and the first (quadratic variation) 
and second (co-variation) terms can be computed via the 
equality of laws and properties of the corresponding 
terms in $\bigl \langle D_n\bigr \rangle=0$, see 
\eqref{eq:ondrejat_arg} below.

\begin{lem}[$\tilde u_n$ solves SPDE]
\label{thm:nth_eq}
Let $\tilde{u}_n$, $\tilde{q}_n$, $\tilde W_n$, 
$\tilde{u}_{0,n}$ be the Skorokhod--Jakubowski 
representations from Proposition \ref{thm:skorohod}. 
Then, for any $\varphi \in C^\infty(\T)$ and $t\in [0,T]$,
\begin{align}
	& \int_\T \varphi\, \tilde{u}_n(t) \,\d x
	-\int_\T \varphi \, \tilde{u}_{0,n} \,\d x
	\notag
	\\ & \quad 
	=\int_0^t\int_\T \pd_x \varphi
	\left(\frac12 \tilde{u}_n^2+\tilde{P}_n
	\right)\,\d x\, \d s
	+\ep_n\int_0^t\int_\T \pd_{xx}^2 \varphi 
	\, \tilde{u}_n\,\d x\, \d s 
	\label{eq:nth_equation_twiddle}
	\\ & \quad\quad
	+\frac12\int_0^t\int_\T\pd_x \bk{\pd_x\bk{\sigma_{\ep_n}\, \varphi}
	\sigma_{\ep_n}}\, \tilde{u}_n \,\d x \,\d s
	+\int_0^t\int_\T \pd_x\bk{\varphi\, \sigma_{\ep_n}}
	\, \tilde{u}_n\,\d x\, \d \tilde{W}_n,
	\notag
\end{align}
$\tilde{\mathbb{P}}$--a.s., where $\tilde{P}_{n}=P[\tilde{u}_n]=
K*\left(\tilde{u}^2_n+\frac{1}{2}\tilde{q}_n^2\right)$.
\end{lem}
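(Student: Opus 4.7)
The plan is to adapt the Brze\'zniak--Ondrej\'at method sketched before the lemma. Set
$$
\tilde{D}_n(t) = \tilde{D}_n^{(1)}(t) - \tilde{D}_n^{(2)}(t),
$$
where $\tilde{D}_n^{(1)}(t)=M_n\bigl[\tilde{u}_n,\tilde{q}_n^2,\tilde{u}_{0,n}\bigr](t)$ is the drift-plus-boundary functional from \eqref{eq:mart_def1} and $\tilde{D}_n^{(2)}(t)=\int_0^t\!\!\int_\T \pd_x(\sigma_{\ep_n}\varphi)\tilde{u}_n\,\d x\,\d \tilde{W}_n$. The It\^o integral is well-defined because $\tilde{W}_n$ is a Wiener process relative to the canonical filtration $\bigl\{\tilde{\mathcal{F}}_t^n\bigr\}$ (established before the lemma, analogously to Lemma~\ref{thm:W_BM}), and because $\tilde{u}_n \in C([0,T];L^2(\T))$ is $\tilde{\mathcal{F}}_t^n$-adapted. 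Note also $\tilde{D}_n(0)=0$, as equality of joint laws forces $\tilde{u}_n(0)=\tilde{u}_{0,n}$ $\tilde{\mathbb{P}}$--a.s. The goal is to prove $\tilde{D}_n\equiv 0$.

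I would show that $\tilde{D}_n$ is a continuous square-integrable $\bigl\{\tilde{\mathcal{F}}_t^n\bigr\}$-martingale with zero quadratic variation, whence $\tilde{D}_n\equiv 0$. For the martingale property, fix $0\le s< t\le T$ and a bounded continuous test functional $\gamma:\mathcal{X}|_{[0,s]}\to\R$. Since $D_n\equiv 0$ by \eqref{eq:u_ep_weakeq}, $\Ex\bigl[\gamma\bigl(X_n|_{[0,s]}\bigr)\bigl(D_n(t)-D_n(s)\bigr)\bigr]=0$, and transferring via equality of joint laws yields
$$
\tilde{\Ex}\bigl[\gamma\bigl(\tilde{X}_n|_{[0,s]}\bigr)\bigl(\tilde{D}_n(t)-\tilde{D}_n(s)\bigr)\bigr]=0.
$$
For the quadratic variation, expand
$$
\langle \tilde{D}_n\rangle=\langle \tilde{D}_n^{(1)}\rangle-2\langle \tilde{D}_n^{(1)},\tilde{D}_n^{(2)}\rangle+\langle \tilde{D}_n^{(2)}\rangle.
$$
The third term equals $\int_0^t\abs{\int_\T\pd_x(\sigma_{\ep_n}\varphi)\tilde{u}_n\,\d x}^2\,\d s$ by It\^o's isometry. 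On the original basis, $D_n^{(1)}=D_n^{(2)}$ forces $\langle D_n^{(1)}\rangle=\langle D_n^{(1)},D_n^{(2)}\rangle=\langle D_n^{(2)}\rangle$; rephrasing these identities as the vanishing of $\gamma$-tested second-moment increments and transferring to the new basis exactly as above yields the same integral for the first two terms. The alternating signs then force $\langle \tilde{D}_n\rangle\equiv 0$, so $\tilde{D}_n$ (a continuous $L^2$-martingale starting at $0$) must vanish identically, which is precisely \eqref{eq:nth_equation_twiddle}.

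The main obstacle is that the It\^o integral $\tilde{D}_n^{(2)}$ is a filtration-dependent object, so equality of laws does not directly apply to it. The standard remedy, due to Brze\'zniak--Ondrej\'at \cite{Brzezniak:2011aa,Ondrejat:2010aa}, is to realise $D_n^{(2)}(t)$ and the corresponding quadratic/cross variations as $L^2(\Omega)$-limits of Riemann sums of the form
$$
\sum_i\Bigl(\,\int_\T \pd_x(\sigma_{\ep_n}\varphi)\, u_{\ep_n}(t_i)\,\d x\Bigr)\bigl(W(t_{i+1})-W(t_i)\bigr)
$$
along partitions $\{t_i\}$ of $[0,t]$ of vanishing mesh; this is legitimate because $t\mapsto \int_\T \pd_x(\sigma_{\ep_n}\varphi)u(t)\,\d x$ is continuous whenever $u\in C_tL^2_x$. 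The Riemann sums are jointly continuous functionals of $(u_{\ep_n},W)$, hence equality of laws applies pathwise, and the $L^2(\Omega)$-boundedness supplied by Lemma~\ref{thm:q_bounds_skorohod} lets the identities of the preceding paragraph pass to the limit. This justifies both the martingale property and the quadratic-variation computation, completing the proof.
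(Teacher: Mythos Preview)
Your proposal is correct in spirit and would lead to a valid proof, but the implementation you sketch differs from the paper's in one important respect. The paper never transfers anything involving the stochastic integral $D_n^{(2)}$; instead it verifies directly that the three \emph{pathwise} functionals $\tilde M_n$, $\tilde M_n^2-\tilde R_n$, and $\tilde M_n\tilde W_n-\tilde N_n$ (with $R,N$ as in \eqref{eq:functionals_NR}) are $\{\tilde{\mathcal F}^n_t\}$-martingales, by first checking that $M_n$, $R$, $N$ are continuous on the product path space and then invoking equality of laws. From the second and third martingale properties one reads off $\langle\tilde M_n\rangle=\tilde R_n$ and $\langle\tilde M_n,\tilde W_n\rangle=\tilde N_n$; the cross term $\langle\tilde D_n^{(1)},\tilde D_n^{(2)}\rangle$ is then computed on the new basis as $\int_0^\cdot\!\int_\T\pd_x(\varphi\sigma_{\ep_n})\tilde u_n\,\d x\,\d\langle\tilde M_n,\tilde W_n\rangle$, with no Riemann sums required. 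Your route---approximating $D_n^{(2)}$ by Riemann sums so that it becomes a pathwise limit of continuous functionals and then transferring identities involving $D_n$ directly---is the older approach of Bensoussan \cite{Bensoussan:1995aa} rather than the Brze\'zniak--Ondrej\'at method you cite; it works here because the integrand $t\mapsto\int_\T\pd_x(\sigma_{\ep_n}\varphi)u(t)\,\d x$ is continuous and the $L^{p_0}_\omega L^\infty_t H^1_x$ bound of Lemma~\ref{thm:q_bounds_skorohod} supplies the uniform integrability needed to pass to the $L^2(\Omega)$-limit. The paper's version is cleaner precisely because it sidesteps this approximation by exploiting that $M_n$, $R$, $N$ are already continuous functionals of $(u,v,z)$ on $\mathcal{X}_u\times\mathcal{X}_{q^2}\times\mathcal{X}_{u_0}$.
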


\begin{proof} 
We follow, e.g., \cite{Breit:2016aa}. For notational 
brevity, herein we use $\LL X \RR(t) $ to denote 
the quadratic variation $\LL X,X \RR(t)$ of a process $X$, 
whilst retaining $\LL X,Y \RR(t)$ for the co-varation 
between two processes $X,Y$. 

\medskip
\noindent \textit{1. Set-up and conclusion.}
\medskip

Given \eqref{eq:mart_def1}, let us also introduce 
the $n$-independent functionals
\begin{align}\label{eq:functionals_NR}
	R[u](t)=\int_0^t \abs{\int_\T 
	\pd_x\bk{\varphi \sigma} u \,\d x}^2\,\d s,
	\quad N[u](t)=-\int_0^t\int_\T 
	\pd_x\bk{\varphi \sigma}u\,\d x\,\d s.
\end{align}

The proof hinges on showing 
that $\tilde M_n=M_n\bigl[\tilde{u}_n,
\tilde{q}_n^2,\tilde{u}_{0,n}\bigr]$ 
is an $\bigl\{\tilde{\mathcal{F}}^n_t\bigr\}$-martingale 
with quadratic variation and covariation 
(with $\tilde W_n$) given by
\begin{equation}\label{eq:qvar_estb_pre}
\begin{aligned}
	\bigl \LL \tilde M_n \bigr\RR&= \int_0^t \abs{\int_\T 
	\pd_x\bk{\varphi \sigma_{\ep_n}} \tilde{u}_n \,\d x}^2\,\d s
	=:\tilde R_n,\\
	\bigl\LL \tilde M_n,\tilde{W}_n\bigr\RR&=-\int_0^t\int_\T 
	\pd_x\bk{\varphi \sigma_{\ep_n}}\tilde{u}_n\,\d x\,\d s
	=:\tilde N_n.
\end{aligned}
\end{equation}
These identities imply that  $\tilde D_n(t)=\tilde M_n(t)
-\int_0^t \int_\T \pd_x\bk{\varphi \sigma_{\ep_n}}
\tilde{u}_n \,\d x \,\d \tilde{W}_n$ has 
vanishing quadratic variation: 
\begin{align}
	& \bigl \langle \tilde D_n\bigr \rangle(t)
	=\left\LL \tilde M_n\right\RR(t)
	-2\int_0^t\int_\T \pd_x\bk{\varphi \sigma_{\ep_n}}\tilde{u}_n \,\d x \,\d 
	\left\LL  \tilde M_n,\tilde{W}_n \right\RR 
	\notag \\ &\qquad\qquad\qquad 
	+ \left\LL\int_0^\cdot \int_\T \pd_x \bk{\varphi \sigma_{\ep_n}}
	\tilde{u}_n\,\d x \,\d \tilde{W}_n\right\RR(t)=0,
	\label{eq:ondrejat_arg}
\end{align}
and $\bigl \langle \tilde D_n\bigr \rangle =0$ implies 
$\tilde D_n=0$, which is the sought-after 
equation \eqref{eq:nth_equation_twiddle}.

\medskip
\noindent \textit{2. Martingale properties and verification 
of \eqref{eq:qvar_estb_pre}.}
\medskip

We establish \eqref{eq:qvar_estb_pre} 
by verifying the martingale property of 
the three processes $\tilde M_n$, 
$\tilde M_n^2-\tilde R_n$, and
$\tilde M_n\tilde{W}_n-\tilde N_n$. 
However, first we must check that 
\begin{align*}
	\mathcal{X}_u\times \mathcal{X}_{q^2}\times \mathcal{X}_{u_0}
	\ni (u,v,z) \mapsto M_n[u,v, z](t)\in \R, \quad t\in [0,T],
\end{align*}
is measurable map. We will do this by proving continuity of $M_n$. 
Given this continuity of $M_n$ on a finite sub-collection 
$\mathcal{X}_u\times \mathcal{X}_{q^2}\times \mathcal{X}_{u_0}$ 
of the factors of the full Cartesian product space 
$\mathcal{X}$, cf.~\eqref{eq:pathspaces} and 
\eqref{eq:pathspaces-joint}, clearly $M_n$ may be 
seen as a continuous function on the full Cartesian 
product $\mathcal{X}$, and then Remark \ref{rem:cont-map-measurable} 
supplies the desired measurability. 
We prove the measurability of $R$ and $N$ in the same way.

Continuity will follow from the estimates already established. 
Fix  $u_i\in \mathcal{X}_u $, $v_i \in \mathcal{X}_{q^2}$ 
and $z_i \in \mathcal{X}_{u_0}$, see \eqref{eq:pathspaces}, 
for $i=1,2$. From \eqref{eq:mart_def1}, and by 
repeated applications of H\"older's inequality, we obtain
\begin{align*}
	& \abs{M_n[u_1,v_1,z_1]-M_n[u_2,v_2,z_2]}
	\\& \qquad 
	\lesssim  \norm{\varphi}_{L^2(\T)}
	\norm{u_1-u_2}_{C([0,T];L^2(\T))}
	+\norm{\varphi}_{L^2(\T)}\norm{z_1-z_2}_{L^2(\T)} 
	\\ & \qquad\quad
	+ \ep_n \norm{\pd_{xx}^2 \varphi}_{L^2(\T)}
	\norm{u_1-u_2}_{C([0,T];L^2(\T))}
	\\ & \qquad\quad
	+\norm{\pd_x \varphi}_{L^\infty(\T)}
	\norm{u_1+u_2}_{L^2([0,T] \times \T)} 
	\norm{u_1-u_2}_{L^2([0,T] \times \T)} 
	\\ & \qquad\quad
	+\norm{\varphi}_{L^\infty(\T)}
	\norm{\pd_x K}_{L^1(\T)}
	\norm{u_1+u_2}_{L^2([0,T] \times \T)} 
	\norm{u_1-u_2}_{L^2([0,T] \times \T)}  
	\\ & \qquad\quad 
	+\abs{\int_0^T \int_\T 
	\bigl((\tau \varphi)*\pd_x K\bigr)
	\bk{v_1-v_2}\,\d x\,\d t}
	\\ & \qquad\quad
	+\norm{\pd_x \bk{\pd_x
	\bk{\sigma_{\ep_n} \varphi}\sigma_{\ep_n}}}_{L^\infty(\T)} 
	\norm{u_1-u_2}_{L^1([0,T] \times \T)},
\end{align*}
writing ``$a^2-b^2=(a+b)(a-b)$" twice. 
Since $\mathcal{X}_{q^2} = L^r([0,T] \times \T)-w$ 
for some fixed $1 \le r < 3/2$, we have used a 
standard property of convolution to write
$$
\int_\T \varphi \,\pd_x K *(v_1-v_2) \,\d x 
=-\int_\T \bigl((\tau \varphi)*\pd_x K\bigr)
(v_1-v_2) \,\d x,
$$
noting that $(\tau\varphi)*\pd_x K\in 
L^{r'}([0,T]\times \T)$, $1/r + 1/r' = 1$ (so $r'>3$). 
Here the map $\tau$ is defined by $(\tau \varphi)(-x)=\varphi(x)$.
This shows that $M_n$ is continuous on 
$\mathcal{X}_u \times \mathcal{X}_{q^2}\times \mathcal{X}_{u_0}$, 
and thereby measurable (according to Remark 
\ref{rem:cont-map-measurable}). 
Similar arguments can now be made for 
$N[u]$ and $R[u]$. As $u \mapsto N[u]$ is linear, 
continuity follows from the bound
$$
\abs{N[u]} \le \norm{\pd_x\bk{\varphi \sigma}}_{L^\infty(\T)} 
\norm{u}_{L^1([0,T]\times \T)}.
$$
For the continuity of $R$,
\begin{gather*}
	\abs{R[u_1]-R[u_2]}
	\le \int_0^t \bk{\int_\T \pd_x 
	\bk{\varphi\sigma} \bk{u_1-u_2}\,\d x
	\int_\T \pd_x \bk{\varphi \sigma} 
	\bk{u_1+u_2}\,\d x}\,\d s
	\\ \le \norm{\pd_x\bk{\varphi \sigma}}_{L^\infty(\T)}^2
	\norm{u_1-u_2}_{L^2([0,T]; L^1(\T))}
	\norm{u_1 + u_2}_{L^2([0,T]; L^1(\T))}
	\\ \lesssim_{\varphi,\sigma,T} 
	\norm{u_1 + u_2}_{L^2([0,T]; L^1(\T))}
	\norm{u_1 - u_2}_{C([0,T]; L^2(\T))}.
\end{gather*}

Finally, we verify the announced martingale properties.  
For any c\`adl\`ag process $X$ on $[0,T]$ 
and $s,t\in [0,T]$ with $s<t$, denote by $\Delta_{s,t} X$ 
the difference $X(t)-X(s)$. Let $\gamma:\mathcal{X}|_{[0,s]}\to [0,1]$ 
be an arbitrary continuous function, 
where $\mathcal{X}$ is the path space defined by 
\eqref{eq:pathspaces}, \eqref{eq:pathspaces-joint}.
By the equality of laws in Proposition \ref{thm:skorohod} 
and the martingale property of the original processes
$M_n:=M_n\bigl[u_{\ep_n},q^2_{\ep_n},z_{0,n}\bigr]$, 
$M_n^2-R[u_n]$, and $M_n W-N[u_n]$, we obtain
\begin{equation} \label{eq:martingale_limitn}
	\begin{aligned}
		&\tilde{\Ex}\left[ \gamma\bigl(\tilde{X}_n|_{[0,s]}\bigr)
		\Delta_{s,t} \tilde M_n\right]=0,
		\quad 
		\tilde{\Ex}\left[ \gamma\bigl(\tilde{X}_n|_{[0,s]}\bigr)
		\left(\Delta_{s,t} \tilde M_n^2
		-\Delta_{s,t} \tilde R_n\right)\right]=0,
		\\ & \quad \text{and} \quad 
		\tilde{\Ex}\left[ \gamma\bigl(\tilde{X}_n|_{[0,s]}\bigr)
		\left(\Delta_{s,t} \bigl(\tilde M_n \tilde W_n\bigr)
		-\Delta_{s,t} \tilde N_n\right)\right]=0,
	\end{aligned}
\end{equation}
which proves that 
$\tilde M_n$, $\tilde M_n^2-\tilde R_n$, 
and $\tilde M_n\tilde{W}_n - \tilde N_n$ 
are $\bigl\{\tilde F_t^n \bigr\}$--martingales.
\end{proof}

Arguing as above, we prove next 
that the a.s.~limit $\tilde u$ from 
Proposition \ref{thm:skorohod} satisfies an SPDE 
on the new probability space 
that resembles the stochastic CH equation \eqref{eq:u_ch}, 
except that the nonlinear term $\tilde q^2$ is 
replaced by the weak limit $\overline{q^2}$. Once 
we make the identification $\overline{q^2}=\tilde q^2$, 
which is equivalent to the strong $L^2_{\tilde \omega,t,x}$ convergence 
of $\tilde q_n$ towards $\tilde q$ \cite[Lemma 3.34]{Novotny-book:2004}, 
the proof of Theorem \ref{thm:main} is concluded. 
But being rather long and technical, the identification step 
is postponed to Section \ref{sec:wk_limits_section}, 
which constitutes a central part of the paper.

\begin{prop}[limit $\tilde u$ solves SPDE]\label{thm:existence_H1}
Suppose the assumptions of Theorem \ref{thm:main} hold.
Let $\tilde{u}$, $\tilde q$, $\overline{q^2}$, $\tilde W$, 
$\tilde{u}_0$ be the Skorokhod--Jakubowski representations from 
Proposition \ref{thm:skorohod}, see 
also Remark \ref{rem:weak-notation}, and let $\tilde \cS$ 
be the stochastic basis defined in \eqref{eq:sigma_algebra}. 
Suppose further that the following identification holds:
\begin{equation}\label{eq:bigassumption1}
	\overline{q^2}=\tilde{q}^2, 
	\quad 
	\text{$\tilde{\mathbb{P}}\otimes \d t 
	\otimes \d x$--a.e.~in $\tilde \Omega 
	\times [0,T]\times \T$}. 
\end{equation}
Then $\bigl(\tilde \cS,\tilde u,\tilde W \bigr)$ 
is a weak martingale solution of 
\begin{equation*}
	\begin{aligned}
		0 &=\d \tilde u + \bigl[\tilde u\, \pd_x \tilde u
		+\pd_x \tilde P \bigr] \, \d t 
		-\frac12\sigma \pd_x \bk{\sigma \pd_x \tilde u}\,\d t
		+\sigma \pd_x \tilde u \, \d \tilde W,\\
		\tilde P & = K*\bk{\tilde u^2+\frac12\tilde q^2},
		\quad \tilde{u}(0) = \tilde{u}_0\sim u_0,
	\end{aligned}
\end{equation*}
in the sense that $\tilde \cS$ satisfies (a), $\tilde W$ 
satisfies (b), and $\tilde u$ satisfies (c), (d) 
of Definition \ref{def:dissp_sol}. Besides, the 
$x$-weak form \eqref{eq:u_weakeq} holds with $u$, $P$ 
replaced by $\tilde u$, $\tilde P$.
\end{prop}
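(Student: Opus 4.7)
The plan is to pass to the limit $n\to\infty$ in the $x$-weak form \eqref{eq:nth_equation_twiddle} for $\tilde u_n$ established in Lemma \ref{thm:nth_eq}, using the a.s.~convergences of Proposition \ref{thm:skorohod}, the bounds gathered in Section \ref{sec:equality-of-laws}, and the identification \eqref{eq:bigassumption1}. The properties (a), (b), (d) of Definition \ref{def:dissp_sol} are essentially free: (a) is \eqref{eq:sigma_algebra}, (b) is Lemma \ref{thm:W_BM}, and (d) follows from the equality in law of $\tilde u_{0,n}$ and $z_n$ together with $z_n\to u_0$ in $L^{p_0}(\Omega;H^1(\T))$ and the a.s.~limit $\tilde u_{0,n}\to\tilde u_0$ in $H^1(\T)$. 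Property (c) is delivered by Lemma \ref{thm:u_in_Linfty}, which provides the $L^2(\tilde\Omega;L^\infty_tH^1_x)$ bound and the a.s.~path regularity $\tilde u\in C_tL^2_x\cap C_t(H^1_x-w)$; progressive measurability then follows from adaptedness of $\tilde u$ to $\{\tilde{\mathcal{F}}_t\}$ and continuity in the strong $L^2_x$ topology.

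For the deterministic terms in \eqref{eq:nth_equation_twiddle}, fix $\varphi\in C^2(\T)$. The viscous term $\eps_n\int_0^t\int_\T \pd_{xx}^2\varphi\,\tilde u_n\,\d x\,\d s$ vanishes a.s.~as $\eps_n\downarrow 0$ thanks to the uniform bound $\tilde\Ex\|\tilde u_n\|_{L^\infty_tL^2_x}^{p_0}\lesssim 1$ from Lemma \ref{thm:q_bounds_skorohod}. The terms linear in $\tilde u_n$ (including the Stratonovich--It\^o correction involving $\pd_x(\pd_x(\sigma_{\eps_n}\varphi)\sigma_{\eps_n})$) converge a.s.~via the strong convergence $\tilde u_n\to\tilde u$ in $C_tL^2_x$ combined with $\sigma_{\eps_n}\to\sigma$ in $W^{2,\infty}(\T)$. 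The nonlinear term $\frac12\tilde u_n^2$ converges a.s.~in $L^1_{t,x}$ by the same strong convergence together with the $L^\infty_{t,x}$ bound in Lemma \ref{thm:P-u2_bound-tilde}.

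The pressure term is where the identification \eqref{eq:bigassumption1} enters decisively. From Proposition \ref{thm:skorohod} we have $\tilde q_n\rightharpoonup\tilde q$ in $L^{2r}_{t,x}$ a.s.~and $\tilde q_n^2\to\overline{q^2}$ in $L^r(L^r_w)$ a.s.; assuming $\overline{q^2}=\tilde q^2$ a.e.~and using Proposition \ref{thm:2plusalpha_apriori} (uniform integrability), one obtains $\|\tilde q_n\|_{L^2_{t,x}}\to\|\tilde q\|_{L^2_{t,x}}$ a.s.~along a subsequence, which by uniform convexity upgrades the weak convergence to strong $L^2_{t,x}$ convergence of $\tilde q_n$. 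Since $K\in H^1(\T)$, convolution with $K$ then yields $\tilde P_n=K*(\tilde u_n^2+\tfrac12\tilde q_n^2)\to \tilde P=K*(\tilde u^2+\tfrac12\tilde q^2)$ a.s.~in, say, $L^1([0,T]\times\T)$, which suffices to pass to the limit in $\int_0^t\int_\T \pd_x\varphi\,\tilde P_n\,\d x\,\d s$.

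The remaining step is the stochastic integral. We apply \cite[Lemma 2.1]{Debussche:2011aa} on each $\tilde\cS^n$: the adaptedness of $\int_\T \pd_x(\varphi\sigma_{\eps_n})\tilde u_n\,\d x$ to $\{\tilde{\mathcal{F}}_t^n\}$ and the Wiener property of $\tilde W_n$ w.r.t.~the same filtration were recorded before Lemma \ref{thm:W_BM}; the required convergences $\tilde W_n\to\tilde W$ uniformly in probability and $\int_\T \pd_x(\varphi\sigma_{\eps_n})\tilde u_n\,\d x\to\int_\T \pd_x(\varphi\sigma)\tilde u\,\d x$ in $L^2_t$ in probability follow from the a.s.~convergences and $\sigma_{\eps_n}\to\sigma$. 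This yields convergence of the stochastic integral in probability, and hence a.s.~along a further subsequence; the limit is a well-defined It\^o integral against $\tilde W$ on $\tilde\cS$ because $\tilde u$ is $\{\tilde{\mathcal{F}}_t\}$-adapted with the required integrability. The main obstacle lies not in the individual limits but in the pressure term: one must genuinely exploit \eqref{eq:bigassumption1} via uniform convexity to pass from weak to strong $L^2_{t,x}$ compactness of $\tilde q_n$, since weak convergence alone is incompatible with the nonlinearity in $K*(\cdot)^2$.
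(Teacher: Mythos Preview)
Your approach is correct but differs from the paper's in two notable respects. First, for the stochastic integral you pass to the limit directly in the a.s.~weak form via \cite[Lemma 2.1]{Debussche:2011aa}, whereas the paper follows the Brze\'zniak--Ondrej\'at martingale method: it shows that $\tilde M$, $\tilde M^2-\tilde R$, and $\tilde M\tilde W-\tilde N$ are $\{\tilde{\mathcal F}_t\}$-martingales by taking $n\to\infty$ in \eqref{eq:martingale_limitn} (via Vitali), then deduces the identity from the vanishing quadratic variation of the difference process. Second, and more substantially, for the pressure term you exploit the identification \eqref{eq:bigassumption1} to upgrade to \emph{strong} $L^2_{t,x}$ convergence of $\tilde q_n$: testing the a.s.~$L^r(L^r_w)$ convergence of $\tilde q_n^2$ against $\varphi\equiv 1$ yields $\|\tilde q_n\|_{L^2_{t,x}}^2\to\|\tilde q\|_{L^2_{t,x}}^2$ a.s.~(no further subsequence is actually needed, and the appeal to Proposition~\ref{thm:2plusalpha_apriori} is not required here), whence ``weak plus norm implies strong''. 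The paper instead treats $\int_0^t\int_\T\pd_x\varphi\,\tilde P_n\,\d x\,\d s$ using only the a.s.~\emph{weak} convergence $\tilde q_n^2\rightharpoonup\overline{q^2}$ in $L^r_{t,x}$, exploiting that $(s,y)\mapsto\int_\T\pd_x\varphi(x)K(x-y)\,\d x\in L^{r'}_{t,x}$, and invokes the identification only at the very end to name the limit. Your route is shorter and more transparent for this particular proposition; the paper's route is more modular, keeping the convergence argument decoupled from the identification---a structure that aligns with Section~\ref{sec:wk_limits_section}, where one must manipulate equations for weak limits before \eqref{eq:bigassumption1} is available.
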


\begin{proof}
We continue to use the functionals $M_n$, $N$, and $R$ defined 
in \eqref{eq:mart_def1} and \eqref{eq:functionals_NR}. 
In addition, we need the $n$-independent functional
\begin{equation*}
	M[u,v,z](t)=M_n[u,v,z](t) +\ep_n \int_0^t \int_\T 
	\pd_{xx}^2\varphi u \,\d x\,\d s.
\end{equation*}
To simplify the notation, set 
$\tilde{M}=M\bigl[\tilde{u},\tilde{q}^2,\tilde{u}_0\bigr]$, 
$\tilde R=R[\tilde{u}]$, and $\tilde N=N[\tilde{u}]$. 
Similarly, we continue to use 
$\tilde M_n=M_n\bigl[\tilde{u}_n,\tilde{q}^2_n,
\tilde{u}_{0,n}\bigr]$, and  $\tilde R_n$, 
$\tilde N_n$ of \eqref{eq:qvar_estb_pre}.

\medskip
\noindent \textit{1. Set-up and conclusion.}
\medskip

The underlying idea of the proof 
is the same as before. Here we want to verify that   
$\tilde M$, $\tilde M^2-\tilde R$, 
and $\tilde{M} \tilde{W} - \tilde N$ 
are all $\bigl\{\tilde{\mathcal{F}}_t\bigr\}$--martingales.
The limit statements corresponding 
to \eqref{eq:qvar_estb_pre} take the form
\begin{align*}
	\bigl\LL \tilde M\bigr\RR =\tilde R, \quad 
	\bigl\LL \tilde M,\tilde{W}\bigr\RR=\tilde N.
\end{align*}
As before \eqref{eq:ondrejat_arg}, these identities 
imply that $\tilde D=\tilde M
-\int_0^\cdot \int_\T \pd_x\bk{\varphi \sigma}
\tilde u \,\d x \,\d \tilde{W}$ is 
a martingale (starting at $0$) with vanishing 
quadratic variation, and $\tilde D =0$ 
is the desired equation \eqref{eq:u_weakeq}, 
replacing $u,P$ by $\tilde u,\tilde P$. 
Because of Section \ref{sec:equality-of-laws}, 
the remaining properties of 
$\bigl(\tilde \cS,\tilde u,\tilde W \bigr)$ 
are evident.

The martingale properties follow by sending 
$n\to \infty$ in \eqref{eq:martingale_limitn}, 
relying on the a.s.~convergences \eqref{eq:weak-conv-tilde} 
and the moment estimates in Lemma \ref{thm:q_bounds_skorohod}.
Eventually, we arrive at the required martingale equalities
\begin{align}
		&\tilde{\Ex}\left[\gamma\bigl(\tilde{X}|_{[0,s]}\bigr)
		\Delta_{s,t} \tilde M\right]=0,
		\label{eq:martingale_limit1} 
		\\ & 
		\tilde{\Ex}\left[\gamma\bigl(\tilde{X}|_{[0,s]}\bigr)
		\left(\Delta_{s,t} \tilde M^2
		-\Delta_{s,t} \tilde R\right)\right]=0,
		\label{eq:martingale_limit2}
		\\ &  
		\tilde{\Ex}\left[ \gamma\bigl(\tilde{X}|_{[0,s]}\bigr)
		\left(\Delta_{s,t} \bigl(\tilde M \tilde W\bigr)
		-\Delta_{s,t} \tilde N\right)\right]=0,
		\label{eq:martingale_limit3}
\end{align}
where $\tilde X$ is defined in \eqref{eq:tilde-Xn}, 
see also \eqref{eq:sigma_algebra}, and 
$\gamma:\mathcal{X}|_{[0,s]}\to [0,1]$ 
is an arbitrary continuous function.

\medskip
\noindent\textit{2. Passing to the limit in \eqref{eq:martingale_limitn} 
to obtain \eqref{eq:martingale_limit1}.}
\medskip

It remains to justify the 
passage to the limit in each equation 
of \eqref{eq:martingale_limitn}. Since $\gamma$ 
is bounded and continuous and $\tilde X_n \to \tilde X$ 
a.s.~(Proposition \ref{thm:skorohod}), 
it follows that 
\begin{equation}\label{eq:gamma-conv}
	\gamma\bigl(\tilde{X}_n|_{[0,s]}\bigr)
	\ton \gamma\bigl(\tilde{X}|_{[0,s]}\bigr)
	\quad \text{in $L^p(\tilde \Omega)$, 
	for any finite $p$.}
\end{equation}
We continue with the claim that, for any $t\in [0,T]$,
\begin{equation}\label{eq:M_converge}
	\tilde M_n(t)\ton M(t), 
	\quad \text{$\tilde{\mathbb{P}}$--a.s.}
\end{equation}
We verify \eqref{eq:M_converge} 
by proving the term-by-term convergence 
of $\tilde M_n$ to $\tilde M$. 
From \eqref{eq:weak-conv-tilde}, we have
$\tilde{u}_{0,n}\to \tilde{u}_{0}$ in 
$H^1(\T)$ and $\tilde{u}_n \to \tilde{u}$ 
in $C([0,T];L^2(\T))$, $\tilde{\mathbb{P}}$--almost surely. 
This (and $\ep_n\to 0$) implies 
\begin{align*} 
	& \abs{\int_\T \varphi\bk{\tilde{u}_{0,n}
	-\tilde{u}_0}\,\d x}  
	\ton 0,\quad \text{$\tilde{\mathbb{P}}$--a.s.},
	\\ & 
	\sup_{t \in [0,T]}\abs{\int_\T \varphi 
	\bk{\tilde{u}_n-\tilde{u}}(t)\,\d x}\ton 0,
	\quad \text{$\tilde{\mathbb{P}}$--a.s.},
	\\ &
	\abs{ \eps_n\int_0^t \int_\T \pd_{xx}^2 \varphi
	\, \tilde{u}_n\,\d x \,\d s} \ton 0,
	\quad \text{$\tilde{\mathbb{P}}$--a.s.}
	\\ & \abs{\int_0^t\int_\T \pd_x\varphi
	\bk{\frac{\tilde{u}^2_n}{2}
	-\frac{\tilde{u}^2}{2}}\,\d x \,\d s}
	\ton 0, \quad \text{$\tilde{\mathbb{P}}$--a.s.}
\end{align*}

Using $P\bigl[\tilde{u}_n,\tilde{q}_n^2\bigr]=K*\left(\tilde{u}^2_n
+\frac{1}{2}\tilde{q}_n^2\right)$, 
$\tilde u_n^2\to \tilde u^2$ in $C([0,T];L^1(\T))$ a.s., 
$\tilde q_n^2 \weak \overline{q^2}$ 
in $L^r([0,T]\times \T)$ a.s., and the weak limit 
identification \eqref{eq:bigassumption1},
\begin{align*}
	&\abs{\int_0^t \int_\T \pd_x \varphi 
	\Bigl(P\bigl[\tilde{u}_n,\tilde{q}_n^2\bigr] 
	-P\bigl[\tilde{u},\tilde{q}^2\bigr]\Bigr)\,\d x \,\d s}	\\ & \quad \leq 
	\norm{\pd_x \varphi}_{L^\infty(\T)}\norm{K}_{L^1(\T)}
	\norm{\tilde{u}^2_n-\tilde{u}^2}_{L^1([0,T]\times \T)} 
	\\ & \quad\quad
	+\abs{\int_0^t \int_\T \bk{\, \, \int_\T 
	\pd_x \varphi(x)K(x-y) \,\d x}(y)\,
	\bk{\tilde{q}_n^2(s,y)-\tilde{q}^2(s,y)}\,\d y\,\d s}\ton 0,
\end{align*}
exploiting $(s,y)\mapsto \int_\T 
\varphi(x)\pd_xK(x-y) \,\d x  \in L^{r'}([0,T]\times \T)$ 
(recall $r<3/2$ and therefore $r'>3$). 

Finally, using again $\tilde{u}_n \to \tilde{u}$ in 
$C([0,T];L^2(\T))$ a.s., and the a.e.~convergence 
$\pd_x\bk{\pd_x\bk{\sigma_{\ep_n}\varphi}\sigma_{\ep_n}}
\to \pd_x\bk{\pd_x\bk{\sigma \varphi}\sigma}$, 
\begin{align*}
	\abs{\int_0^t\int_\T\pd_x \bk{\pd_x\bk{\sigma_{\ep_n} \varphi}
	\sigma_{\ep_n}}\tilde{u}_n- \pd_x \bk{\pd_x\bk{\sigma \varphi}
	\sigma}\tilde{u} \,\d x \,\d s}
	\ton 0, \quad \text{$\tilde{\mathbb{P}}$--a.s.},
\end{align*}
which concludes the proof of \eqref{eq:M_converge}. 
By Lemma \ref{thm:q_bounds_skorohod}, 
\begin{equation}\label{eq:tMn-moments}
	\Ex \abs{\tilde M_n(t)}^{p_0}\lesssim_\varphi 1 
	\quad \text{(with $p_0>4$)}.	
\end{equation}
Hence, by Vitali's convergence theorem, 
\begin{equation}\label{eq:tMn-conv-Lp}
	\tilde M_n(t) \ton \tilde M(t)
	\quad \text{in $L^p(\tilde \Omega)$, 
	for any $p\in [1,p_0)$, $t\in [0,T]$}.
\end{equation}
In view of \eqref{eq:martingale_limitn}, \eqref{eq:gamma-conv}, 
and \eqref{eq:tMn-conv-Lp}, we see that 
\eqref{eq:martingale_limit1} holds.

\medskip
\noindent\textit{3. Passing to the limit in \eqref{eq:martingale_limitn} 
to obtain \eqref{eq:martingale_limit2}.}
\medskip

Recalling \eqref{eq:martingale_limitn}, we consider 
the convergences of $\Delta_{s,t}\tilde M_n^2$ 
and $\Delta_{s,t} \tilde R_n$ separately.  
Using \eqref{eq:tMn-conv-Lp} again ($p_0>4$), we have 
\begin{equation}\label{eq:tMn2-conv-Lp}
	\tilde M_n^2(t) \ton \tilde M^2(t)
	\quad \text{in $L^p(\tilde \Omega)$, 
	for any $p\in \bigl[1,p_0/2\bigr)$, 
	$t\in [0,T]$}.
\end{equation}
The convergence 
\begin{align}\label{eq:square_convergence}
	\abs{\int_0^t \abs{\int_\T 
	\pd_x \bk{\varphi\sigma_{\ep_n}}
	\tilde{u}_n\,\d x}^2
	-\abs{\int_\T 
	\pd_x \bk{\varphi\sigma}
	\tilde{u}\,\d x }^2 \d s}
	\ton 0, \quad \text{$\tilde{\mathbb{P}}$--a.s.},
\end{align}
follows by applying the algebraic identity $a^2-b^2=(a+b)(a-b)$, 
and using the a.s.~convergence $\tilde{u}_n\to \tilde{u}$ 
in $C([0,T];L^2(\T))$ and the a.e.~convergence 
$\pd_x\bk{\varphi \sigma_{\ep_n}} \to \pd_x \bk{\varphi \sigma}$. 
Clearly, \eqref{eq:square_convergence} 
implies the $\tilde{\mathbb{P}}$--a.s.~convergence 
$\tilde R_n(t)\to \tilde R(t)$, for $t\in [0,T]$.
By Lemma \ref{thm:q_bounds_skorohod}, 
$\Ex \abs{\tilde R_n(t)}^{p_0/2}\lesssim_\varphi 1$, 
and therefore, by Vitali's convergence theorem, 
\begin{equation}\label{eq:tRn-conv-Lp}
	\tilde R_n(t) \ton \tilde R(t)
	\quad \text{in $L^p(\tilde \Omega)$, 
	for any $p\in \bigl[1,p_0/2\bigr)$, $t\in [0,T]$}.
\end{equation}
Combining \eqref{eq:martingale_limitn} with 
\eqref{eq:gamma-conv}, \eqref{eq:tMn2-conv-Lp}, 
and \eqref{eq:tRn-conv-Lp}, the 
claim \eqref{eq:martingale_limit2} follows.

\medskip
\noindent\textit{4. Passing to the limit in \eqref{eq:martingale_limitn} 
to obtain \eqref{eq:martingale_limit3}.}
\medskip

From the a.s.~convergence $\tilde{W}_n 
\to \tilde{W}$ in $C([0,T])$, cf.~\eqref{eq:weak-conv-tilde}, 
along with \eqref{eq:M_converge},
$$
\tilde M_n(t)\, \tilde{W}_n(t)
\ton \tilde M(t)\, \tilde{W}(t), 
\quad \text{$\tilde{\mathbb{P}}$--a.s., $t\in [0,T]$}.
$$
By the Cauchy--Schwarz inequality,
\begin{align*}
	\tilde{\Ex}\abs{\tilde M_n \tilde{W}_n}^{p_0/2} 
	\leq \bk{\tilde{\Ex}\abs{\tilde M_n}^{p_0}}^{1/2}
	\bk{\tilde{\Ex} \abs{\tilde{W}_n}^{p_0}}^{1/2} \lesssim 1,
\end{align*}
where we have used \eqref{eq:tMn-moments} and 
the BDG martingale inequality to bound the $p_0$
moment of $\tilde W_n$. Thus, again by 
Vitali's convergence theorem, 
\begin{equation}\label{eq:tMn-tWm-conv-Lp}
	\tilde M_n(t)\, \tilde W_n(t) 
	\ton \tilde M(t)\, \tilde M(t)
	\quad \text{in $L^p(\tilde \Omega)$, 
	$p\in \bigl[1,p_0/2\bigr)$, $t\in [0,T]$}.
\end{equation}
The a.s.~convergence of $\tilde N_n$ 
to $\tilde N$, cf.~\eqref{eq:functionals_NR}, 
follows from $\tilde{u}_n\to \tilde{u}$ in $C([0,T];L^2(\T))$. 
By Lemma \ref{thm:q_bounds_skorohod}, 
$\Ex \abs{\tilde N_n(t)}^{p_0}\lesssim_\varphi 1$, 
and thus Vitali's convergence theorem yields
$$
\tilde N_n(t) \ton \tilde N(t)
\quad \text{in $L^p(\tilde \Omega)$, 
for any $p\in [1,p_0)$, $t\in [0,T]$}.
$$
Combining this, \eqref{eq:tMn-tWm-conv-Lp} and 
\eqref{eq:gamma-conv} with \eqref{eq:martingale_limitn}, 
the final identity \eqref{eq:martingale_limit3} emerges.
\end{proof}

\section{Identification of a weak limit}
\label{sec:wk_limits_section}

In this final section we prove the crucial assumption 
\eqref{eq:bigassumption1} of Proposition 
\ref{thm:existence_H1}, thereby concluding 
the proof of our main result (Theorem \ref{thm:main}).  

\begin{thm}[identification of weak limit]
\label{thm:products_convergences}
Suppose the assumptions of Theorem \ref{thm:main} hold.
Let $\tilde q$ and $\overline{q^2}$ be the Skorokhod--Jakubowski 
representations from Proposition \ref{thm:skorohod}, 
recalling that notationally we drop the 
tilde under the overline in $\overline{q^2}$ (see Remark 
~\ref{rem:weak-notation}). Then the weak limit identification 
\eqref{eq:bigassumption1} holds.
\end{thm}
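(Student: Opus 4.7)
The plan is to show that the \emph{defect measure}
\[
\bD := \tfrac12\bigl(\overline{q^2}-\tilde q^2\bigr)
= \tfrac12\bigl(\overline{q_+^2}-\tilde q_+^2\bigr)
+\tfrac12\bigl(\overline{q_-^2}-\tilde q_-^2\bigr)
=: \bD_+ + \bD_-
\]
vanishes a.s.~in $(t,x)$. Because each term $\overline{q_\pm^2}-\tilde q_\pm^2\ge 0$ (weak $L^1$ lower semicontinuity of convex functions), it suffices to show that $\int_\T \bD_\pm(t,x)\,\d x=0$ for a.e.~$(\tilde\omega,t)$. I will concentrate on $\bD_+$; the argument for $\bD_-$ is symmetric. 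Following the Coclite--Pang observation, the advantage of splitting into $\pm$ parts is that an SPDE inequality can be derived for $\bD_\pm$ alone, bypassing the unavailable Oleinik-type one-sided gradient bound.

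The first main step is to derive, on the new probability space, two SPDEs for the ``renormalised energies". For each $\ell\in\N$, apply the mollification / real-valued It\^o argument outlined around \eqref{eq:SPDE-S(qe)} to the viscous variable $\tilde q_n$ with entropy $S_\ell(v_+)\in W^{2,\infty}_{\loc}(\R)$ (cf.~\eqref{eq:entrop2}); this yields an SPDE for $S_\ell\bigl((\tilde q_n)_+\bigr)$ that is completely analogous to \eqref{eq:SPDE-S(qe)}. Integrate against a test $\varphi\in C^\infty(\T)$ and pass $n\to\infty$ using the a.s.~convergences in \eqref{eq:weak-conv-tilde}: the strong-in-$t$, weak-in-$x$ topology on $L^{2r}(L^{2r}_w)$ is precisely tailored so that products like $S_\ell'((\tilde q_n)_+)\bigl[\tilde P_n - \tilde u_n^2\bigr]$ converge to $\overline{S_\ell'(\tilde q_+)}\bigl[\tilde P-\tilde u^2\bigr]$ (the factor $\tilde P-\tilde u^2$ being strongly convergent, analogously to Lemma \ref{lem:strong-conv-tPn}), and the $\eps\abs{\pd_x \tilde q_n}^2$ viscous dissipation can be discarded with a sign. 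For the stochastic integral, I will rely on the quasi-Polish machinery of Section \ref{sec:Jak-Skor} together with \cite[Lemma 2.1]{Debussche:2011aa} as adapted to $L^r(L^r_w)$ convergence of the energy variable. This furnishes an SPDE for the weak limit $\overline{S_\ell(\tilde q_+)}$. Independently, apply the It\^o formula with entropy $S_\ell(v_+)$ directly to the SPDE for $\tilde q$ obtained from Proposition \ref{thm:existence_H1}; this requires a DiPerna--Lions mollification $\tilde q * J_\delta$, passing $\delta\downarrow 0$ using the commutator estimates collected in Appendix \ref{sec:commutator-est}. The extra commutator error arising from the Stratonovich--It\^o correction $-\tfrac12\pd_x(\sigma\,\pd_x(\sigma\tilde q))$ is the only genuinely new term compared to the deterministic case and is controlled exactly as in \cite{HKP-viscous}. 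This yields an SPDE for $S_\ell(\tilde q_+)$.

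Subtracting the two equations and using the identities \eqref{eq:basic-identities} to re-express the transport, second-order and stochastic terms in conservative form, I obtain
\begin{equation*}
\begin{aligned}
\d\bD_+^{(\ell)}
&+\pd_x\!\left[\tilde u\,\bD_+^{(\ell)} - \tfrac14 \pd_x\sigma^2\,\bD_+^{(\ell)}\right]\d t
-\tfrac12\pd_{xx}^2\bigl(\sigma^2 \bD_+^{(\ell)}\bigr)\d t
\\ &+\bigl[\pd_x(\sigma\,\bD_+^{(\ell)})+\pd_x\sigma\,\bD_+^{(\ell)}\bigr]\d\tilde W
\;\le\; \bigl[\textrm{r.h.s.}\bigr]_\ell\,\d t,
\end{aligned}
\end{equation*}
where $\bD_+^{(\ell)}:=\overline{S_\ell(\tilde q_+)}-S_\ell(\tilde q_+)\ge 0$ (by convexity of $S_\ell$ and weak $L^1$ lower semicontinuity) and $[\textrm{r.h.s.}]_\ell$ is a linear function of $\bD_+^{(\ell)}$ plus error terms $E_\ell$ involving the renormalisation cut-off, of the schematic form $\overline{S_\ell(\tilde q_+)q-\tfrac12 S_\ell'(\tilde q_+)q^2}$ and similar expressions from \eqref{eq:entrop2} that vanish on $\{|q_+|\le\ell\}$. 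Integrating against a test function $\psi(t)$ and in $x$, applying It\^o's product rule with the exponential $\exp\bigl(-\int_0^t C(s)\,\d s\bigr)$ for a suitable adapted process $C$ constructed from $\|\tilde u(s)\|_{L^\infty}$ and $\sigma$ (the stochastic integral drops out after taking expectations), I arrive at a Gronwall-type inequality
\[
\tilde\Ex\!\int_\T \bD_+^{(\ell)}(t)\,\d x \le C\,\tilde\Ex\!\int_0^t\!\!\int_\T \bD_+^{(\ell)}(s)\,\d x\,\d s + \tilde\Ex\!\int_0^t E_\ell(s)\,\d s,
\]
plus an initial contribution that vanishes because the initial data $z_n$ converge strongly in $H^1$ (see \eqref{eq:u0-approx}), forcing $\bD_+^{(\ell)}(0)=0$.

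The hard step is to send $\ell\to\infty$ and verify $\tilde\Ex\int_0^T|E_\ell|\,\d s\to 0$. The renormalisation errors are supported on $\{|\tilde q_+|>\ell\}$ (and analogously for the weakly convergent sequence), so the higher integrability estimate $\tilde\Ex\|\tilde q\|_{L^{2+\alpha}_{t,x}}^{2+\alpha}\le C$ from Proposition \ref{thm:2plusalpha_apriori} (transferred via equality of laws, Lemma \ref{thm:q_bounds_skorohod}) yields $\int_{\{|\tilde q|>\ell\}}\tilde q^2\to 0$ as $\ell\to\infty$ by uniform integrability. The stochastic renormalisation terms arising from the second-order Stratonovich--It\^o correction--- which have no deterministic counterpart and were the reason for choosing the delicate entropies \eqref{eq:entropies_S_ell} rather than the simple truncations of \cite{Xin:2000qf}---are handled by the same uniform integrability, exploiting the specific cancellations encoded in the last two lines of \eqref{eq:entrop2}. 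In the limit $\ell\to\infty$, $\bD_+^{(\ell)}\to\bD_+$ in $L^1$ and Gronwall gives $\tilde\Ex\int_\T\bD_+(t)\,\d x\le C\tilde\Ex\int_0^t\int_\T\bD_+\,\d x\,\d s$, whence $\bD_+\equiv 0$. Symmetrically $\bD_-\equiv 0$, so $\overline{q^2}=\tilde q^2$ a.e., as claimed.
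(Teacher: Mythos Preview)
Your proposal has a genuine gap: the assertion that ``the argument for $\bD_-$ is symmetric'' is false, and the way you organise the error terms $E_\ell$ hides precisely the term that breaks the symmetry.

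When you subtract the SPDE for $S_\ell(\tilde q_\pm)$ from the one for $\overline{S_\ell(q_\pm)}$, two terms appear that are \emph{not} supported on $\{|q_\pm|>\ell\}$ and do \emph{not} vanish as $\ell\to\infty$:
\[
\bigl(\overline{S_\ell(q_\pm)'}-S_\ell(\tilde q_\pm)'\bigr)\bigl(\tilde P-\tilde u^2\bigr)
\qquad\text{and}\qquad
-\tfrac12\,S_\ell(\tilde q_\pm)'\bigl(\overline{q^2}-\tilde q^2\bigr).
\]
The first term tends, as $\ell\to\infty$, to $(\overline{q_\pm}-\tilde q_\pm)(\tilde P-\tilde u^2)$, which is generically nonzero and cannot be absorbed into a Gronwall inequality for $\bD_\pm$ alone; it is exactly the quantity you are trying to show vanishes. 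The second term is harmless for the positive part (since $S_\ell(\tilde q_+)'\ge 0$ and $\overline{q^2}\ge\tilde q^2$, it has the good sign and can be dropped), but for the negative part $S_\ell(\tilde q_-)'\in[-\tfrac32\ell,0]$, so this term is $\ge 0$ and bounded only by $\tfrac{3\ell}{2}\,\bD$. Your Gronwall constant for $\bD_-$ therefore grows like $\ell$, producing a factor $e^{CT\ell}$ that no polynomial decay of the cut-off errors (at best $\ell^{-2(r-1)}$ from the $L^{2r}$ bound) can defeat.

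The paper resolves both obstructions simultaneously by \emph{adding} the inequality for $\bD^+$ (already passed to $\ell=\infty$, Lemma~\ref{thm:limiteq_2_positive_ell_infty}) to the one for $\bD^-_\ell$ (Lemma~\ref{thm:limiteq_2c}). The point of the addition is the identity $\overline{q_+}+\overline{q_-}=\tilde q=\tilde q_++\tilde q_-$, which makes the dangerous $(\tilde P-\tilde u^2)$ terms cancel. What survives is an inequality for $\bD^++\bD^-_\ell$ with Gronwall constant $O(\ell)$; the remaining error on the right is not handled by uniform integrability in $\ell$ but by a further localisation to sets $A_L^{n_0}=\{\|\tilde P_{n_0}-\tilde u_{n_0}^2\|_{L^\infty}\le L\}$ together with the \emph{strong} convergence $\tilde P_{n_0}\to\tilde P$ (Lemma~\ref{lem:strong-conv-tPn}), which kills the error uniformly in $\ell$ before the limits $\ell\to\infty$, $L\to\infty$ are taken. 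The convexity of the total error function $h_\ell$ in Lemma~\ref{thm:limiteq_2c} (see Remark~\ref{rem:delicate}) is what allows the several $\sigma$-dependent error terms to be combined and signed on $A_L^{n_0}$; this is also where the specific $W^{3,\infty}$ entropies~\eqref{eq:entropies_S_ell} are essential and the simpler $W^{2,\infty}$ truncations of \cite{Xin:2000qf} would fail.
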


For a high-level description of the proof, which is 
long and technical, we refer to Section \ref{sec:intro}. 
The proof depends on deriving It\^{o} 
differential inequalities for the differences 
$\overline{q_\pm^2}-\tilde{q}_\pm^2\ge 0$ 
(via numerous steps of truncation and regularisation). 
We record these inequalities over several results 
(see Lemmas \ref{thm:limiteq_1} -- \ref{thm:limiteq_2b}). 
With regards to the subscripts $\pm$ on $q_\pm$, it 
is in fact expedient to carry out this procedure, 
as in the deterministic setting 
\cite{Coclite:2005tq,Coclite:2015aa,Xin:2000qf}, 
for the positive and negative parts separately. 
It is a characteristic feature of dissipative 
solutions that $q_+$ does not 
blow up in $L^\infty$, but $q_-$ does. 
We refer to Section \ref{sec:intro} for a
discussion of the many differences 
between the deterministic and stochastic cases.

\subsection{Energy inequalities and a right-continuity property}
The differential inequalities mentioned above will serve  
to propagate strong compactness, assumed initially at $t=0$, 
via a (yet to be established) strong temporal 
continuity property at $t=0$. 
The existence of this strong initial trace 
is the content of Lemma \ref{thm:temp_cont_0} below, 
which encodes the dissipative 
nature of the considered solution class. 
However, first we need to transfer the 
energy balance \eqref{eq:energybalance} 
to the new probability space, expressed in terms 
of the Skorokhod--Jakubowski 
representations from Proposition \ref{thm:skorohod}.

\begin{lem}[energy inequality]
\label{lem:energy-of-weak-limit}
Let $\tilde u_n$, $\tilde W_n$, $\tilde u_{0,n}$ 
be respectively the Skorokhod--Jakubowski representations of 
$u_{\ep_n}$, $W$, $z_n$, where $u_{\ep_n}$ 
is the strong solution to the viscous 
SPDE \eqref{eq:u_ch_ep} with noise $W$ and 
initial data $u_{\ep_n}(0)=z_n$, cf.~\eqref{eq:u0-approx}.
The energy inequality \eqref{eq:energybalance} 
holds with $u_\ep$, $W$, $\ep$ replaced 
by $\tilde u_n$, $\tilde W_n$, $\ep_n$.

Let $\tilde{u}$, $\tilde q$, $\overline{q^2}$, $\tilde W$, 
$\tilde{u}_0$ be the a.s.~limits from Proposition \ref{thm:skorohod}, 
see also Remark \ref{rem:weak-notation}, and let $\tilde \cS$ 
be the stochastic basis defined in \eqref{eq:sigma_algebra}. Then
\begin{equation}\label{eq:energybalance-tu}
	\begin{aligned}
		& \frac{\d}{\d t}\int_\T \tilde u^2+\overline{q^2}\,\d x
 		\leq \int_\T \frac14 \pd_{xx}^2 \sigma^2 \tilde u^2
 		+\bk{\abs{\pd_x \sigma}^2
 		-\frac14 \pd_{xx}^2 \sigma^2}
 		\,\overline{q^2} \,\d x 
 		\\ &\qquad \quad 
		+\int_\T \pd_x\sigma \bk{\tilde u^2-\overline{q^2}}
 		\, \d x \, \dot{\tilde{W}}, 
 		\quad \text{in $\mathcal{D}'([0,T))$, 
 		$\tilde{\mathbb{P}}$--a.s.},
 		\\ & 
 		\int_\T \left(\tilde u^2+\overline{q^2}\right)(0)\,\d x
 		=\int_\T \tilde u_0^2+\abs{\pd_x \tilde u_0}^2\,\d x.
	\end{aligned}
\end{equation}
\end{lem}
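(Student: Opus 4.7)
The first task is to establish the pathwise energy identity \eqref{eq:energybalance} on the new probability space, with $u_\ep, W, \ep$ replaced by $\tilde u_n, \tilde W_n, \ep_n$. By Lemma \ref{thm:nth_eq}, $\tilde u_n$ satisfies the $x$-weak form of the viscous SPDE \eqref{eq:u_ch_ep} on the canonical stochastic basis $\tilde\cS^n$, driven by $\tilde W_n$, with initial datum $\tilde u_{0,n}$. By Lemma \ref{thm:smoothness_n} and the moment bounds of Lemma \ref{thm:q_bounds_skorohod}, $\tilde u_n$ has the regularity of a strong $H^m$ solution, so the weak equation in fact holds pointwise in $x$. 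Consequently, I would repeat verbatim the derivation of \eqref{eq:energybalance} from \cite{HKP-viscous}---mollify in $x$, apply the real-valued It\^o formula pointwise in $x$ to $\frac12\bigl(\tilde u_{n,\delta}^2+\tilde q_{n,\delta}^2\bigr)$, integrate in $x$, and send $\delta\downarrow 0$ along the lines outlined before \eqref{eq:SPDE-S(qe)-tmp1}---to obtain the claimed identity.

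\textbf{Part 2 (passage to the limit, easy terms).} Multiplying the energy identity for $\tilde u_n$ by a nonnegative test function $\phi \in C_c^\infty([0,T))$ and integrating by parts in $t$, the nonnegative viscosity contribution $2\ep_n\int_0^T \phi\int_\T \bigl(\abs{\pd_x\tilde u_n}^2+\abs{\pd_{xx}^2\tilde u_n}^2\bigr)\,\d x\,\d t$ is discarded, turning equality into inequality. The drift contributions pass to the $n\to\infty$ limit using the Skorokhod--Jakubowski a.s.\ convergences from Proposition \ref{thm:skorohod}---$\tilde u_n\to \tilde u$ in $C_tL^2_x$ (whence $\tilde u_n^2\to \tilde u^2$ in $C_tL^1_x$) and $\tilde q_n^2\to \overline{q^2}$ in $L^r(L^r_w)$---together with $\sigma_{\ep_n}\to \sigma$ in $W^{2,\infty}(\T)$. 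The boundary term $\phi(0)\int_\T\bigl(\tilde u_{0,n}^2+\abs{\pd_x\tilde u_{0,n}}^2\bigr)\,\d x$ converges to $\phi(0)\int_\T\bigl(\tilde u_0^2+\abs{\pd_x\tilde u_0}^2\bigr)\,\d x$ from the a.s.\ convergence $\tilde u_{0,n}\to \tilde u_0$ in $H^1(\T)$, encoding the initial-value condition in \eqref{eq:energybalance-tu}.

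\textbf{The main obstacle.} The delicate step is the convergence of the stochastic integral
\[
I_n := \int_0^T\phi(t)\,g_n(t)\,\d\tilde W_n(t),\qquad g_n(t):=\int_\T\pd_x\sigma_{\ep_n}\bigl(\tilde u_n^2-\tilde q_n^2\bigr)\,\d x,
\]
since only weak-in-$x$ convergence of $\tilde q_n^2$ is available. The decisive observation---and the very reason for working in $L^r(L^r_w)$ rather than $L^r_{t,x}-w$ in \eqref{eq:pathspaces}---is that the strong-in-$t$ character of $L^r(L^r_w)$ gives, upon testing the a.s.\ convergence $\tilde q_n^2\to \overline{q^2}$ against the fixed $\pd_x\sigma\in L^\infty(\T)\subset L^{r'}(\T)$, the strong convergence $\int_\T \pd_x\sigma\,\tilde q_n^2\,\d x\to \int_\T \pd_x\sigma\,\overline{q^2}\,\d x$ in $L^r_t$ a.s. Absorbing the $n$-dependence of $\pd_x\sigma_{\ep_n}$ through the splitting $\pd_x\sigma_{\ep_n}\bigl(\tilde q_n^2-\overline{q^2}\bigr)=\pd_x\sigma\,\bigl(\tilde q_n^2-\overline{q^2}\bigr)+\bigl(\pd_x\sigma_{\ep_n}-\pd_x\sigma\bigr)\bigl(\tilde q_n^2-\overline{q^2}\bigr)$, combined with $\norm{\sigma_{\ep_n}-\sigma}_{W^{2,\infty}}\to 0$ and the $L^1_{t,x}$ bound on $\tilde q_n^2$ from Lemma \ref{thm:q_bounds_skorohod}, then yields $g_n\to g:=\int_\T\pd_x\sigma\bigl(\tilde u^2-\overline{q^2}\bigr)\,\d x$ strongly in $L^r_t$ a.s. With the uniform moment bound $\tilde\Ex\norm{g_n}_{L^\infty_t}^{p_0/2}\lesssim 1$ (from Lemmas \ref{thm:q_bounds_skorohod}, \ref{thm:P-u2_bound-tilde}), the a.s.\ convergence $\tilde W_n\to \tilde W$ in $C_t$, and progressive measurability of $g_n$ on $\tilde\cS^n$, an application of \cite[Lemma 2.1]{Debussche:2011aa} delivers $I_n\to \int_0^T\phi\,g\,\d\tilde W$ in probability; extracting a further (unlabelled) subsequence upgrades this to a.s.\ convergence, thereby completing the proof of \eqref{eq:energybalance-tu}.
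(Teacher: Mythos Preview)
Your argument is correct and follows the paper's strategy closely: transfer the SPDE to the new basis via Lemma~\ref{thm:nth_eq} and the regularity Lemmas~\ref{thm:smoothness_n}--\ref{thm:q_bounds_skorohod}, derive the energy balance as in \eqref{eq:SPDE-S(qe)}--\eqref{eq:SPDE-S(ue)}, drop the dissipation, and pass to the limit distributionally using the a.s.\ convergences from Proposition~\ref{thm:skorohod} together with \cite[Lemma 2.1]{Debussche:2011aa} for the martingale term.

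Two small points where the paper proceeds slightly differently. First, for the stochastic integrand $g_n$ the paper does not test $\tilde q_n^2$ directly against $\pd_x\sigma\in L^{r'}$ to get $L^r_t$ convergence and then interpolate; instead it routes through the embedding $L^r_w(\T)\hookrightarrow H^{-1}(\T)$ and the moment bound \eqref{eq:Lr-HnegEst} (see \eqref{eq:Hneg-conv}--\eqref{eq:Hneg-conv4}) to obtain $\tilde q_n^2\to\overline{q^2}$ in $L^2_{\tilde\omega,t}(H^{-1}_x)$, which after pairing with $\pd_x\sigma\in H^1(\T)$ yields $L^2_t$ convergence of $g_n$ directly. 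Your interpolation between $L^r_t$ a.s.\ convergence and the $L^\infty_t$ moment bound achieves the same end, but you should make explicit the step from $L^r_t$ (with $r<3/2$) to $L^2_t$ convergence in probability. Second, the paper singles out, via \eqref{eq:mart-prop-part1}, that the limiting process $\int_0^t\int_\T\pd_x\sigma\,\overline{q^2}\,\d x\,\d\tilde W$ is a square-integrable martingale (needed later in Lemmas~\ref{thm:temp_cont_0}--\ref{lem:energy-rightcont}); this is implicit in your convergence argument but worth stating.
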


\begin{rem}We emphasise that \eqref{eq:energybalance-tu} 
holds in the sense of distributions 
on the half-open interval $[0,T)$, 
$\tilde{\mathbb{P}}$-a.s., whilst 
$\int_\T \tilde u^2+\overline{q^2}\,\d x$ is 
understood to take the value 
$\int_\T \tilde u_0^2+\abs{\pd_x \tilde u_0}^2
\,\d x$ at $t = 0$. This means that for every 
non-negative $\psi \in C^\infty([0,T))$,
\begin{align*}
	- \int_0^T &\pd_t \psi \, \int_\T 
	\tilde u^2+\overline{q^2}
	\,\d x\,\d t
	- \psi(0) \int_\T \tilde u_0^2+\abs{\pd_x \tilde u_0}^2\,\d x 
	\\ & 
	\le \int_0^T\psi  \int_\T \frac14 
	\pd_{xx}^2 \sigma^2 \tilde u^2
 	+\bk{\abs{\pd_x \sigma}^2
 	-\frac14 \pd_{xx}^2 \sigma^2}
 	\,\overline{q^2} \,\d x \,\d t 
 	\\ &\qquad \quad
	+\int_0^T \psi \int_\T \pd_x\sigma 
	\bk{\tilde u^2-\overline{q^2}}
 	\, \d x \, \d \tilde{W}, 
 	\quad \text{
 	$\tilde{\mathbb{P}}$--a.s.}
\end{align*}
\end{rem}

\begin{proof}
Recall the properties of $\tilde u_n$ stated 
in Lemmas \ref{thm:smoothness_n} and 
\ref{thm:q_bounds_skorohod}. In particular, 
$\tilde u_n$ lies in  the intersection
$L^2([0,T];H^m(\T)) \cap C([0,T];H^1(\T))$ a.s., for 
any $m\in \N$. According to Lemma \ref{eq:nth_equation_twiddle}, 
$\tilde u_n$ satisfies the SPDE \eqref{eq:u_ch_ep} 
with $u_\ep$, $W$, $\ep$ replaced by $\tilde u_n$, 
$\tilde W_n$, $\ep_n$, respectively. 
If we differentiate this equation with respect to 
$x$, cf.~Lemma \ref{thm:lim_iden_simple}, then 
$\tilde q_n=\pd_x \tilde u_n$ satisfies the SPDE
\eqref{eq:intro-qest-tmp} with $q_\ep$, $W$, $\ep$ 
replaced by $\tilde q_n$, $\tilde W_n$, $\eps_n$. 
Consequently, we may apply the corresponding versions of 
\eqref{eq:SPDE-S(qe)} and \eqref{eq:SPDE-S(ue)} with $S(v)=v^2/2$. 
Adding the resulting equations yields the 
total energy equation \eqref{eq:intro-visc-energy} 
with $u_\ep$, $W$, $\ep$ replaced by 
$\tilde u_n$, $\tilde W_n$, $\ep_n$. Integrating 
this equation in $x$, dropping the dissipation term, 
and expressing the temporal differential 
as a time-derivative in $\mathcal{D}'([0,T{)})$, we arrive at
\begin{equation}\label{eq:energybalance-tun}
	\begin{aligned}
		&\frac{\d}{\d t}\int_\T \tilde u_n^2 + 
		\tilde q_n^2\,\d x
 		\leq \int_\T \frac14 \pd_{xx}^2 
		\sigma_{\ep_n}^2  \tilde u_n^2
 		+\bk{\abs{\pd_x \sigma_{\ep_n}}^2
 		-\frac14 \pd_{xx}^2 \sigma_{\ep_n}^2}
 		\, \tilde q_n^2 \,\d x 
 		\\ &\qquad \qquad 
		+\int_\T \pd_x \sigma_{\ep_n}
		\bk{\tilde u_n^2-\tilde q_n^2}
 		\,\d x \, \dot{\tilde{W}}_n, 
 		\quad  \text{in $\mathcal{D}'([0,T{)})$, 
 		\, $\tilde{\mathbb{P}}$--a.s.},
 	\end{aligned}
\end{equation}
where $\int_\T \left(\tilde u_n^2
+\abs{\tilde q_n}^2\right)(0)\,\d x
=\int_\T \tilde u_{n,0}^2+\abs{\pd_x \tilde u_{n,0}}^2\,\d x$ 
and $\dot{\tilde{W}}_n= \frac{\d}{\d t}\tilde{W}_n$.

Equipped with the a.s.~convergences in \eqref{eq:weak-conv-tilde}, 
in particular $\tilde u_n\ton \tilde u$ in $C_tL^2_x$ a.s.~and 
$\tilde q_n^2\ton \overline{q^2}$ in $L^r\bigl(L^r_w\bigr)$ a.s., 
recalling that we write $\overline{q^2}$ instead of 
$\overline{\tilde q^2}$, we can send $n\to \infty$ 
in \eqref{eq:energybalance-tun} 
to arrive at \eqref{eq:energybalance-tu}. We refer to 
Lemma \ref{thm:limiteq_1} for a detailed 
convergence proof of an inequality that is more 
general than \eqref{eq:energybalance-tun}. 

During the derivation of \eqref{eq:energybalance-tu}, one issue 
was swept under the rug. Indeed, a priori, it is 
not clear that the process $\tilde M(t)=\int_0^t\int_\T 
\pd_x\sigma \overline{q^2}\, \d x \, \d \tilde{W}$ 
is a square-integrable martingale. 
The matter in question is that the limit 
$\overline{q^2}$ belongs to $L^r_{t,x}$ with 
merely $r<3/2$, cf.~\eqref{eq:weak-conv-tilde}; note 
carefully that we do not have this issue with the 
related process $\int_0^t\int_\T \pd_x\sigma\,
\tilde q^2\, \d x \, \d \tilde{W}$, 
where $\tilde q = \pd_x \tilde u$ is the a.s.~limit of 
$\tilde q_n=\pd_x \tilde u_n$, as $\tilde u$ 
satisfies  \eqref{eq:tu-Lp-Linfty-H1}. 
Fortunately, according to \eqref{eq:Lr-HnegEst} 
of Lemma \ref{thm:P-u2_weak}, we may assume 
that $\overline{q^2}\in L^2_{\tilde \omega,t}\bigl(H^{-1}_x\bigr)$ 
and whence $\tilde M$ be interpreted as a square-integrable 
martingale, recalling that $\sigma \in W^{2,\infty}(\T)$:
\begin{equation}\label{eq:mart-prop-part1}
	\begin{aligned}
		\tilde \Ex \int_0^T \abs{\int_\T 
		\pd_x\sigma\, \overline{q^2}\, \d x}^2\, \d t
		\leq \norm{\pd_x \sigma}_{H^1(\T)}^2 \tilde \Ex \int_0^T 
		\norm{\overline{q^2}(t)}_{H^{-1}(\T)}^2
		\, \d t <\infty.
	\end{aligned}
\end{equation}
\end{proof}

The pathwise inequality \eqref{eq:energybalance-tu},
the convergence $\tilde u_n\to \tilde u$ in 
$C\bigl([0,T];H^1(\T)-w\bigr)$ a.s.~(see
proof of Lemma \ref{thm:u_in_Linfty}), and the
strong $H^1$ convergence of $\tilde u_{0,n}$ towards $\tilde u_0$,
imply the strong right-continuity at $t=0$ in $H^1$.
We have the following result:

\begin{lem}[one-sided temporal continuity at $t=0$]
\label{thm:temp_cont_0}
Let $\tilde{u}$, $\tilde q$, $\tilde u_0$, and
$\overline{q^2}$ be the Skorokhod--Jakubowski representations
from Proposition \ref{thm:skorohod}.
Then
\begin{equation}\label{eq:right-cont1}
        \lim_{t \downarrow 0}
        \norm{\tilde{u}(t)-\tilde{u}_0}_{H^1(\T)}=0,
        \quad \text{$\tilde{\mathbb{P}}$--a.s.}
\end{equation}
Moreover, for the nonlinearities $S(v)=S_\ell(v_{\pm})$ defined
by \eqref{eq:entropies_S_ell},
\begin{equation}\label{eq:right-cont2}
        \lim_{t\downarrow 0}
        \norm{S\bigl(\tilde{q}(t)\bigr)
        -S\bigl(\pd_x \tilde u_0\bigr)}_{L^1(\T)}=0,
        \quad \text{$\tilde{\mathbb{P}}$--a.s.}
\end{equation}
\end{lem}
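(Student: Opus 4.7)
The plan is to derive \eqref{eq:right-cont1} from the energy inequality \eqref{eq:energybalance-tu} coupled with the weak $H^1$-continuity of $\tilde u$ at $t=0$, and then obtain \eqref{eq:right-cont2} as a direct consequence of \eqref{eq:right-cont1} through the Lipschitz continuity of $v\mapsto v_\pm$ and $v\mapsto S_\ell(v)$.

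First I would record that, by Lemma \ref{thm:u_in_Linfty}, $\tilde u\in C([0,T];H^1(\T)-w)$ $\tilde{\prob}$--a.s.; this identifies $\tilde u(0)=\tilde u_0$ in $H^1(\T)$ and yields the weak convergence $\tilde u(t)\weak \tilde u_0$ in $H^1(\T)$ as $t\downarrow 0$. The function $\psi(t):=\norm{\tilde u(t)}_{H^1(\T)}^2$ is therefore lower semicontinuous in $t$, and weak lower semicontinuity of the Hilbert norm delivers
$$
\norm{\tilde u_0}_{H^1(\T)}^2\le \liminf_{t\downarrow 0}\psi(t).
$$

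For the matching upper bound, I would pair two ingredients. The first is the weak-limit comparison $\overline{q^2}\ge \tilde q^2$ a.e.~in $(\tilde\omega,t,x)$, obtained from the convexity expansion $(\tilde q_n-\tilde q)^2\ge 0$ combined with $\tilde q_n\tonweak \tilde q$ in $L^{2r}_{t,x}$ and $\tilde q_n^2\tonweak \overline{q^2}$ in $L^r_{t,x}$ (the former is in \eqref{eq:weak-conv-tilde} and the latter follows from the same since $L^r(L^r_w)$ convergence implies weak $L^r_{t,x}$ convergence), the required integrability $\phi\tilde q\in L^{(2r)'}_{t,x}$ for bounded $\phi$ being automatic when $r\ge 1$. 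The second is the integrated form of \eqref{eq:energybalance-tu}: testing the distributional inequality against appropriate non-negative cutoffs yields, for a.e.~$t\in (0,T)$,
$$
\int_\T \bk{\tilde u^2+\overline{q^2}}(t)\,\d x \le \norm{\tilde u_0}_{H^1(\T)}^2+R(t), \qquad R(t):=\int_0^t D(s)\,\d s+M(t),
$$
where $R$ is $\tilde{\prob}$--a.s.~continuous in $t$ with $R(0)=0$; here $D$ collects the drift integrand (integrable in $t$ thanks to Lemmas \ref{thm:P-u2_bound-tilde} and \ref{thm:P-u2_weak}) and $M(t)=\int_0^t\int_\T \pd_x\sigma\bk{\tilde u^2-\overline{q^2}}\,\d x\,\d\tilde W$ is the continuous square-integrable martingale afforded by \eqref{eq:mart-prop-part1}. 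Combining these with $\overline{q^2}\ge \tilde q^2$ gives $\psi(t)\le \norm{\tilde u_0}_{H^1(\T)}^2+R(t)$ for a.e.~$t$, and the lsc of $\psi$ together with continuity of $R$ promotes this to every $t\in (0,T)$ via $\psi(t_0)\le \liminf_{t\to t_0,\,t\notin N}\psi(t)\le R(t_0)+\norm{\tilde u_0}_{H^1(\T)}^2$. Sending $t_0\downarrow 0$ then yields $\limsup_{t\downarrow 0}\psi(t)\le \norm{\tilde u_0}_{H^1(\T)}^2$; together with the liminf bound one has $\psi(t)\ton \norm{\tilde u_0}_{H^1(\T)}^2$, and weak convergence plus norm convergence in the Hilbert space $H^1(\T)$ delivers \eqref{eq:right-cont1}.

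For \eqref{eq:right-cont2}, the strong $H^1$ convergence gives $\tilde q(t)=\pd_x \tilde u(t)\to \pd_x \tilde u_0$ in $L^2(\T)$ by Lemma \ref{thm:lim_iden_simple}, so the $1$-Lipschitz map $v\mapsto v_\pm$ yields $\tilde q(t)_\pm\to (\pd_x \tilde u_0)_\pm$ in $L^2(\T)$; since $S_\ell\in W^{1,\infty}(\R)$ with $\norm{S_\ell'}_{L^\infty(\R)}\lesssim \ell$ by \eqref{eq:entrop1},
$$
\norm{S_\ell(\tilde q(t)_\pm)-S_\ell((\pd_x \tilde u_0)_\pm)}_{L^1(\T)}
\lesssim_\ell \norm{\tilde q(t)-\pd_x \tilde u_0}_{L^2(\T)}\ton 0.
$$
The principal obstacle is the rigorous passage from the distributional form of \eqref{eq:energybalance-tu} to the pointwise-in-time inequality above, given that $\overline{q^2}$ has only the low integrability \eqref{eq:Lr-HnegEst} and is accessed in $t$ through an $H^{-1}$-valued representation; one must verify that $M(\cdot)$ is continuous at $t=0$ with $M(0)=0$ a.s.~(using \eqref{eq:mart-prop-part1} and the standard path-regularity of stochastic integrals) and that $\int_0^t D(s)\,\d s\to 0$, ultimately via Lemmas \ref{thm:P-u2_bound-tilde} and \ref{thm:P-u2_weak}.
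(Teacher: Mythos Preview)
Your proposal is correct and follows essentially the same architecture as the paper's proof: weak $H^1$-continuity at $t=0$ gives the $\liminf$ bound, an energy inequality gives the $\limsup$ bound, and ``weak convergence plus convergence of norms'' in the Hilbert space $H^1(\T)$ yields \eqref{eq:right-cont1}; then \eqref{eq:right-cont2} follows by a Lipschitz estimate on $S_\ell$.

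There is one genuine technical difference worth flagging. For the upper bound, the paper does \emph{not} work directly with the limit inequality \eqref{eq:energybalance-tu}. Instead it stays at the approximant level: from the viscous energy balance \eqref{eq:energybalance-tun} one obtains the pointwise inequality $\norm{\tilde u_n(t)}_{H^1}^2\le \norm{\tilde u_{0,n}}_{H^1}^2+I_n(t)$ (this step is clean because $\tilde u_n$ is regular), and then one passes to the limit $n\to\infty$ using $\tilde u_n\to\tilde u$ in $C_t(H^1_x-w)$ together with weak lower semicontinuity on the left-hand side and $I_n(t)\to I(t)$ on the right. This sidesteps entirely the distributional-to-pointwise conversion at the limit level and never invokes $\overline{q^2}\ge \tilde q^2$. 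Your route---convert \eqref{eq:energybalance-tu} to a pointwise-in-$t$ inequality via piecewise-linear cutoffs, then use $\overline{q^2}\ge\tilde q^2$ to descend from $\int_\T(\tilde u^2+\overline{q^2})\,\d x$ to $\psi(t)$---also works, and is in fact exactly the mechanism the paper uses later in Lemma~\ref{lem:energy-rightcont} for general $t_0$; you have correctly identified the delicate step (continuity of the martingale $M$ and integrability of $D$). A minor remark on \eqref{eq:right-cont2}: the paper uses the $\ell$-independent bound $\abs{S_\ell(b_\pm)-S_\ell(a_\pm)}\le(\abs{a}+\abs{b})\abs{a-b}$ rather than your $\norm{S_\ell'}_{L^\infty}\lesssim\ell$, but since $\ell$ is fixed both suffice.
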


\begin{rem}
In view of Lemma \ref{thm:u_in_Linfty}
and Vitali's convergence theorem, \eqref{eq:right-cont1} and
\eqref{eq:right-cont2} imply 
\begin{align*}
        \lim_{t\downarrow 0}
        \tilde \Ex \norm{\tilde{u}(t)
        -\tilde{u}_0}_{H^1(\T)}^2=0,
        \quad
        \lim_{t\downarrow 0} \tilde \Ex
        \norm{S\bigl(\tilde{q}(t)\bigr)
        -S\bigl(\pd_x \tilde u_0\bigr)}_{L^1(\T)} = 0.
\end{align*}
\end{rem}

\begin{proof}
We divide the proof into two steps.

\medskip
\noindent\textit{1. One-sided temporal continuity in
$H^1(\T)$, \eqref{eq:right-cont1}}.
\medskip

In the process of proving Lemma \ref{thm:u_in_Linfty},
we demonstrated that
$$
\text{$\tilde u_n\ton \tilde u$ in
$C\bigl([0,T];H^1(\T)-w\bigr)$, a.s.}
$$
Accordingly, employing the weak lower semicontinuity of $v
\mapsto \norm{v}_{H^1(\T)}^2$,
\begin{equation}\label{eq:right-cont-new1}
        \norm{\tilde u(t)}_{H^1(\T)}^2
        \leq \liminf_{n \to \infty}    
        \norm{\tilde{u}_n(t)}_{H^1(\T)}^2
        \leq \limsup_{n \to \infty}    
        \norm{\tilde{u}_n(t)}_{H^1(\T)}^2, \quad t>0.
\end{equation}
Define
\begin{align*}
	& I_n(t) = \int_0^t \int_\T \frac14 \pd_x^2 \sigma^2 \tilde u_n^2
	+\bk{\abs{\pd_x \sigma}^2-\frac14 \pd_x^2 \sigma^2}
	\,\tilde q_n^2 \,\d x \,ds
	\\ &\qquad  \qquad \qquad
	+\int_0^t\int_\T \pd_x\sigma
	\bk{\tilde u_n^2-\tilde q^2}\, \d x \, \d \tilde{W}_n,
	\\ &
	I(t) = \int_0^t \int_\T \frac14 \pd_x^2 \sigma^2 \tilde u^2
	+\bk{\abs{\pd_x \sigma}^2-\frac14 \pd_x^2 \sigma^2}
	\,\overline{q^2} \,\d x \,ds
	\\ &\qquad  \qquad \qquad
	+\int_0^t\int_\T \pd_x\sigma
	\bk{\tilde u^2-\overline{q^2}}\, \d x \, \d \tilde{W}.
\end{align*}
Arguing as in the proof of Lemma \ref{lem:energy-of-weak-limit},
$I_n\ton I$ a.s.~(with $t>0$ fixed).
By a standard deterministic argument, see for
example \cite[page 653]{Evans:2010gf},
we can turn the (pathwise) distributional
inequality \eqref{eq:energybalance-tun}
into the pointwise inequality
\begin{equation}\label{eq:right-cont-new2}
        \norm{\tilde{u}_n(t)}_{H^1(\T)}^2
        \leq \norm{\tilde{u}_{0,n}}_{H^1(\T)}^2+I_n(t), \quad t>0.
\end{equation}
Importantly, as $n\to \infty$, the right-hand side
of \eqref{eq:right-cont-new2} converges
almost surely to $\norm{\tilde{u}_{0}}_{H^1(\T)}^2+I(t)$.
In view of \eqref{eq:right-cont-new1}
and \eqref{eq:right-cont-new2}, we conclude that
\begin{equation}\label{eq:right-cont-new3}
        \norm{\tilde u(t)}_{H^1(\T)}^2
        \leq \norm{\tilde{u}_{0}}_{H^1(\T)}^2+I(t),
        \quad \text{a.s., for all $t>0$}.
\end{equation}

Since $\tilde u\in C\bigl([0,T];H^1(\T)-w\bigr)$ a.s., it is evident that
\begin{equation}\label{eq:weak-timecont-as-tmp}
        \tilde u(t) \weak \tilde u_0
        \quad
        \text{in $H^1(\T)$ as $t\downarrow 0$}.
\end{equation}
Whence, again by the weak lower semicontinuity of
$\norm{\cdot}_{H^1(\T)}^2$,
\begin{align*}
	\norm{\tilde u_0}_{H^1(\T)}^2
	& \leq \liminf_{t \downarrow  0}
	\norm{\tilde{u}(t)}_{H^1(\T)}^2
	\leq \limsup_{t \downarrow  0}
	\norm{\tilde{u}(t)}_{H^1(\T)}^2
	\\ & \notag
	\overset{\eqref{eq:right-cont-new3}}{\leq}
	\limsup_{t \downarrow  0}
	\left(\norm{\tilde{u}_{0}}_{H^1(\T)}^2+I(t)\right)
	=\norm{\tilde{u}_{0}}_{H^1(\T)}^2,
\end{align*}
where we have used that $I(t)\to 0$ as $t\downarrow 0$, a.s. 
Therefore, a.s.,
\begin{equation}\label{eq:limit_of_norms-as}
        \lim_{t\downarrow 0}\norm{\tilde{u}(t)}^2_{H^1(\T)}
        = \norm{\tilde{u}_0}_{H^1(\T)}^2.
\end{equation}

Combining \eqref{eq:weak-timecont-as-tmp} and
\eqref{eq:limit_of_norms-as} (``weak convergence plus
convergence of norms imply strong convergence"), we
attain \eqref{eq:right-cont1}.

\medskip
\noindent\textit{2. One-sided temporal continuity
for nonlinearities, \eqref{eq:right-cont2}.}
\medskip

Fix $a,b\in \R$ with $a<b$. Assume that $b>0$,
otherwise there would be nothing to prove.
It is easy to verify that
\begin{align*}
        \abs{S_\ell(b_+)-S_\ell(a_+)}
        & =\int_{a\vee 0}^b S'_\ell(v) \,\d v
        \overset{\eqref{eq:entrop1}}{\leq}
        \int_{a\vee 0}^b v \,\d v
        \\ & \leq b^2 - \bk{a \vee 0}^2
        \leq \bk{\abs{b}+\abs{a}}\abs{b-a}.
\end{align*}
By symmetry, the inequality holds for $b<a$,
and a similar calculation establishes the inequality
for $S(v_-)$.

Fix any $\tilde \omega \in \tilde \Omega$ for
which \eqref{eq:right-cont1} holds. For $S(v)=S_\ell(v_{\pm})$
defined by \eqref{eq:entropies_S_ell}, we
then proceed as follows:
\begin{align*}
	& \norm{S\bigl(\tilde{q}(t)\bigr)
	-S\bigl(\pd_x \tilde u_0\bigr)}_{L^1(\T)}
	\\ & \qquad \leq
	\int_\T \bigl(\abs{\tilde{q}(t,x)}+\abs{\pd_x \tilde u_0(x)}\bigr)
	\abs{\tilde{q}(t,x)-\pd_x \tilde u_0(x)}\,\d x
        \\ & \qquad \leq
        2\norm{\tilde{q}}_{L^\infty([0,T];L^2(\T))}
        \norm{\tilde{q}(t)-\pd_x \tilde u_0}_{L^2(\T)}
        \\ & \qquad
        \overset{\eqref{eq:tu-Lp-Linfty-H1}}{\lesssim_{\tilde \omega}}
        \norm{\tilde{q}(t)-\pd_x \tilde u_0}_{L^2(\T)}
        \overset{\eqref{eq:right-cont1}}{\longrightarrow} 0
        \quad \text{as $t\downarrow 0$.}
\end{align*}
This concludes the proof of \eqref{eq:right-cont2}.
\end{proof}

Once we have made the identification
\eqref{eq:bigassumption1}, the
inequality \eqref{eq:energybalance-tu} becomes
\begin{equation}\label{eq:energybalance-tu-new}
        \begin{aligned}
        		& \frac{\d}{\d t}\int_\T \tilde u^2+\tilde q^2\,\d x
		\leq \int_\T \frac14 \pd_x^2 \sigma^2 \tilde u^2
		+\bk{\abs{\pd_x \sigma}^2
		-\frac14 \pd_x^2 \sigma^2}
		\,\tilde q^2 \,\d x
		\\ &\qquad \quad
		+\int_\T \pd_x\sigma \bk{\tilde u^2-\tilde q^2}
		\, \d x \, \dot{\tilde{W}},
		\quad \text{in $\mathcal{D}'([0,T))$,
		$\tilde{\mathbb{P}}$--a.s.},
		\\ &
		\int_\T \left(\tilde u^2+\tilde q^2\right)(0)\,\d x
		=\int_\T \tilde u_0^2+\abs{\pd_x \tilde u_0}^2\,\d x,
        \end{aligned}
\end{equation}
where $\tilde q=\pd_x \tilde u$ and so
$\int_\T \tilde u^2+\tilde q^2\,\d x=\norm{\tilde u}_{H^1(\T)}^2$.
By modifying the proof of Lemma \ref{thm:temp_cont_0}, we can use
\eqref{eq:energybalance-tu-new} to establish
the validity of the claim \eqref{eq:energybalance-sch}
in Theorem \ref{thm:main}, and also that the limit $\tilde u$ satisfies
part (f) of Definition \ref{def:dissp_sol}.
This is the content of the next lemma.

\begin{lem}[energy inequality and one-sided temporal continuity]
\label{lem:energy-rightcont}
Suppose \eqref{eq:energybalance-tu-new} holds.
Then the total energy inequality \eqref{eq:energybalance-sch}
holds a.s., for a.e.~$s\in [0,T)$ and every $t$ with $s< t\leq T$.
Specifically, it holds for $s=0$ and any $t\in (0,T]$, with
$$
\int_\T \left(\tilde u^2 + \abs{\pd_x \tilde u}^2\right)(0)
\,\d x=\int_\T \tilde u_0^2+\abs{\pd_x \tilde u_0}^2\,\d x.
$$ 
Consequently, a.s., for a.e.~$t_0\in [0,T)$,
\begin{equation}\label{eq:right-cont1-t0}
        \lim_{t \downarrow t_0}
        \norm{\tilde{u}(t)-\tilde{u}(t_0)}_{H^1(\T)}=0,
\end{equation}
where the case $t_0=0$, for which $\tilde{u}(0)=\tilde{u}_0$,
is covered by Lemma \ref{thm:temp_cont_0}.
\end{lem}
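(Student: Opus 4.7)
The plan is to promote the pathwise distributional inequality \eqref{eq:energybalance-tu-new} to a pointwise-in-time statement, and then combine it with the weak continuity $\tilde u \in C([0,T]; H^1(\T)-w)$ a.s.~(from Lemma \ref{thm:u_in_Linfty}) to recover both \eqref{eq:energybalance-sch} and the strong right-continuity \eqref{eq:right-cont1-t0}, essentially repeating and localising the argument that already appeared at the start of the proof of Lemma \ref{thm:temp_cont_0}.

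First I would fix $\tilde\omega$ in a full $\tilde{\mathbb{P}}$-measure set on which \eqref{eq:energybalance-tu-new} holds, and introduce the shorthand $E(t) = \int_\T \tilde u^2(t) + \tilde q^2(t) \,\d x$ together with
\begin{align*}
I(s,t) & = \int_s^t \int_\T \frac14 \pd_{xx}^2\sigma^2 \tilde u^2
+\bk{\abs{\pd_x \sigma}^2-\frac14 \pd_{xx}^2\sigma^2}\tilde q^2 \,\d x\,\d t' \\
& \qquad + \int_s^t\int_\T \pd_x \sigma \bk{\tilde u^2-\tilde q^2}\,\d x\,\d \tilde W.
\end{align*}
Testing \eqref{eq:energybalance-tu-new} against non-negative approximations of $\mathds{1}_{[s,t]}$ (the same deterministic device used in \cite[page 653]{Evans:2010gf} and recalled in Step~1 of the proof of Lemma \ref{thm:temp_cont_0}) yields the pointwise inequality $E(t)\le E(s)+I(s,t)$ for every Lebesgue point $s$ of $E$ and every $t\in(s,T]$. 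Since $E\in L^\infty([0,T])$ a.s.~by Lemma \ref{thm:u_in_Linfty}, the set of Lebesgue points is of full measure, giving \eqref{eq:energybalance-sch} for a.e.~$s\in[0,T)$ and every $t>s$. The $s=0$ case is covered directly by the initial-value clause of \eqref{eq:energybalance-tu-new}, which forces $E(0)=\norm{\tilde u_0}_{H^1(\T)}^2$.

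For the right-continuity claim, pick any Lebesgue point $t_0\in[0,T)$ of $E$ (including $t_0=0$, which is treated by Lemma \ref{thm:temp_cont_0}). Weak continuity in $H^1$ gives $\tilde u(t)\weak\tilde u(t_0)$ as $t\downarrow t_0$, hence the lower bound
$$
\norm{\tilde u(t_0)}_{H^1(\T)}^2 \leq \liminf_{t\downarrow t_0}\norm{\tilde u(t)}_{H^1(\T)}^2.
$$
On the other hand, taking $s=t_0$ in the pointwise inequality and letting $t\downarrow t_0$, both terms in $I(t_0,t)$ vanish: the Lebesgue integral by dominated convergence, and the stochastic integral by path continuity of the It\^o integral, whose integrand $\int_\T \pd_x\sigma\,(\tilde u^2-\tilde q^2)\,\d x$ is square-integrable on $\tilde\Omega\times[0,T]$ exactly as in \eqref{eq:mart-prop-part1}. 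This yields $\limsup_{t\downarrow t_0}\norm{\tilde u(t)}_{H^1(\T)}^2\le \norm{\tilde u(t_0)}_{H^1(\T)}^2$, so the norms converge; combined with the weak convergence this upgrades to strong $H^1$ convergence.

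The principal technical obstacle I anticipate is the measurability and null-set bookkeeping needed to assert that the exceptional Lebesgue-null set of ``bad'' $s$ (respectively $t_0$) can be chosen \emph{outside a single} $\tilde{\mathbb{P}}$-null set. This is resolved by a Fubini argument: $E$ is a progressively measurable process bounded in $L^1_t$ a.s., and the right-hand side of the energy inequality admits a continuous-in-$t$ version (the drift being absolutely continuous and the martingale having continuous paths thanks to the $L^2_{\tilde\omega,t}$ integrability of the integrand in \eqref{eq:mart-prop-part1}), so that the exceptional set in $(\tilde\omega,s)$ is jointly measurable and has zero product measure, which delivers the required a.s.~statement.
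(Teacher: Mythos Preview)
Your proposal is correct and follows essentially the same route as the paper: test the distributional inequality against piecewise-linear approximations of $\mathds{1}_{[s,t]}$, pass to the limit via Lebesgue differentiation (handling the stochastic integral via $L^2_{\tilde\omega,t}$-convergence and BDG), and then combine the resulting pointwise energy inequality with the a.s.\ weak $H^1$-continuity from Lemma~\ref{thm:u_in_Linfty} to upgrade weak to strong right-continuity at Lebesgue points.

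One small point you have glossed over: your assertion that $E(t)\le E(s)+I(s,t)$ holds ``for every $t\in(s,T]$'' is not immediate, since the approximation argument a~priori only delivers it for Lebesgue points in \emph{both} variables. The paper closes this gap explicitly by approximating an arbitrary $t$ by a sequence of Lebesgue points $t_\ell\to t$ and invoking weak lower semicontinuity of the $H^1$ norm (from $\tilde u\in C_t(H^1_x-w)$) together with the continuity in $t$ of $I(s,\cdot)$. This is the same mechanism you already deploy for the right-continuity step, so the fix is routine; but the lemma statement does require the inequality for \emph{every} $t$, so it should be mentioned.
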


\begin{rem}\label{rem:almost-sure}
When we assert that a property
is true ``a.s.~for a.e.~$t\in [0,T]$", it means
that for almost every $\tilde \omega\in \tilde \Omega$, under the probability
measure $\tilde{\mathbb{P}}$, there exists a Lebesgue negligible subset
$N=N(\tilde \omega)\subset [0,T]$ such that the
stated property holds true for every $t\in [0,T]\setminus N(\tilde \omega)$.
Consider the inequality \eqref{eq:energybalance-sch}, which
can be abstractly represented as $\mathcal{I}(\tilde \omega,t)\leq 0$ for
some function $\mathcal{I}$ on $\tilde \Omega\times [0,T]$ (with $s$ fixed).
For a.e.~$\tilde \omega$, there exists a negligible
set $N(\tilde \omega)\subset [0,T]$, such that the inequality
$\mathcal{I}(\tilde \omega,t)\leq 0$ remains valid for all
$t\in [0,T]\setminus N(\tilde \omega)$.
Now note that our function $\mathcal{I}$ is integrable on the product
space $\tilde \Omega\times [0,T]$. Considering this, we can
employ the Tonelli theorem to conclude
that the inequality $\mathcal{I}(\tilde \omega,t)\leq 0$ is indeed
valid for a.e.~$(\tilde \omega,t)\in
\tilde \Omega\times [0,T]$, i.e., $\mathcal{I}$ is nonpositive
on the product space $\tilde \Omega\times [0,T]$.
\end{rem}

\begin{proof}
We employ a standard deterministic argument, see
for example \cite[page 653]{Evans:2010gf}. Consider test functions
$0 \leq \beta_\delta \in W^{1,\infty}([0, T])$ with
$\delta>0$ taking values in a sequence converging to zero.
For given $s$ and $t$ with $0 \leq s < t \leq T$, consider $\delta > 0$ such
that $s + \delta < t - \delta$. For such $\delta$, let $\beta_\delta$ be
the continuous piecewise linear function that equals $1$
on $[s + \delta, t -\delta]$, $0$ on $[0, s]$ and $[t, T]$, and
is linear on $[s, s + \delta]$ and $[t-\delta, t]$. Then 
$\beta_\delta(t')\to \mathds{1}_{[s,t]}(t')$ for a.e.~$t'\in [0,T]$.
For $t'\in [0,T]$, define
\begin{equation*}
        \begin{split}
                & I_\delta(t')
                =\int_\T \left(\frac14 \pd_x^2 \sigma^2 \tilde u^2
                 +\bk{\abs{\pd_x \sigma}^2-\frac14 \pd_x^2 \sigma^2}
                \,\tilde q^2\right)(t')
                \, \beta_\delta(t') \,\d x,
                 \\ &
                 \Psi_\delta(t') =\int_\T \pd_x\sigma
                 \bk{\tilde u^2-\tilde q^2}(t')
                \, \beta_\delta(t')\, \d x.
        \end{split}
\end{equation*}
By using $\beta_\delta$ as the test function
in \eqref{eq:energybalance-tu-new},
we obtain the following result (a.s.):
\begin{equation*}
        \begin{split}
                &\frac{1}{\delta}\int_{t-\delta}^t
                \norm{\tilde u(t')}_{H^1(\T)}^2\, dt'
                -\frac{1}{\delta}\int_s^{s+\delta}
                \norm{\tilde u(t')}_{H^1(\T)}^2\, dt'
                \\ & \qquad
                \leq \int_0^T I_\delta(t')\,dt'
                +\int_0^T \Psi_\delta(t')\,d\tilde W(t').
        \end{split}
\end{equation*}
We apply Lebesgue's differentiation theorem to
send $\delta$ to zero. As a result, we obtain the following
inequality for all Lebesgue points $0\leq s<t\leq T$ of
the function $t'\mapsto \norm{\tilde u(\tilde \omega,t')}_{H^1(\T)}^2$,
which is integrable on $[0,T]$ for a.e.~$\tilde \omega$:
\begin{equation}\label{eq:right-cont-tmp1-Lebesgue}
        \norm{\tilde u(\tilde \omega,t)}_{H^1(\T)}^2
        -\norm{\tilde u(\tilde \omega,s)}_{H^1(\T)}^2
        \leq \int_s^t I(t')\,dt'
        +\int_s^t \Psi(t') \,d\tilde W(t').
\end{equation}
Here, $I$ and $\Psi$ are defined in the same
manner as $I_\delta$ and $\Psi_\delta$, respectively, but
with the substitution of $\beta_\delta$ by $1$.
To be more precise, for each fixed $\tilde{\omega}$
from a set $F$ of full $\tilde{\mathbb{P}}$--measure, there exists a
subset $N(\tilde{\omega}) \subset [0, T]$ of zero Lebesgue measure
such that \eqref{eq:right-cont-tmp1-Lebesgue} holds for every
$t \in [0,T] \setminus N(\tilde{\omega})$.

The only distinction from the
deterministic argument is the necessity to pass to
the ($\delta\to 0$) limit in the stochastic integrals
$\int_0^T \Psi_\delta \,d\tilde{W}$, where
we clearly have $\abs{\Psi_\delta-\mathds{1}_{[s,t]}\Psi}\to 0$
a.e.~in $\tilde \Omega \times [0,T]$.
Furthermore, leveraging Lemma \ref{thm:u_in_Linfty}, it follows
that $\abs{\Psi_\delta-\mathds{1}_{[s,t]}\Psi}^2
\leq 4\abs{\Psi}^2\in L^1(\tilde \Omega\times [0,T])$.
Thus, by the Lebesgue dominated convergence
theorem, $\Psi_\delta\to \mathds{1}_{[s,t]}\Psi$
in $L^2(\tilde \Omega\times [0,T])$.
Hence, by the BDG inequality, we
conclude that $\int_0^\cdot \Psi_\delta(t')\,d\tilde{W}(t')
\to \int_0^\cdot \mathds{1}_{[s,t]}(t')\Psi(t')\,d\tilde{W}(t')$
in $L^2\bigl(\tilde \Omega; C([0,T])\bigr)$.
Passing to a subsequence, this convergence
holds a.s. in $C([0,T])$.

Next, we note that \eqref{eq:right-cont-tmp1-Lebesgue} is valid
for all values of $t$, not exclusively limited to the Lebesgue points.
To see this, fix an arbitrary $t > s$, with
$s\in [0,T) \setminus N(\tilde{\omega})$, $\tilde \omega \in F$
($s$ is a Lebesgue point of
$\norm{\tilde u(\tilde \omega,\cdot)}_{H^1(\T)}^2$).
Let $t_\ell>s$, $t_\ell\in [0,T) \setminus N(\tilde{\omega})$, be
a sequence of (Lebesgue) points converging to $t$
as $\ell \to \infty$. In \eqref{eq:right-cont-tmp1-Lebesgue}
we replace $t$ by $t_\ell$. Recalling that
$\tilde u\in C\bigl([0,T];H^1(\T)-w\bigr)$ a.s.,
see Lemma \ref{thm:u_in_Linfty}, which implies
that $\tilde u$ is a.s.~weakly lower semicontinuous
in $H^1(\T)$, it then follows that
\begin{align*}
        \norm{\tilde u(\tilde \omega,t)}_{H^1(\T)}^2
        &\leq \liminf_{\ell\to \infty}
        \norm{\tilde u(\tilde \omega,t_\ell)}_{H^1(\T)}^2
        \\ &
        \overset{\eqref{eq:right-cont-tmp1-Lebesgue}}{\leq}
        \norm{\tilde u(\tilde \omega,s)}_{H^1(\T)}^2
        +\int_s^t I(t')\,dt'
        +\int_s^t \Psi(t') \,d\tilde W(t').
\end{align*}
Summarising, the inequality \eqref{eq:right-cont-tmp1-Lebesgue}
holds for $\tilde{\mathbb{P}}$--a.e.~$\tilde \omega$
(i.e., for any $\tilde \omega \in F$ with
$\tilde{\mathbb{P}}(F)=1$), for
any time $t\in (0,T]$ and for any Lebesgue point
$s$ with $0\leq s<t\leq T$ (i.e., 
$s\in [0,T)\setminus N(\tilde \omega)$,
$\abs{N(\tilde \omega)}=0$).
This proves the first part of the lemma.

The right-continuity of $\tilde u$ in $H^1(\T)$
at a Lebesgue point $s=t_0$ can be inferred
from \eqref{eq:right-cont-tmp1-Lebesgue}.
More precisely, by the a.s.~weak lower semicontinuity
of $\tilde u$ and \eqref{eq:right-cont-tmp1-Lebesgue},
\begin{align*}
        \norm{\tilde u(\tilde \omega,t_0)}_{H^1(\T)}^2
        \leq \liminf_{t\downarrow t_0}
        \norm{\tilde u(\tilde \omega,t)}_{H^1(\T)}^2
        & \leq \limsup_{t\downarrow t_0}
        \norm{\tilde u(\tilde \omega,t)}_{H^1(\T)}^2
        \\ & \leq \norm{\tilde u(\tilde \omega,t_0)}_{H^1(\T)}^2,
\end{align*}
so that $\lim_{t\downarrow t_0}
\norm{\tilde u(\tilde \omega,t)}_{H^1(\T)}^2
=\norm{\tilde u(\tilde \omega,t_0)}_{H^1(\T)}^2$,
for any $\tilde \omega \in F$, $\tilde{\mathbb{P}}(F)=1$, and
$t_0\in [0,T)\setminus N(\tilde \omega)$, $\abs{N(\tilde \omega)}=0$. 
As a result, we can employ a similar reasoning
as in the proof of Lemma \ref{thm:temp_cont_0}
to conclude that the right-continuity
claim \eqref{eq:right-cont1-t0} holds.

Finally, utilizing the strong initial trace result \eqref{eq:right-cont1}, we
can conclude that $s = 0$ is a Lebesgue point
of $\norm{\tilde{u}(\cdot)}_{H^1(\mathbb{T})}^2$.
\end{proof}

\subsection{Equation for the weak limits $\overline{S(q)}$}
We will need to know that products like 
$S'(\tilde q_n)\, \tilde P_n$ converge weakly. 
Since $S'(\tilde q_n)$ converges weakly, it is crucial 
that $\tilde P_n$ converges strongly to  $\tilde P$. 
To this end, we will make essential use of the 
space $L^r\bigl(L^r_w\bigr)$. 
First, by \eqref{eq:Hneg-conv},
\begin{equation}\label{eq:Hneg-bound-qn2-oq2-as}
	\norm{\tilde q_n^2
	-\overline{q^2}}_{L^r_t(H^{-1}_x)}^{r}
	=\int_0^T \norm{\tilde q_n^2(t)
	-\overline{q^2}(t)}_{H^{-1}(\T)}^{r}\, \d t
	\ton 0, \quad \text{a.s.}
\end{equation} 
By \eqref{eq:Hneg-bound-qn2} 
and \eqref{eq:Hneg-bound-oq2},
\begin{equation}\label{eq:Hneg-bound-qn2-oq2}
	\tilde \Ex \int_0^T 
	\norm{\tilde q_n^2(t)
	-\overline{q^2}(t)}_{H^{-1}(\T)}^{p}\, \d t\lesssim 1, 
	\quad  p\in \bigl[1,p_0/2\bigr].
\end{equation}
For any $\bar p>1$ with $r\bar p\in \bigl[1,p_0/2\bigr]$ (recall 
that $r<3/2$ and $p_0>4$), we use H\"older's inequality 
and \eqref{eq:Hneg-bound-qn2-oq2} to deduce that 
$\tilde \Ex \norm{\tilde q_n^2
-\overline{q^2}}_{L^r_t(H^{-1}_x)}^{r\bar p} \lesssim_T 1$. 
Therefore, by \eqref{eq:Hneg-bound-qn2-oq2-as} and 
Vitali's convergence theorem,
\begin{equation}\label{eq:Hneg-conv2}
	\tilde q_n^2\ton \overline{q^2}
	\quad \text{in 
	$L^r_{\tilde \omega,t}\bigl(H^{-1}_x\bigr)$}.
\end{equation}

Given \eqref{eq:Hneg-conv2}, passing to a 
subsequence if necessary, we may assume that
\begin{equation}\label{eq:Hneg-conv3}
	\tilde q_n^2(\tilde\omega,t)\ton 
	\overline{q^2}(\tilde \omega,t)
	\quad \text{in $H^{-1}(\T)$, 
	for a.e.~$(\tilde \omega,t)\in 
	\tilde \Omega\times [0,T]$}.
\end{equation}

By Lebesgue interpolation between 
the convergence in $L^r_{\tilde \omega,t}$, see \eqref{eq:Hneg-conv2}, 
and the uniform boundedness in $L^{p_0/2}_{\tilde \omega,t}$, 
see \eqref{eq:Hneg-bound-qn2-oq2}, we can 
improve \eqref{eq:Hneg-conv2} to 
\begin{equation}\label{eq:Hneg-conv4}
	\tilde q_n^2\ton \overline{q^2}
	\quad \text{in 
	$L^p_{\tilde \omega,t}\bigl(H^{-1}_x\bigr)$},
	\quad p\in \bigl[1,p_0/2\bigr).
\end{equation}

We can now prove the following result:

\begin{lem}[strong convergence of $\tilde P_n$]
\label{lem:strong-conv-tPn}
Let $\tilde{u}_n$, $\tilde u$, $\tilde{q}_n$, $\overline{q^2}$ 
be the Skorokhod--Jakubowski representations 
from Proposition \ref{thm:skorohod}. Setting
\begin{equation}\label{eq:tPn-tP-def}
	\begin{split}
		& \tilde{P}_n=K*\bk{\tilde u_n^2+\frac12 \tilde q_n^2}, 
		\quad n\in \N, 
		\\ & 
		\tilde{P}=K*\bk{\tilde{u}^2+\frac12 \overline{q^2}},
	\end{split}
\end{equation}	
the following strong convergence holds:
\begin{equation}\label{eq:tPn-strong-conv}
	\tilde P_n \ton \tilde P \quad 
	\text{in $L^r([0,T]\times \T)$, 
	$\tilde{\mathbb{P}}$--a.s.},
\end{equation}
where $r\in [1,3/2)$ is fixed 
in \eqref{eq:pathspaces}. In addition, for 
any $p\in \bigl[1,p_0/2\bigr)$,
\begin{equation}\label{eq:tPn-strong-conv2}
	\tilde P_n \ton \tilde P \quad 
	\text{in $L^p(\tilde\Omega \times [0,T]\times \T)$},
\end{equation}
where $p_0>4$ is specified 
in Theorem \ref{thm:bounds1}.
\end{lem}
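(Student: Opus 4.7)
The plan is to decompose $\tilde P_n - \tilde P$ into two pieces coming from the quadratic terms and to exploit the strong spatial regularising action of convolution with the kernel $K$, combining it with the modes of convergence already established in Proposition \ref{thm:skorohod} and the supplementary estimates of Section \ref{sec:equality-of-laws}. Specifically, write
\begin{equation*}
	\tilde P_n - \tilde P
	= K*\bigl(\tilde u_n^2 - \tilde u^2\bigr)
	+ \tfrac12 K*\bigl(\tilde q_n^2 - \overline{q^2}\bigr).
\end{equation*}

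For the first summand, the a.s.~convergence $\tilde u_n \to \tilde u$ in $C_tL^2_x$ from \eqref{eq:weak-conv-tilde} combined with the uniform $L^\infty_t H^1_x$ bound \eqref{eq:tu-Lp-Linfty-H1} (via Cauchy--Schwarz on the factorisation $\tilde u_n^2-\tilde u^2=(\tilde u_n+\tilde u)(\tilde u_n-\tilde u)$) yields $\tilde u_n^2 \to \tilde u^2$ in $C([0,T];L^1(\T))$, $\tilde{\mathbb{P}}$--a.s. Since $K\in L^\infty(\T)$, Young's inequality gives $\norm{K*(\tilde u_n^2-\tilde u^2)}_{C_tL^\infty_x}\to 0$ a.s., which is more than enough.

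The treatment of the second summand is the heart of the argument. The key regularising estimate is the pointwise bound
\begin{equation*}
	\abs{K*f(x)}
	\le \norm{K(x-\cdot)}_{H^1(\T)}\norm{f}_{H^{-1}(\T)}
	\lesssim \norm{f}_{H^{-1}(\T)},
\end{equation*}
so $\norm{K*f}_{L^\infty(\T)}\lesssim \norm{f}_{H^{-1}(\T)}$. Applying this with $f=\tilde q_n^2-\overline{q^2}$ and invoking the a.s.~convergence $\tilde q_n^2\to\overline{q^2}$ in $L^r_t(H^{-1}_x)$ recorded in \eqref{eq:Hneg-bound-qn2-oq2-as} (itself a consequence of the $L^r(L^r_w)$ convergence in \eqref{eq:weak-conv-tilde} together with the embedding $L^r_w(\T)\hookrightarrow H^{-1}(\T)$), one concludes $K*(\tilde q_n^2-\overline{q^2})\to 0$ in $L^r([0,T];L^\infty(\T))$ a.s. Since $L^r_t(L^\infty_x)\hookrightarrow L^r_{t,x}$, combining with the first summand establishes \eqref{eq:tPn-strong-conv}.

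Finally, \eqref{eq:tPn-strong-conv2} will follow from \eqref{eq:tPn-strong-conv} by Vitali's convergence theorem: the uniform moment bound
\begin{equation*}
	\tilde\Ex\norm{\tilde P_n}_{L^\infty([0,T]\times\T)}^{p_0/2}
	\lesssim 1
\end{equation*}
from Lemma \ref{thm:P-u2_bound-tilde}, together with Fubini, furnishes uniform boundedness of $\{\tilde P_n\}$ in $L^{p_0/2}(\tilde\Omega\times[0,T]\times\T)$, hence uniform integrability of $\{\abs{\tilde P_n-\tilde P}^p\}$ for any $p<p_0/2$, so the a.s.~convergence can be upgraded to $L^p(\tilde\Omega\times[0,T]\times\T)$. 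There is no serious obstacle here; the whole statement is essentially a payoff of the choice of the strong-weak path space $L^r(L^r_w)$ made in \eqref{eq:pathspaces} and the $L^r_{\tilde\omega,t}(H^{-1}_x)$ bounds extracted in Lemma \ref{thm:P-u2_weak}, the only mild subtlety being the need to exploit the full $H^{-1}$-to-$L^\infty$ smoothing of $K$ rather than settling for, e.g., an $L^r$-to-$L^r$ bound.
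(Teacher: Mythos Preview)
Your proof is correct and follows essentially the same route as the paper: the same decomposition into $K*(\tilde u_n^2-\tilde u^2)$ and $K*(\tilde q_n^2-\overline{q^2})$, the same $H^{-1}\to L^\infty$ smoothing estimate for $K$ combined with \eqref{eq:Hneg-bound-qn2-oq2-as} to handle the second piece, and Vitali plus the $L^{p_0/2}$ moment bound from Lemma~\ref{thm:P-u2_bound-tilde} for the upgrade to $L^p_{\tilde\omega,t,x}$. The only minor difference is that for \eqref{eq:tPn-strong-conv2} the paper first establishes a.e.~pointwise convergence in $(\tilde\omega,t,x)$ via the subsequence extraction \eqref{eq:Hneg-conv3}, whereas you feed the already-proven a.s.~$L^r_{t,x}$ convergence directly into Vitali through convergence in measure on the product space; your variant is slightly more economical since it sidesteps the subsequence.
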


\begin{proof}
For any $(t,x)$,
\begin{align*}
	\abs{K*\tilde q_n^2-K*\overline{q^2}}(t,x)
	& = \abs{\, \int_\T K(x-y)\left(\tilde q_n^2(t,y)
	-\overline{q^2}(t,y)\right) \, \d y}
	\\ & 
	\leq \norm{K(x-\cdot)}_{H^1(\T)}
	\norm{\tilde q_n^2(t)-\overline{q^2}(t)}_{H^{-1}(\T)},
\end{align*}
where $\norm{K(x-\cdot)}_{H^1(\T)}\lesssim 1$ for all $x$. 
Raising this to the $r$th power and then 
integrating in $t$ and $x$, we arrive at
\begin{align*}
	& \int_0^T\int_{\T} \abs{\, \int_\T 
	K(x-y)\left(\tilde q_n^2(t,y)
	-\overline{q^2}(t,y)\right)\, \d y}^r \, \d x\, \d t
	\\ & \qquad \lesssim
	\norm{\tilde q_n^2-\overline{q^2}}_{L^r([0,T];H^{-1}(\T))}^r
	\ton 0, \quad \text{$\tilde{\mathbb{P}}$--a.s.},
\end{align*}
using \eqref{eq:Hneg-bound-qn2-oq2-as}. 
By \eqref{eq:weak-conv-tilde}, 
$\tilde u_n^2 \ton \tilde u^2$ 
in $C_tL^1_x$ a.s.~and so 
\begin{align*}
	& \int_0^T\int_{\T} \abs{\, \int_\T 
	K(x-y)\left(\tilde u_n^2(t,y)
	-\tilde u^2(t,y)\right)\, \d y}^r \, \d x\, \d t
	\\ & \qquad 
	\lesssim 
	\norm{\tilde u_n^2-\tilde u^2}_{L^\infty([0,T];L^1(\T))}^r
	\ton 0, \quad \text{$\tilde{\mathbb{P}}$--a.s.}
\end{align*}
Hence, \eqref{eq:tPn-strong-conv} follows.

Let us now turn to the proof of \eqref{eq:tPn-strong-conv2}. 
From the previous calculations, 
\begin{align}
	\abs{\tilde P_n(\tilde \omega,t,x)
	-\tilde P(\tilde \omega,t,x)}
	& \lesssim \norm{\tilde u_n^2(\tilde \omega,t)
	-\tilde u^2(\tilde \omega,t)}_{L^1(\T)}
	\notag \\ & \qquad 
	+\norm{\tilde q_n^2(\tilde \omega,t)
	-\overline{q^2}(\tilde \omega,t)}_{H^{-1}(\T)}
	\ton 0,
	\label{eq:tP-ae-conv}
\end{align}
for a.e.~$(\tilde \omega,t,x)$, using \eqref{eq:Hneg-conv3} 
and also that $\tilde u_n^2(\tilde \omega,t) \ton \tilde u^2(\tilde \omega,t)$ 
in $L^1_x$, uniformly in $t\in [0,T]$, $\tilde\prob$--a.e.~in $\tilde \omega$, 
cf.~\eqref{eq:weak-conv-tilde}. 
By Lemma \ref{thm:P-u2_bound-tilde} and \eqref{eq:Kstar-weak-q2},
$$
\tilde \Ex \int_0^T \int_\T
\abs{\tilde P_n-\tilde P}^{p_0/2}\, \d x\, \d t
\lesssim 1.
$$
Combining the a.e.~convergence \eqref{eq:tP-ae-conv} with this 
$n$-uniform bound in $L^{p_0/2}_{\tilde \omega,t,x}$, the
Vitali convergence theorem gives \eqref{eq:tPn-strong-conv2}.
\end{proof}

The remaining part of this section is devoted to 
the study of the defect measure $\mathbb{D}$ defined 
in \eqref{eq:intro-defect}, which will be done by 
analysing the related defects $\overline{S(q)}-S(\tilde q)$, 
for an appropriate class of nonlinearities $S$ (for 
reasons outlined in Section \ref{sec:intro}). 
We compute $\overline{S(q)}$ and $S(\tilde q)$ 
in this section and Section \ref{subsec:renorm-tq}, 
before we put the results together 
in Section \ref{subsec:defect} to 
conclude that $\mathbb{D}=0$.

Lemma \ref{thm:limiteq_1} below shows that the a.s.~weak limit 
$\overline{S(q)}$ of $S(\tilde q_n)$, see \eqref{eq:weak-conv-tilde} 
and \eqref{eq:SPDE-S(qe)}, satisfies 
the following pathwise inequality 
in $\mathcal{D}'([0,T)\times \T)$:
\begin{align}
	\pd_t \overline{S(q)}
	&+\pd_x \left[\tilde u\, \overline{S(q)}
	+\frac14 \pd_x \sigma^2 
	\left(3\overline{S(q)}-2\overline{S'(q)q}\right) \right]
	\notag \\ &
	-\pd_{xx}^2\left[\frac12\sigma^2\, \overline{S(q)}\right]
	+\Biggl[ \overline{S'(q)} 
	\bk{P\bigl[\tilde u,\overline{q^2}\,\bigr]-\tilde u^2}
	-\left(\overline{S(q)q}-\frac12 \overline{S'(q)q^2}\right)
	\notag \\ & \qquad 
	-\frac14 \pd_{xx}^2\sigma^2 
	\left(\overline{S(q)}-\overline{S'(q)q}\right)
	-\frac12 \abs{\pd_x \sigma}^2
	\, \overline{S''(q) \,q^2}\Biggr]
	\notag \\ &
	+\left[\pd_x \left(\sigma \, \overline{S(q)}\right)
	-\pd_x\sigma\left(\overline{S(q)}
	-\overline{S'(q)q}\right)\right]\, \dot{\tilde{W}}\leq 0,
	\label{eq:SPDE-S(qe)-limit}
\end{align}
along with the initial data 
$\overline{S(q)}(0)=S(\pd_x \tilde u_0)$. 
Regrettably, we cannot establish 
\eqref{eq:SPDE-S(qe)-limit} along the lines 
of Proposition \ref{thm:existence_H1}. 
The obstacle is that passing to the limit 
in some terms is hampered by the lack of 
strong temporal compactness. Instead we will 
furnish a ``direct" weak convergence proof, relying on 
\cite[Lemma 2.1]{Debussche:2011aa} to establish 
the convergence of stochastic integrals of 
processes like $\int_{\T} S'(\tilde q_n)\tilde q_n\,\d x$. 
A priori, these processes only converge weakly in $L^{2r}_t$. 
However, we have devoted much effort to showing 
that, e.g., $S'(\tilde q_n)\tilde q_n$ converges 
a.s.~in the strong-weak space $L^{2r}\bigl(L^{2r}_w\bigr)$, 
cf.~\eqref{eq:weak-conv-tilde}. This implies that 
$\int_{\T} S'(\tilde q_n)\tilde q_n\,\d x$ 
converges strongly in $L^2_t$, which in turn allows 
for the application of \cite[Lemma 2.1]{Debussche:2011aa}.

\begin{lem}[characterisation of weak limit]\label{thm:limiteq_1}
Denote by $S=S(v)$ any of the functions $S_\ell(v_\pm)$, defined 
by \eqref{eq:entropies_S_ell}, or $\frac12 v^2,\frac12 v_\pm^2$.  
Let $\overline{S(q)}$, $\overline{S'(q)}$, $\overline{S(q)q}$, 
$\overline{S'(q)q}$, $\overline{S'(q)q^2}$ and 
$\overline{S''(q)q^2}$ be the Skorokhod--Jakubowski representations 
from Proposition \ref{thm:skorohod}, see also 
Remark \ref{rem:weak-notation}, and let $\tilde{P}$ 
be defined by \eqref{eq:tPn-tP-def}. Then the 
inequality \eqref{eq:SPDE-S(qe)-limit} holds weakly 
in $(t,x)$, almost surely, that is, for any $0\leq \varphi 
\in C^\infty_c([0,T)\times \T)$,
\begin{equation}\label{eq:S_equation_n_limit}
	\begin{aligned}
		&\int_0^T \int_\T \overline{S(q)}\, \pd_t \varphi \,\d x \,\d t
		+\int_\T  S(\pd_x\tilde{u}_0)\, \varphi(0,x)  \,\d x
		\\ & \quad 
		+ \int_0^T \int_\T\left[\tilde u\, \overline{S(q)}
		+\frac14 \pd_x \sigma^2\, \overline{H^{(1)}(q)} \right] 
		\pd_x \varphi \,\d x \,\d t
		\\ & \quad  
		+\int_0^T \int_\T \frac12 \sigma^2\, \overline{S(q)}\,
		\pd_{xx}^2 \varphi \,\d x \,\d t
		\\ & \quad
		-\int_0^T \int_\T\Biggl[ \overline{S'(q)} 
		\bk{\tilde P-\tilde u^2}
		-\overline{H^{(2)}(q)}
		\\ & \quad\qquad \qquad \qquad
		-\frac14 \pd_{xx}^2\sigma^2\, \overline{H^{(3)}(q)}
		-\frac12 \abs{\pd_x \sigma}^2
		\, \overline{S''(q) \,q^2}\Biggr] \varphi \,\d x \,\d t
		\\ & \quad	
		+\int_0^T \int_\T \sigma \, \overline{S(q)}\, \pd_x \varphi
		+\pd_x\sigma\, \overline{H^{(3)}(q)}\,\varphi
		\,\d x\, \d \tilde W\geq 0, 
		\quad \text{$\tilde{\mathbb{P}}$--a.s.},
	\end{aligned}
\end{equation}
where we have introduced the functions
\begin{equation}\label{eq:H-def}
	\begin{split}
		& H^{(1)}(v)=3S(v)-2S'(v)v, 
		\quad 
		H^{(2)}(v)=S(v)v-\tfrac12 S'(v)v^2,
		\\ &
		H^{(3)}(v)=S(v)-S'(v)v. 
	\end{split}
\end{equation}
By the linearity of weak limits, we have
$\overline{H^{(1)}(q)}= 3\overline{S(q)}-2\overline{S'(q)q}$, 
$\overline{H^{(2)}(q)}=\overline{S(q)q}
-\frac12 \overline{S'(q)q^2}$, and 
$\overline{H^{(3)}(q)}=\overline{S(q)}
-\overline{S'(q)q}$.
\end{lem}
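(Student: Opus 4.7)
The plan is to start from the pathwise SPDE \eqref{eq:SPDE-S(qe)} applied to $\tilde q_n=\pd_x\tilde u_n$ on the new probability space. This is legitimate because Lemma \ref{thm:smoothness_n} gives $\tilde u_n\in L^2_tH^m_x$ a.s.~for any $m$, and Lemma \ref{thm:nth_eq} shows that $\tilde u_n$ solves the viscous SPDE driven by $\tilde W_n$ with coefficient $\sigma_{\ep_n}$; differentiating in $x$ yields \eqref{eq:intro-qest-tmp} and the It\^o/mollification argument recalled before \eqref{eq:SPDE-S(qe)-tmp1} produces \eqref{eq:SPDE-S(qe)} for $S(\tilde q_n)$. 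Testing against a nonnegative $\varphi\in C^\infty_c([0,T)\times\T)$, integrating by parts (spatial derivatives onto $\varphi$, temporal derivative produces the initial datum $\int_\T S(\pd_x\tilde u_{0,n})\varphi(0)\,\d x$), and then dropping the nonnegative dissipative contribution $\ep_n\int_0^T\!\int_\T S''(\tilde q_n)|\pd_x\tilde q_n|^2\varphi\,\d x\,\d t\ge 0$ (note $S''_\ell(v_\pm)\ge 0$ and $(v_\pm^2)''\ge 0$) yields the finite-$n$ counterpart of \eqref{eq:S_equation_n_limit} with a remainder $-\ep_n\int\!\int S(\tilde q_n)\pd_{xx}^2\varphi\,\d x\,\d t$ coming from the $\ep\pd_{xx}^2$-viscosity, which vanishes in the limit since $\ep_n\to 0$ and $\Ex\norm{S(\tilde q_n)}_{L^\infty_tL^1_x}\lesssim 1$ by Lemma \ref{thm:q_bounds_skorohod}.

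Next I pass to the limit $n\to\infty$ in the deterministic terms. The purely linear-in-$S(\tilde q_n)$ contributions (i.e., those involving $S(\tilde q_n)$, $S'(\tilde q_n)$, $S'(\tilde q_n)\tilde q_n$, $S(\tilde q_n)\tilde q_n$, $S'(\tilde q_n)\tilde q_n^2$, $S''(\tilde q_n)\tilde q_n^2$) converge directly, since \eqref{eq:weak-conv-tilde} supplies a.s.~weak convergence of each of these in $L^r_{t,x}$ or $L^{2r}_{t,x}$ against the smooth test functions; the approximating coefficients $\sigma_{\ep_n}$ and their derivatives converge to those of $\sigma$ in $L^\infty$. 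For $\int\!\int\tilde u_n S(\tilde q_n)\pd_x\varphi$ the strong convergence of $\tilde u_n\pd_x\varphi$ in $L^{r'}_{t,x}$ (coming from $\tilde u_n\to\tilde u$ in $C_tL^2_x$ a.s.) combines with the weak convergence of $S(\tilde q_n)$. The critical term is $\overline{S'(q)}(\tilde P_n-\tilde u_n^2)$: because $S'(\tilde q_n)$ converges only weakly, we need strong convergence of $\tilde P_n-\tilde u_n^2$, which is furnished by Lemma \ref{lem:strong-conv-tPn} together with $\tilde u_n^2\to\tilde u^2$ in $C_tL^1_x$ a.s. The initial datum passes by $\tilde u_{0,n}\to\tilde u_0$ in $H^1(\T)$, $|S(v)|\lesssim v^2$, and dominated convergence.

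The main obstacle is convergence of the stochastic integral
\[
I_n:=\int_0^T\!\!\int_\T\bigl[\sigma_{\ep_n}S(\tilde q_n)\pd_x\varphi
+\pd_x\sigma_{\ep_n}H^{(3)}(\tilde q_n)\varphi\bigr]\,\d x\,\d\tilde W_n,
\]
since no strong temporal compactness of the integrand is available. The strategy is to invoke \cite[Lemma 2.1]{Debussche:2011aa}: with $\tilde W_n\to\tilde W$ uniformly a.s.~(cf.\ Lemma \ref{thm:W_BM}), it suffices to show that the scalar-valued integrands
\[
G_n(t)=\int_\T\bigl[\sigma_{\ep_n}S(\tilde q_n)\pd_x\varphi
+\pd_x\sigma_{\ep_n}H^{(3)}(\tilde q_n)\varphi\bigr]\,\d x
\]
converge in probability in $L^2([0,T])$ to their natural limit $G$ (built from $\overline{S(q)}$ and $\overline{H^{(3)}(q)}$). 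This is exactly where the strong-weak path space $L^{2r}(L^{2r}_w)$ is indispensable: by \eqref{eq:weak-conv-tilde}, $S(\tilde q_n)\to\overline{S(q)}$ and $H^{(3)}(\tilde q_n)$-type quantities converge a.s.~in $L^{2r}(L^{2r}_w)$, which against fixed smooth test functions $\pd_x\varphi,\varphi$ gives a.s.~strong convergence of $G_n$ in $L^{2r}([0,T])$. The $n$-uniform bound $\Ex\norm{G_n}_{L^2([0,T])}^p\lesssim 1$ (for some $p>2$, via $|S(v)|,|H^{(3)}(v)|\lesssim v^2$ and Lemma \ref{thm:q_bounds_skorohod}) then upgrades this to $L^2_{\tilde\omega}\bigl(L^2([0,T])\bigr)$ convergence by Vitali. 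Progressive measurability of $G_n$ with respect to $\{\tilde{\mathcal F}^n_t\}$ holds by the equality of laws with the original problem, and adaptedness of $G$ to $\{\tilde{\mathcal F}_t\}$ follows from the a.s.~convergence; hence \cite[Lemma 2.1]{Debussche:2011aa} applies and $I_n$ converges in probability to the required limit. Extracting a subsequence if necessary and combining everything yields \eqref{eq:S_equation_n_limit} $\tilde\prob$-a.s.
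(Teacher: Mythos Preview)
Your outline is sound for $S=S_\ell(v_\pm)$, and there it matches the paper's argument closely. The genuine gap appears in the quadratic cases $S(v)=\tfrac12 v^2,\tfrac12 v_\pm^2$, where two of your key convergence claims fail.

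For the stochastic integral you assert that $S(\tilde q_n)$ and $H^{(3)}(\tilde q_n)$ converge a.s.\ in $L^{2r}(L^{2r}_w)$. This is what \eqref{eq:weak-conv-tilde} gives for the $S_\ell$-quantities, but for $S(v)=\tfrac12 v^2$ one has $S(\tilde q_n)=\tfrac12\tilde q_n^2$ and $H^{(3)}(\tilde q_n)=-\tfrac12\tilde q_n^2$, and \eqref{eq:weak-conv-tilde} only supplies $\tilde q_n^2\to\overline{q^2}$ in $L^r(L^r_w)$ with $r<3/2$. Consequently $G_n\to G$ merely in $L^r([0,T])$, which is below the $L^2([0,T])$ threshold required by \cite[Lemma~2.1]{Debussche:2011aa}, and your Vitali upgrade has no a.s.\ $L^2_t$ convergence to start from. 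The paper remedies this by a different route: it uses the strong convergence $\tilde q_n^2\to\overline{q^2}$ in $L^2_{\tilde\omega,t}\bigl(H^{-1}_x\bigr)$ from \eqref{eq:Hneg-conv4} (built up through \eqref{eq:Hneg-conv}--\eqref{eq:Hneg-bound-oq3}), which, since $\sigma\,\pd_x\varphi-\pd_x\sigma\,\varphi\in H^1(\T)$, gives $\tilde{\mathcal J}_n\to\tilde{\mathcal J}$ in $L^2([0,T])$ in probability; the same device handles $(\tilde q_n)_\pm^2$.

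A related integrability mismatch affects the product terms. Your claim that $\tilde u_n\pd_x\varphi\to\tilde u\,\pd_x\varphi$ in $L^{r'}_{t,x}$ (with $r'>3$) does not follow from $C_tL^2_x$ convergence alone; similarly, the pathwise convergence $\tilde P_n\to\tilde P$ in \eqref{eq:tPn-strong-conv} is only in $L^r_{t,x}$, which does not dual-pair with $S'(\tilde q_n)=\tilde q_n\in L^{2r}_{t,x}$. The paper handles $\tilde I_n^{(3)},\tilde I_n^{(6)},\tilde I_n^{(7)}$ by lifting to the product probability space: it promotes the a.s.\ weak limits to weak limits in $L^{2r}(\tilde\Omega\times[0,T]\times\T)$ and $L^{r}(\tilde\Omega\times[0,T]\times\T)$, cf.\ \eqref{eq:improved-S(tun)-conv}, \eqref{eq:Sprime-conv}, and pairs them with the $\omega$-integrated strong convergences $\tilde u_n\to\tilde u$ in $L^{r'}_{\tilde\omega,t,x}$, $\tilde u_n^2\to\tilde u^2$ in $L^{(2r)'}_{\tilde\omega,t,x}$ from \eqref{eq:improved-tun-conv}, and $\tilde P_n\to\tilde P$ in $L^2_{\tilde\omega,t,x}$ from \eqref{eq:tPn-strong-conv2}. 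This yields only \emph{weak} $L^1(\tilde\Omega)$ convergence of these three terms, so the a.s.\ inequality cannot be obtained by a pathwise limit as you suggest; instead one tests $\tilde I_n+\tilde M_n\ge 0$ against $\mathds{1}_A$ for arbitrary $A\in\tilde{\mathcal F}$, passes to the limit, and concludes $\int_{\tilde\Omega}\mathds{1}_A(\tilde I+\tilde M)\,\d\tilde{\mathbb P}\ge 0$, hence \eqref{eq:S_equation_n_limit} $\tilde{\mathbb P}$-a.s.
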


\begin{proof}
Referring to Proposition \ref{thm:skorohod}, 
$\bigl(\tilde u_n,\tilde q_n=\pd_x \tilde u_n, 
\tilde W_n,\tilde u_{0,n}\bigr)$ are the 
Skorokhod--Jakubowski representations of 
$\bigl(u_{\ep_n},q_{\ep_n}=\pd_x u_{\ep_n}, W, z_n\bigr)$, 
respectively, where $u_{\ep_n}$ is the strong solution to the viscous 
SPDE \eqref{eq:u_ch_ep} with noise $W$ and 
initial function $u_{\ep_n}(0)=z_n$, cf.~\eqref{eq:u0-approx}. 
As in the proof of Lemma \ref{lem:energy-of-weak-limit},  
$S(\tilde q_n)$ satisfies \eqref{eq:SPDE-S(qe)} 
with $u_\ep$, $q_\ep$, $W$, $\ep$ replaced by $\tilde u_n$, 
$\tilde q_n$, $\tilde W_n$, $\ep_n$, respectively, 
where $\ep_n \to 0$ as $n\to \infty$. 

Fix a non-negative test function $\varphi \in C^\infty_c([0,T) \times \T)$. 
Multiply \eqref{eq:SPDE-S(qe)} by $\varphi$, integrate over $(t,x)$, 
and then do integration-by-parts in time, keeping in mind that 
$S(\tilde q_n(0))=S(\pd_x \tilde u_{0,n})$. Dropping 
the dissipation term and employing the notation 
\eqref{eq:H-def}, the end result is
\begin{equation}\label{eq:S_equation_n_weak-I}
	\tilde I_n(\tilde \omega)+\tilde{M}_n(\tilde\omega)
	\ge 0, \quad \text{for 
	$\tilde{\mathbb{P}}$--a.e.~$\tilde \omega
	\in \tilde \Omega$},
\end{equation}
where $\tilde{M}_n(\tilde \omega)
=\tilde{\mathcal{M}}_n(\tilde\omega,T)$ with
\begin{equation}\label{eq:S_equation_n_weak-II}
	\tilde{\mathcal{M}}_n(t)
	=\int_0^t \int_\T \sigma_{\ep_n} \, S(\tilde q_n)\, \pd_x \varphi
	+\pd_x\sigma_{\ep_n}\, H^{(3)}(\tilde q_n)\, \varphi
	\,\d x\, \d \tilde W_n, 
	\quad t\in [0,T],
\end{equation}
and $\tilde I_n = \sum_{i=1}^{10} \tilde I_n^{(i)}$ with
\begin{align}
	& \tilde I_n^{(1)}=\int_0^T \int_\T S(\tilde q_n)
	\, \pd_t \varphi \,\d x \,\d t,
	\quad
	\tilde I_n^{(2)}=\int_\T  S(\pd_x\tilde u_{0,n})
	\, \varphi(0,x)  \,\d x,
	\notag	\\ & 
	\tilde I_n^{(3)}= \int_0^T \int_\T \tilde u_n\, S(\tilde q_n)\,
	\pd_x \varphi \, \d x \, \d t,
	\notag \\ &  
	\tilde I_n^{(4)}= \frac14\int_0^T
	\int_\T \pd_x \sigma_{\ep_n}^2\, 
	H^{(1)}(\tilde q_n)\,
	\pd_x \varphi \, \d x \, \d t,
	\notag \\ & 
	\tilde I_n^{(5)}= \int_0^T \int_\T
	\left(\frac12 \sigma_{\ep_n}^2 +\eps_n\right)
	\, S(\tilde q_n)\,\pd_{xx}^2\, \varphi \,\d x \,\d t,
	\label{eq:S_equation_n_weak-III} \\ & 
	\tilde I_n^{(6)}=-\int_0^T \int_\T S'(\tilde q_n)
	\, \tilde P_n\, \varphi \,\d x \,\d t,
	\quad 
	\tilde I_n^{(7)}=\int_0^T \int_\T S'(\tilde q_n) 
	\, \tilde u_n^2\, \varphi \,\d x \,\d t,
	\notag \\ & 
	\tilde I_n^{(8)}=\int_0^T \int_\T 
	\, H^{(2)}(\tilde q_n)\, \varphi \,\d x \,\d t,
	\quad
	\tilde I_n^{(9)}=\frac14\int_0^T \int_\T 
	\pd_{xx}^2\sigma_{\ep_n}^2\, H^{(3)}(\tilde q_n) 
	\, \varphi \,\d x \,\d t,
	\notag \\ & 
	\tilde I_n^{(10)}=\frac12\int_0^T \int_\T 
	\abs{\pd_x \sigma_{\ep_n}}^2 \, S''(\tilde q_n) \,\tilde q_n^2
	\, \varphi \,\d x \,\d t.
	\notag
\end{align}

We can also write the claim \eqref{eq:S_equation_n_limit}
of the lemma in the form
\begin{equation}\label{eq:S_equation_weak-IV}
	\tilde I(\tilde \omega)+\tilde{M}(\tilde\omega)
	\ge 0, \quad \text{for 
	$\tilde{\mathbb{P}}$--a.e.~$\tilde \omega\in \Omega$},
\end{equation}
where $\tilde{M}(\tilde\omega)=\tilde{\mathcal{M}}(T)$ and 
$\tilde I=\sum_{i=1}^{10} \tilde I^{(i)}$ are defined 
as in \eqref{eq:S_equation_n_weak-II} and 
\eqref{eq:S_equation_n_weak-III} via the corresponding 
limit terms identified in \eqref{eq:S_equation_n_limit}. 
Below we will prove that
\begin{equation}\label{eq:S_equation_weak-V}
	\begin{split}
		& \text{$\tilde I_n^{(i)} \ton I^{(i)}$   
		$\tilde{\mathbb{P}}$--a.s.~and 
		strongly in $L^2(\tilde \Omega)$},
		\quad \forall i\notin \{3,6,7\},
		\\ & 
		\text{$\tilde I_n^{(i)} \tonweak I^{(i)}$ 
		in $L^1(\tilde \Omega)$},
		\quad i\in \{3,6,7\}.
	\end{split}
\end{equation}
Moreover, we will prove that the stochastic integral term 
converges strongly in the sense that 
(for a non-relabelled subsequence)
\begin{equation}\label{eq:S_equation_weak-VI}
	\text{$\tilde{M}_n \ton 
	\tilde{M}$ in $L^2(\tilde \Omega)$}.
\end{equation}
Given \eqref{eq:S_equation_n_weak-I}, 
the convergences \eqref{eq:S_equation_weak-V} 
and \eqref{eq:S_equation_weak-VI} imply that
$$
\int_{\tilde \Omega} \mathds{1}_{A}(\tilde \omega)
\bigl( I(\tilde\omega)+M(\tilde\omega)\bigr)
\, \d \tilde{\mathbb{P}}(\tilde \omega)\geq 0,
$$
for any measurable set $A\in \tilde{\mathcal{F}}$, 
which is enough to conclude 
that \eqref{eq:S_equation_weak-IV} holds. 

\medskip

It remains to verify \eqref{eq:S_equation_weak-V} 
and \eqref{eq:S_equation_weak-VI}. 
Let us start with \eqref{eq:S_equation_weak-V}. We do this 
only for the most challenging choices of $S$, 
namely $S(v)= \frac12 v^2,\frac12 v_\pm^2$. 
The argument is the same for $S=S_\ell$ (in fact, it is 
simpler because $S_\ell(v)\lesssim_\ell \abs{v}$).

Regarding the convergences of $I_n^{(i)}$ 
for $i\neq \{3,6,7\}$, cf.~\eqref{eq:S_equation_weak-V}, 
they are all direct consequences of the a.s.~convergences 
in \eqref{eq:weak-conv-tilde}. We detail only the 
case $i=1$. By \eqref{eq:weak-conv-tilde} and 
$S(v)=\frac12 v^2,\frac12 v_\pm^2$, we have, in particular, that 
$S(\tilde q_n)\tonweak \overline{S(q)}$ 
in $L^r([0,T]\times \T)$ a.s.~and so  $\tilde I_n^{(1)}
\ton \tilde I^{(1)}$ a.s. By Lemma \ref{thm:q_bounds_skorohod}, 
we also have the $n$-independent bound
$$
\tilde \Ex\norm{\tilde I_n^{(1)}}_{L^p(\tilde \Omega)}^p
\lesssim_\varphi \tilde \Ex
\left(\int_0^T\int_\T \abs{\tilde q_n}^2
\, \d x\,\d t\right)^p 
\lesssim_T \tilde \Ex 
\norm{\tilde q_n}_{L^\infty([0,T];L^2(\T))}^{2p}
\lesssim 1,
$$ 
for $p\in \bigl[1,p_0/2\bigr)$. Thus, 
by Vitali's convergence theorem, $\tilde I_n^{(1)}
\ton \tilde I^{(1)}$ in $L^2(\tilde \Omega)$. 

Let us consider the exceptional term $I_n^{(3)}$. 
In view of \eqref{eq:weak-conv-tilde}, 
$S(\tilde q_n)\tonweak \overline{S(q)}$ 
in $L^{r}_{t,x}$ a.s., where $r\in [1,3/2)$ (and so $r'=\tfrac{r}{r-1}>3$). 
Given Lemma \ref{thm:q_bounds_skorohod}, we also have the 
bound $\tilde \Ex 
\norm{S(\tilde q_n)}_{L^r_{t,x}}^r\lesssim 1$. 
Hence, by a weak compactness argument, we 
may assume that $\overline{S(q)}\in 
L^r(\tilde\Omega\times[0,T]\times \T)$ and
\begin{equation}\label{eq:improved-S(tun)-conv}
	S(\tilde q_n)\tonweak \overline{S(q)} \quad 
	\text{in $L^r(\tilde\Omega\times[0,T]\times \T)$}. 
\end{equation}
By \eqref{eq:weak-conv-tilde}, $\tilde u_n\ton \tilde u$  
in $C_tL^2_x$ a.s. In view of 
Lemma \ref{thm:P-u2_bound-tilde} and 
Vitali's convergence theorem, 
we thus obtain $\tilde u_n \ton \tilde u$ in 
$L^2(\tilde \Omega\times [0,T]\times \T)$. 
Apart from that, Lemma \ref{thm:P-u2_bound-tilde} delivers 
the bounds $\tilde \Ex \norm{\bigl(\tilde u_n,
\tilde u\bigr)}_{L^{p_0}_{t,x}}^{p_0}
\lesssim 1$, where $p_0>4$ and we may assume $3<r'<p_0$. 
Accordingly, we gather that 
\begin{equation}\label{eq:improved-tun-conv}
	\tilde u_n \ton \tilde u \quad 
	\text{in $L^{r'}(\tilde \Omega\times [0,T]\times \T)$}.
\end{equation}
Given \eqref{eq:improved-S(tun)-conv} 
and \eqref{eq:improved-tun-conv}, by the weak convergence 
of products of strongly and weakly converging sequences,
$$
\tilde u_n\, S(\tilde q_n) \tonweak \tilde u \, \overline{S(q)}
\quad \text{in $L^1(\tilde \Omega\times [0,T]\times \T)$},
$$
which proves \eqref{eq:S_equation_weak-V} for $i=3$. 
Next, consider the term $I_n^{(7)}$. We need to verify 
the weak $L^1_{\tilde \omega,t,x}$ convergence 
of the product $S'(\tilde q_n)\tilde u_n^2$. 
From \eqref{eq:weak-conv-tilde}, $S'(\tilde q_n)\tonweak 
\overline{S'(q)}$ in $L^{2r}_{t,x}$ a.s., 
but Lemma \ref{thm:q_bounds_skorohod} 
also supplies the bound $\tilde \Ex 
\norm{S'(\tilde q_n)}_{L^{2r}_{t,x}}^{2r}\lesssim 1$. Thus, 
by a weak compactness argument, we may assume 
$\overline{S'(q)} \in 
L^{2r}(\tilde \Omega\times [0,T]\times \T)$ and
\begin{equation}\label{eq:Sprime-conv}
	S'(\tilde q_n)\tonweak \overline{S'(q)}
	\quad \text{in $L^{2r}(\tilde \Omega\times [0,T]\times \T)$}.
\end{equation}
As $2<2r<3$ (with $2r$ close to $3$) 
and so $3/2<(2r)'<2$, arguing as for 
\eqref{eq:improved-tun-conv}, we may assume that 
$\tilde u_n \ton \tilde u$ in 
$L^{2(2r)'}(\tilde \Omega\times [0,T]\times \T)$. 
As a result, writing $\tilde u_n^2-\tilde u^2
=\left(\tilde u_n-\tilde u\right) 
\left(\tilde u_n+\tilde u\right)$ 
and using the Cauchy--Schwarz inequality,
$$
\tilde u_n^2 \ton \tilde u^2 \quad 
\text{in $L^{(2r)'}(\tilde \Omega\times [0,T]\times \T)$}.
$$
Combining this with \eqref{eq:Sprime-conv}, we obtain
$S'(\tilde q_n)\, \tilde u_n^2\tonweak 
\overline{S'(q)}\, \tilde u^2$ in $L^1_{\tilde \omega,t,x}$, 
thereby establishing \eqref{eq:S_equation_weak-V} for $i=7$. 

Next, we turn to the weak $L^1_{\tilde \omega,t,x}$ convergence 
of the delicate product term $S'(\tilde q_n)\, \tilde P_n$. 
Fortunately, most of the ``heavy lifting" has already 
been done, since \eqref{eq:tPn-strong-conv2} implies 
that $\tilde P_n \ton \tilde P$ in 
$L^2(\tilde\Omega \times [0,T]\times \T)$. On the other hand, 
given \eqref{eq:Sprime-conv}, $S'(\tilde q_n)
\tonweak \overline{S'(q)}$ in $L^2(\tilde \Omega\times [0,T]\times \T)$, 
and thus $S'(\tilde q_n)\, \tilde P_n 
\tonweak  \overline{S'(q)}\, \tilde P$ 
in $L^1(\tilde \Omega\times [0,T]\times \T)$. This proves 
\eqref{eq:S_equation_weak-V} for $i=6$.

\medskip

Finally, we consider the stochastic integral term 
\eqref{eq:S_equation_n_weak-II}, which we write as
$$
\tilde{\mathcal{M}}_n(t)
=\int_0^t \tilde{\mathcal{J}}_n(s) \, \d \tilde{W}_n,
\quad 
\tilde{\mathcal{J}}_n=
\int_\T \sigma_{\ep_n} \, S(\tilde q_n)\, \pd_x \varphi
+\pd_x\sigma_{\ep_n}\, H^{(3)}(\tilde q_n)
\, \varphi\,\d x.
$$ 
We divide the argument into two cases, 
depending on the choice of $S$, namely 
$S_\ell(v_\pm)$, cf.~\eqref{eq:entropies_S_ell},
or $\frac12 v^2,\frac12 v_\pm^2$. 

Let us begin with the  case $S(v)=S_\ell(v_\pm)$. 
By \eqref{eq:weak-conv-tilde}, 
$S(\tilde q_n)\ton \overline{S(q)}$, 
$S'(\tilde q_n) \tilde q_n\ton \overline{S'(q)q}$, 
and thus $H^{(3)}(\tilde q_n)\ton \overline{H^{(3)}(q)}$
in the strong-weak space $L^{2r}\bigl(L^{2r}_w\bigr)$ 
a.s.~(with $2r>2$). Because of this and $\sigma \pd_x \varphi,
\pd_x\sigma \varphi \in L^\infty_{t,x}$,
\begin{align*}
	& \tilde{\mathcal{J}}_n\ton \int_\T \sigma \, \overline{S(q)}
	\, \pd_x \varphi
	+\pd_x\sigma\, \overline{H^{(3)}(q)}\,\varphi
	\,\d x =: \tilde{\mathcal{J}}
	\quad \text{in $L^{2r}([0,T])$, a.s.}
\end{align*}
This implies that 
$\tilde{\mathcal{J}}_n\to \tilde{\mathcal{J}}$ 
in $L^2([0,T])$, in probability.  
By \eqref{eq:weak-conv-tilde}, $\tilde W_n\to 
\tilde W$ in $C([0,T])$ a.s., 
and thus in probability. The assumptions of 
\cite[Lemma 2.1]{Debussche:2011aa} are therefore fulfilled, 
with the result that
\begin{equation}\label{eq:stoch-int-conv-main}
	\tilde{\mathcal{M}}_n \ton \tilde{\mathcal{M}} 
	\quad \text{in $L^2([0,T])$, in probability},
\end{equation}
where $\tilde{\mathcal{M}}(t)=
\int_0^t \tilde{\mathcal{J}}(s) \, \d \tilde{W}$. 
By passing to a subsequence if necessary, we may assume that this 
convergence holds $\tilde{\mathbb{P}}$--almost surely. 
Note also that the exceptional set does not 
depend on the particular test function $\varphi\in 
C^\infty_c([0,T)\times \T)$ (by the 
separability of $C^\infty_c$).

Next, suppose $S(v)=\frac12 v^2$, 
noting that $H^{(3)}(v)=S(v)-S(v)'v$ in this case 
becomes $-\frac12 v^2$ and so
\begin{align*}
	& \tilde{\mathcal{M}}_n(t)
	=\int_0^t \tilde{\mathcal{J}}_n(s) 
	\, \d \tilde{W}_n,
	\quad 
	\tilde{\mathcal{J}}_n=
	\int_\T \frac12 \bigl(\sigma_{\ep_n} \, \pd_x \varphi
	-\pd_x\sigma_{\ep_n}\,\varphi\bigr)\, \tilde q_n^2\,\d x,
	\\ & 
	\tilde{\mathcal{M}}(t)
	=\int_0^t \tilde{\mathcal{J}}(s) 
	\, \d \tilde{W},
	\quad 
	\tilde{\mathcal{J}}=
	\int_\T \frac12 \bigl(\sigma \, \pd_x \varphi
	-\pd_x\sigma\,\varphi\bigr)\, \overline{q^2}\,\d x.
\end{align*}
According to our previous considerations---leading 
up to \eqref{eq:Hneg-conv4}---we have the crucial
convergence $\tilde q_n^2\ton \overline{q^2}$ 
in $L^2_{\tilde \omega,t}\bigl(H^{-1}_x\bigr)$, 
which amounts to strong $L^2$ convergence in $\tilde \omega, t$, 
see also the proof of Lemma \ref{lem:energy-of-weak-limit}.
This implies that $\tilde{\mathcal{J}}_n\to \tilde{\mathcal{J}}$ 
in $L^2([0,T])$, in probability. As before, $\tilde W_n\to \tilde W$ 
in $C([0,T])$ a.s., and thus in probability. As a result, 
Lemma 2.1 of \cite{Debussche:2011aa} supplies 
\eqref{eq:stoch-int-conv-main}. By passing to 
a subsequence, we may assume that this 
convergence holds $\tilde{\mathbb{P}}$--almost surely. 

The cases $S(v)=\frac12 v_\pm^2$ can be viewed 
in the same way, noting that $H^{(3)}(v)=S(v_\pm)-S(v_\pm)'q
=-\frac12 v_\pm^2$ and that the pivotal convergence 
\eqref{eq:Hneg-conv4} still holds for $(\tilde q_n)_\pm^2$. Indeed, 
with the same proof, $(\tilde q_n)_\pm^2\ton \overline{q^2_\pm}$ 
in $L^2_{\tilde \omega,t}\bigl(H^{-1}_x\bigr)$.

Finally, let us establish the sought-after convergence claim 
\eqref{eq:S_equation_weak-VI}. Indeed, by the previous findings,
\begin{equation}\label{eq:as-conv-tmp1}
	\norm{\tilde{\mathcal{M}}_n
	-\tilde{\mathcal{M}}}_{L^2([0,T])}^2
	\ton 0 \quad \text{a.s.},
\end{equation}
and, for any $p\in \bigl[2,p_0/2\bigr]$ (recall $p_0>4$), 
\begin{equation}\label{eq:as-conv-tmp2}
	\tilde \Ex\norm{\tilde{\mathcal{M}}_n
	-\tilde{\mathcal{M}}}_{L^2([0,T])}^p
	\lesssim_T \tilde \Ex \sup_{t\in [0,T]} 
	\abs{\tilde{\mathcal{M}}_n(t)}^p
	+\tilde \Ex \sup_{t\in [0,T]}
	\abs{\tilde{\mathcal{M}}(t)}^p
	\lesssim_T 1.
\end{equation}

The last bound follows from the following
calculations:
\begin{equation}\label{eq:mart-prop-part2-tmp}
	\begin{split}
		& \tilde \Ex \sup_{t\in [0,T]} 
		\abs{\tilde{\mathcal{M}}_n(t)}^p
		\lesssim 
		\tilde \Ex \left[\left(\int_0^T 
		\abs{\tilde{\mathcal{J}}_n(t)}^2 
		\, \d t\right)^{p/2}\right]
		\\ & \quad 
		\lesssim_T \tilde \Ex \int_0^T 
		\abs{\int_\T \tilde q_n^2(t)\,\d x}^p \, \d t
		\lesssim_T \tilde \Ex 
		\norm{\tilde q_n}_{L^\infty([0,T];L^2(\T))}^{2p}
		\overset{\text{Lemma \ref{thm:q_bounds_skorohod}}}{\lesssim} 1,
	\end{split}
\end{equation}
where we have used the BDG and H\"older inequalities, 
and similarly
\begin{equation}\label{eq:mart-prop-part2}
	\begin{aligned}
		&\tilde \Ex \sup_{t\in [0,T]} 
		\abs{\tilde{\mathcal{M}}(t)}^p
		\lesssim 
		\tilde \Ex \left[\left(\int_0^T 
		\abs{\tilde{\mathcal{J}}(t)}^2 
		\, \d t\right)^{p/2}\right]
		\\ & \quad 
		\lesssim_T \tilde \Ex \int_0^T 
		\abs{\int_\T \overline{q^2}(t)\,\d x}^p \, \d t
		\leq \tilde \Ex \int_{0}^{T} 
		\norm{\overline{q^2}(t)}_{H^{-1}(\T)}^p\, \d t
		\overset{\eqref{eq:Lr-HnegEst}}{\lesssim} 1,
	\end{aligned}
\end{equation}

Given \eqref{eq:as-conv-tmp1} 
and \eqref{eq:as-conv-tmp2}, 
Vitali's convergence theorem returns
$$
\tilde \Ex \int_0^T
\abs{\tilde{\mathcal{M}}_n(t)
-\tilde{\mathcal{M}(t)}}^2 \d t \ton 0.
$$ 
Passing to a subsequence, we conclude that
\begin{align}\label{eq:D_n(t)_to_0}
D_n(t)=\tilde \Ex \abs{\tilde{\mathcal{M}}_n(t)
-\tilde{\mathcal{M}}(t)}^2
\ton 0, \quad 
\text{for a.e.~in $t\in [0,T]$}.
\end{align}

We may assume that $D_n(t)\to 0$ 
for \textit{all} $t\in [0,T]$, as the function $D_n(t)$ depends 
continuously on $t\in [0,T]$, uniformly in $n$. 
Let us explain why. Through some straightforward 
manipulations and by utilising \eqref{eq:mart-prop-part2-tmp} 
and \eqref{eq:mart-prop-part2}, 
$$
\abs{D_n(t_2) - D_n(t_1)}^2
\lesssim  \tilde \Ex  \abs{\tilde{\mathcal{M}}_n(t_2)
-\tilde{\mathcal{M}}_n(t_1)}^2 
+ \tilde \Ex \abs{\tilde{\mathcal{M}}(t_2)
-\tilde{\mathcal{M}}(t_1)}^2.
$$
We will estimate the terms on the right separately. 
Suppose $S(v)=\frac12v^2$. The other cases 
$S(v)=S_\ell(v),\frac12v_\pm^2$ can be treated similarly.  
For $0\leq t_1<t_2\leq T$, the 
It\^{o} isometry implies
\begin{align*}
	& \tilde \Ex \abs{\tilde{\mathcal{M}}_n(t_2)
	-\tilde{\mathcal{M}}_n(t_1)}^2
	=\tilde \Ex \int_{t_1}^{t_2} 
	\abs{\tilde{\mathcal{J}}_n(t)}^2\, \d t
	\lesssim_{\sigma,\varphi}\tilde \Ex \int_{t_1}^{t_2} 
	\abs{\int_\T \tilde q_n^2(t)\,\d x}^2\, \d t
	\\ & \qquad 
	\leq \abs{t_2-t_1}
	\tilde \Ex \norm{\tilde q_n}_{L^\infty([0,T];L^2(\T))}^4
	\overset{\text{Lemma \ref{thm:q_bounds_skorohod}}}{\lesssim} 
	\abs{t_2-t_1}.
\end{align*}
Similarly, for any $p$ such that 
$2p\in \bigl(1,p_0/2\bigr]$ ($p_0>4$) and $1/p+1/p'=1$,
\begin{align*}
	&\tilde \Ex \abs{\tilde{\mathcal{M}}(t_2)
	-\tilde{\mathcal{M}}(t_1)}^2
	\lesssim_{\sigma,\varphi}\tilde \Ex \int_{t_1}^{t_2} 
	\norm{\overline{q^2}(t)}_{H^{-1}(\T)}^2\, \d t
	\\ & \qquad 
	\leq \abs{t_2-t_1}^{\frac{1}{p'}}
	\left(\tilde \Ex\int_0^T 
	\norm{\overline{q^2}(t)}_{H^{-1}(\T)}^{2p}
	\, \d t\right)^{1/p}
	\overset{\eqref{eq:Lr-HnegEst}}{\lesssim}
	\abs{t_2-t_1}^{\frac{1}{p'}}.
\end{align*}
From the above estimations, we can infer that $\abs{D_n(t)} \lesssim 1$ 
uniformly across $n \in \mathbb{N}$ and $t \in [0,T]$. 
Using the Arzelà--Ascoli theorem, we deduce that $D_n(t) \to D(t)$ 
uniformly for $t \in [0,T]$ along a subsequence, where 
$D\in C([0,T])$. According to \eqref{eq:D_n(t)_to_0}, the  
entire sequence must converge to $D \equiv 0$. This 
implies that the random variable $\tilde{M}_n=\tilde{\mathcal{M}}_n(T)$ 
satisfies \eqref{eq:S_equation_weak-VI}.
\end{proof}

Lemma \ref{thm:limiteq_1} applies to the linearly growing 
approximations $S=S_\ell(v_\pm)$ of $\frac12 v^2_\pm$ 
as well as $S=\frac12 v_\pm^2$ (and $S=\frac12v^2$). 
As explained in the introduction, for 
the deterministic CH equation \cite{Xin:2000qf}, the 
analysis relies on the use of $S_\ell(v_+)$ 
and a one-sided gradient bound (Oleinik-type estimate) 
to control the error 
that arises when replacing $v_+^2$ by $S_\ell(v_+)$. 
As one-sided gradient bounds are not available 
to us, we will insist on applying Lemma \ref{thm:limiteq_1} 
with $S=\frac12 v^2_+$ and then use some different ideas 
to control the defect measure. Exploiting the identities
\begin{align*}
	& S(v)=\frac12 v_+^2, \quad S'(v)=v_+,
	\quad 
	S''(v)=\mathds{1}_{\{v>0\}},
	\quad 
	H^{(1)}(v)=-\frac12 v_+^2,
	\\ &
	H^{(2)}(v)=0,
	\quad
	H^{(3)}(v)=-\frac12 v_+^2,
	\quad 
	S''(v) \,v^2=v_+^2,
\end{align*}
and the linearity of the weak 
limit (i.e., $\overline{a}+\overline{b}=\overline{a+b}$), 
the inequality \eqref{eq:S_equation_n_limit} 
with $S(v)=\frac12 v_+^2$ simplifies into
\begin{equation}\label{eq:tqp2-limit}
	\begin{aligned}
		&\int_0^T \int_\T \frac12\overline{q_+^2}
		\, \pd_t \varphi \,\d x \,\d t
		+\int_\T \frac12\bk{\pd_x\tilde{u}_0}_+^2
		\, \varphi(0,x)  \,\d x
		\\ & \quad \quad
		+\int_0^T \int_\T\left(\tilde u
		-\frac14 \pd_x \sigma^2\right) 
		\frac12\overline{q_+^2}\, \pd_x \varphi \,\d x \,\d t
		+\int_0^T \int_\T \frac14 \sigma^2\, \overline{q_+^2}\,
		\pd_{xx}^2 \varphi \,\d x \,\d t
		\\ & \quad\quad
		-\int_0^T \int_\T\Biggl[
		\, \overline{q_+}\bk{\tilde P-\tilde u^2}
		+\left(\frac14 \pd_{xx}^2\sigma^2
		-\abs{\pd_x \sigma}^2\right)
		\, \frac12\overline{q_+^2}\Biggr] \varphi \,\d x \,\d t
		\\ & \quad	\quad
		+\int_0^T \int_\T \bigl(\sigma \, \pd_x \varphi
		-\pd_x\sigma\,\varphi\bigr) 
		\frac12\overline{q_+^2}\,\d x\, \d \tilde W\geq 0,
		\quad \text{$\tilde{\mathbb{P}}$--a.s.},
	\end{aligned}
\end{equation}
for all non-negative $\varphi \in C^\infty_c([0,T)\times \T)$. 

\subsection{Renormalised equation for the weak limit $\tilde{q}$}
\label{subsec:renorm-tq}
According to Proposition \ref{thm:existence_H1}, the 
a.s.~limit $\tilde u$ from Proposition \ref{thm:skorohod} satisfies
\begin{equation}\label{eq:tu-limit}
	\begin{split}
		& 0 =\d \tilde u+\bigl[\tilde u\, \pd_x \tilde u
		+\pd_x \tilde P \bigr] \, \d t 
		-\frac12\sigma \pd_x \bk{\sigma \pd_x \tilde u}\,\d t
		+\sigma \pd_x \tilde u \, \d \tilde W,
	\end{split}
\end{equation}
weakly in $x$, almost surely, where 
$-\pd_{xx}^2\tilde P+\tilde P
=\tilde u^2+\frac12\overline{q^2}$. 
By Lemma \ref{thm:lim_iden_simple}, 
the a.s.~limit $\tilde q$ of Proposition \ref{thm:skorohod} 
satisfies $\tilde q= \pd_x \tilde u$ weakly.  
Differentiating \eqref{eq:tu-limit} 
with respect to $x$, we thus obtain the SPDE
\begin{equation}\label{eq:tq-limit-SPDE}
	\begin{split}
		 0 & = \d \tilde q 
		 +\left(\pd_x \bk{\tilde u \tilde q}
		 -\frac12 \overline{q^2}
		 +\tilde P-\tilde u^2 \right) \,\d t
		 \\ & \qquad \qquad
		 -\frac12 \pd_x \bk{\sigma \pd_x\bk{\sigma \tilde q}} \,\d t
		 +\pd_x\bk{ \sigma \tilde q}\,\d \tilde W.
	\end{split}
\end{equation}

Consider a linearly growing 
$S\in W^{2,\infty}_{\operatorname{loc}}(\R)$ 
(of the type considered before). 
Note that, thanks to \eqref{eq:basic-identities},
\begin{align*}
	S'(\tilde q)\left(\pd_x \bk{\tilde u \tilde q}
	-\frac12 \overline{q^2}\right)
	= \pd_x \bk{\tilde u S(\tilde q)}
	-H^{(2)}(\tilde q)
	-\frac12 S'(\tilde q)
	\left(\overline{q^2}-\tilde q^2\right),
\end{align*}
where $H^{(2)}$ is defined in \eqref{eq:H-def}. 
Formally applying It\^{o}'s formula 
to \eqref{eq:tq-limit-SPDE} as 
in \eqref{eq:SPDE-S(qe)}, expressing the 
temporal differential as a time-derivative 
in $\mathcal{D}'([0,T))$, we obtain
\begin{equation}\label{eq:SPDE-S(tq)}
	\begin{split}
		0=\pd_t S(\tilde q)
		& +\pd_x \left[\tilde u\, S(\tilde q)
		+\frac14 \pd_x \sigma^2 
		\, H^{(1)}(\tilde q) \right] 
		-\pd_{xx}^2\left[\frac12\sigma^2S(\tilde q)\right]
		\\ &  
		+\Biggl[ S'(\tilde q) \bk{\tilde P-\tilde u^2}-H^{(2)}(\tilde q)
		-\frac12 S'(\tilde q)\left(\overline{q^2}-\tilde q^2\right)
		\\ & \qquad \qquad\quad
		-\frac14 \pd_{xx}^2\sigma^2 \, H^{(3)}(\tilde q)
		-\frac12 \abs{\pd_x \sigma}^2
		\, S''(\tilde q) \, \tilde q^2\Biggr] 
		\\ & 
		+\Bigl[\pd_x \bigl(\sigma \, S(\tilde q)\bigr)
		-\pd_x\sigma \, H^{(3)}(\tilde q)\Bigr]\, \dot{\tilde W}, 
		\quad \text{in $\mathcal{D}'([0,T)\times \T)$, a.s.},
	\end{split}	
\end{equation}
with initial data $S(\tilde q)(0)=S(\pd_x \tilde u_0)$, 
here relying crucially on Lemma \ref{thm:temp_cont_0} 
(strong right-continuity at $t=0$). 
Note carefully that $S$ is assumed linearly growing in order 
to make sense to the product $S'(\tilde q)\bigl(\overline{q^2}
-\tilde q^2\bigr)$, given the meagre 
integrability \eqref{eq:weak-conv-tilde}. 
This excludes the functions $S(v)=\frac12v^2,\frac12v_\pm^2$ 
allowed by Lemma \ref{thm:limiteq_1}.

The processes $\tilde u$ and $\tilde q$ 
appearing in \eqref{eq:SPDE-S(tq)} exhibit limited 
regularity. Specifically, $\tilde q$ does not belong to 
any spatial Sobolev space, as the second-order 
part of the SPDE \eqref{eq:tq-limit-SPDE} does not manifest 
``parabolic regularity". The rigorous derivation 
of \eqref{eq:SPDE-S(tq)} is therefore quite involved: it relies 
on the regularisation (by convolution) method and 
the real-valued It\^{o} formula, along with 
non-standard DiPerna--Lions estimates 
to control the regularisation error linked to the second 
order operator and the martingale part of 
the equation \eqref{eq:tq-limit-SPDE}, see 
Appendix \ref{sec:commutator-est} for details 
and Section \ref{sec:intro} for some relevant references.

\begin{lem}[renormalisation of limit SPDE]
\label{thm:limiteq_2}
Denote by $S(v)$ any one of the functions $S_\ell(v_\pm)$ 
defined by \eqref{eq:entropies_S_ell}. 
Let $\tilde u$, $\tilde q=\pd_x \tilde u$, 
cf.~Lemma \ref{thm:lim_iden_simple}, and $\overline{q^2}$ 
be the Skorokhod--Jakubowski representations 
from Proposition \ref{thm:skorohod}, see also 
Remark \ref{rem:weak-notation}, and 
$H^{(1)},H^{(2)},H^{(3)}$ be the functions 
defined in \eqref{eq:H-def} with $S(v)=S_\ell(v_\pm)$. 
The SPDE \eqref{eq:SPDE-S(tq)} holds weakly 
in $(t,x)$, almost surely, that is, 
$\tilde{\mathbb{P}}$--a.s.,
\begin{equation}\label{eq:S_equation_n_limit2}
	\begin{aligned}
		&\int_0^T \int_\T S(\tilde q)\, \pd_t \varphi \,\d x \,\d t
		+\int_\T S(\pd_x\tilde{u}_0)\, \varphi(0,x)  \,\d x
		\\ & \quad 
		+ \int_0^T \int_\T\left[\tilde u\, S(\tilde q)
		+\frac14 \pd_x \sigma^2 \, H^{(1)}(\tilde q)\right] 
		\pd_x \varphi \,\d x \,\d t
		\\ & \quad  
		+\int_0^T \int_\T \frac12 \sigma^2\, S(\tilde q)\,
		\pd_{xx}^2 \varphi \,\d x \,\d t
		\\ & \quad
		-\int_0^T \int_\T\Biggl[S'(\tilde q) 
		\bk{\tilde P-\tilde u^2}-\, H^{(2)}(\tilde q)
		-\frac12 S'(\tilde q)\left(\overline{q^2}-\tilde q^2\right)
		\\ & \qquad \qquad\qquad\quad
		-\frac14 \pd_{xx}^2\sigma^2 \, H^{(3)}(\tilde q)
		-\frac12\abs{\pd_x \sigma}^2 S''(\tilde{q})\, 
		\tilde q^2\Biggr] \varphi \,\d x \,\d t
		\\ & \quad	
		+\int_0^T \int_\T \sigma \, S(\tilde q)\, \pd_x \varphi
		+\pd_x\sigma \, H^{(3)}(\tilde q)\,\varphi
		\,\d x\, \d \tilde W = 0,
	\end{aligned}
\end{equation}
for all $\varphi \in C^\infty_c([0,T)\times \T)$, 
where $\tilde{P}$ is defined in \eqref{eq:tPn-tP-def}.
\end{lem}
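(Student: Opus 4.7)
The proof follows the strategy briefly outlined after the statement of the lemma: regularise in $x$ by mollification, apply the real-valued It\^o formula to $S(\tilde q_\delta)$ for each fixed $x$, integrate against a test function, and pass to the limit $\delta\downarrow 0$. The scheme parallels the derivation of \eqref{eq:SPDE-S(qe)} from \eqref{eq:intro-qest-tmp}, but here $\tilde q$ possesses far less regularity (merely $\tilde q \in L^\infty_t L^2_x\cap L^{2r}_{t,x}$ a.s., with no additional spatial smoothness), so both the justification of It\^o and the mollification limit are considerably more subtle.

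First, convolving \eqref{eq:tq-limit-SPDE} against a spatial Friedrichs mollifier $J_\delta$ and writing $f_\delta = f*J_\delta$, one obtains
\begin{equation*}
\begin{split}
0 &= \d \tilde q_\delta + \bk{\pd_x(\tilde u \tilde q_\delta)-\tfrac12 \overline{q^2}_\delta+\tilde P_\delta-(\tilde u^2)_\delta}\d t
\\ & \qquad -\tfrac12\pd_x\bk{\sigma\pd_x(\sigma\tilde q_\delta)}\d t+\pd_x(\sigma\tilde q_\delta)\d \tilde W+r^1_\delta\d t+r^2_\delta\d t+r^3_\delta\d \tilde W,
\end{split}
\end{equation*}
where $r^1_\delta=[\pd_x(\tilde u\,\cdot),J_\delta]\tilde q$, $r^2_\delta=-\tfrac12[\pd_x(\sigma\pd_x(\sigma\,\cdot)),J_\delta]\tilde q$ and $r^3_\delta=[\pd_x(\sigma\,\cdot),J_\delta]\tilde q$ are the customary mollifier commutators. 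Since $\tilde q_\delta$ is $C^\infty$ in $x$ for each $t$ and continuous in $t$ for each $x$, the real-valued It\^o formula (available for $W^{2,\infty}_{\loc}$ functions, as recalled after Definition \ref{def:st_sol}) can be applied to $S(\tilde q_\delta)$ pointwise in $x$. The algebraic identities \eqref{eq:basic-identities}, with $q_\ep$ replaced by $\tilde q_\delta$ and $\sigma_\ep$ by $\sigma$, then put the drift in conservation form. Multiplying by $\varphi\in C_c^\infty([0,T)\times\T)$, integrating over $[0,T]\times\T$ and integrating by parts in $t$ (the initial trace $S(\pd_x \tilde u_0)$ is supplied by Lemma \ref{thm:temp_cont_0}) and in $x$ yields an approximate identity whose structure matches \eqref{eq:S_equation_n_limit2}, augmented by remainder contributions from the $r^i_\delta$.

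Next, I would send $\delta\downarrow 0$. The terms not involving commutators pass to the limit by strong mollifier convergence in the topologies provided by Section \ref{sec:equality-of-laws}: $\tilde u_\delta\to \tilde u$ in $C_t L^\infty_x$ by Lemma \ref{thm:P-u2_bound-tilde}; $\tilde P_\delta\to \tilde P$ by Lemma \ref{lem:strong-conv-tPn}; $\tilde q_\delta\to\tilde q$ in $L^{2r}_{t,x}$ by Lemma \ref{thm:u_in_Linfty}; and $\overline{q^2}_\delta\to\overline{q^2}$ in $L^r_{t,x}$ by Lemma \ref{thm:P-u2_weak}. The boundedness $\abs{S'_\ell(v_\pm)}\lesssim_\ell 1$ combined with a.s.~pointwise convergence $\tilde q_\delta\to\tilde q$ (along a subsequence) delivers, by dominated convergence, the convergence of the delicate product $S'(\tilde q_\delta)\bigl(\overline{q^2}_\delta-\tilde q_\delta^2\bigr)$ in $L^1_{t,x}$. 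The stochastic integral is handled by Lemma 2.1 of \cite{Debussche:2011aa}, noting that $\tilde W$ is now fixed and the integrand $\int_\T\bigl[\sigma S(\tilde q_\delta)\pd_x\varphi+\pd_x\sigma\, H^{(3)}(\tilde q_\delta)\,\varphi\bigr]\d x$ converges in $L^2_t$ in probability (with $L^p_{\tilde\omega}$ moments controlled by the $L^\infty_t L^2_x$ bound on $\tilde q_\delta$).

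The main obstacle is the coupled treatment of the quadratic variation $\tfrac12 S''(\tilde q_\delta)\bk{\pd_x(\sigma\tilde q_\delta)+r^3_\delta}^{\!2}\d t$ generated when It\^o is applied to the mollified martingale term, and the Stratonovich--It\^o corrector $\tfrac12 S'(\tilde q_\delta)\pd_x\bk{\sigma\pd_x(\sigma\tilde q_\delta)}\d t$ together with the remainder $S'(\tilde q_\delta)\,r^2_\delta$. Using the last identity of \eqref{eq:basic-identities}, the ``clean'' parts of these two contributions combine into the deterministic integrand in \eqref{eq:S_equation_n_limit2}, namely $-\tfrac14\pd_{xx}^2\sigma^2\, H^{(3)}(\tilde q_\delta)-\tfrac12\abs{\pd_x\sigma}^2 S''(\tilde q_\delta)\tilde q_\delta^2$, while the leftover mixed pieces---of schematic form $S''(\tilde q_\delta)\,r^3_\delta\,\pd_x(\sigma\tilde q_\delta)$, $S''(\tilde q_\delta)(r^3_\delta)^2$ and $S'(\tilde q_\delta)\,r^2_\delta$---must be shown to vanish in $L^1_{\tilde\omega,t,x}$. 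This step is not covered by the classical DiPerna--Lions commutator lemma alone, because $\pd_x(\sigma\tilde q_\delta)$ is not bounded in $L^2_{t,x}$ uniformly in $\delta$; the required cancellation between the squared noise commutator $(r^3_\delta)^2$ and the Stratonovich commutator $r^2_\delta$ is precisely what is captured by the dedicated commutator estimates collected in Appendix \ref{sec:commutator-est} and proved in \cite{HKP-viscous}. Once these error terms are controlled, the surviving pieces assemble into \eqref{eq:S_equation_n_limit2}.
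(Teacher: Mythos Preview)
Your proposal follows essentially the same route as the paper: mollify the $\tilde q$-equation, apply the real-valued It\^o formula to $S(\tilde q_\delta)$, rewrite via \eqref{eq:basic-identities}, and pass $\delta\downarrow 0$ using the commutator estimates of Appendix~\ref{sec:commutator-est} for the coupled noise/Stratonovich errors. Two small slips: the convergence $\tilde P_\delta\to\tilde P$ is just a standard mollifier fact (Lemma~\ref{lem:strong-conv-tPn} concerns the $n\to\infty$ limit, not mollification), and invoking \cite[Lemma~2.1]{Debussche:2011aa} for the stochastic integral is unnecessary since $\tilde W$ is fixed---the paper simply uses the It\^o isometry.
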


\begin{proof}
Let $J_\delta$ be a standard Friedrichs mollifier on $\T$. 
For $f \in L^p(\T)$, write $f_\delta=J_\delta*f$. 
Mollifying the limit SPDE 
\eqref{eq:tu-limit} against $J_\delta$, we obtain
\begin{equation}\label{eq:mollifying_example}
	\begin{aligned}
		0 &=\d \tilde{u}_{\delta} 
		+\tilde u_{\delta} \tilde q_{\delta} \,\d t 
		+E^{(1)}_\delta\,\d t 
		+\pd_x K*\bk{\tilde{u}^2
		+\frac12 \overline{q^2}}*J_\delta \,\d t   
		\\ &\qquad \quad 
		-\frac12 \sigma  \pd_x \bk{\sigma \tilde{q}_{\delta}} \,\d t
		+E^{(3)}_\delta\,\d t +\left[\sigma \tilde{q}_{\delta}
		+E^{(2)}_\delta\right]\,\d \tilde{W},
	\end{aligned}
\end{equation}
where $E^{(1)}_\delta$, $E^{(2)}_\delta$, $E^{(3)}_\delta$ 
denote the following convolution error terms:
\begin{equation*}
	\begin{aligned}
		&E^{(1)}_\delta 
		=\bk{\tilde{u}\tilde{q}}*J_\delta 
		-\tilde{u}_\delta\tilde{q}_\delta,
		\quad 
		E^{(2)}_\delta 
		=\bigl( \sigma\tilde{q}\bigr)*J_\delta
		-\sigma\tilde{q}_{\delta},
		\\
		& E^{(3)}_\delta
		=-\frac12 \bigl(\sigma \pd_x\bk{\sigma \tilde{q}}\bigr)*J_\delta
		+\frac12 \sigma\,\pd_x\bk{\sigma\tilde{q}_{\delta}}.
	\end{aligned}
\end{equation*}

Next, differentiating \eqref{eq:mollifying_example}
with respect to $x$, we arrive at 
\begin{equation}\label{eq:mollifying_example-q}
	\begin{aligned}
		0 = \d \tilde{q}_\delta 
		& +\left[\pd_x \bk{\tilde{u}_\delta\tilde{q}_\delta} 
		+K*\bk{\tilde{u}^2
		+\frac12 \overline{{q}^2}}*J_\delta 
		-\bk{\tilde{u}^2 + \frac12\overline{{q}^2}}*J_\delta 
		\right]\,\d t 
		\\ & 
		-\frac12 \pd_x\bigl(\sigma \pd_x \bk{\sigma \tilde{q}_\delta}
		\bigr)\,\d t 
		+\pd_x \bk{\sigma \tilde{q}_\delta}\,\d \tilde{W}
		\\ &
		+\pd_x E^{(1)}_\delta\,\d t
		+\pd_x E^{(2)}_\delta \,\d \tilde{W} 
		+\pd_x E^{(3)}_\delta\,\d t.
	\end{aligned}
\end{equation}

Consider $S(v) = S_\ell(v_\pm)$ as 
in the lemma. Given \eqref{eq:mollifying_example-q}, 
applying the standard It\^o formula to $S(\tilde{q}_\delta)$,
as in \eqref{eq:SPDE-S(tq)} 
(cf.~also \eqref{eq:basic-identities}
and \eqref{eq:SPDE-S(qe)}), we obtain
\begin{equation}\label{eq:S_equation_limit}
	0 =\int_0^T\int_\T S(\tilde{q}_\delta)
	\pd_t \varphi \,\d x \,\d t
	+ \int_\T S(\tilde{q}_\delta(0))
	\varphi(0,x)\,\d x
	+\sum_{i=1}^6 I^{(i)}_{\delta}, 
\end{equation}
where
\begin{align*}
	I_\delta^{(1)} & 
	= \int_0^T \int_\T
	\bk{H^{(2)}(\tilde{q}_\delta)
	+\frac14 \,\pd_{xx}^2\sigma^2 H^{(3)}(\tilde{q}_\delta)}
	\varphi \,\d x\,\d t 
	\\ &\qquad 
	+ \int_0^T \int_\T 
	\bk{\tilde{u}_\delta S(\tilde{q}_\delta)
	+\frac14 \pd_x \sigma^2 H^{(1)}(\tilde{q}_\delta)}
	\pd_x \varphi \,\d x\,\d t
	\\ &\qquad 
	-\frac12 \int_0^T \int_\T  
	\sigma^2\, S(\tilde{q}_\delta)
	\pd_{xx}^2 \varphi \, \d x \,\d t,
	\\ 
	I_\delta^{(2)} & 
	=-\int_0^T \int_\T S'(\tilde{q}_\delta) 
	\overline{I}_\delta^{(2)}
	\varphi \,\d x\,\d t,
	\\
	I_\delta^{(3)} & 
	=\frac12\int_0^T\int_\T \abs{\pd_x \sigma}^2
	S''(\tilde{q}_\delta)\tilde{q}_\delta^2 
	\varphi  \,\d x\, \d t,
	\\
	I^{(4)}_\delta & 
	=\int_0^T \int_\T \sigma S(\tilde{q}_\delta)\pd_x\varphi 
	+\pd_x\sigma H^{(3)}(\tilde{q}_\delta) \varphi
	\, \d x \,\d \tilde{W},
	\\
	I_\delta^{(5)} &
	=\int_0^T \int_\T -\varphi 
	S'(\tilde{q}_\delta)\pd_x E^{(3)}_\delta 
	+\varphi S''(\tilde{q}_\delta)
	\overline{I}_\delta^{(5)}\,\d x\,\d t
	\\ &\qquad
	-\int_0^T \int_\T \varphi S'(\tilde{q}_\delta)
	\pd_x E^{(1)}_\delta \,\d x\,\d t,
	\\
	I^{(6)}_\delta &
	=-\int_0^T \int_\T \varphi S'(\tilde{q}_\delta)
	\pd_x E^{(2)}_\delta\,\d x \,\d \tilde{W},
\end{align*}
and
\begin{align*}
	&\overline{I}_\delta^{(2)}
	=K*\bk{\tilde{u}^2
	+\frac12 \overline{q^2}}*J_\delta
	-\bk{\tilde{u}^2
	+\frac12 \overline{q^2}}*J_\delta + \frac12 \tilde{q}_\delta^2,
	\\ & 
	\overline{I}_\delta^{(5)}
	=\bk{\pd_x E^{(2)}_\delta\pd_x \bk{\sigma \tilde{q}_\delta}
	+\frac12 \abs{\pd_xE^{(2)}_\delta}^2}.
\end{align*}
In deriving $\overline{I}_\delta^{(2)}$, we used the 
fact that $K$ is the Green's function of $1 - \pd_{xx}^2$ on $\T$.

Denote by $I^{(i)}$ the expression corresponding 
to formally taking $\delta$ to zero 
in $I^{(i)}_\delta$, $i=1,\ldots,6$, 
and the same for $\overline{I}^{(2)}$, 
$\overline{I}^{(5)}$.

Recall that
$$
\text{$\tilde q\in L^{p_0}_{\tilde \omega}L^\infty_tL^2_x
\cap L^{2r}_{\tilde \omega,t,x}$},
\quad 
\text{$\overline{q^2}\in L^r_{\tilde \omega,t,x}$},
\quad 
\text{$\tilde u\in L^{p_0}_{\tilde \omega}L^\infty_{t,x}$},
$$
where $r\in [1,3/2)$ and $p_0>4$, 
see Lemmas \ref{thm:u_in_Linfty}, \ref{thm:P-u2_bound-tilde}, 
and \ref{thm:P-u2_weak}. By a standard property of mollifiers, 
$\tilde{q}_\delta \todelta \tilde{q}$ 
a.e.~in $\tilde \Omega\times [0,T]\times \T$. 
Denote by $f(v)$ any one of the nonlinear functions $S(v)$, $S'(v)$, 
$S''(v) v^2$, $H^{(1)}(v)$, $H^{(2)}(v)$, $H^{(3)}(v)$, 
where we recall that $S(v)=S_\ell(v_\pm)$, 
cf.~\eqref{eq:entropies_S_ell}, 
and $H^{(1)},H^{(2)},H^{(3)}$ are defined 
in \eqref{eq:H-def} with $S(v)=S_\ell(v_\pm)$. 
By the continuity of $f(v)$, 
\begin{equation}\label{eq:f-ae-conv}
	f(\tilde{q}_\delta) \todelta f(\tilde{q})
	\quad  \text{a.e.~in $\tilde \Omega
	\times [0,T]\times \T$}.	
\end{equation}

We have the bound
$$
\norm{f(\tilde{q}_\delta)}_{L^\infty_{t,x}} 
\lesssim_\ell 1, 
\quad 
f(v) = S'(v),\, S''(v)v^2, 
$$
and thus, by 
\eqref{eq:f-ae-conv} and Vitali's convergence theorem,
\begin{equation}\label{eq:strongconvg_delta0}
	f(\tilde{q}_\delta) \todelta f(\tilde{q}) 
	\quad \text{in $L^p_{t,x}$, a.s.},
\end{equation}
for any $1\le p < \infty$.

Similarly, by the bounds 
$$
\norm{f(\tilde{q}_\delta)}_{L^\infty_tL^2_x},\, 
\norm{f(\tilde{q}_\delta)}_{L^{2r}_{t,x}}
\lesssim_{\ell,\tilde \omega} 1, 
\quad 
f(v)=S(v),\, H^{(1)}(v),\, H^{(3)}(v), 
$$
we have the a.s.~convergences 
\begin{equation}\label{eq:strongconvg_delta1}
	f(\tilde{q}_\delta) \todelta f(\tilde{q})
	\quad \text{in $L^{p_1}_tL^{p_2}_x$ 
	and in $L^{p}_{t,x}$}, 
\end{equation}
for any $1\le p_1<\infty$, $1 \le p_2 < 2$, 
and $1\leq p< 2r$. In addition, since $f(\tilde{q}_\delta)$ 
is bounded in $L^{2r}_{\tilde \omega,t,x}$ (and $2r>2$),
\begin{equation}\label{eq:strongconvg_delta1-new}
	f(\tilde{q}_\delta) \todelta f(\tilde{q})
	\quad \text{in $L^2_{\tilde \omega,t,x}$}. 
\end{equation}

Next, from the bound
$$
\norm{f(\tilde{q}_\delta)}_{L^r_{t,x}}
\lesssim_{\ell,\tilde \omega} 1, 
\quad 
f(v)=v^2,\, H^{(2)}(v),
$$
we obtain the convergence
\begin{equation}\label{eq:strongconvg_delta2}
	f(\tilde{q}_\delta) \todelta f(\tilde{q})
	\quad \text{in $L^{p}_{t,x}$, a.s.},
\end{equation}
for any $1\leq p <r$.

Since $\overline{q^2}\in L^{r}_{t,x}$ a.s., we 
have the convergence
\begin{equation}\label{eq:strongconvg_delta3}
	\overline{q^2}*J_\delta \todelta \overline{q^2}
	\quad\text{in $L^{p}_{t,x}$, a.s.} 
\end{equation}

Finally, since $\tilde{u}\in L^\infty_{t,x}$, a.s., 
\begin{equation}\label{eq:strongconvg_delta4}
	\tilde{u}_\delta \todelta \tilde{u},
	\quad \tilde{u}^2*J_\delta \todelta \tilde{u}^2 
	\quad \text{in $L^p_{t,x}$, a.s., for any 
	$p\in [1,\infty)$.}
\end{equation}

\smallskip
\noindent \textit{1. The first two terms in \eqref{eq:S_equation_limit}.}
\smallskip

By \eqref{eq:strongconvg_delta1} and $\pd_t \varphi 
\in L^\infty_{t,x}$,
\begin{align*}
	\int_0^T \int_\T 
	S(\tilde{q}_\delta) \pd_t \varphi
	\,\d x \,\d t
	\todelta  
	\int_0^T \int_\T 
	S(\tilde{q})\pd_t \varphi 
	\,\d x \,\d t, 
	\quad \text{a.s.}
\end{align*}

Regarding the initial term,
$$
\int_\T S(\tilde{q}_\delta(0))\varphi(0,x)\,\d x
=\int_\T S(\pd_x \tilde{u}_0)\varphi(0,x)\,\d x
+A+B_\delta,
$$
where
\begin{align*}
	& A=\int_\T \bigl(S(\tilde{q}(0))
	-S(\pd_x \tilde{u}_0)\varphi(0,x)\,\d x,
	\\ & 
	B_\delta =\int_\T \bigl(S(\tilde{q}_\delta(0))
	-S(\tilde{q}(0))\bigr)\varphi(0,x)\,\d x.
\end{align*}
By Lemma \ref{thm:temp_cont_0} (strong 
initial trace of $\tilde q=\pd_x \tilde u$ 
in $L^2$), we have that $A=0$. 
As in \eqref{eq:strongconvg_delta1}, 
$S(\tilde{q}_\delta(0))\todelta S(\tilde{q}(0))$ 
in $L^1_x$ a.s., and hence we easily deduce that 
$B_\delta \todelta 0$; accordingly,
$$
\int_\T S(\tilde{q}_\delta(0))\varphi(0,x)\,\d x
\todelta\int_\T S(\pd_x \tilde{u}_0)
\varphi(0,x)\,\d x, 
\quad \text{a.s.}
$$

\smallskip
\noindent \textit{2. The term $I^{(1)}_\delta$.}
\smallskip

Since $\varphi, \pd_x \varphi, \pd_{xx}^2 \varphi, 
\sigma^2, \pd_x \sigma^2, \pd_{xx}^2 \sigma^2 
\in L^\infty_{t,x}$, the convergences 
\eqref{eq:strongconvg_delta1} and 
\eqref{eq:strongconvg_delta2} ensure that
$I^{(1)}_\delta  \todelta I^{(1)}$ a.s.

\smallskip
\noindent \textit{3. The term $I^{(2)}_\delta$.}
\smallskip

In view of Young's convolution inequality 
and the convergences \eqref{eq:strongconvg_delta2}, 
\eqref{eq:strongconvg_delta3}, and 
\eqref{eq:strongconvg_delta4}, we obtain 
$\overline{I}_\delta^{(2)} \todelta \overline{I}^{(2)}$ 
in $L^p_{t,x}$, a.s., for any $p\in [1,r)$, where
$$
\overline{I}^{(2)}
=K*\bk{\tilde{u}^2+\frac12 \overline{q^2}}
-\tilde{u}^2- \frac12\bk{\overline{q^2}-\tilde{q}^2}.
$$
Moreover, by \eqref{eq:strongconvg_delta0}, 
$S'(\tilde{q}_\delta) \todelta S'(\tilde{q})$ 
in $L^{p'}_{t,x}$, a.s., where $\frac{1}{p}+\frac{1}{p'}=1$. 
Consequently, as $\varphi\in L^\infty_{t,x}$, 
it follows that $I^{(2)}_\delta \todelta I^{(2)}$, a.s.

\smallskip
\noindent \textit{4. The term $I^{(3)}_\delta$.}
\smallskip

Using \eqref{eq:strongconvg_delta0}, recalling that 
$\frac12 \abs{\pd_x \sigma}^2 \varphi  \in L^\infty_{t,x}$,
$$
I^{(3)}_\delta \todelta 
\frac12 \int_0^T \int_\T  
\abs{\pd_x \sigma}^2 S''(\tilde{q}) 
\tilde{q}^2\varphi\,\d x \, \d s.
$$

\smallskip
\noindent \textit{5. The term $I^{(4)}_\delta$.}
\smallskip
By the It\^o isometry and the 
Cauchy--Schwarz inequality,
\begin{multline*}
	\tilde{\Ex}\abs{I^{(4)}_\delta-I^{(4)}}^2
	\leq \tilde{\Ex}\int_0^T \int_\T
	\abs{\sigma\pd_x \varphi}^2 
	\abs{S(\tilde{q}_\delta)-S(\tilde{q})}^2 \,\d x\,\d t
	\\ +\tilde{\Ex} \int_0^T \int_\T 
 	\abs{\varphi\pd_x\sigma}^2 
 	\abs{H^{(3)}(\tilde{q}_\delta)
 	-H^{(3)}(\tilde{q})}^2 \, \d x \,\d t.
\end{multline*}
Both these integrals tend to nought by 
\eqref{eq:strongconvg_delta1-new}. 
Therefore, along a subsequence 
$\delta=\delta_j \downarrow 0$ as $j\to\infty$, 
$I^{(4)}_\delta \toj  I^{(4)}$, a.s.

\smallskip
\noindent \textit{6. The terms $I^{(5)}_\delta$ 
and $I^{(6)}_\delta$.}
\smallskip

Since $\varphi\in L^\infty_{t,x}$, 
$\abs{S'(\cdot)}\lesssim_\ell 1$, and 
$\abs{S''(\cdot)}\leq 1$, 
Lemma \ref{thm:commutator1} and 
Proposition \ref{thm:commutator2} allow us 
to directly conclude that $I^{(5)}_\delta 
\todelta I^{(5)}$, a.s., along a 
subsequence $\delta=\delta_j\downarrow 0$ 
as $j\to \infty$. Similarly, again invoking the It\^o isometry 
and the Cauchy--Schwarz inequality,
\begin{align*}
	\tilde{\Ex} \abs{I^{(6)}_\delta}^2 
	\leq \tilde{\Ex} \int_0^T\int_\T
	\abs{\varphi S'(\tilde{q}_\delta)}^2
	\abs{\pd_x E^{(2)}_\delta}^2 
	\,\d x \,\d t \todelta 0,
\end{align*}
by Lemma \ref{thm:commutator1}.
So along a subsequence $\delta=\delta_j  \downarrow 0$ 
as $j\to \infty$, $I^{(6)}_\delta \to 0$, a.s.

This concludes the proof of the lemma.
\end{proof}

In the remaining part of this section, we will make 
hefty use of the formulas gathered in the next remark.

\begin{rem}\label{rem:pos-neg-parts}
Recall the formulas \eqref{eq:entropies_S_ell}--\eqref{eq:entrop2} 
involving $S_\ell(v)$ and $S_\ell(v_\pm)$. 
The following identities are straightforward to verify:
\begin{align*}
	& S_\ell(v_+) 
	=\frac12 v_+^2
	-\frac1{6\ell} \bk{v-\ell}^3\one{\ell<v<2\ell} 
	-\frac16\bk{3v^2-9\ell v+7\ell^2}\one{v \geq 2\ell},
	\\
	& S_\ell(v_+)'
	= v_+-\frac1{2\ell}\bk{v-\ell}^2\one{\ell<v<2\ell} 
	+\frac12\bk{3\ell-2v}\one{v\geq 2\ell},
	\\ 
	& S_\ell(v_+)''
	=\one{0<v<2\ell}-\frac1\ell\bk{v-\ell}
	\one{\ell<v<2 \ell},
	\\
	& S_\ell(v_+)-S_\ell(v_+)'v
	=-\frac12 v_+^2
	+\frac1{6\ell} \bk{2v^3-3 \ell v^2+\ell^3}
	\one{\ell<v< 2\ell}
	\\ & \qquad \qquad\qquad \qquad\qquad
	+\frac16\bk{3v^2-7\ell^2}\one{v \geq 2 \ell},
	\\ 
	& 3S_\ell(v_+)-2S_\ell(v_+)' v
	=-\frac12 v^2_+
	+\frac1{2\ell}\bk{v^3-\ell v^2-\ell^2v+\ell^3}
	\one{\ell<v<2 \ell}
	\\ & \qquad \qquad\qquad \qquad\qquad
	+\frac12 \bk{v^2+3\ell v-7\ell^2}\one{v \ge 2 \ell},
	\\
	& S_\ell(v_+)v - \frac12S_\ell(v_+)'v^2 
	= \frac1{12\ell}\bk{v^4-3\ell^2 v^2+2\ell^3 v}
	\one{\ell<v<2\ell}
	\\ & \qquad \qquad\qquad \qquad\qquad
	+\frac1{12}\bk{9\ell v^2-14\ell^2v}
	\one{v \geq 2\ell},
	\\ 
	& \frac12 S_\ell(v_+)''v^2
	=\frac12 v^2_+- \frac1{2\ell} v^2\bk{v-\ell}
	\one{\ell<v<2\ell}-\frac12 v^2\one{v\geq 2\ell},
\end{align*}
and
\begin{align*}
	& S_\ell(v_-)
	=\frac12 v_-^2
	+\frac1{6\ell}\bk{v+\ell}^3
	\one{-2\ell < v < -\ell} 
	-\frac16\bk{3v^2+9\ell v+7\ell^2}
	\one{v\leq-2\ell},
	\\
	& S_\ell(v_-)'
	= v_-	+\frac1{2\ell}\bk{v+\ell}^2 
	\one{-2\ell<v<-\ell}
	-\frac12 \bk{3\ell+2v}\one{v\leq-2\ell},
	\\ &
	S_\ell(v_-)''
	=\one{-2\ell<v<0}+\frac1\ell\bk{v+\ell}
	\one{-2\ell<v<-\ell},
	\\ & 
	S_\ell(v_-)-S_\ell(v_-)' v 
	=-\frac12 v_-^2
	+\frac1{6\ell} \bk{-2v^3-3\ell v^2+\ell^3}
	\one{-2\ell<v<-\ell}
	\\ & \qquad \qquad\qquad \qquad\qquad 
	+\frac16\bk{3v^2-7\ell^2}
	\one{v \leq -2 \ell},
	\\ & 3S_\ell(v_-)-2S_\ell(v_-)'v 
	=-\frac12 v^2_-
	+\frac1{2\ell}\bk{-v^3-\ell v^2+\ell^2 v+\ell^3}
	\one{-2\ell<v<-\ell}
	\\ & \qquad\qquad\qquad\qquad\qquad 
	+\frac12 \bk{v^2-3\ell v-7\ell^2}
	\one{v \leq -2 \ell},
	\\ & 
	S_\ell(v_-)v-\frac12S_\ell(v_-)'v^2
	=-\frac1{12\ell}\bk{v^4-3\ell^2 v^2-2\ell^3 v}
	\one{-2\ell<v<-\ell} 
	\\ &\qquad\qquad\qquad\qquad\qquad
	-\frac1{12}\bk{9\ell v^2+14\ell^2 v}
	\one{v\leq-2\ell},
	\\ & 
	\frac12 S_\ell(v_-)'' v^2
	=\frac12 v^2_- +\frac1{2\ell} v^2
	\bk{v+\ell}\one{-2\ell<v<-\ell}
	-\frac12v^2\one{v\leq-2\ell}.
\end{align*}
\end{rem}

Lemma \ref{thm:limiteq_2} applies to the linearly growing 
approximations $S_\ell(v_\pm)$ of $v^2_\pm$, but not 
the functions $v^2_\pm$ themselves. However, by exploiting 
some structural property of the SPDE \eqref{eq:SPDE-S(tq)}, 
we will be able to write an SPDE---up to an inequality---for the 
positive part $\tilde q_+^2$. 
Together with \eqref{eq:tqp2-limit}, this 
observation makes it possible to control 
the positive part $\frac12\bk{\overline{q_+^2}-
\tilde q_+^2}$ of the defect measure \eqref{eq:intro-defect}, 
without counting on a one-sided gradient estimate 
(available in the deterministic 
case \cite{Xin:2000qf} but not here).

\begin{lem}[characterisation of $\tilde q_+^2$]
\label{thm:limiteq_2_positive_ell_infty}
Let $\tilde u$, $\tilde q=\pd_x \tilde u$, 
cf.~Lemma \ref{thm:lim_iden_simple}, and $\overline{q^2}$ 
be the Skorokhod--Jakubowski representations 
from Proposition \ref{thm:skorohod}, see also 
Remark \ref{rem:weak-notation}. Then, for any 
nonnegative $\varphi \in C^\infty_c([0,T)\times \T)$,
\begin{equation}\label{eq:S_equation_n_limit3}
	\begin{aligned}
		&\int_0^T \int_\T \frac12\tilde q_+^2
		\, \pd_t \varphi \,\d x \,\d t
		+\int_\T \frac12\bk{\pd_x\tilde{u}_0}_+^2
		\, \varphi(0,x)  \,\d x
		\\ & \quad
		+ \int_0^T \int_\T\left(\tilde u
		-\frac14 \pd_x \sigma^2 \right) 
		\frac12\tilde q_+^2 \, \pd_x \varphi \,\d x \,\d t
		-\int_0^T \int_\T \frac12 \sigma^2\, 
		\frac12\tilde q_+^2
		\,\pd_{xx}^2 \varphi \,\d x \,\d t
		\\ & \quad
		-\int_0^T \int_\T\Biggl[
		\tilde q_+\bk{\tilde P-\tilde u^2}
		+\left(\frac14 \pd_{xx}^2\sigma^2
		-\abs{\pd_x \sigma}^2\right)
		\, \frac12\tilde q_+^2\Biggr] \varphi \,\d x \,\d t
		\\ & \quad
		+\int_0^T \int_\T \bigl(\sigma \, \pd_x \varphi
		-\pd_x\sigma\,\varphi\bigr)\frac12\tilde q_+^2
		\,\d x\, \d \tilde W\leq 0, 
		\quad \text{$\tilde{\mathbb{P}}$--a.s.},
	\end{aligned}
\end{equation}
where $\tilde{P}$ is defined in \eqref{eq:tPn-tP-def}.
\end{lem}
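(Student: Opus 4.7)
The plan is to invoke the renormalised equation of Lemma~\ref{thm:limiteq_2} with the specific choice $S(v) = S_\ell(v_+)$ and pass to the limit $\ell \to \infty$. Reading off the explicit formulas in Remark~\ref{rem:pos-neg-parts}, the relevant pointwise limits as $\ell \to \infty$ are
\begin{equation*}
	S_\ell(v_+) \to \tfrac12 v_+^2,
	\quad S_\ell(v_+)' \to v_+,
	\quad S_\ell''(v_+) v^2 \to v_+^2,
\end{equation*}
together with $H^{(1)}(v) \to -\tfrac12 v_+^2$, $H^{(2)}(v) \to 0$, and $H^{(3)}(v) \to -\tfrac12 v_+^2$. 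One also extracts from those formulas the $\ell$-uniform dominations $|S_\ell(v_+)| \leq \tfrac12 v^2$, $|S_\ell(v_+)'| \leq |v|$, $|S_\ell''(v_+) v^2| \leq v^2$, and $|H^{(i)}(v)| \lesssim v^2$ for $i \in \{1,3\}$, which will serve as integrable majorants.

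Combining these bounds with the a priori estimates collected earlier, namely $\tilde q \in L^\infty_t L^2_x \cap L^{2r}_{t,x}$ a.s.~(Lemma~\ref{thm:u_in_Linfty}), $\tilde u \in L^\infty_{t,x}$ a.s.~(Lemma~\ref{thm:P-u2_bound-tilde}), and $\tilde P \in L^2_t L^\infty_x$ a.s.~(Lemma~\ref{thm:P-u2_weak}), the Lebesgue dominated convergence theorem—applied $\tilde{\mathbb{P}}$-pathwise for each fixed test function $\varphi$—yields convergence of every deterministic integral in \eqref{eq:S_equation_n_limit2} to its counterpart in \eqref{eq:S_equation_n_limit3}. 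The most delicate term is $\int S_\ell(\tilde q_+)'(\tilde P - \tilde u^2)\varphi$, for which the Cauchy--Schwarz-type dominant $|\tilde q|(|\tilde P| + \tilde u^2) \in L^1_{t,x}$ a.s.~does the job, using $\tilde q \in L^2_{t,x}$ and $\tilde P - \tilde u^2 \in L^2_{t,x}$. The initial datum converges by dominated convergence in $L^1(\T)$, recalling the strong initial trace $\tilde q(0) = \pd_x \tilde u_0 \in L^2(\T)$ secured by Lemma~\ref{thm:temp_cont_0}. For the stochastic integral, the real-valued integrand is dominated by $C\|\tilde q(t)\|_{L^2(\T)}^2$, which lies in $L^2(\tilde\Omega \times [0,T])$ by Lemma~\ref{thm:q_bounds_skorohod} (with $p_0>4$); dominated convergence in $L^2$ together with the It\^o isometry gives convergence of the stochastic integral, and a subsequence extraction upgrades this to a.s.~convergence, uniformly over a countable dense family of $\varphi$.

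The inequality—rather than an equality—emerges from the observation that two of the terms appearing in \eqref{eq:S_equation_n_limit2}, namely
\begin{equation*}
	\int_0^T\!\!\int_\T H^{(2)}(\tilde q)\,\varphi \,\d x \,\d t
	\quad\text{and}\quad
	\int_0^T\!\!\int_\T \tfrac12 S_\ell(\tilde q_+)'\bigl(\overline{q^2} - \tilde q^2\bigr)\varphi\, \d x\, \d t
\end{equation*}
are both manifestly \emph{non-negative} whenever $\varphi \geq 0$. Indeed, $S_\ell(v_+)' \geq 0$ pointwise (check the formula piecewise), $\overline{q^2} - \tilde q^2 \geq 0$ a.e.~by the classical weak lower semicontinuity of $v \mapsto v^2$, and the explicit factorisations in Remark~\ref{rem:pos-neg-parts} yield $H^{(2)}(v) = \frac{1}{12\ell}v(v-\ell)^2(v+2\ell) \geq 0$ on $\{\ell<v<2\ell\}$ and $H^{(2)}(v) = \frac{\ell v}{12}(9v-14\ell) \geq 0$ on $\{v\geq 2\ell\}$. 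Writing \eqref{eq:S_equation_n_limit2} schematically as $A_\ell + B_\ell = 0$, where $B_\ell \geq 0$ collects these two contributions and $A_\ell$ gathers everything else, we get the one-sided bound $A_\ell \leq 0$ for every $\ell$; taking $\ell\to\infty$ in $A_\ell$ via the previous paragraph then delivers \eqref{eq:S_equation_n_limit3}.

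The main obstacle is that $H^{(2)}(\tilde q)$ cannot be shown to converge to $0$ in $L^1_{t,x}$ by direct estimation: its $\ell$-uniform pointwise bound is only $\lesssim |\tilde q|^3$ on the support $\{|\tilde q|>\ell\}$, and the available integrability $\tilde q \in L^{2r}_{t,x}$ with $2r < 3$ is insufficient. The non-negativity observation above bypasses this difficulty entirely, since we never need to compute $\lim B_\ell$---only to know $B_\ell \geq 0$. A secondary obstacle is that the strong convergence $S_\ell(\tilde q_+)' \to \tilde q_+$ needed for the $S'(\tilde q)(\tilde P - \tilde u^2)\varphi$ term requires $\tilde q \in L^{2r}_{t,x}$ with $2r>2$; this is just barely available, and it is precisely the role of the higher-integrability estimate \eqref{eq:higher-int} to supply it.
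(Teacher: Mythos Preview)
Your proposal is correct and follows essentially the same approach as the paper: start from \eqref{eq:S_equation_n_limit2} with $S=S_\ell(v_+)$, drop the two non-negative contributions $H^{(2)}(\tilde q)\geq 0$ and $\tfrac12 S_\ell(\tilde q_+)'(\overline{q^2}-\tilde q^2)\geq 0$, and pass to the limit $\ell\to\infty$ in the remaining terms by dominated convergence (the paper phrases this as controlling explicit error terms $e^{(i)}_\ell$ after integrating against $\mathds{1}_A$, but the content is the same). Your identification of the non-negativity of $H^{(2)}$ via the factorisation in Remark~\ref{rem:pos-neg-parts}, and your handling of the stochastic integral via an $L^2_{\tilde\omega,t}$ dominant followed by subsequence extraction, match the paper's argument.
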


\begin{proof}
Denote the left-hand side of \eqref{eq:S_equation_n_limit3} 
by $I+M$, where $M$ is the stochastic integral term. 
We will demonstrate that
\begin{equation}\label{eq:qp2-claim}
	\int_{\tilde \Omega} \mathds{1}_{A}(\tilde \omega)
	\bigl(I(\tilde\omega)+M(\tilde\omega)\bigr)
	\, \d \tilde{\mathbb{P}}(\tilde \omega)\leq 0,
\end{equation}
for any measurable set $A\in \tilde{\mathcal{F}}$. 

Given \eqref{eq:S_equation_n_limit2} with 
$S(v)=S_\ell(v_+)$, observe that
\begin{equation}\label{eq:temporarytag}
	H^{(2)}(\tilde q)=S(\tilde q)\tilde q
	-\frac12 S'(\tilde q)\tilde q^2\geq 0, 
	\qquad
	S'(\tilde{q})\bk{\overline{{q}^2}-\tilde{q}^2}\ge 0,
\end{equation}
using \eqref{eq:H-def}, 
Remark \ref{rem:pos-neg-parts} and the weak 
convergence $\tilde q_n^2\tonweak \overline{q^2}$ 
in $L^r_{t,x}$ a.s., cf.~\eqref{eq:weak-conv-tilde} 
(the weak convergence implies that 
$\tilde{q}^2\leq \overline{{q}^2}$). 
In addition,
\begin{equation}\label{eq:temporarytag2}
	\begin{split}
		S(\tilde q) 
		& = \frac12\tilde q_+^2+e^{(1)}_\ell(t,x),
		\\ 
		H^{(1)}(\tilde q)=3S(\tilde q)-2S'(\tilde q)\tilde q
		& =-\frac12 \tilde q_+^2+ e^{(2)}_\ell(t,x),
		\\ 
		S'(\tilde q)
		& = \tilde q_+ +e^{(3)}_\ell(t,x),
		\\ 
		H^{(3)}(\tilde q)=S(\tilde q)-S'(\tilde q)\tilde q
		& =-\frac12 \tilde q_+^2+e^{(4)}_\ell(t,x),
		\\ 
		\frac12 S''(\tilde q) \tilde{q}^2
		& = \frac12 \tilde{q}^2_+ +  e^{(5)}_\ell(t,x),
	\end{split}
\end{equation}
where
\begin{align*}
	& e^{(1)}_\ell 
	=-\frac1{6\ell}\bk{\tilde{q}-\ell}^3
	\one{\ell<\tilde{q}<2\ell}
	-\frac16\bk{3\tilde{q}^2 
	-9\ell \tilde{q}+7\ell^2}
	\one{\tilde{q} \ge 2\ell},
	\\ &
	e^{(2)}_\ell 
	=\frac1{2\ell}\bk{\tilde{q}^3-\ell\tilde{q}^2 
 	-\ell^2\tilde{q}+\ell^3}\one{\ell<\tilde{q}<2 \ell}
 	+\frac12 \bk{\tilde{q}^2 
 	+3\ell\tilde{q}-7\ell^2}
 	\one{\tilde{q}\geq 2\ell},
 	\\ & 
 	e^{(3)}_\ell =-\frac1{2\ell}\bk{\tilde{q}-\ell}^2 
 	\one{\ell<\tilde{q}<2\ell}
 	+\frac12 \bk{3\ell-2\tilde{q}}
 	\one{\tilde{q} \ge 2 \ell},
 	\\ & 
 	e^{(4)}_\ell =\frac1{6\ell}\bk{2\tilde{q}^3 
	-3\ell\tilde{q}^2+\ell^3}\one{\ell<\tilde{q}<2\ell}
	+\frac16\bk{3\tilde{q}^2
	-7\ell^2} \one{\ell\ge 2\ell},
	\\ & 
	e^{(5)}_\ell=-\frac1{2\ell}\tilde{q}^2
	\bk{\tilde{q}-\ell}
	\one{\ell<\tilde{q}<2\ell}
	-\frac12\tilde{q}^2\one{\tilde{q}\geq 2\ell}.
\end{align*}
By \eqref{eq:tu-Lp-Linfty-H1}, 
$\tilde{q} \in L^{p_0}\bigl(\tilde{\Omega};
L^\infty([0,T];L^2(\T))\bigr)$, where $p_0>4$ is 
fixed in Theorem \ref{thm:main}. As a result, 
the error terms converge to zero in the sense that
\begin{equation}\label{eq:eiell-conv-Ii}
	\begin{split}
		& \abs{e^{(i)}_\ell}\lesssim 
		\mathds{1}_{\{\tilde q>\ell\}} \tilde q^2
		\toell 0 \quad 
		\text{a.e.~in $(\tilde \omega,t,x)$, 
		\, $i=1,2,4,5$},
		\\ &
		\abs{e^{(3)}_\ell}\lesssim 
		\mathds{1}_{\{\tilde q>\ell\}} \tilde q
		\toell 0 \quad
		\text{a.e.~in $(\tilde \omega,t,x)$}.
	\end{split}
\end{equation}

Inserting the inequalities \eqref{eq:temporarytag} 
and the identities \eqref{eq:temporarytag2} into 
\eqref{eq:S_equation_n_limit2}, with $S(v)=S_\ell(v_+)$, 
we arrive at
\begin{align*}
	&\int_{\tilde \Omega} \mathds{1}_{A}(\tilde \omega)
	\bigl(I(\tilde\omega)+M(\tilde\omega)\bigr)
	\, \d \tilde{\mathbb{P}}(\tilde \omega)
	\\ & \quad 
	\leq
	C_1\,\tilde \Ex\int_0^T \int_\T 
	\abs{e^{(1)}_\ell}+\abs{e^{(1)}_\ell(0)}
	+\abs{e^{(2)}_\ell}+\abs{e^{(4)}_\ell}
	+ \abs{e^{(5)}_\ell}\, \d x \, \d t
	\\ & \quad \qquad
	+C_2\,\tilde \Ex\int_0^T \int_\T 
	\abs{\tilde u}\abs{e^{(1)}_\ell}\, \d x \, \d t
	+C_3\, \tilde \Ex\int_0^T \int_\T 
	\abs{\tilde P-\tilde u^2}\abs{e^{(3)}_\ell}\, \d x \, \d t
	\\ & \quad\qquad 
	+\tilde \Ex\abs{\int_0^T \int_\T \sigma \, e^{(1)}_\ell
	\, \pd_x \varphi+\pd_x\sigma \, e^{(4)}_\ell \varphi
	\,\d x\, \d \tilde W}
	=:\sum_{i=1}^4 R_{i,\ell},
\end{align*}
for some $\ell$-independent constants
$C_1=C_1(\sigma,\varphi,\pd_x\varphi,\pd_{xx}^2 \varphi)$, 
$C_2=C_2(\pd_x\varphi)$, and $C_3=C_2(\varphi)$. 
Here, $e^{(1)}_\ell(0)$ refers to
$$
-\frac1{6\ell} \bk{\pd_x\tilde{u}_0-\ell}^3
\one{\ell<\pd_x\tilde{u}_0<2\ell} 
-\frac16\bk{3\bk{\pd_x\tilde{u}_0}^2 
-9\ell \pd_x\tilde{u}_0+
7 \ell^2}\one{\pd_x\tilde{u}_0 \ge 2\ell}.
$$

Notice that $\abs{e^{(i)}_\ell}\lesssim \tilde q^2
\in L^1_{\tilde \omega,t,x}$, $i=1,2,4, 5$, and 
$\abs{e^{(1)}_\ell(0)}\lesssim (\pd_x \tilde u_0)^2\in 
L^1_{\tilde \omega,t,x}$, cf.~\eqref{eq:weak-conv-tilde}. 
Therefore, by \eqref{eq:eiell-conv-Ii}, and 
the Lebesgue dominated convergence theorem, 
$R_{1,\ell}\toell 0$. The same argument applies to 
$R_{2,\ell}$, $R_{3,\ell}$, as
$$
\abs{\tilde u}\abs{e^{(1)}_\ell}
\lesssim \abs{\tilde u}\tilde q^2, 
\quad 
\abs{\tilde P-\tilde u^2}\abs{e^{(3)}_\ell}
\lesssim 
\abs{\tilde P}\tilde q^2
+\abs{\tilde u}^2\tilde q^2, 
$$
and, by Lemma \ref{thm:P-u2_bound-tilde} and 
\eqref{eq:tu-Lp-Linfty-H1},
\begin{align*}
	& \tilde \Ex \int_0^T \int_\T 
	\abs{\tilde u}^p\tilde q^2\, \d x\, \d t
	\lesssim_T 
	\left(\tilde \Ex \norm{\tilde u}_{L^\infty_{t,x}}^{2p}
	\right)^{\frac12}
	\left(\tilde \Ex \norm{\tilde q}_{L^\infty_tL^2_x}^4
	\right)^{\frac12}\lesssim 1, 
	\quad p\in \bigl[1,p_0/2\bigr],
	\\ & 
	\tilde \Ex \int_0^T \int_\T 
	\abs{\tilde P}\tilde q^2\, \d x\, \d t
	\lesssim
	\left(\tilde \Ex \norm{\tilde P}_{L^\infty_{t,x}}^2
	\right)^{\frac12}
	\left(\tilde \Ex \norm{\tilde q}_{L^\infty_tL^2_x}^4
	\right)^{\frac12}\lesssim 1.
\end{align*} 

Finally, let us consider the stochastic integral term. 
By the Cauchy--Schwarz inequality and the It\^{o} isometry,
\begin{align*}
	\abs{R_{4,\ell}}^2
	& \leq \tilde \Ex 
	\int_0^T \abs{\int_\T \sigma \, e^{(1)}_\ell
	\, \pd_x \varphi+\pd_x\sigma \, e^{(4)}_\ell \varphi
	\,\d x}^2 \d t
	\\ & \lesssim_{\sigma,\varphi}
	\tilde \Ex\int_0^T \abs{\, \int_\T 
	\mathds{1}_{\{\tilde q_+>\ell\}} 
	\tilde q_+^2\,\d x}^2 \d t.
\end{align*}
By \eqref{eq:tu-Lp-Linfty-H1}, 
$\tilde \Ex \int_0^T \int_{\T} \tilde{q}^2\, \d x\, \d t <\infty$ 
and so $\tilde{q}\in L^2(\T)$ for a.e.~$(\tilde \omega,t)$. 
Thus
$$
\abs{\, \int_\T \mathds{1}_{\{\tilde q_+>\ell\}} 
\tilde q_+^2\,\d x}^2 \toell 0,
\quad \text{for a.e.~$(\tilde \omega,t)$}.
$$
Besides, $\abs{\int_\T \mathds{1}_{\{\tilde q_+>\ell\}} 
\tilde q_+^2\,\d x}^2\leq 
\abs{\int_\T \tilde q_+^2\,\d x}^2$ and
\begin{equation}\label{eq:qpluss-square-int}
	\begin{aligned}
		& \tilde \Ex 
		\int_0^T \abs{\, \int_{\T} \tilde q_+^2\, \d x}^2\, \d t
		\lesssim_T 
		\tilde \Ex \norm{\tilde q}_{L^\infty([0,T];L^2(\T))}^4
		\overset{\eqref{eq:tu-Lp-Linfty-H1}}{\lesssim} 1.
	\end{aligned}
\end{equation}
Therefore, by Lebesgue's dominated convergence theorem, 
$\abs{R_{4,\ell}}^2\toell 0$. This concludes the proof 
of \eqref{eq:qp2-claim}.
\end{proof}

\subsection{Controlling the defect measure}
\label{subsec:defect}
We define the positive part of the
defect measure \eqref{eq:intro-defect} by
\begin{equation}\label{eq:defect-pospart}
        \mathbb{D}^{+}=
        \frac12\bk{\overline{q_+^2}-\tilde q_+^2}\ge 0.
\end{equation}
One can construe
$\mathbb{D}^{+} = \mathbb{D}^{+}(\tilde \omega,t,x)$
(and similar objects introduced later on) as a random
variable that assumes values in a path space of functions depending
on $t \in [0,T]$ and $x \in \T$. 
Alternatively, $\mathbb{D}^{+}$ can be conceived of  
as a stochastic process $(\tilde \omega,t) \mapsto
\mathbb{D}^{+}(\tilde\omega,t,\cdot)$ that takes
values in some functional space (over $x$).
If $\mathbb{D}^{+}$ is conceptualised as a random variable
in the Lebesgue space $L^r([0,T] \times \T)$ (recall that
$1\leq r<3/2$), then the pointwise value $\mathbb{D}^{+}(t)$ is only
determinable modulo a set of times with zero measure in $[0,T]$.
Consequently, discerning $\mathbb{D}^{+}$ as a traditional
stochastic process proves to be a complex endeavour.
Therefore, in the forthcoming discussion, we consider
$\mathbb{D}^{+}$ (and other similar objects) as a
random variable within the space $L^r([0,T] \times \T)$.
In view of our previous results, $\mathbb{D}^{+}$ obeys an SPDE inequality.
This inequality is interpreted almost surely in the distributional sense
on the domain $[0,T)\times \T$, with the inclusion
of the zero function as initial data in the distributional formulation.

To provide more clarity, by directly subtracting \eqref{eq:tqp2-limit}
and \eqref{eq:S_equation_n_limit3},
\begin{equation}\label{eq:Sdifference_plus}
        \begin{aligned}
                \pd_t \mathbb{D}^{+} & + \pd_x\left[\left(\tilde u
                -\frac14 \pd_x \sigma^2\right)\mathbb{D}^{+}\right]
                -\pd_{xx}^2\left[\frac12 \sigma^2 \mathbb{D}^{+}\right]
                \\ &
                +  \left(\overline{q_+}-\tilde q_+\right)
                \bk{\tilde P-\tilde u^2}
                + \left(\frac14 \pd_{xx}\sigma^2
                -\abs{\pd_x \sigma}^2\right)\mathbb{D}^{+}
                \\ &
                +\left[\pd_x \left(\sigma \, \mathbb{D}^{+}\right)
                +\pd_x\sigma\, \mathbb{D}^{+} \right]
                \, \dot{\tilde{W}} \leq 0
                \quad \text{in $\mathcal{D}'([0,T)\times \T)$,
                 \, $\tilde{\mathbb{P}}$--a.s.},
        \end{aligned}
\end{equation}
with zero initial data (in the sense of distributions).
This formulation is weak in $(t,x)$. 
Employing a reasoning approach akin to the one
used in the proof of Lemma \ref{lem:energy-rightcont}, we
can transform this into a formulation that
is pointwise (a.e.) in $(\tilde \omega, t)$ and integrated in $x$.

\begin{lem}[positive part of defect measure]
\label{thm:limiteq_2b}
Let $\mathbb{D}^{+}$ be defined by \eqref{eq:defect-pospart}
and $\tilde{P}$ by \eqref{eq:tPn-tP-def}. 
Then, for a.e.~$(\tilde \omega,t)\in \tilde \Omega \times [0,T]$,
\begin{equation}\label{eq:Sdifference_plus_ptws}
        \begin{aligned}
                & \int_\T \mathbb{D}^{+}(t) \,\d x
                +\int_0^t \int_\T
                 \bk{\overline{q_+}-\tilde{q}_+}
                 \bk{\tilde{P}-\tilde{u}^2}\, \d x\, \d s
                 \\ & \quad
                 +\int_0^t \int_\T\left(\frac14 \pd_{xx}\sigma^2
                -\abs{\pd_x \sigma}^2\right)\mathbb{D}^{+}
                \, \d x\,\d s
                 + \int_0^t\int_\T \pd_x\sigma\,
                 \mathbb{D}^{+} \,\d x \, \d \tilde{W} \leq 0.
        \end{aligned}
\end{equation}
The stochastic integral is a square-integrable martingale.
\end{lem}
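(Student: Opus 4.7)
The plan is to convert the distributional inequality \eqref{eq:Sdifference_plus} --- which is weak in both $t$ and $x$ --- into a form that is integrated in $x$ and pointwise (a.e.) in $(\tilde\omega,t)$, by mimicking the argument already used in the proof of Lemma \ref{lem:energy-rightcont}.

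First, I would test \eqref{eq:Sdifference_plus} against a nonnegative test function of the separated form $\varphi(s,x)=\beta_\delta(s)$ that is constant in $x$, where $\beta_\delta\in W^{1,\infty}([0,T])$ is the piecewise linear function equal to $1$ on $[0,t-\delta]$, linear on $[t-\delta,t]$, and zero on $[t,T]$, with $0<\delta<t$. Since $\pd_x\varphi\equiv 0$ and $\pd_{xx}^2\varphi\equiv 0$, after integration by parts in $x$ on the torus all divergence and Laplacian terms drop out, and the component $\pd_x(\sigma\mathbb{D}^{+})$ of the stochastic integrand similarly integrates to zero. Using $\mathbb{D}^{+}(0)=0$, which follows from the fact that the initial values in \eqref{eq:tqp2-limit} and \eqref{eq:S_equation_n_limit3} coincide (both equal $\frac12(\pd_x\tilde u_0)_+^2$ by Lemma \ref{thm:temp_cont_0}), the inequality reduces to
\begin{equation*}
\frac{1}{\delta}\int_{t-\delta}^{t}\!\int_\T \mathbb{D}^{+}(s)\,\d x\,\d s
+\int_0^T\beta_\delta(s)\,\mathcal{R}(s)\,\d s
+\int_0^T\beta_\delta(s)\,\Psi(s)\,\d\tilde W(s)\le 0,
\end{equation*}
where $\mathcal{R}(s)=\int_\T(\overline{q_+}-\tilde q_+)(\tilde P-\tilde u^2)\,\d x+\int_\T\bigl(\frac14\pd_{xx}\sigma^2-\abs{\pd_x\sigma}^2\bigr)\mathbb{D}^{+}\,\d x$ and $\Psi(s)=\int_\T\pd_x\sigma\,\mathbb{D}^{+}\,\d x$.

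The second step is to let $\delta\downarrow 0$. By Lebesgue's differentiation theorem, the first term converges to $\int_\T\mathbb{D}^{+}(t)\,\d x$ at every Lebesgue point $t$ of the integrable function $s\mapsto\int_\T\mathbb{D}^{+}(\tilde\omega,s)\,\d x$, a.s.~in $\tilde\omega$. The deterministic term converges to $\int_0^t\mathcal{R}(s)\,\d s$ by Lebesgue dominated convergence, once it is checked that $\mathcal{R}\in L^1([0,T])$ a.s., which follows from Lemmas \ref{thm:u_in_Linfty} and \ref{thm:P-u2_bound-tilde} together with the Cauchy--Schwarz inequality. For the stochastic integral, once the square-integrability of $\Psi$ is established (see below), we mimic the treatment of the corresponding term in Lemma \ref{lem:energy-rightcont}: dominated convergence in $L^2(\tilde\Omega\times[0,T])$ and the BDG inequality give convergence of $\int_0^\cdot\beta_\delta(s)\Psi(s)\,\d\tilde W$ to $\int_0^\cdot\mathds{1}_{[0,t]}(s)\Psi(s)\,\d\tilde W$ in $L^2\bigl(\tilde\Omega;C([0,T])\bigr)$; extracting a subsequence yields a.s.~convergence. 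A Tonelli argument on $\tilde\Omega\times[0,T]$ (as in Remark \ref{rem:almost-sure}) reconciles the exceptional set in $\tilde\omega$ coming from the stochastic integral with the $\tilde\omega$-dependent exceptional set in $t$ coming from Lebesgue differentiation, and delivers \eqref{eq:Sdifference_plus_ptws} for a.e.~$(\tilde\omega,t)\in\tilde\Omega\times[0,T]$.

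The final step is the square-integrability of $\Psi$, which also yields the claimed martingale property. Since $\sigma\in W^{2,\infty}(\T)$ gives $\pd_x\sigma\in H^1(\T)$, we have the duality bound $\abs{\Psi(s)}\le\norm{\pd_x\sigma}_{H^1(\T)}\norm{\mathbb{D}^{+}(s)}_{H^{-1}(\T)}$, so it suffices to control $\tilde\Ex\int_0^T\norm{\overline{q_+^2}(s)}_{H^{-1}}^2+\norm{\tilde q_+^2(s)}_{H^{-1}}^2\,\d s$. For $\tilde q_+^2$, we use $\norm{\tilde q_+^2(s)}_{H^{-1}}\lesssim\norm{\tilde q_+^2(s)}_{L^1}\le\norm{\tilde q(s)}_{L^2}^2$ and Lemma \ref{thm:u_in_Linfty} (recalling $p_0>4$). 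For $\overline{q_+^2}$, we repeat verbatim the weak compactness argument of Lemma \ref{thm:P-u2_weak}, applied to the sequence $\bigl\{(\tilde q_n)_+^2\bigr\}$ rather than $\bigl\{\tilde q_n^2\bigr\}$: the chain of bounds \eqref{eq:Hneg-bound-qn2}--\eqref{eq:Hneg-bound-oq3} transfers without change, since $\abs{(\tilde q_n)_+}\le\abs{\tilde q_n}$. The main (though modest) obstacle is the need to combine Lebesgue differentiation in $t$ with almost-sure convergence of the stochastic integrals, each of which produces its own exceptional null set; this is handled by the same product-space Tonelli argument already employed in Lemma \ref{lem:energy-rightcont}.
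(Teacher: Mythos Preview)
Your proposal is correct and follows essentially the same route as the paper: integrate \eqref{eq:Sdifference_plus} against a test function constant in $x$, use the piecewise-linear cutoff $\beta_\delta$ and Lebesgue differentiation as in Lemma \ref{lem:energy-rightcont}, and justify the square-integrability of the stochastic integrand via the $H^1/H^{-1}$ duality and the $L^p_{\tilde\omega,t}(H^{-1}_x)$ bound for $\overline{q_+^2}$ obtained exactly as in Lemma \ref{thm:P-u2_weak}. The only cosmetic difference is that the paper first reduces to a $\mathcal{D}'([0,T))$ inequality \eqref{eq:Sdifference_plus-integrated} before inserting $\beta_\delta$, whereas you test with $\beta_\delta$ in one step.
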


\begin{proof}
Using the test function $\varphi(t,x)=\psi(t)\phi(x)$
in \eqref{eq:Sdifference_plus}, with $0\leq \psi\in C^\infty_c([0,T))$
arbitrary  and $\phi\equiv 1$, we obtain
\begin{equation}\label{eq:Sdifference_plus-integrated}
        \begin{aligned}
                &\frac{\d}{\d t}\int_\T \mathbb{D}^{+}\, \d x
                +\int_\T \left(\overline{q_+}-\tilde q_+\right)
                \bk{\tilde P-\tilde u^2} \,\d x
                \\ & \qquad 
                +\int_\T \left(\frac14 \pd_{xx}\sigma^2
                -\abs{\pd_x \sigma}^2\right)\mathbb{D}^{+}\, \d x
                + \int_\T \pd_x\sigma\,
                \mathbb{D}^{+} \,\d x \, \dot{\tilde{W}}
                \leq 0,
        \end{aligned}
\end{equation}
which holds in $\mathcal{D}'([0,T))$, a.s.,
with zero initial data (in the sense of distributions).

Following the proof of Lemma \ref{lem:energy-rightcont}, for a given
Lebesgue point $t$ of the integrable function $s\mapsto
\int_\T \mathbb{D}^{+}(s) \,\d x$ (with $\tilde \omega$ 
fixed from a set $F$ of full $\tilde{\mathbb{P}}$--measure),
consider $\delta$ such that $0<t-\delta<T$.
For such $\delta$, let $\beta_\delta$ be
the continuous piecewise linear function that equals $1$
on $[0,t -\delta]$, $0$ on $[t, T]$, and is linear on $[t-\delta,t]$.
Then  $\beta_\delta(s)\to  \mathds{1}_{[0,t]}(s)$ for a.e.~$s\in [0,T]$.
Using $\beta_\delta$ as test function in
\eqref{eq:Sdifference_plus-integrated} gives
\begin{align*}
        & \frac{1}{\delta}\int_{t-\delta}^t
        \left(\,\, \int_\T \mathbb{D}^{+}(s) \,\d x \right) \, ds
        +\int_0^T \int_\T \bk{\overline{q_+}-\tilde{q}_+}
         \bk{\tilde{P}-\tilde{u}^2}(s)
        \, \beta_\delta(s)\, \d x\, \d s
         \\ & \qquad
         +\int_0^T \int_\T\left(\frac14 \pd_{xx}\sigma^2
        -\abs{\pd_x \sigma}^2\right)\mathbb{D}^{+}(s)
        \, \beta_\delta(s) \, \d x\,\d s
         \\ & \qquad \qquad
        + \int_0^T\int_\T \pd_x\sigma\,
         \mathbb{D}^{+}(s)\, \beta_\delta(s)
        \,\d x \, \d \tilde{W}(s) \leq 0.
\end{align*}
The stochastic integral is a square-integrable
martingale on $[0,T]$, which follows from calculations
like \eqref{eq:mart-prop-part1}, \eqref{eq:mart-prop-part2},
and \eqref{eq:qpluss-square-int}.

By adhering to the proof of Lemma \ref{lem:energy-rightcont}
and considering Remark \ref{rem:almost-sure}, we
can take the limit as $\delta\to 0$ in this
inequality, leading us to \eqref{eq:Sdifference_plus_ptws}.
\end{proof}

Define
\begin{equation}\label{eq:defect-negpart}
	\mathbb{D}^{-}_{\ell}
	=\overline{S_\ell(q_-)}-S_\ell(\tilde q_-)\ge 0,
	\quad \ell \in \N,
\end{equation}
so that $\mathbb{D}^{-}_{\ell}$ approximates 
the negative part $\mathbb{D}^{-}
=\frac12\bk{ \overline{q_-^2}- \tilde q_-^2}$ of the defect 
measure \eqref{eq:intro-defect}. 
We first make explicit the approximation error by the 
following result:

\begin{lem}\label{thm:negD_ell_error}
Let $r' = r/(r-1)$ be the H\"older conjugate of $r$, 
recalling that $r< 3/2$. Then
\begin{align*}
	\abs{\tilde{\Ex} \int_0^T \int_\T 
	\mathbb{D}_\ell^- - \mathbb{D}^- \,\d x\,\d t} 
	\lesssim \ell^{-2(r-1)}.
\end{align*}
\end{lem}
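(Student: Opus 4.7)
The plan is to reduce the estimate to a tail bound on $\tilde{q}_n$ via the auxiliary function
$T_\ell(v):=S_\ell(v_-)-\tfrac12 v_-^2$. From the explicit formulas in Remark \ref{rem:pos-neg-parts}, one reads off that $T_\ell\le 0$, vanishes on $\{v\ge -\ell\}$, and satisfies the pointwise bound $|T_\ell(v)|\le \tfrac12\, v_-^2\,\mathds{1}_{\{|v_-|>\ell\}}$, uniformly in $\ell$. By linearity of weak limits, the object $\overline{T_\ell(q)}:=\overline{S_\ell(q_-)}-\tfrac12\,\overline{q_-^2}$ is well-defined and is the weak limit of $T_\ell(\tilde{q}_n)$, so the defect difference factors cleanly as
\begin{equation*}
\mathbb{D}^-_\ell - \mathbb{D}^- = \overline{T_\ell(q)} - T_\ell(\tilde{q}).
\end{equation*}

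The main deterministic ingredient is the elementary inequality $|v_-|^2\mathds{1}_{\{|v_-|>\ell\}}\le \ell^{-2(r-1)}|v_-|^{2r}$, valid for any $v\in\R$ and $r\ge 1$. Combined with the uniform-in-$n$ $L^{2r}_{\tilde\omega,t,x}$-bound on $\tilde{q}_n$ from Lemma \ref{thm:q_bounds_skorohod} (and the analogous bound on $\tilde q$ from Lemma \ref{thm:u_in_Linfty}), this yields
\begin{equation*}
\tilde{\Ex}\int_0^T\!\!\int_\T |T_\ell(\tilde{q}_n)|\,\d x\,\d t
\lesssim \ell^{-2(r-1)}\,\tilde{\Ex}\int_0^T\!\!\int_\T |\tilde{q}_n|^{2r}\,\d x\,\d t
\lesssim \ell^{-2(r-1)},
\end{equation*}
uniformly in $n$, and the identical bound holds with $\tilde{q}$ in place of $\tilde{q}_n$.

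It remains to transfer the $n$-uniform bound to $\overline{T_\ell(q)}$. As the a.s.~weak limit of the non-positive sequence $\{T_\ell(\tilde{q}_n)\}$, which is bounded in $L^r(\tilde\Omega\times[0,T]\times\T)$ since $|T_\ell(\tilde{q}_n)|\le \tilde{q}_n^2$, the function $\overline{T_\ell(q)}$ itself is a.e.~non-positive, and a weak compactness argument upgrades the convergence to $T_\ell(\tilde{q}_n)\tonweak \overline{T_\ell(q)}$ in $L^r(\tilde\Omega\times[0,T]\times\T)$. Testing against $\mathds{1}\in L^{r'}_{\tilde\omega,t,x}$ and exploiting the sign $T_\ell\le 0$ to replace the absolute values by $-T_\ell$ before taking the limit then gives
\begin{equation*}
\tilde{\Ex}\int_0^T\!\!\int_\T |\overline{T_\ell(q)}|\,\d x\,\d t
= \lim_{n\to\infty}\tilde{\Ex}\int_0^T\!\!\int_\T |T_\ell(\tilde{q}_n)|\,\d x\,\d t
\lesssim \ell^{-2(r-1)}.
\end{equation*}
The triangle inequality concludes. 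No step presents a serious obstacle: the argument is essentially a quantitative Chebyshev tail estimate, with the only technical subtlety being the passage of the $L^1$ bound through the weak limit, mediated by the sign of $T_\ell$ and the uniform $L^r$-boundedness with $r>1$.
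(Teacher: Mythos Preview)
Your proof is correct and follows essentially the same approach as the paper: both arguments reduce to the Chebyshev-type tail bound $v_-^2\,\mathds{1}_{\{|v_-|>\ell\}}\le \ell^{-2(r-1)}|v_-|^{2r}$ combined with the uniform $L^{2r}_{\tilde\omega,t,x}$ estimate on $\tilde q_n$ from Lemma~\ref{thm:q_bounds_skorohod}. The only cosmetic difference is that the paper takes the absolute value outside the expectation and passes the weak limit in a single step (writing $\tilde\Ex\int(\mathbb{D}^-_\ell-\mathbb{D}^-)=\lim_n\tilde\Ex\int(\ldots)$ and then bounding), whereas you split $\mathbb{D}^-_\ell-\mathbb{D}^-=\overline{T_\ell(q)}-T_\ell(\tilde q)$, exploit the sign $T_\ell\le 0$ to control $\tilde\Ex\int|\overline{T_\ell(q)}|$, and finish with the triangle inequality; your route is slightly more careful about the placement of absolute values but otherwise identical.
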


\begin{proof}
Using the weak convergences \eqref{eq:Hneg-bound-oq2} 
and \eqref{eq:improved-S(tun)-conv} of 
$\tilde{q}_n^2 \weak \overline{q^2}$ in $L^{r}_{\tilde \omega, t ,x}$ 
and $S_\ell(\tilde{q}_n) \weak \overline{S_\ell(q)}$  
in $L^{2r}_{\tilde \omega, t ,x}$, 
\begin{equation*}
	\begin{aligned}
		& \tilde{\Ex} \int_0^T \int_\T 
		\mathbb{D}_\ell^- - \mathbb{D}^-  \,\d x\,\d t 
		\\ & \qquad 
		=\lim_{n \to \infty}\tilde{\Ex} \int_0^t \int_\T
		S_\ell\bigl((\tilde{q}_n)_-\bigr) - (\tilde{q}_n)_-^2
		+ S_\ell(\tilde{q}_-) - \tilde{q}_-^2\,\d x\,\d t.
	\end{aligned}
\end{equation*}
Remark \ref{rem:pos-neg-parts} implies that 
\begin{align*}
	\abs{S_\ell\bigl(v_-\bigr) - v_-^2 }
	& \lesssim \frac1\ell \abs{v + \ell}^3 \one{-2\ell \le v \le - \ell} 
	\\ & \qquad 
	+ v^2 \one{ v \le - 2\ell}
	\lesssim 
	v^2 \one{\abs{v} \ge \ell} 
	\leq \ell^{-2 (r - 1)} v^{2r} \one{\abs{v} \ge \ell}.
\end{align*}
Therefore, by Lemma \ref{thm:q_bounds_skorohod},
\begin{align*}
	& \abs{\tilde{\Ex} \int_0^T\int_\T
	S_\ell\bigl((\tilde{q}_n)_-\bigr) - (\tilde{q}_n)_-^2 \,\d x \,\d t}
	\\ & \qquad 
	\leq  \ell^{-2(r-1)}\tilde{\Ex} \int_0^T\int_\T
	 \abs{\tilde{q}_n}^{2r} \one{\abs{\tilde{q}_n} \ge \ell}
	 \,\d x \,\d t\lesssim  \ell^{-2(r - 1)}.
\end{align*}
A similar bound for $\tilde{q}$ in place of $\tilde{q}_n$ 
can be derived by invoking Lemma \ref{thm:u_in_Linfty}.
\end{proof}

Now, we introduce the functions
\begin{equation}\label{eq:H-ell-neg}
	\begin{split}
		& H^{(1)}_{\ell,-}(v)
		=3S_\ell(v_-)-2S_\ell(v_-)'v, 
		\quad
		H^{(2)}_{\ell,-}(v)
		=S_\ell(v_-)v-\tfrac12 S_\ell(v_-)'v^2,
		\\ &
		H^{(3)}_{\ell,-}(v)= S_\ell(v_-)-S_\ell(v_-)' v.
	\end{split}
\end{equation}
Subtracting \eqref{eq:S_equation_n_limit2} 
from \eqref{eq:S_equation_n_limit} yields
\begin{equation}\label{eq:Sdifference_minus-diff-form}
	\begin{aligned}
		& \pd_t \mathbb{D}^{-}_{\ell}
		+\pd_x\left[ \tilde u\, \mathbb{D}^{-}_{\ell}
		+\frac14 \pd_x \sigma^2 
		\left(\, \overline{H^{(1)}_{\ell,-}(q)}
		-H^{(1)}_{\ell,-}(\tilde q)\right)\right]
		-\pd_{xx}^2\left[\frac12 \sigma^2
		\,\mathbb{D}^{-}_{\ell}\right]
		\\ & \qquad
		+ \left(\,\overline{S_\ell(q_-)'}
		-S_\ell(\tilde q_-)'\right)\bk{\tilde P-\tilde u^2}
		\\ & \qquad 
		-\left(\, \overline{H^{(2)}_{\ell,-}(q)}
		-H^{(2)}_{\ell,-}(\tilde q)\right)
		+\frac12 S_\ell(\tilde q_-)'
		\left(\, \overline{q^2}-\tilde q^2\right)
		\\ & \qquad 
		-\frac14\pd_{xx}^2\sigma^2
		\left(\, \overline{H^{(3)}_{\ell,-}(q)}
		-H^{(3)}_{\ell,-}(\tilde q)\right)
		-\frac12 \abs{\pd_x \sigma}^2
		\left(\, \overline{S_\ell(q_-)''\,q^2}
		-S_\ell(\tilde{q}_-)''\tilde q^2\right)
		\\ & \qquad 
		+\left[\pd_x\left(\sigma \, \mathbb{D}^{-}_{\ell}\right)
		-\pd_x\sigma \left(\, \overline{H^{(3)}_{\ell,-}(q)}
		-H^{(3)}_{\ell,-}(\tilde q)\right) \right]\, \dot{\tilde{W}}
		\\ & \qquad \qquad 
		\leq 0 
		\quad \text{in $\mathcal{D}'([0,T)\times \T)$, 
		\, $\tilde{\mathbb{P}}$--a.s.},
	\end{aligned}
\end{equation}	
with zero initial data: $\mathbb{D}^{-}_{\ell}(0)=0$. 

Combining \eqref{eq:Sdifference_minus-diff-form} 
with the formulas in Remark \ref{rem:pos-neg-parts}, we obtain 
the following bound for the negative part $\mathbb{D}^-_\ell$ 
of the defect measure:

\begin{lem}[negative part of defect measure]\label{thm:limiteq_2c}
Let $\mathbb{D}^{-}_{\ell}$ be defined by 
\eqref{eq:defect-negpart} {and} $\mathbb{D}^{+}$ 
by \eqref{eq:defect-pospart}. 
Let $\bigl\{\tilde{u}_n\bigr\}_{n \ge 1}$, 
$\bigl\{\tilde{P}_n\bigr\}_{n \ge 1}$, and $\tilde{P}$ be 
as in Proposition \ref{lem:strong-conv-tPn}. 
For any { $n_0 \in \N$ and $L > 0$, 
define the measurable set 
\begin{align}\label{eq:A_L_set_defin}
	A^{n_0}_L = \left\{
	\tilde{\omega} \in \tilde{\Omega}
	:\norm{\tilde{P}_{n_0}
	-\tilde{u}_{n_0}^2}_{L^\infty([0,T]\times \T)} \le L\right\},
\end{align}}
which satisfies $\tilde{\mathbb{P}}\bigl(A_L^{n_0}\bigr) \to 1$ 
as $L \to \infty$, uniformly in $n_0$. 

For a.e.~$t\in [0, T]$ 
and sufficiently large $\ell$ (depending on $L$),
\begin{equation}\label{eq:Sdifference_minus} 
	\begin{split}
		&\int_\T \mathbb{D}^{-}_{\ell}(t)\, \d x
		+\int_0^{{t}}\int_\T 
		\left(\, \overline{q_-}-\tilde q_-\right)
		\bk{\tilde P-\tilde u^2} \, \d x\, \d {s}
		\\ & \quad
		+\int_0^{t}\int_\T \bk{\frac14\pd_{xx}^2\sigma
		-\abs{\pd_x \sigma}^2} \mathbb{D}_\ell^-
		-\frac{3\ell}{2} \, \bigl(\mathbb{D}^{-}_{\ell}
		+\mathbb{D}^{+}\bigr)\, \d x\, \d {s}
		\\ &\quad
		+\int_0^t \int_\T\bk{\overline{S_\ell(q_-)' - q_-}
		-\bk{S_\ell(\tilde{q}_-)' - \tilde{q}_-}}
		\bk{\tilde{P} - \tilde{u}^2
		-\tilde{P}_{n_0}+\tilde{u}_{n_0}^2} \,\d x\,\d s
		\\ & \quad	
		+\mathcal{M}^-_\ell(t) 
		\leq 0, \quad \text{a.s.~on $A_L^{n_0}$},
	\end{split}
\end{equation}
where $\mathcal{M}^-_\ell(t)$ is a
square-integrable martingale, with
$\tilde \Ex \abs{\mathcal{M}^-_\ell(T)}^2 \lesssim_\ell 1$.  
\end{lem}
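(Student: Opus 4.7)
The plan is to integrate the pathwise distributional inequality \eqref{eq:Sdifference_minus-diff-form} in $x$ over $\T$ so that the three divergence-form contributions (transport, diffusion, and the divergence part of the stochastic integrand) cancel, then convert the resulting distributional-in-$t$ inequality to a pointwise-in-$t$ inequality by the piecewise-linear truncation $\beta_\delta$ argument already used in Lemma \ref{thm:limiteq_2b}. The uniform-in-$n_0$ bound $\tilde{\mathbb{P}}\bigl((A_L^{n_0})^c\bigr) \leq L^{-p}\,\tilde{\Ex}\norm{\tilde P_{n_0}-\tilde u_{n_0}^2}_{L^\infty([0,T]\times\T)}^p$ for $p\in[1,p_0/2]$ is a direct Chebyshev consequence of Lemma \ref{thm:P-u2_bound-tilde}. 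The surviving stochastic integral $\mathcal{M}^-_\ell(t) = -\int_0^t \int_\T \pd_x \sigma \bigl( \overline{H^{(3)}_{\ell,-}(q)} - H^{(3)}_{\ell,-}(\tilde q)\bigr) \, \d x\, \d \tilde W$ is a square integrable martingale with the advertised $\ell$-dependent bound because $|H^{(3)}_{\ell,-}(v)| \leq \tfrac{7}{6}\ell^2$ can be read directly from Remark \ref{rem:pos-neg-parts} and is inherited by the weak limit.

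The first algebraic bulk term $\bigl(\overline{S_\ell(q_-)'} - S_\ell(\tilde q_-)'\bigr)(\tilde P - \tilde u^2)$ is decomposed twice: first via $S_\ell(v_-)' = v_- + (S_\ell(v_-)' - v_-)$ to extract the leading $(\overline{q_-} - \tilde q_-)(\tilde P - \tilde u^2)$ target contribution; second, in the residual involving $\overline{S_\ell(q_-)' - q_-} - (S_\ell(\tilde q_-)' - \tilde q_-)$, one splits $\tilde P - \tilde u^2 = (\tilde P_{n_0} - \tilde u_{n_0}^2) + (\tilde P - \tilde u^2 - \tilde P_{n_0} + \tilde u_{n_0}^2)$. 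The second piece becomes the final bulk term of the target verbatim; the first piece is bounded on $A_L^{n_0}$ by $L$ times the $L^1_x$-norm of the residual, which is supported on $\{|v| \geq \ell\}$ and tends to zero in $L^1_{\tilde\omega,t,x}$ by Proposition \ref{thm:2plusalpha_apriori} combined with H\"older's inequality, hence is absorbable for $\ell$ sufficiently large in terms of $L$.

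Terms 4 and 5 assemble the $\bigl(\tfrac14 \pd_{xx}^2 \sigma^2 - |\pd_x\sigma|^2\bigr)\mathbb{D}_\ell^-$ contribution. Using the explicit formulas of Remark \ref{rem:pos-neg-parts}, one writes $H^{(3)}_{\ell,-}(v) = -\tfrac12 v_-^2 + e_3(v)$ and $\tfrac12 S_\ell(v_-)'' v^2 = \tfrac12 v_-^2 + e_5(v)$ with $e_3, e_5$ supported on $\{v \leq -\ell\}$; passing to weak limits gives $\overline{H^{(3)}_{\ell,-}(q)} - H^{(3)}_{\ell,-}(\tilde q) = -\mathbb{D}^- + (\overline{e_3(q)} - e_3(\tilde q))$ and analogously for Term 5. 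Combining with the $\tfrac14 \pd_{xx}^2 \sigma^2$ and $\tfrac12 |\pd_x\sigma|^2$ prefactors yields $\bigl(\tfrac14 \pd_{xx}^2 \sigma^2 - |\pd_x\sigma|^2\bigr)\mathbb{D}^-$ plus error terms controlled by Proposition \ref{thm:2plusalpha_apriori}; the $\mathbb{D}^- \rightsquigarrow \mathbb{D}^-_\ell$ conversion uses the identity $\mathbb{D}^- - \mathbb{D}^-_\ell = \overline{\tfrac12 q_-^2 - S_\ell(q_-)} - (\tfrac12 \tilde q_-^2 - S_\ell(\tilde q_-))$ together with Lemma \ref{thm:negD_ell_error}. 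Term 2, $-(\overline{H^{(2)}_{\ell,-}(q)} - H^{(2)}_{\ell,-}(\tilde q))$, is purely an error since $H^{(2)}_{\ell,-} \equiv 0$ on $\{v \geq -\ell\}$ by Remark \ref{rem:pos-neg-parts}.

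Finally, the $-\tfrac{3\ell}{2}(\mathbb{D}_\ell^- + \mathbb{D}^+)$ bound arises from Term 3, $\tfrac12 S_\ell(\tilde q_-)'(\overline{q^2} - \tilde q^2)$. Using $\overline{q^2} - \tilde q^2 = 2(\mathbb{D}^+ + \mathbb{D}^-)$, the sharp bound $|S_\ell'| \leq \tfrac{3\ell}{2}$ from \eqref{eq:entrop1}, and the sign information $S_\ell(\tilde q_-)' \leq 0$ (since $\tilde q_- \leq 0$ and $S_\ell'$ is sign-preserving), one obtains $\tfrac12 S_\ell(\tilde q_-)'(\overline{q^2} - \tilde q^2) \geq -\tfrac{3\ell}{2}(\mathbb{D}^+ + \mathbb{D}^-)$; the replacement of $\mathbb{D}^-$ by $\mathbb{D}^-_\ell$ on this favourable (negative) side of the inequality uses the pointwise bound $\mathbb{D}^- \leq \mathbb{D}^-_\ell + \overline{\tfrac12 q_-^2 - S_\ell(q_-)}$ (which follows from $S_\ell(\tilde q_-) \leq \tfrac12 \tilde q_-^2$), and the resulting extra error $\tfrac{3\ell}{2}\,\overline{\tfrac12 q_-^2 - S_\ell(q_-)}$ is absorbed by taking $\ell$ sufficiently large depending on $L$. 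The main technical obstacle is precisely this last $\mathbb{D}^- \rightsquigarrow \mathbb{D}^-_\ell$ replacement: the $\tfrac{3\ell}{2}$ prefactor forces a careful balance between the cutoff level $\ell$ and the $L^\infty$-cutoff $L$ of $A_L^{n_0}$, producing the stated smallness condition on $\ell$.
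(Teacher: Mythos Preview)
Your overall skeleton---integrate \eqref{eq:Sdifference_minus-diff-form} in $x$, then pass from the distributional to the pointwise-in-$t$ formulation via the $\beta_\delta$ truncation of Lemma~\ref{thm:limiteq_2b}---matches the paper, as do your treatments of $A_L^{n_0}$ and the martingale $\mathcal{M}_\ell^-$. But the core of the argument, the control of the error terms supported on $\{\tilde q\le -\ell\}$, is not handled correctly, and the route you propose does not close.

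The decisive problem is in your last paragraph. After $\tfrac12 S_\ell(\tilde q_-)'(\overline{q^2}-\tilde q^2)\ge -\tfrac{3\ell}{2}(\mathbb{D}^+ + \mathbb{D}^-)$, your replacement $\mathbb{D}^- \le \mathbb{D}_\ell^- + \overline{\tfrac12 q_-^2 - S_\ell(q_-)}$ produces the \emph{weaker} lower bound $-\tfrac{3\ell}{2}(\mathbb{D}^+ + \mathbb{D}_\ell^-) - \tfrac{3\ell}{2}\,\overline{\tfrac12 q_-^2 - S_\ell(q_-)}$; the last term is nonpositive, so it cannot be dropped to reach the target in \eqref{eq:Sdifference_minus}. ``Absorbing'' it by taking $\ell$ large is impossible: in expectation $\overline{\tfrac12 q_-^2 - S_\ell(q_-)}$ decays only like $\ell^{-2(r-1)}$ (cf.~Lemma~\ref{thm:negD_ell_error}), so the product scales like $\ell^{3-2r}$, which \emph{diverges} since $r<3/2$. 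The same obstruction hits your handling of $I_\ell^{(3)}$, $I_\ell^{(4)}$: writing $H^{(3)}_{\ell,-}(v)=-\tfrac12 v_-^2+e_3(v)$ lands on $\mathbb{D}^-$ rather than $\mathbb{D}_\ell^-$, and the conversion again generates wrong-sign errors. Lemma~\ref{thm:negD_ell_error} cannot rescue this, because it controls only expectations, while \eqref{eq:Sdifference_minus} must hold pathwise on $A_L^{n_0}$.

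The paper's proof sidesteps all of this by a structural observation you miss. First, it writes $H^{(3)}_{\ell,-}(v)=-S_\ell(v_-)+(\text{remainder})$ and $\tfrac12 S_\ell(v_-)''v^2=S_\ell(v_-)+(\text{remainder})$, so that $\mathbb{D}_\ell^-$ appears directly in $I_\ell^{(3)}$, $I_\ell^{(4)}$ without passing through $\mathbb{D}^-$. Second---and this is the crux---it does \emph{not} estimate the residual errors individually. Instead, all of them (from $I_\ell^{(1)}$ through $I_\ell^{(4)}$, including the $-\tfrac{3\ell}{2}e_\ell^{(2,1)}$ correction and the $H^{(2)}_{\ell,-}$ difference you dismiss as ``purely an error'') are gathered into a single function $h_\ell(v)$ supported on $\{v\le -\ell\}$, and one computes $h_\ell''\ge 0$ on $A_L^{n_0}$ for $\ell$ large relative to $L$. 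Convexity then gives the \emph{pointwise} sign $\overline{h_\ell(q)}-h_\ell(\tilde q)\ge 0$, which is exactly the lower bound needed. The coercivity making $h_\ell$ convex comes precisely from $H^{(2)}_{\ell,-}$: its contribution $f_2$ on $(-2\ell,-\ell)$ satisfies $f_2''(v)=\tfrac1\ell v^2+\tfrac32 v+\ell$, whose $v^2/\ell$ and $\ell$ terms dominate the contributions of $f_1,f_3,f_4$ (see Remark~\ref{rem:delicate}). This is why the approximation $S_\ell$ in \eqref{eq:entropies_S_ell} was engineered with the specific $C^2$ transition on $(\ell,2\ell)$ rather than the simpler $W^{2,\infty}$ truncation used in the deterministic theory.
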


\begin{proof}
Using the test function $\varphi(t,x)=\psi(t)\phi(x)$ 
with $0\leq \psi\in C^\infty_c([0,T))$ and $\phi\equiv 1$ in 
\eqref{eq:Sdifference_minus-diff-form}, we obtain
\begin{equation}\label{eq:Sdifference_minus-tmp1}
	\frac{\d}{\d t}\int_\T \mathbb{D}^{-}_{\ell}\, \d x
	+\sum_{i=1}^4 \int_\T I_\ell^{(i)}\, \d x
	+\int_\T I_\ell^{(5)}\, \d x \, \dot{\tilde{W}} \leq 0,
\end{equation}
which holds in $\mathcal{D}'([0,T))$, $\tilde{\mathbb{P}}$--a.s., 
with zero initial data, where
\begin{equation*}
	\begin{split}
		& I_\ell^{(1)}=\left(\,\overline{S_\ell(q_-)'}
		-S_\ell(\tilde q_-)'\right)\bk{\tilde P-\tilde u^2},
		\\ & 
		I_\ell^{(2)}=\frac12 S_\ell(\tilde q_-)'
		\left(\, \overline{q^2}-\tilde q^2\right)
		-\left(\, \overline{H^{(2)}_{\ell,-}(q)}
		-H^{(2)}_{\ell,-}(\tilde q)\right),
		\\ & 
		I_\ell^{(3)}=-\frac14 \pd_{xx}^2\sigma^2
		\left(\, \overline{H^{(3)}_{\ell,-}(q)}
		-H^{(3)}_{\ell,-}(\tilde q)\right),
		\\ &  
		I_\ell^{(4)}=-\frac12 \abs{\pd_x \sigma}^2
		\left(\, \overline{S_\ell(q_-)'' \,q^2}
		-S''(\tilde{q})\tilde q^2\right),
		\\ &  
		I_\ell^{(5)}=-\pd_x\sigma 
		\left(\, \overline{H^{(3)}_{\ell,-}(q)}
		-H^{(3)}_{\ell,-}(\tilde q)\right),
	\end{split}
\end{equation*}
and $H^{(1)}_{\ell,-}$, $H^{(2)}_{\ell,-}$, 
$H^{(3)}_{\ell,-}$ are defined in \eqref{eq:H-ell-neg}.

\medskip
\noindent \textit{1. The term $I^{(1)}_\ell$.}
\medskip

In view of Remark \ref{rem:pos-neg-parts}, 
\begin{equation*}
\begin{aligned}
	I_\ell^{(1)}  
	& =\left( \, \overline{q_-}-\tilde q_-\right)
	\bk{\tilde P-\tilde u^2}
	+e^{(1)}_\ell\bk{\tilde P-\tilde u^2}\\
	& =  \left( \, \overline{q_-}-\tilde q_-\right)
	\bk{\tilde P-\tilde u^2}
	\\ & \qquad 
	+e^{(1)}_\ell\bk{\tilde P_{n_0}-\tilde u_{n_0}^2} 
	+ e^{(1)}_\ell\bk{\tilde{P} - \tilde{u}^2 
	- \tilde P_{n_0} +\tilde u_{n_0}^2},
\end{aligned}
\end{equation*}
where
\begin{align}
	& e^{(1)}_\ell
	= \overline{S_\ell(q_-)' - q_-}
	-\bk{S_\ell(\tilde{q}_-)' -\tilde{q}_-}
	\label{eq:e1_ell_def}
	\\ & \,\,\,\,\,\,\,\,\,
	= \overline{f_1(q)\one{-2\ell<q<-\ell} 
	+g_1(q)\one{q\leq -2\ell}} 
	\notag
	\\ & \qquad\qquad
	-\bk{f_1(\tilde{q})\one{-2\ell<\tilde{q}<-\ell}
	+g_1(\tilde{q})\one{\tilde{q}\leq -2\ell}},
	\notag
	\\ 
	& f_1(v)=\frac1{2\ell}\bk{v+\ell}^2,
	\quad 
	g_1(v)=-\frac12 \bk{3\ell+2v},
	\notag
\end{align}
Note the real-valued mapping 
$r_1(v)=f_1(v) \one{-2\ell<v<-\ell}
+g_1(v)\one{v\leq -2\ell}$ is convex:
\begin{align*}
	& r_1'(v)= \frac1{4\ell}\bk{v+\ell}\one{-2\ell<v<-\ell}
	-\one{v\leq -2\ell},
	\quad 
	r_{{1}}''(v)=\frac1{4\ell}\one{-2\ell<v<-\ell}\geq 0.
\end{align*}
Because of the convexity, it follows that $e^{(1)}_\ell \geq 0$ 
\cite[Corollary 3.33]{Novotny-book:2004} and thus
\begin{align*}
	I^{(1)}_\ell 
	& \geq \bk{\overline{q_-}-\tilde{q}_-}
	\bk{\tilde{P}-\tilde{u}^2} 
	\\ & \quad 
	-e^{(1)}_\ell 
	\norm{\tilde{P}_{n_0}
	-\tilde{u}^2_{n_0}}_{L^\infty([0,T] \times \T)}
	+ e^{(1)}_\ell \bk{\tilde{P} - \tilde{u}^2 
	- \tilde P_{n_0} +\tilde u_{n_0}^2}.
\end{align*}

\medskip
\noindent \textit{2. The term $I^{(2)}_\ell$.}
\medskip

Recalling the definition \eqref{eq:defect-pospart} 
of $\mathbb{D}^+$, we next manipulate 
$I_\ell^{(2)}$ into the form ``$C_\ell\bk{\mathbb{D}^+ 
+\mathbb{D}^-_\ell} + \text{error}$''. 
From Remark \ref{rem:pos-neg-parts},
\begin{equation}\label{eq:oq2-q2-formula}
	\frac12\bk{\overline{q^2_-}-\tilde q^2_-}
	=\overline{S_\ell(q_-)}-S_\ell(\tilde q_-)
	+e^{(2,1)}_\ell,
\end{equation}
where
\begin{align*}
	& e^{(2,1)}_\ell
	=\overline{f_{2,1}(q)\one{-2\ell<q<-\ell} 
	+g_{2,1}(q)\one{q \leq -2\ell}}
	\\ &\qquad\qquad 
	-\bk{f_{2,1}(q)\one{-2\ell<q<-\ell}
	+g_{2,1}(q)\one{q\leq -2\ell}},
	\\ \intertext {and}
	& f_{2,1}(v)=-\frac1{6\ell} \bk{v+\ell}^3, 
	\quad 
	g_{2,1}(v)=\frac16\bk{3v^2+9\ell v+7\ell^2},
\end{align*}
recalling that we drop the tilde atop 
a variable sitting under an overline (see 
Remark \ref{rem:weak-notation}). 
Given the identity \eqref{eq:oq2-q2-formula}, writing 
\begin{equation*}
	\overline{q^2}-\tilde q^2=
	\overline{q^2_-}-\tilde q^2_-
	+\overline{q^2_+}-\tilde q^2_+,
\end{equation*}
it follows that
\begin{align*}
	\frac12 S_\ell(\tilde q_-)'
	\left(\, \overline{q^2}-\tilde q^2\right)
	& =\frac12 S_\ell(\tilde q_-)'
	\left(\, \overline{q^2_+}-\tilde q^2_+\right)
	\\ & \qquad 
	+S_\ell(\tilde q_-)'\left(\overline{S_\ell(q_-)}
	-S_\ell(\tilde q_-)\right)
	+S_\ell(\tilde q_-)' e^{(2,1)}_\ell.
\end{align*}

Regarding $e^{(2,1)}_\ell$, 
observe that $r_{2,1}(v)=f_{2,1}(v)\one{-2\ell<v<-\ell} 
+g_{2,1}(v)\one{v\leq -2\ell}$ 
is non-negative and convex. Indeed,  by construction, 
$r(v)$ and $r'(v)$ are continuous functions, recalling 
that $S_\ell(v_\pm) \in W^{3,\infty}(\R)$, and so
\begin{align*}
	& r_{2,1}'(v)=-\frac1{2\ell} \bk{v+\ell}^2\one{-2\ell<v<-\ell}
	+\frac12\bk{2v+3\ell}\one{v \leq -2\ell},
	\\ 
	& r_{2,1}''(v)=-\frac{1}{\ell}\bk{v+\ell}\one{-2\ell<v<-\ell}
	+\one{v \leq -2\ell} \geq 0.
\end{align*}

Making use of $S_\ell(\tilde{q}_-)' \ge - 3\ell/2$ and the 
positivity (negativity) of $\overline{f(q)}-f(\tilde q)$ 
for any convex (concave) $f$ 
\cite[Corollary 3.33]{Novotny-book:2004} 
\begin{align*}
	I_\ell^{(2)} & \geq 
	-\frac{3\ell}4\left(\, \overline{q^2_+}-\tilde q^2_+\right) 
	-\frac{3\ell}2\left(\overline{S_\ell(q_-)}
	-S_\ell(\tilde q_-)\right)
	\\ &\qquad\quad 
	- \frac{3\ell}2 e_\ell^{(2,1)} 
	-\left(\, \overline{H^{(2)}_{\ell,-}(q)}
	-H^{(2)}_{\ell,-}(\tilde q)\right)
	\\ &
	=-\frac{3\ell}2\left(\, \mathbb{D}^+
	+\mathbb{D}^-_\ell \right)+e^{(2)}_\ell,
\end{align*}
where, recalling that the expression 
$H^{(2)}_{\ell,-}(v)=S_\ell(v_-)v
-\frac12 S_\ell(v_-)' v^2$, 
see \eqref{eq:H-ell-neg}, takes the explicit 
form calculated in Remark \ref{rem:pos-neg-parts},
\begin{align*}
	& e^{(2)}_\ell 
	=\overline{f_2(q)\one{-2\ell<q<-\ell} 
	+g_2(q) \one{q\leq -2 \ell}}
	\\ &\qquad\qquad
	-\bk{f_2(\tilde q)\one{-2\ell<\tilde q<-\ell} 
	-g_2(\tilde q)\one{\tilde{q} \leq -2\ell}}, 
	\\
	& f_2(v)=\frac1{12\ell}\bk{v^4+3\ell v^3+6\ell^2 v^2 
	+7\ell^3 v+3 \ell^4},
	\quad
	g_2(v)=-\frac1{12}\bk{13\ell^2 v+21\ell^3}.
\end{align*}

\medskip
\noindent \textit{3. The terms $I^{(3)}_\ell$ and $I^{(4)}_\ell$.}
\medskip

Similarly, using \eqref{eq:H-ell-neg} and 
Remark \ref{rem:pos-neg-parts}, we obtain
\begin{align*}
	H^{(3)}_-(v) 
	&=\left(H^{(3)}_-(v)+S_\ell(v_-)\right)
	-S_\ell(v_-)
	\\ &
	=-S_\ell(v_-)+f_3(v) \one{-2\ell<v<-\ell}
	+g_3(v) \one{v \leq -2\ell},
\end{align*}
where
\begin{align*}
	f_3(v)=-\frac1{6\ell} v^3+\frac12\ell v
	+\frac13\ell^2, 
	\quad  
	g_3(v)=-\frac32\ell v-\frac73 \ell^2.
\end{align*}
Furthermore,
\begin{align*}
	\frac12 S_\ell(v_-)'' v_-^2 
	& =\left(\frac12 S_\ell(v_-)'' v_-^2
	-S_\ell(v_-)\right)+S_\ell(v_-) 
	\\ & 
	=S_\ell(v_-)
	+f_4(v)\one{-2 \ell < v < - \ell}
	+g_4(v) \one{ v \leq -2\ell},
\end{align*}
where
\begin{align*}
	f_4(v)=\frac1{3\ell}v^3-\frac12\ell v-\frac16 \ell^2, 
	\quad  
	g_4(v)=\frac32\ell v+\frac76 \ell^2.
\end{align*}
Therefore, if we set 
\begin{align*}
	& e^{(3)}_\ell 
	=\overline{f_3(q)\one{-2\ell<q<-\ell}
	+g_3(q)\one{q\leq -2\ell}}
	\\ & \qquad\qquad 
	-f_3(\tilde{q})\one{-2\ell<\tilde{q}<-\ell} 
 	+g_3(\tilde{q})\one{\tilde{q}\leq -2\ell},
 	\\
 	& e^{(4)}_\ell=\overline{f_4(q)\one{-2\ell<q<-\ell}
 	+g_4(q) \one{q\leq-2\ell}}
 	\\ & \qquad\qquad 
 	-f_4(\tilde{q})\one{-2\ell<\tilde{q}<-\ell}
 	+g_4(\tilde{q}) \one{\tilde{q} \leq-2\ell},
\end{align*}
we get
\begin{align*}
	&I^{(3)}_\ell
	=\frac14 \pd_{xx}^2 \sigma^2 \mathbb{D}^-_\ell
	-\frac14 \pd_{xx}^2 \sigma^2 e^{(3)}_\ell,
	\\ &
	I^{(4)}_\ell
	=-\abs{\pd_{x} \sigma}^2 \mathbb{D}^-_\ell 
	-\abs{\pd_{x} \sigma}^2e^{(4)}_\ell.
\end{align*}

\medskip
\noindent \textit{4. The term $I^{(5)}_\ell$.}
\medskip

By Lemmas \ref{thm:q_bounds_skorohod} 
and \ref{thm:u_in_Linfty}, recalling 
that $\abs{S_\ell(v_-)-S_\ell(v_-)'v}
\lesssim_\ell \abs{v}$, cf.~\eqref{eq:entrop2}, we may assume 
that $\overline{H^{(3)}_{\ell,-}(q)}$, 
$H^{(3)}_{\ell,-}(\tilde q)$, cf.~\eqref{eq:H-ell-neg}, 
and thus $I^{(5)}_\ell$ belong to 
$L^{2r}_{\tilde\omega,t,x}$ (with $2r>2$), 
for each fixed $\ell$. In particular, this implies that 
\begin{align*}
	\mathcal{M}^-_\ell(t)
	=\int_0^t \int_\T I^{(5)}_\ell\,\d x \,\d \tilde W
\end{align*}
is a square-integrable martingale on $[0,T]$.
 
\medskip
\noindent \textit{5. The inequality \eqref{eq:Sdifference_minus}.}
\medskip

Introduce the ``total error" function 
\begin{align*}
	h_\ell(v) & = 
	\Biggl(-f_1(v) \norm{\tilde{P}_{n_0}
	-\tilde{u}^2_{n_0}}_{L^\infty([0,T] \times \T)} 
	\\ & \qquad \qquad
	+f_2(v) 
	-\frac14 \pd_{xx}^2 \sigma^2 f_3(v) 
	-\abs{\pd_{x}\sigma}^2f_4(v)\Biggr)\one{- 2\ell<v<-\ell}  
	\\ & \qquad
	+ \Biggl(-g_1(v)\norm{\tilde{P}_{n_0}
	-\tilde{u}^2_{n_0}}_{L^\infty([0,T] \times \T)}
	\\ & \qquad \qquad \qquad
	+g_2(v)
	-\frac14 \pd_{xx}^2 \sigma^2 g_3(v) 
	-\abs{\pd_{x} \sigma}^2 g_4(v) \Biggr) \one{v\leq-2\ell}.
\end{align*}
Gathering the findings of the first {three} steps, 
we deduce that
\begin{equation}\label{eq:foregoing0}
	\begin{split}
		&I^{(1)}_\ell+I^{(2)}_\ell
		+I^{(3)}_\ell+I^{(4)}_\ell 
		\\ & \quad
		\geq \bk{\overline{q_-}-\tilde{q}_-}\bk{\tilde{P}-\tilde{u}^2}
		-\frac{3\ell}2\bk{\mathbb{D}^++\mathbb{D}^-_\ell}
		+\left(\frac14 \pd_{xx}^2 \sigma^2
		-\abs{\pd_{x} \sigma}^2 \right)\mathbb{D}^-_\ell
		\\ & \quad \qquad
		+\overline{h_\ell(q)}-h_\ell(\tilde{q})
		+e^{(1)}_\ell \bk{\tilde{P} - \tilde{u}^2
		-\tilde{P}_{n_0} + \tilde{u}_{n_0}^2},
	\end{split}	
\end{equation}
where the overlines denote the weak limits 
in $n$ only ($n_0$ is kept fixed).

Recall the definition of $A_L^{n_0}$ 
in \eqref{eq:A_L_set_defin}. We claim that $h_\ell(v)$ 
is convex {on $A_L^{n_0}$}, at 
least for a sufficiently large $\ell=\ell(L)$.
To see this, we can compute the second derivative 
of $h_\ell$ directly on each of the two 
subsets $\{-2\ell<v<-\ell\}$ and $\{v\leq-2\ell\}$, 
thanks to the continuity of $h_{{\ell}}$ and 
$h'_{{\ell}}$ that follows from 
the continuity of $f_i$, $f_i'$, $g_i$, $g_i'$ ($i = 1, \ldots 4$), 
and then add the results. Indeed,
\begin{align*}
	& f_1''(v)=\frac1\ell, 
	\quad 
	f_2''(v)=\frac1\ell v^2+\frac32 v+\ell,
	\quad
	f_3''(v)=-\frac1\ell v,
	\\ &
	f_4''(v) = \frac2\ell v,
	\quad 
	g_1''(v), \, g_2''(v), 
	\, g_3''(v), \, g_4''(v) \equiv 0,
\end{align*}
and so on $A_L^{n_0}$, for any $v \in \R$ 
and a.e.~$x \in \T$,
$$
h''_{{\ell}}(v) \ge 
\bk{ \frac1\ell v^2  + \frac32 v + \ell 
-\frac1\ell\left\{L -\frac1{4}\pd_{xx}^2\sigma^2 v 
+2\abs{\pd_x \sigma}^2 v\right\}} 
\one{- 2\ell < v < - \ell},
$$
which is non-negative for sufficiently 
large $\ell$ because the term in braces can be made small 
relative to the terms outside the braces.

The convexity of $h_\ell(v)$ implies that 
on $A_L^{n_0}$,
\begin{equation}\label{eq:h_convex}
	\overline{h_\ell(q)}- h_\ell(\tilde{q}) 
	\ge 0 
	\quad \text{a.e.~in $[0,T] \times \T$.}
\end{equation}
Using \eqref{eq:h_convex}, which holds for a sufficiently large 
$\ell=\ell(L)$, \eqref{eq:foregoing0} becomes
\begin{equation}\label{eq:foregoing}
	\begin{split}
		& I^{(1)}_\ell+I^{(2)}_\ell+I^{(3)}_\ell+I^{(4)}_\ell
		\\ & \quad 
		\geq \bk{\overline{q_-}-\tilde{q}_-}\bk{\tilde{P}-\tilde{u}^2}
		-\frac{3\ell}2\bk{\mathbb{D}^++\mathbb{D}^-_\ell}
		\\ & \quad\qquad
		+\left(\frac14 \pd_{xx}^2 \sigma^2 
		-\abs{\pd_{x} \sigma}^2 \right)\mathbb{D}^-_\ell
		+e^{(1)}_\ell \bk{\tilde{P} - \tilde{u}^2 
		-\tilde{P}_{n_0} + \tilde{u}_{n_0}^2}.
	\end{split}
\end{equation}

Now we multiply \eqref{eq:Sdifference_minus-tmp1} 
by $\mathds{1}_{A_L^{n_0}}$ and insert \eqref{eq:foregoing}, 
arriving at
\begin{equation}\label{eq:Sdifference_minus-tmp3}
	\begin{split}
		&\frac{\d}{\d t}\int_\T \mathbb{D}^{-}_{\ell}\, \d x
		+\int_\T \left(\, \overline{q_-}-\tilde q_-\right)
		\bk{\tilde P-\tilde u^2} \, \d x
		\\ & \quad
		+\int_\T \bk{ \frac14 \pd_{xx}^2 \sigma
		-\abs{\pd_x \sigma}^2} \mathbb{D}_\ell^-
		-\frac{3\ell}2 \bk{\mathbb{D}^{+}
		+\mathbb{D}^{-}_{\ell}}\, \d x
		\\ &\quad 
		+\int_\T  e^{(1)}_\ell \bk{\tilde{P}-\tilde{u}^2
		-\tilde{P}_{n_0} + \tilde{u}_{n_0}^2}\,\d x
		\\ & \quad
		+\int_\T I^{(5)}_\ell\, \d x \, \dot{\tilde{W}} \leq 0,
		\quad 
		\text{in $\mathcal{D}'([0,T))$, 
		 a.s.~on $A_L^{n_0}$},
	\end{split}
\end{equation}
with zero initial data (in the distributional sense). 
Arguing as in the proofs of Lemmas \ref{thm:temp_cont_0}, 
\ref{eq:energybalance-tu-new} and \ref{thm:limiteq_2b}, 
we can turn \eqref{eq:Sdifference_minus-tmp3} 
into the inequality \eqref{eq:Sdifference_minus} 
that holds a.e.~in $A_L^{n_0} \times [0,T]$.

\begin{rem}\label{rem:delicate}
Note carefully that the fifth step is rather delicate, 
relying on having precise control of the error terms 
leading up to the convexity of the total error function  
$h_\ell(v)$, and thus \eqref{eq:h_convex}. Along  
the way, we exploit some crucial ``coercivity" induced 
by the specific error term $e^{(2)}_\ell$ linked to 
the difference $\overline{H^{(2)}_{\ell,-}(q)}
-H^{(2)}_{\ell,-}(\tilde q)$, recalling that 
$H^{(2)}_{\ell,-}(v)=S_\ell(v_-)v-\tfrac12 S_\ell(v_-)'v^2$. 
It may be instructive to keep in mind 
that $S(v)v-\tfrac12 S'(v)v^2\equiv 0$ if 
$S(v)=\frac12 v_\pm^2$ or $v^2$.
\end{rem}

\medskip
\noindent \textit{6. Properties of the set $A_L^{n_0}$.}
\medskip

The set $A_L^{n_0} \subset \tilde{\Omega}$ is measurable as 
$\normb{\tilde{P}_{n_0} - \tilde{u}^2_{n_0}}_{L^\infty([0,T] \times \T)}$ 
is a random variable (see Lemma \ref{thm:P-u2_bound-tilde}).
Denote by $\bk{A_L^{n_0}}^c$ the complement  
$\tilde{\Omega} \backslash A_L^{n_0}$. 
It further follows from Markov's inequality 
applied to the bound of Lemma \ref{thm:P-u2_bound-tilde} that 
\begin{align}\label{eq:A_Lc_bound}
	\tilde{\mathbb{P}}\bigl(\bk{A_L^{n_0}}^c\bigr) 
	\leq \frac1L 
	\tilde{\Ex} \norm{\tilde{P}_{n_0}
	-\tilde{u}^2_{n_0}}_{L^\infty([0,T]\times \T)} 
	\lesssim \frac1L.
\end{align}
\end{proof}

We can now identify the weak limit 
$\overline{q^2}$ with $\tilde{q}^2$, thereby 
concluding the proof of Theorem \ref{thm:products_convergences}.

\begin{proof}[Proof of Theorem \ref{thm:products_convergences}]
We shall be adding \eqref{eq:Sdifference_plus_ptws} 
and \eqref{eq:Sdifference_minus}. The purpose of doing so 
is that using
$$
\overline{q_+}+\overline{q_-}=\tilde{q}
=\tilde q_++\tilde q_-\Longrightarrow
\overline{q_+}-\tilde{q}_+ +\overline{q_-}-\tilde q_-=0,
$$ 
the term involving $\bigl(\tilde{P}-\tilde{u}^2\bigr)$ 
disappears, allowing us to conclude via 
taking an expectation and applying Gronwall's inequality, 
as we will demonstrate next.

We observe first that the 
inequality \eqref{eq:Sdifference_plus_ptws} 
holds a.s.~on $A_L^{n_0}$, where $A_L^{n_0}$ is 
defined in \eqref{eq:A_L_set_defin}. 
We now multiply each of 
\eqref{eq:Sdifference_plus_ptws} 
and \eqref{eq:Sdifference_minus} 
by $\mathds{1}_{A_L^{n_0}}$, 
add these two equations together 
and then take an expectation to obtain, 
for all sufficiently large $\ell$ (with $L, n_0$ fixed) 
and a.e.~$t \in [0,T]$,
\begin{equation}\label{eq:defect-final-ineq}
	\begin{aligned}
		& \tilde{\Ex} \int_\T \mathds{1}_{A_L^{n_0}}
		\bigl(\mathbb{D}^{+}+\mathbb{D}^{-}_{\ell}\bigr)({t}) \,\d x
		\\ & \quad 
		+\tilde{\Ex} \int_0^t \int_\T {f_\ell(x)} \mathds{1}_{A_L^{n_0}}
		\bigl(\mathbb{D}^{+} +\mathbb{D}^{-}_{\ell}\bigr)({s})
		\, \d x\,\d s
		\\ & \quad\quad 
		\leq -\Ex \int_0^t \int_{\T}\mathds{1}_{A_L^{n_0}}  
		e^{(1)}_\ell \bk{\tilde{P} - \tilde{u}^2 
		-\tilde{P}_{n_0}+\tilde{u}_{n_0}^2} 
		\,\d x\,\d s,
	\end{aligned}
\end{equation}
where 
\begin{align*}
	f_\ell(x)=\frac14  \pd_{xx}^2\sigma^2
	-\abs{\pd_x \sigma}^2-\frac{3\ell}2, 
	\qquad \norm{f_\ell}_{L^\infty(\T)}\leq C \ell ,
\end{align*}
and, for brevity, we have retained the notation 
\eqref{eq:e1_ell_def} for $e^{(1)}_\ell$.

Applying Gronwall's inequality 
to \eqref{eq:defect-final-ineq}, we get 
for a.e. $t \in [0,T]$ that
\begin{align*}
	\tilde{\Ex}&\bk{\mathds{1}_{A_L^{n_0}} 
	\norm{\mathbb{D}^+(t)+\mathbb{D}^-_\ell(t)}_{L^1(\T)}} 
	\\ & \qquad 
	\leq  e^{C t\ell}\, \tilde{\Ex} \int_0^t \int_{\T}
	\mathds{1}_{A_L^{n_0}} 
	\,\, e^{(1)}_\ell \bk{\tilde{P}-\tilde{u}^2 
	-\tilde{P}_{n_0} + \tilde{u}_{n_0}^2}\,\d x\,\d s.
\end{align*}
Integrating the above over $[0,T]$, 
\begin{align*}
	\int_0^T \tilde{\Ex}&\bk{\mathds{1}_{A_L^{n_0}} 
	\norm{\mathbb{D}^+
	+\mathbb{D}^-_\ell}_{L^1(\T)}} \,\d t 
	\\ & \le T e^{C T\ell} 
	\,\tilde{\Ex} \int_0^T \int_{\T} 
	\abs{e^{(1)}_\ell} \abs{\tilde{P} - \tilde{u}^2 
	-\tilde{P}_{n_0} + \tilde{u}_{n_0}^2}\,\d x \,\d t
	=: Te^{C T\ell} \mathcal{J}_{n_0}.
\end{align*}
Adding $\Ex \int_0^T \mathds{1}_{\bk{A_L^{n_0}}^c}
\norm{\mathbb{D}^+ + \mathbb{D}_\ell^-}_{L^1(\T)} \,\d t$ 
to both sides, 
\begin{equation}\label{eq:defect(l_n)-zero}
	\begin{aligned}
		\tilde{\Ex} &\int_0^T 
		\norm{\mathbb{D}^+
		+\mathbb{D}^-_\ell}_{L^1(\T)}\,\d t 
		\\ &
		\le  \Ex \int_0^T \mathds{1}_{\bk{A_L^{n_0}}^c}
		\norm{\mathbb{D}^+ + \mathbb{D}_\ell^-}_{L^1(\T)} \,\d t 
		+Te^{C T \ell}\mathcal{J}_{n_0}
		\\ & \lesssim_T \bk{\tilde{\Ex}\int_0^T \norm{\mathbb{D}^+
		+\mathbb{D}^-_\ell}_{L^1(\T)}^r\, \d t}^{1/r} 
		\bk{\tilde{\mathbb{P}}\big(\bk{A_L^{n_0}}^c\big)}^{1/r'}
		+e^{C T\ell}\mathcal{J}_{ n_0}
		\\ & 
		\lesssim_T 
		\norm{\mathbb{D}^++\mathbb{D}^-_\ell}_{L^r_{\tilde \omega,t,x}}
		\bk{\frac1L}^{1/r'}+e^{C T \ell}\mathcal{J}_{ n_0},
	\end{aligned}
\end{equation}
where the final inequality follows from  \eqref{eq:A_Lc_bound}. 
The implicit constant in the final $\lesssim$ 
is independent of $L$ and $n_0$.  Also note that
$\norm{\mathbb{D}^++\mathbb{D}^-_\ell}_{L^r_{\tilde \omega,t,x}}
\lesssim 1$, uniformly in $\ell$ by Lemma 
\ref{thm:u_in_Linfty} and the 
first inequality of \eqref{eq:Lr-HnegEst}.  
On the other hand, as we shall presently argue, 
$\mathcal{J}_{n_0} \to 0$ as $n_0 \uparrow \infty$, uniformly in $\ell$. 
The convergence of $\mathcal{J}_{n_0}$ is a consequence 
of two facts. First, by the second bound of \eqref{eq:tu-Lp-Linfty-H1-new}  
and \eqref{eq:Sprime-conv}, we have  
$$
\norm{e^{(1)}_\ell }_{L^{2r}_{\tilde\omega,t,x}} 
= \norm{\overline{S'_\ell(q_-)  - q_- } 	
- \bk{S'_\ell(\tilde{q}) -  \tilde{q}_-}}_{L^{2r}_{\tilde\omega,t,x}} 
\lesssim 1.
$$
Second, as $n_0\uparrow \infty$, we have the strong 
convergences \eqref{eq:tPn-strong-conv2} 
of $\tilde{P}_{n_0} \to \tilde{P}$ in $L^p_{\tilde\omega,t,x}$ 
and \eqref{eq:improved-tun-conv} of $\tilde{u}_{n_0}^2 \to \tilde{u}^2$  
in $L^p_{\tilde\omega,t,x}$, for any $p < p_0/2$. 
Since $p_0 > 4$, and $p := 2r/(2r - 1) < 2$ 
for $r$ close to $3/2$, this implies that
\begin{align*}
	\mathcal{J}_{n_0} & = \tilde{\Ex} \int_0^T \int_{\T} 
	\abs{e^{(1)}_\ell} \abs{\tilde{P} - \tilde{u}^2 
	-\tilde{P}_{n_0} + \tilde{u}_{n_0}^2}\,\d x \,\d t
	\\ & 
	\leq \norm{e^{(1)}_\ell}_{L^{2r}_{\tilde\omega,t,x}} 
	\norm{\tilde{P}-\tilde{u}^2
	-\tilde{P}_{n_0}+\tilde{u}_{n_0}^2}_{L^{p}_{\tilde\omega,t,x}}
	\xrightarrow{n_0 \uparrow \infty} 0,
\end{align*}
nullifying the harmful exponential factor in  
\eqref{eq:defect(l_n)-zero}, and yielding
\begin{align}\label{eq:defect(l)-zero}
	\tilde{\Ex} &\int_0^T \int_\T \mathbb{D}^+
	+\mathbb{D}^-_\ell\,\d x \,\d t 
	\lesssim L^{-1/r'},
\end{align}
for any sufficiently large $\ell$ (with $L$ fixed).

Finally, upon taking the limits 
$\ell \to \infty$ first---making use of 
Lemma \ref{thm:negD_ell_error}---and 
$L\to \infty$ second in \eqref{eq:defect(l)-zero}, 
we work out that
$$
\overline{q_+^2}=\tilde{q}_+^2, \quad 
\overline{q_-^2}=\tilde{q}_-^2
\quad \text{a.e.~in $\tilde \Omega \times 
\bigl[0,T\bigr]\times \T$},
$$
which concludes the proof 
of Theorem \ref{thm:products_convergences}.
\end{proof}

\begin{rem}
Let us refine Remark \ref{rem:delicate} further by exploring 
the treatment of the residual ``bad" error term $e^{C T \ell}\mathcal{J}_{ n_0}$ 
in \eqref{eq:defect(l_n)-zero}. This specific term does not lend itself 
to a resolution through the delicate balance of convexity and coercivity 
discussed in Remark \ref{rem:delicate}. Rather, its successful 
management primarily depends on the strong 
convergence \eqref{eq:tPn-strong-conv2}. 
This strong convergence, in turn, stems from 
the employment of the quasi-Polish strong-weak 
space $L^r\bigl(L^r_w\bigr)$ featured among the 
path spaces \eqref{eq:pathspaces}.
\end{rem}

\section{Acknowledgements}

This research was partially supported 
by the Research Council of Norway via the Toppforsk 
project \textit{Waves and Nonlinear Phenomena} (250070),
the project {\it IMod — Partial differential equations, statistics 
and data: An interdisciplinary approach to data-based 
modelling} (325114), and the project \textit{INICE} (301538).

The current address of Luca Galimberti is the 
Department of Mathematics, King's College London, 
WC2R 2LS, London, UK.

We are appreciative of the reviewer’s 
meticulous examination of this paper. This individual’s 
thoughtful and nuanced inquiries, including making 
us aware of the reference \cite{CL2022}, have 
contributed to the improvement of the paper.

\appendix

\section{Formal derivation of stochastic CH equation}
\label{sec:derivation-SCH}
Let us outline a formal derivation 
of the stochastic CH equation \eqref{eq:u_ch}.
Denote by $M_m$ be the multiplication operator 
$M_mf = mf$, and by $D$ the spatial derivative operator. 
As is well-known, the deterministic CH equation (for 
the momentum variable $m$) takes the form
\begin{equation}\label{eq:bi-Hamiltonian}
	0 = \pd_t m +M_m D 
	\frac{\delta \tilde{h} [m]}{\delta m}
	+D M_m \frac{\delta \tilde{h}[m]}{\delta m},
\end{equation}
where $\tilde{h}[m]=\int_\T \frac{1}{2} 
m(t,x)\, (K*m)(t,x)\, \d x$ is a nonlocal 
Hamiltonian based on the kernel $K$ 
defined in \eqref{eq:P-def}. 
Setting $u := K*m$, one can formally 
convert the bi-Hamiltonian equation 
\eqref{eq:bi-Hamiltonian} into the deterministic 
CH equation (for $u$), i.e., \eqref{eq:SCH-tmp} 
with $\sigma\equiv 0$. The stochastic CH equation 
is obtained by considering a stochastic perturbation of the 
temporally-integrated Hamiltonian:
$$
H[m]:=\int_\T \left(\, \, \int_0^t 
\frac{1}{2}m(s,x) (K*m)(t,x) \,\d s
+\int_0^t \bigl(m(s,x) \sigma(x) \bigr)
\circ \d{W}(s) \right)\,\d x.
$$
We recover the deterministic Hamiltonian
$\tilde h$ by taking $\sigma\equiv 0$ and 
computing $\d H/\d t$. The first variation of $H[m]$ is
$\frac{\delta H[m]}{\delta m} 
= u+\sigma(x) \,\dot{W}$.
Note that this expression is highly irregular 
in time $t$ (of class $C^{-1/2-0}$), but 
at the formal level, compared 
with \eqref{eq:bi-Hamiltonian}, 
the analogous stochastic CH equation becomes
$$
0 = \d m + M_{m} D \bigl(u \, \d t+ \sigma(x) 
\,\d{W}\bigr) + D M_{m}\bigl(u\, \d t
+\sigma(x)\, \d{W}\bigr),
$$
where the multiplication operator $M$ 
here uses the Stratonovich product $\circ$; 
written out more explicitly, we obtain
\begin{equation}\label{eq:m_Stoch-CH}
	0 = \d m + \bigl(m\, \pd_x u
	+\pd_x (mu) \bigr)\, \d t 
	+ m\, \pd_x \sigma(x) \circ \d W 
	+ \pd_x \bigl(m \sigma(x) \bigr)\circ \d W.
\end{equation}
Recalling that $u=K*m$, i.e., $m=(1-\pd_{xx}^2)u$, 
we can formally expand \eqref{eq:m_Stoch-CH} 
into \eqref{eq:SCH-tmp}, or \eqref{eq:u_ch} thanks to 
the Stratonovich--It\^{o} conversion 
formula. In this paper we use \eqref{eq:u_ch} as 
the operational form of the stochastic CH equation.

\section{Primer on quasi-Polish spaces}\label{sec:qpolish}

We detail here some definitions and results 
that have been applied repeatedly in our proofs. 
Ready references for some---but not all---of the material 
collected here are \cite{Jakubowski:1997aa} 
and \cite{Brzezniak:2013ab,Brzezniak:2016wz,Ondrejat:2010aa}. 

\subsection{Examples of quasi-Polish spaces}\label{sec:qPexamples}
In this subsection we give the definition 
and some examples of quasi-Polish spaces.  

First, given two measurable spaces 
$(\mathfrak{X}_i,\mathcal{M}_i),i=1,2$, 
by the statement ``$A$ is $\mathcal{M}_1/\mathcal{M}_2$", 
we mean that $A:(\mathfrak{X}_1,\mathcal{M}_1)
\to(\mathfrak{X}_2,\mathcal{M}_2)$ is measurable. 
Let $\mathcal{A}$ be a collection of subsets, or 
a collection of maps. On a few occasions, see for example 
\eqref{eq:sigma_algebra}, by $\Sigma(\mathcal{A})$ 
we mean the $\sigma$-algebra generated by $\mathcal{A}$. 

\begin{defin}[quasi-Polish space]\label{def:quasipolish}
A topological space $(Z,\tau)$ is \textit{quasi-Polish} 
if there is a sequence $\{f_n\}_{n\in\N}$ 
of continuous functions $f_n:Z\to [-1,1]$ 
separating points of $Z$. 
\end{defin}

Quasi-Polish spaces are Hausdorff. 
Below we exhibit some examples of commonly 
encountered quasi-Polish spaces, remarking 
specifically on the existence of a sequence of continuous 
functions to $[-1,1]$ that separates points. 
By considering the continuous injection 
$(f_1,f_2,\ldots): Z \hookrightarrow [-1,1]^\mathbb{N}$ 
of $Z$ into the Polish space $[-1,1]^\mathbb{N}$, 
one can recover many of the properties of 
Polish spaces for compact subsets of a 
quasi-Polish space $Z$, see \cite[Section 2]{Jakubowski:1997aa}. 
The topology induced by $\{f_n\}_{n\in \mathbb{N}}$, 
sometimes referred to as $\tau_f$, coincides 
with the topology of $Z$ on $\tau$-compact subsets. 
This is the cardinal property that allows
theorems on Polish spaces 
(such as the Skorokhod representation theorem) 
to carry over to quasi-Polish spaces.

\medskip 

\noindent \textit{Examples.}

\begin{enumerate}

\item If $(Z,\norm{\cdot})$ is 
a separable normed space (with dual $Z'$), 
then it is quasi-Polish. Indeed, let 
$\left\{\phi_n\right\}_{n\in\N}\subset Z'$ be such that
\begin{equation*}
	\norm{z}=\sup_n \, \langle \phi_n,z\rangle,
	\quad z\in Z. 
\end{equation*}
Define $f_n(z)=\frac2\pi\arctan\bigl(\langle 
\phi_n,z\rangle\bigr)$, $n\in\N$. 
Given $z_1\neq z_2$, choose an integer $m$ such that 
$\langle \phi_m,z_1-z_2\rangle > \frac12\norm{z_1-z_2}>0$. 
Whereupon $\langle\phi_m,z_1\rangle>\langle\phi_m,z_2\rangle$ 
and hence $f_m(z_1)>f_m(z_2)$. This also shows that 
$Z-w$ ($Z$ endowed with the weak topology $\tau_w$)  
is quasi-Polish. Therefore, the spaces $L^p([0,T]\times \T)-w$, 
$1\leq p<\infty$, are quasi-Polish; they are used 
in \eqref{eq:pathspaces} with $p=r$ and $p=2r$, $r\in [1,3/2)$.

\medskip

\item \label{example:C_tH_x}Let $(Z,\langle\cdot,\cdot\rangle)$ be a separable 
Hilbert space. Equipping $C([0,T];Z-w)$ with the 
locally convex topology generated by the seminorms $\norm{h}_\phi 
:=\sup_{0\leq t \leq T}\abs{\langle h(t),\phi \rangle}$ 
for $\phi\in Z$, the space $C([0,T];Z-w)$ 
becomes quasi-Polish \cite[Remark 4.1]{Brzezniak:2016wz}. 
An example is $C([0,T];H^1(\T)-w)$, which is used 
on a few occasions.

\medskip

\item If $(Z,\norm{\cdot})$ is a separable Banach space, 
then its dual $Z'$ endowed with the weak-star 
topology $\tau_\star$ is quasi-Polish. To see this, take an arbitrary 
countable dense subset $D\subset Z, D=\{d_1,d_2,\dots\}$. 
Given $\phi_1,\phi_2\in Z', \phi_1\neq \phi_2$, there 
must exist $d_n\in D$ such that $\phi_1(d_n)\neq \phi_2(d_n)$, 
because, if this were not the case, then 
$\phi_1(d)=\phi_2(d)$, $d\in D$, and thus $\phi_1\equiv \phi_2$. 
Define $f_n:Z'\to\R$, $\phi\mapsto\phi(d_n)$, $n\in \N$. 
Then $f_n$ is $\tau_\star$-continuous, and we 
conclude that $(Z',\tau_\star)$ is quasi-Polish, 
with a separating sequence provided by $\{f_n\}_{n\in\N}$. 
An example is the space of Radon measure 
(with a separable pre-dual), equipped 
with the weak-star topology.
\end{enumerate}
\medskip

For a quasi-Polish space $(Z,\tau)$, the 
point-separating sequence does not always 
characterise the topology $\tau$, because 
if $Z$ has the topology $\tau_f$ induced by functions $\{f_n\}$ 
that separate points, then changing 
the topology of $Z$ to the discrete topology, 
$Z$ remains quasi-Polish under the maps $\{f_n\}$.
In general, we have $\tau_f\subset \tau$. 
However, for the quasi-Polish spaces used in this paper, 
we will always know that $\tau_f=\tau$. 

We recall the following result 
\cite[Proposition 1.1.1]{BanachI:2016}, 
also known as ``the linear characterisation 
of the Borel $\sigma$-algebra", which 
we will need below.

\begin{lem}\label{lem:qpolish-borel-sigma-algebra}
Let $(Z,\norm{\cdot})$ be a separable normed space. 
Let $\{\phi_n\}_{n\in\N}\subset Z'$ 
be a norming sequence in the sense that
\begin{equation*}
	\norm{z}=\sup_{n\in \N}\,
	\bigl\langle \phi_n,z\bigr\rangle,
	\quad z\in Z. 
\end{equation*}
Denote by $\mathcal{B}(Z)$ the 
Borel $\sigma$-algebra on $Z$. 
Then $\mathcal{B}(Z)=\Sigma\bigl(\phi_n, n\in\N\bigr)$.
\end{lem}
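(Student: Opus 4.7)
The plan is to establish the two inclusions $\Sigma(\phi_n, n\in\N)\subset \mathcal{B}(Z)$ and $\mathcal{B}(Z)\subset \Sigma(\phi_n, n\in\N)$ separately, with the first being essentially tautological and the second relying on separability of $Z$ together with the norming property of $\{\phi_n\}$.

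For the easy inclusion, each $\phi_n\in Z'$ is a continuous linear functional on $(Z,\norm{\cdot})$, hence Borel measurable from $(Z,\mathcal{B}(Z))$ to $(\R,\mathcal{B}(\R))$. Since $\Sigma(\phi_n, n\in \N)$ is by definition the smallest $\sigma$-algebra making every $\phi_n$ measurable, it follows immediately that $\Sigma(\phi_n, n\in\N)\subset \mathcal{B}(Z)$.

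For the reverse inclusion, I would first observe that the norming hypothesis makes distance-to-a-point functions measurable. Indeed, for any fixed $z_0\in Z$, the translation identity
\begin{equation*}
        \norm{z-z_0}=\sup_{n\in \N}\bigl\langle \phi_n, z-z_0\bigr\rangle
        =\sup_{n\in \N}\Bigl(\bigl\langle \phi_n,z\bigr\rangle
        -\bigl\langle \phi_n,z_0\bigr\rangle\Bigr)
\end{equation*}
exhibits $z\mapsto \norm{z-z_0}$ as a countable supremum of $\Sigma(\phi_n, n\in\N)$-measurable functions, and is therefore itself measurable with respect to $\Sigma(\phi_n, n\in\N)$. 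Consequently, every open ball
\begin{equation*}
        B(z_0,r)=\bigl\{z\in Z: \norm{z-z_0}<r\bigr\},
        \quad z_0\in Z,\,\, r>0,
\end{equation*}
belongs to $\Sigma(\phi_n, n\in\N)$.

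To conclude, I would invoke the separability of $(Z,\norm{\cdot})$: fix a countable dense set $D=\{d_k\}_{k\in\N}\subset Z$. A standard elementary argument shows that any open set $U\subset Z$ can be written as
\begin{equation*}
        U=\bigcup\bigl\{B(d_k,q): k\in \N,\, q\in \mathbb{Q}_{>0},
        \,\, B(d_k,q)\subset U\bigr\},
\end{equation*}
a countable union of balls with centers in $D$ and positive rational radii. Hence every open set---and by the definition of $\mathcal{B}(Z)$, every Borel set---lies in $\Sigma(\phi_n, n\in\N)$. This gives $\mathcal{B}(Z)\subset \Sigma(\phi_n, n\in\N)$, completing the proof. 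No step presents a real obstacle; the only subtlety is to note explicitly that without the norming hypothesis the argument collapses, since then $\norm{\cdot}$ need not be $\Sigma(\phi_n, n\in \N)$-measurable.
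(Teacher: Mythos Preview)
Your proof is correct and is the standard argument for this result. The paper does not actually supply its own proof of this lemma; it simply records the statement and cites \cite[Proposition 1.1.1]{BanachI:2016}, so there is nothing to compare against beyond noting that your argument is precisely the textbook proof one would find behind that citation.
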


In Section \ref{sec:Jak-Skor}, we made essential 
use of the (topological) space 
\begin{equation}\label{eq:LptLpx-weak}
	L^{p_1}\bigl(L^{p_2}_w\bigr)
	=L^{p_1}\bigl([0,T];L^{p_2}(\T)-w\bigr), 
	\quad p_1,p_2\in (1,\infty).
\end{equation}
To define $L^{p_1}\bigl(L^{p_2}_w\bigr)$, consider 
the classical Bochner space of equivalence 
classes of measurable functions $z:[0,T]\to L^{p_2}(\T)$ 
for which $\norm{z(\cdot)}_{L^{p_2}(\T)}\in L^{p_1}([0,T])$, 
denoted by $L^{p_1}(L^{p_2})=L^{p_1}([0,T];L^{p_2}(\T))$. 
Equipping this space with the locally 
convex topology generated by the seminorms
\begin{equation}\label{eq:semi-norm}
	L^{p_1}(L^{p_2})\ni z\mapsto \left(\int_0^T \abs{\int_\T 
	\phi(x) z(t,x)\,\d x}^{p_1} \d t\right)^{\frac{1}{p_1}},
	\quad \phi \in L^{p_2'}(\T),
\end{equation} 
where $\frac{1}{p_2}+\frac{1}{p_2'}=1$, we denote the 
resulting topological space by $L^{p_1}\bigl(L^{p_2}_w\bigr)$, 
see \eqref{eq:LptLpx-weak}.  

For notational simplicity in what follows, 
set $Z=L^{p_1}(L^{p_2})$ and denote by $\tau_N$ the 
new topology \eqref{eq:semi-norm}. 
We will then write $L^{p_1}\bigl(L^{p_2}_w\bigr)
=(Z,\tau_N)$. We claim that $(Z,\tau_N)$ 
is a quasi-Polish space. Indeed, we first notice that, given 
an arbitrary net $\left\{z_\alpha\right\}_\alpha\subset 
(Z,\tau_N)$ converging to $z\in (Z,\tau_N)$, 
then $z_\alpha\to z$ with respect to the 
standard weak topology $\tau_w$ 
of $Z$. In other words, $(Z,\tau_N)$ 
embeds continuously into $(Z,\tau_w)$. 
Trivially, $\tau_N$ is weaker than the 
norm topology of $Z$, called $\tau_s$. 
Consequently, we have $\tau_w\subset \tau_N \subset \tau_s$. 
By the separability of $Z$ (endowed with $\tau_s$) and 
the previous discussion of this appendix, we know 
that $(Z,\tau_w)$ is quasi-Polish. 
As a result, any separating sequence of continuous 
functions for this space will 
also be a separating sequence for $(Z,\tau_N)$, thereby 
turning it into a quasi-Polish space. Finally, from the 
inclusions $\tau_w\subset \tau_N \subset \tau_s$, we also 
obtain that $\mathcal{B}_{\tau_N}=\mathcal{B}_{\tau_s}$, because 
in a separable Banach space the Borel $\sigma$-algebra 
$\mathcal{B}_{\tau_w}$ generated by the weak topology 
coincides with the strong Borel $\sigma$-algebra 
$\mathcal{B}_{\tau_s}$, 
cf.~Lemma \ref{lem:qpolish-borel-sigma-algebra}, 
and since $\tau_N$ is an intermediate topology, this must 
hold for $\tau_N$ as well.

\medskip

In Section \ref{sec:equality-of-laws}, we 
used a quasi-Polish analogue of the 
Kuratowski--Lusin--Souslin (KLS) theorem, taken 
from \cite[Corollary A.2]{Ondrejat:2010aa} and 
\cite[Proposition C.2]{Brzezniak:2013ab}. This result 
is used repeatedly in Section \ref{sec:equality-of-laws}. 

\begin{lem}\label{thm:LS4qP}
Let $Z$ be a quasi-Polish space and let $Y$ 
be a Polish space for which exists a continuous 
injection $b:Y\to Z$. For any Borel set $B \subset Y$, 
the set $b[B]$ is Borel in $Z$.
\end{lem}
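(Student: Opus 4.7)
The plan is to reduce the quasi-Polish case to the classical Kuratowski--Lusin--Souslin theorem for Polish spaces by embedding $Z$ into a canonical Polish space via its separating sequence. Specifically, since $Z$ is quasi-Polish, by Definition \ref{def:quasipolish} there exists a sequence $\{f_n\}_{n\in\N}$ of continuous functions $f_n:Z\to [-1,1]$ that separates the points of $Z$. I would use this sequence to define the evaluation map
\begin{equation*}
	\iota: Z \to [-1,1]^\N,\quad
	\iota(z) = \bigl(f_n(z)\bigr)_{n\in\N}.
\end{equation*}
By construction $\iota$ is continuous (as each coordinate map $f_n$ is) and injective (thanks to the separating property), with target space $[-1,1]^\N$ that is Polish (being a countable product of compact metrisable spaces).

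The next step would be to consider the composite map $\iota \circ b: Y \to [-1,1]^\N$. Since $b$ is a continuous injection by hypothesis and $\iota$ is a continuous injection, the composition is a continuous injection from the Polish space $Y$ into the Polish space $[-1,1]^\N$. Now I can invoke the classical Lusin--Souslin theorem (see, e.g., \cite[Theorem 15.1]{Kechris:1995aa} in the standard descriptive set theory reference), which states that a continuous injection between Polish spaces sends Borel sets to Borel sets. Applied to our situation, for any Borel $B\subset Y$, the image $(\iota \circ b)[B] = \iota\bigl(b[B]\bigr)$ is a Borel subset of $[-1,1]^\N$.

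To conclude, I would pull this back to $Z$. Since $\iota$ is continuous, the preimage $\iota^{-1}\bigl(\iota(b[B])\bigr)$ is a Borel subset of $Z$. The injectivity of $\iota$ on $Z$ yields the equality $\iota^{-1}\bigl(\iota(b[B])\bigr) = b[B]$: the inclusion $\supseteq$ is automatic, while for $\subseteq$, if $z\in Z$ satisfies $\iota(z) = \iota(z')$ for some $z' \in b[B]\subset Z$, injectivity of $\iota$ forces $z=z'\in b[B]$. Hence $b[B]$ is Borel in $Z$, as desired.

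The main conceptual point (rather than an obstacle per se) is the careful use of injectivity: one must work with $\iota$ restricted to $Z$ (not its range in $[-1,1]^\N$, which may fail to be Borel), so that $\iota^{-1}\bigl(\iota(b[B])\bigr)$ faithfully recovers $b[B]$. The only ``heavy" ingredient invoked is the classical Lusin--Souslin theorem for Polish spaces, which is standard and requires no reproof. The rest is a bookkeeping argument about composition of continuous injections and the topology of countable products of $[-1,1]$.
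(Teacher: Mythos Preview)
Your proposal is correct and follows exactly the approach sketched in the paper: embed $Z$ into the Polish space $[-1,1]^{\N}$ via the separating sequence, then apply the classical Kuratowski--Lusin--Souslin theorem to the composite injection $\iota\circ b$ and pull back. The paper's proof consists of a single sentence pointing to precisely this reduction (and to \cite[Corollary~A.2]{Ondrejat:2010aa}, \cite[Proposition~C.2]{Brzezniak:2013ab}); you have simply written out the details.
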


The proof is a direct application of the KLS theorem 
after the injection $Z\hookrightarrow [-1,1]^\mathbb{N}$, 
which puts us in the Polish space setting.

\medskip

New quasi-Polish spaces can be constructed by forming 
Cartesian products of countable collections of 
them (see, e.g., \cite{Brzezniak:2011aa} and 
next subsection). This fact is heavily 
used in Section \ref{sec:Jak-Skor}. 
In this paper, we avoided using 
\textit{intersections} of quasi-Polish spaces 
in our application of the Skorokhod--Jakubowski theorem 
\cite{Jakubowski:1997aa} (see Theorem \ref{thm:Jakubowski}). 
Let us consider a Skorokhod--Jakubowski representation 
$\left\{\tilde{u}_n\right\}$ of a sequence 
$\left\{u_n\right\}$, and suppose we need 
to know the a.s.~convergence of $\left\{\tilde{u}_n\right\}$ 
in two different spaces $Z_1$ and $Z_2$. It is then 
natural to use a space $Z$ for which
\begin{itemize}
	\item[(i)] $Z$ is quasi-Polish,
	
	\item[(ii)] compact subsets of $Z$ can be identified, 
	in order to verify tightness of the laws of $\left\{\tilde u_n\right\}$,

	\item[(iii)] $Z$ respects the topologies of $Z_1$ and $Z_2$, in 
	the sense that a.s.~convergence in $Z$ 
	implies a.s.~convergence in $Z_1$ 
	and $Z_2$ separately.
\end{itemize}
These three requirements are in tension.  
As the topology chosen with which to equip $Z$ 
is strengthened, (iii) is more easily satisfied, 
whereas (i) and (ii) are less easily so. 
For the intersection $Z=Z_1\cap Z_2$, endowed with 
the upper bound topology, (i) and (ii) 
are fulfilled as soon as $Z_1$ and $Z_2$ are 
quasi-Polish, since $Z$ embeds continuously 
in $Z_1$ and $Z_2$ by construction. However, 
additional arguments are required to find 
compact subsets of $Z$ to satisfy (ii) 
(see, e.g., \cite{Brzezniak:2013aa}); 
the reason is that there is no general way 
to construct compact subsets of $Z$ using compact 
subsets of $Z_1$ and $Z_2$. 
On the other hand, if one considers the 
Cartesian product with the product topology, 
the three requirements above are 
automatically satisfied. In particular, 
Tychonoff's theorem allows us to readily 
construct compact subsets of $Z$.
    
\subsection{Products of quasi-Polish spaces}

In Section \ref{sec:Jak-Skor}, we worked systematically 
with random variables defined on countable  
products of quasi-Polish spaces.

\begin{lem}\label{thm:a_0}
Let $\{Z_i\}_{i \in \mathbb{N}}$ be a countable collection 
of quasi-Polish spaces. Then $\mathfrak{X}
=\prod_{i \in \mathbb{N}} Z_i$, endowed with 
the product topology, is a quasi-Polish space. 
\end{lem}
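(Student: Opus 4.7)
The plan is to produce explicitly a countable separating family of $[-1,1]$-valued continuous functions on the product space $\mathfrak{X}=\prod_{i\in\mathbb{N}}Z_i$, by lifting the separating families on each factor through the coordinate projections. This is the most direct route, since it reduces to verifying the defining property of a quasi-Polish space (Definition \ref{def:quasipolish}) rather than going through any intermediate structural characterisation. No obstacle is anticipated; the only care needed is in checking that countability is preserved and that separation propagates correctly through the projections.

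First, for each $i\in\mathbb{N}$, since $Z_i$ is quasi-Polish, fix a sequence $\{f_{i,j}\}_{j\in\mathbb{N}}$ of continuous functions $f_{i,j}\colon Z_i\to[-1,1]$ separating points of $Z_i$. Let $\pi_i\colon\mathfrak{X}\to Z_i$ denote the $i$-th coordinate projection, which is continuous by definition of the product topology. Then I would define
\begin{equation*}
F_{i,j}\colon\mathfrak{X}\to[-1,1],\qquad F_{i,j}:=f_{i,j}\circ\pi_i,\qquad (i,j)\in\mathbb{N}\times\mathbb{N},
\end{equation*}
which is continuous as a composition of continuous maps. Fixing any bijection $\mathbb{N}\times\mathbb{N}\to\mathbb{N}$, we obtain a single sequence $\{F_n\}_{n\in\mathbb{N}}$ enumerating $\{F_{i,j}\}_{(i,j)\in\mathbb{N}^2}$.

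Next, I would verify that $\{F_n\}_{n\in\mathbb{N}}$ separates points of $\mathfrak{X}$. Given $x,y\in\mathfrak{X}$ with $x\ne y$, by definition there exists an index $i_0$ with $\pi_{i_0}(x)\ne\pi_{i_0}(y)$; since $\{f_{i_0,j}\}_{j\in\mathbb{N}}$ separates points of $Z_{i_0}$, there exists $j_0$ with $f_{i_0,j_0}(\pi_{i_0}(x))\ne f_{i_0,j_0}(\pi_{i_0}(y))$, i.e.\ $F_{i_0,j_0}(x)\ne F_{i_0,j_0}(y)$. Together with the continuity established above, this shows that $(\mathfrak{X},\tau_{\mathrm{prod}})$ meets the requirements of Definition \ref{def:quasipolish}, so $\mathfrak{X}$ is quasi-Polish, completing the proof.
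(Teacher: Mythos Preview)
Your proof is correct and follows essentially the same approach as the paper's own: both lift the separating sequences on the factors through the coordinate projections to obtain the maps $f_{i,j}\circ\pi_i$ and then re-enumerate $\mathbb{N}^2$ to get a single sequence. Your version is slightly more explicit in checking continuity and the separation property, but the argument is identical in substance.
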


\begin{proof}
This is immediate on invoking the definition of 
a quasi-Polish---that there is a countable, 
point-separating collection of maps 
$g_n:\mathfrak{X} \to [-1,1]$. Let $\pi_i:\mathfrak{X} \to Z_i$ 
be the $i$th canonical projection. 
Since there is a collection $f_{i,n} : Z_i \to [-1,1]$ 
for the $i$th factor space in $\mathfrak{X}$, the 
maps $\{f_{i,n}\circ \pi_i\}_{(i,n) \in \mathbb{N}^2}$ 
can be reordered to give $\{g_n\}_{n\in \mathbb{N}}$ on 
the product space $\mathfrak{X}$.\footnote{Banakh, Bogachev, 
and Kolesnikov \cite{BBK2005} derives the 
stronger conclusion of the weak Skorokhod property 
instead of the weak sequential Skorokhod property 
derived here, under the stronger assumption of the existence 
of a fundamental sequence of compact sets. 
We do not require this assumption, and our result applies to 
arbitrary countable collections of quasi-Polish spaces.}
\end{proof}

In what follows, we will continue to focus on 
products of quasi-Polish spaces and the measures 
that can be defined on them, starting with some 
subtle issues arising from the general 
non-coincidence of the Borel $\sigma$-algebra 
$\mathcal{B}\left(\prod_i Z_i\right)$ 
and the product Borel $\sigma$-algebra 
$\bigotimes_i \mathcal{B}(Z_i)$. To take an example, in 
Section \ref{sec:equality-of-laws}, we 
implicitly identified $\tilde{u}_0 \in H^1(\T)$ 
with $\tilde{u}(0) \in L^2(\T)$. By the equality of laws, 
the probability law of $\tilde{u}(0)$ is supported on $Z=H^1(\T)$. 
In order to identify $\tilde{u}_0 \in Z$ 
with $\tilde u(0) \in Z$,  we need to ensure 
that the joint law of $\bigl(\tilde{u}_0,\tilde{u}(0)\bigr)$ 
is supported on the diagonal 
$\Delta_{Z \times Z}=\{(z,z)\in Z\times Z:z\in Z\}$. 
For arbitrary topological spaces $Z$, 
this is not always possible, for the 
surprising reason that the diagonal 
$\Delta_{Z \times Z}$, whilst certainly in the 
Borel $\sigma$-algebra $\mathcal{B}(Z\times Z)$, 
is not necessarily in the product Borel 
$\sigma$-algebra $\mathcal{B}(Z)\otimes 
\mathcal{B}(Z)$, for large enough topologies on $Z$ 
(known as \textit{Nedoma's pathology} 
\cite[Chapter 15.9]{Schilling-book:2021}). 
However, this is no impediment in 
quasi-Polish spaces.

\begin{lem}\label{thm:a_1}
Let $Z$ be a quasi-Polish space. 
Then the diagonal $\Delta_{Z\times Z}$ belongs to 
$\mathcal{B}(Z)\times \mathcal{B}(Z)$, i.e., 
the diagonal is measurable.
\end{lem}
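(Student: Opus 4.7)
The plan is to use the defining separating sequence of a quasi-Polish space to write the diagonal as a countable intersection of zero sets of product-measurable functions.

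Since $Z$ is quasi-Polish, by Definition \ref{def:quasipolish} there exists a sequence of continuous functions $\{f_n\}_{n \in \mathbb{N}}$, $f_n : Z \to [-1,1]$, that separates points of $Z$. The key observation, which I would record at the outset, is the point-wise identity
\begin{equation*}
        \Delta_{Z \times Z}
        = \bigcap_{n \in \mathbb{N}}
        \Bigl\{(z_1,z_2) \in Z \times Z \;:\; f_n(z_1) = f_n(z_2)\Bigr\}.
\end{equation*}
Indeed, the inclusion $\subseteq$ is trivial, while $\supseteq$ is precisely the point-separation property of $\{f_n\}$.

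Next, I would argue that each set on the right belongs to $\mathcal{B}(Z) \otimes \mathcal{B}(Z)$. Let $\pi_1, \pi_2 : Z \times Z \to Z$ denote the two canonical projections. By the very definition of the product $\sigma$-algebra, $\pi_i$ is $\mathcal{B}(Z) \otimes \mathcal{B}(Z) / \mathcal{B}(Z)$ measurable. Since each $f_n$ is continuous and hence Borel on $Z$, the composition $f_n \circ \pi_i$ is $\mathcal{B}(Z) \otimes \mathcal{B}(Z)$-measurable as a real-valued function on $Z \times Z$. Therefore the difference $g_n := f_n \circ \pi_1 - f_n \circ \pi_2$ is $\mathcal{B}(Z) \otimes \mathcal{B}(Z)$-measurable, and its zero set $g_n^{-1}(\{0\})$ lies in $\mathcal{B}(Z) \otimes \mathcal{B}(Z)$. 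A countable intersection of such sets remains in $\mathcal{B}(Z) \otimes \mathcal{B}(Z)$, giving the claim.

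There is no real obstacle here; the only subtle point worth flagging is the well-known distinction between $\mathcal{B}(Z \times Z)$ and $\mathcal{B}(Z) \otimes \mathcal{B}(Z)$ alluded to in the discussion preceding the lemma (Nedoma's pathology). The proof sidesteps this subtlety entirely by never invoking joint continuity on $Z \times Z$: we only use the projections together with the separating sequence $\{f_n\}$, both of which respect the product $\sigma$-algebra. Thus the argument is essentially one line once the separation property from Definition \ref{def:quasipolish} is recalled.
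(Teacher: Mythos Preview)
Your proof is correct and is in fact more self-contained than the paper's. The paper constructs a countable family $\mathfrak{C}=\{f_n^{-1}([q,r]):q,r\in\mathbb{Q}\}$ of Borel sets that separates points, and then invokes a theorem of Draveck\'{y} \cite{Drav1975} asserting that any countable point-separating family of measurable sets forces the diagonal into the product $\sigma$-algebra. Your argument effectively proves (the relevant instance of) Draveck\'{y}'s theorem on the spot: having separating \emph{functions} rather than merely separating sets lets you write $\Delta_{Z\times Z}=\bigcap_n g_n^{-1}(\{0\})$ with $g_n=f_n\circ\pi_1-f_n\circ\pi_2$, which is immediate. The paper's route is slightly more general in spirit (it would work if one only had a countable separating family of Borel sets, not continuous functions), but in the quasi-Polish setting your direct argument is cleaner and avoids the external citation.
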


\begin{proof}
Let $\{f_n\}_{n \in \N}$ be the point-separating 
sequence of continuous maps, $f_n:Z \to [-1,1]$. 
Define the following class of subsets of $Z$:
\begin{equation*}
	\mathfrak{C} = \left\{ f_n^{-1}([q,r]):
	q,r\in \mathbb{Q},\,\, 0\leq q < r\leq 1,
	\,\, n\in\N\right \}.
\end{equation*}
The collection  $\mathfrak{C}$ is countable and 
in $\mathcal{B}(Z)$, because $f_n^{-1}([q,r])$ 
is closed. Let $x,y\in Z, x\neq y$. 
Choose $m\in\N$ such that $f_m(y)<f_m(x)$, and two rational 
numbers $0<q<r\leq 1$  such that $f_m(x)\in [q,r]$ 
and $f_m(y)<q$, i.e., $x\in f_m^{-1}([q,r])$ 
and $y\notin f_m^{-1}([q,r])$. As a result,
$\mathfrak{C}$ separates point of $Z$ 
and, by a theorem of Draveck\'{y} \cite[Theorem 1]{Drav1975}, 
the diagonal $\Delta_{Z \times Z}$ is measurable.
\end{proof}

\begin{lem}\label{thm:a_2}
Consider a quasi-Polish space $Z$ with a point-separating 
sequence of continuous maps $\{f_n : Z \to [-1,1]\}_{n\in\N}$, 
and denote by $\mathcal{B}_f$ the $\sigma$-algebra generated by 
$\{f_n\}_{n \in \mathbb{N}}$. Let $\mu:\mathcal{B}_f\to[0,1]$ 
be a tight probability measure. Define the $\sigma$-algebra
$$
\mathcal{B}_\ast(Z):=\left\{  V\subset Z: 
V\cap K \in \mathcal{B}(Z), 
\, \forall K\subset Z \, \text{compact}\right\},
$$
where $\mathcal{B}(Z)$ is the Borel $\sigma$-algebra of $Z$. 
Then there exists a unique Radon extension 
$\lambda :\mathcal{B}_\ast(Z)\to [0,1]$ of $\mu$.
\end{lem}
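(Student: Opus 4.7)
The plan is to exploit the fundamental property that the separating sequence $\{f_n\}$ identifies $Z$ with a subspace of the Polish cube $[-1,1]^{\N}$ and, crucially, induces the same topology as $Z$ on every compact subset. First, using tightness of $\mu$, I would select an increasing sequence of compact sets $K_n\subset Z$ with $\mu(K_n)\uparrow 1$. These compacts are automatically $\mathcal{B}_f$-measurable: the canonical map $F:=(f_k)_{k\in\N}:Z\to [-1,1]^{\N}$ is continuous and injective, so $F(K_n)$ is compact, hence Borel, in $[-1,1]^{\N}$, and $K_n=F^{-1}(F(K_n))\in\mathcal{B}_f$.

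The main step, and the real technical heart of the proof, is to establish that for every compact $K\subset Z$,
\begin{equation*}
        \mathcal{B}_f\cap K \;=\; \mathcal{B}(Z)\cap K,
\end{equation*}
as trace $\sigma$-algebras. This holds because $F|_K:K\to F(K)$ is a continuous bijection from a compact space into the Hausdorff space $[-1,1]^{\N}$, hence a homeomorphism onto its image; since $F(K)$ is metrisable, the two trace topologies on $K$ agree and so do their Borel $\sigma$-algebras. This is precisely the mechanism underpinning Jakubowski's extension of Skorokhod's theorem (cf.~Section~\ref{sec:qPexamples}).

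With this identification at hand, for any $V\in\mathcal{B}_\ast(Z)$ the section $V\cap K_n$ lies in $\mathcal{B}(Z)\cap K_n=\mathcal{B}_f\cap K_n$, so there is $B_n\in\mathcal{B}_f$ with $B_n\cap K_n=V\cap K_n$. I would then define
\begin{equation*}
        \lambda(V):=\lim_{n\to\infty}\mu(B_n\cap K_n).
\end{equation*}
The verifications are then routine: well-definedness (the value depends only on $V\cap K_n=B_n\cap K_n\in\mathcal{B}_f$, not on the choice of $B_n$); monotonicity, and therefore existence of the limit, because $B_n\cap K_n=B_{n+1}\cap K_n\subset B_{n+1}\cap K_{n+1}$; the extension property $\lambda|_{\mathcal{B}_f}=\mu$ by taking $B_n=V$ and using $\mu(K_n)\uparrow 1$; and countable additivity, by exchanging countable unions with $\lim_n$ via the monotone convergence theorem applied to $\mu$ on $\mathcal{B}_f$. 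Radonness (inner regularity) follows because $\mu$ restricted to the metrisable compact $K_n$ is automatically Radon, and $\lambda(V)=\lim_n\mu(V\cap K_n)$ inherits the property globally.

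Uniqueness is the mildest step. Any Radon extension $\lambda'$ of $\mu$ must agree with $\mu$ on $\mathcal{B}_f$, and therefore on each $K_n$; by inner regularity both $\lambda$ and $\lambda'$ are determined by their values on compact subsets of $Z$, and every such compact is contained, up to a $\lambda$-null set, in some $K_n$ by tightness, so $\lambda=\lambda'$. The principal obstacle is Step~2, the coincidence of the trace $\sigma$-algebras on compacts; everything else is an exhaustion argument of Prokhorov flavour.
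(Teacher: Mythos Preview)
Your construction is correct and essentially unpacks what the paper defers to a citation. The paper's own proof is much shorter: it uses tightness to find compacts $K_n\in\mathcal{B}_f$ with $\mu(K_n^c)<n^{-1}$, observes that $K:=\bigcup_n K_n\in\mathcal{B}_f$ carries the full mass of $\mu$, and then simply invokes \cite[Section~3]{Brzezniak:2016wz} for the existence and uniqueness of the Radon extension to $\mathcal{B}_\ast(Z)$. Your argument, by contrast, supplies the actual construction $\lambda(V)=\lim_n\mu(V\cap K_n)$ and verifies its properties directly; the key technical point you isolate---that $\mathcal{B}_f\cap K=\mathcal{B}(Z)\cap K$ on every compact $K$, via the homeomorphism $F|_K$---is exactly the mechanism implicit in the cited reference and in Jakubowski's framework. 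So the two proofs share the same skeleton (tightness $\Rightarrow$ $\sigma$-compact support, coincidence of $\sigma$-algebras on compacts), but yours is self-contained whereas the paper outsources the extension step.

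One small wrinkle in your uniqueness paragraph: the claim that ``every such compact is contained, up to a $\lambda$-null set, in some $K_n$'' is neither true in general nor needed. You already showed that every compact $C\subset Z$ lies in $\mathcal{B}_f$, so any Radon extension $\lambda'$ of $\mu$ satisfies $\lambda'(C)=\mu(C)=\lambda(C)$ on all compacts; inner regularity then forces $\lambda'=\lambda$ on $\mathcal{B}_\ast(Z)$ directly.
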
 

\begin{proof}
Recall that the $\sigma$-algebra generated 
by the compact sets of $Z$ belongs to $\mathcal{B}_f$ 
\cite{Jakubowski:1997aa}. 
Then, for any $n\in\N$, there exists a compact 
$K_n\in\mathcal{B}_f$ such that $\mu(K_n^c)<n^{-1}$. 
Setting $K=\cup_n K_n \in \mathcal{B}_f$, it follows that 
$$
\mu(K^c)=\mu\left(\cap_n K_n^c\right) 
\leq \mu\left(K_n^c\right)<n^{-1},
$$
i.e., $\mu(K^c)=0$. Thus, the support 
of $\mu$ belongs to $K$. By \cite[Section 3]{Brzezniak:2016wz}, 
there exists a unique Radon extension 
$\lambda:\mathcal{B}_\ast(Z)\to [0,1]$ of $\mu$.
\end{proof}

\begin{rem}
Since for each $V \in \mathcal{B}(Z)$ 
and compact $K \subset Z$, $V \cap K \in \mathcal{B}(Z)$, 
it follows from the definition of $\mathcal{B}_\ast(Z)$ 
that $\mathcal{B}(Z)\subset \mathcal{B}_\ast(Z)$.
\end{rem}

Suppose $A_i$, $i\in \N$, lives on a quasi-Polish space 
and is measurable. The next lemma shows 
that $A=\{A_i\}_{i\in \N}$ is measurable with respect 
to the product of the individual Borel $\sigma$-algebras.  
    
\begin{lem}\label{thm:a_3}
Let $\{Z_i\}_{i \in \mathbb{N}}$ be a countable
collection of quasi-Polish spaces, and denote by 
$\mathcal{B}_i=\mathcal{B}(Z_i)$ the Borel 
$\sigma$-algebra on $Z_i$. 
Define $\X=\prod_{i\in \mathbb{N}} Z_i$, 
endowed with the product topology. 
Let $\pi_i:\X\to Z_i$ be the $i$th canonical projection. 
Consider a probability space $(\Omega,\mathcal{F},\mathbb{P})$ 
and  random variables $A_i:(\Omega,\mathcal{F})\to(Z_i,\mathcal{B}_i)$, 
i.e., for each $i$, $A_i$ is $\mathcal{F}/\mathcal{B}_{i}$-measurable.  
Finally, consider the unique map $A:\Omega\to \X$ characterised by
$\pi_i(A(\omega))=A_i(\omega)$ $\forall$ $i$ and $\omega$. 
Then $A$ is $\mathcal{F}/\bigotimes_{i \in \mathbb{N}}
\mathcal{B}_{i}$-measurable.
\end{lem}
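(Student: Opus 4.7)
The plan is to reduce the measurability of $A$ to that of each component $A_i$ by exploiting the universal property of the product $\sigma$-algebra, completely bypassing any topological subtleties associated with the product space. First, I would recall that by definition $\bigotimes_{i\in\N} \mathcal{B}_i$ is the smallest $\sigma$-algebra on $\X$ that renders every canonical projection $\pi_i: \X \to Z_i$ measurable. Equivalently, it coincides with $\Sigma(\mathcal{C})$, where
$$
\mathcal{C} = \left\{\pi_i^{-1}(B_i) : i\in \N,\ B_i \in \mathcal{B}_i \right\}
$$
is the collection of cylinder sets over single coordinates.

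Next, I would invoke the standard criterion that a map into a space equipped with a generated $\sigma$-algebra is measurable as soon as preimages of the generating family are measurable in the domain. Accordingly, the task reduces to verifying that $A^{-1}(C) \in \mathcal{F}$ for every $C \in \mathcal{C}$. For $C = \pi_i^{-1}(B_i)$ with $i \in \N$ and $B_i \in \mathcal{B}_i$, the defining relation $\pi_i \circ A = A_i$ yields
$$
A^{-1}\bigl(\pi_i^{-1}(B_i)\bigr)
= (\pi_i \circ A)^{-1}(B_i)
= A_i^{-1}(B_i),
$$
and the last set lies in $\mathcal{F}$ by the assumed $\mathcal{F}/\mathcal{B}_i$-measurability of $A_i$. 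Combining these two observations delivers the lemma.

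I do not foresee any genuine obstacle, since the quasi-Polish nature of the factors $Z_i$ plays no role here: the argument never touches the Borel $\sigma$-algebra $\mathcal{B}(\X)$ associated with the product topology. The only point worth flagging is precisely this distinction, which is the source of the delicacies emphasised elsewhere in this appendix (cf.\ Lemmas \ref{thm:a_1} and \ref{thm:a_4}). Indeed, had the conclusion been formulated with respect to $\mathcal{B}(\X)$ instead of $\bigotimes_{i\in\N} \mathcal{B}_i$, the proof would need to confront the possible strict inclusion $\bigotimes_{i\in\N} \mathcal{B}_i \subsetneq \mathcal{B}(\X)$ in the countable product; as formulated, the statement sidesteps this issue, and the argument above is complete.
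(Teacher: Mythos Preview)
Your proof is correct and follows essentially the same approach as the paper's: reduce to a generating family for $\bigotimes_{i\in\N}\mathcal{B}_i$ and check preimages directly via the component maps $A_i$. The only cosmetic difference is that the paper uses measurable rectangles $\prod_{i\in\N}E_i$ as generators (writing $\{A\in\prod_i E_i\}=\bigcap_i\{A_i\in E_i\}$), whereas you use single-coordinate cylinders $\pi_i^{-1}(B_i)$; both are standard and the argument is the same.
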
 

\begin{proof}
By countability, $\bigotimes_{i\in\N} \mathcal{B}_{i}$ 
is generated by the family $\left\{\prod_{i\in\N}E_i:
E_i \in \mathcal{B}_{i} \right\}$. It is therefore 
enough to check measurability for these sets only.  
Measurability here is evident, because
$$
\left \{A\in \prod_{i\in\N}E_i \right\}
=\bigcap_{i\in\N}\left\{A_i\in E_i\right\}
\in\mathcal{F}.
$$
\end{proof}

By the previous lemma, $A$ is generally 
only $\mathcal{F}/\bigotimes_{i\in\N} 
\mathcal{B}_{i}$-measurable, and is hence precluded from 
being a random variable with respect to the natural $\sigma$-algebra 
on $\mathfrak{X}$, namely the Borel 
$\sigma$-algebra $\mathcal{B}(\mathfrak{X})$ (in 
which case the term ``random mapping" is used), 
because generally for quasi-Polish spaces
we only have $\bigotimes_{i\in\N} \mathcal{B}_{i}
\subset \mathcal{B}(\mathfrak{X})$, where 
$\mathcal{B}(\mathfrak{X})$ is the Borel $\sigma$-algebra 
on $\mathfrak{X}$ with the product topology. 
Fortunately, in applications with random mappings 
whose laws are tight, this is not a major problem, 
for the reason conferred about in 
Remark \ref{rem:cont-map-measurable} below. 

We conclude this section by clarifying the 
relationship between the measures defined 
via restrictions and extensions on the hierarchy 
of $\sigma$-algebras introduced so far. 
Given a random variable $A$ on $(\Omega,\mathbb{P})$, let us
denote by $\mathbb{P}_A$ its law $\mathbb{P}\circ A^{-1}$.
    
\begin{lem}\label{thm:a_4}
Let $\{Z_i\}_{i \in \mathbb{N}}$, $\mathfrak{X}$, 
$\mathcal{B}_{i}$, $\mathcal{B}(\mathfrak{X})$, $\pi_i$ 
be defined as in Lemma \ref{thm:a_3}. 
For each $i \in \mathbb{N}$, let 
$\left\{f_{i,n}:Z_i \to [-1,1]\right\}_{n \in \mathbb{N}}$ 
be the point-separating sequence of continuous 
maps linked to $Z_i$. For $i,n\in \N$, define
$h_{i,n}=f_{i,n}\circ \pi_i$ and denote by
$\mathcal{B}_f$ the $\sigma$-algebra generated by 
$\left\{h_{i,n}\right\}_{(i,n)\in \mathbb{N}^2}$. 
Finally, set 
$$
\mathcal{B}_\ast(\mathfrak{X})
=\bigl\{V\subset \mathfrak{X}:V\cap K \in 
\mathcal{B}(\mathfrak{X}), 
\, \forall K\subset \mathfrak{X}
\, \text{compact}\bigr\}.
$$
For each $i\in\mathbb{N}$, let 
$A_{i,\nu}:\bigl(\Omega,\mathcal{F},\mathbb{P}\bigr)
\to (Z_i,\mathcal{B}_i)$ be a family of random variables, 
indexed over $\nu\in (0,1)$, with a corresponding 
tight family of laws $\left\{\mu_{i,\nu}\right\}_{0<\nu<1}$. 
Let $A_\nu:\Omega \to \mathfrak{X}$ be uniquely 
characterised by $\pi_i(A_\nu(\omega))=A_{i,\nu}(\omega)$ for all 
$i$ and $\omega$, and denote by $\eta_\nu$ the law of 
$A_\nu$ restricted to $\mathcal{B}_f$. Finally, denote 
by $\mathcal{K}$ the family of compact subsets of $\X$.
\begin{itemize}
	\item[(i)] We have the inclusions
	\begin{equation*}
		\Sigma(\mathcal{K}) \subset 
		\mathcal{B}_f\subset 
		\bigotimes_{i\in \mathbb{N}} \mathcal{B}_i 
		\subset \mathcal{B}(\mathfrak{X}) 
		\subset \mathcal{B}_\ast(\X).
	\end{equation*}

	\item[(ii)] For each $\nu$, the law $\eta_\nu$ of $A_\nu$ 
	can be uniquely extended to $\mathcal{B}_\ast(\X)$ 
	as a Radon probability measure $\lambda_\nu$. The family 
	$\left\{\lambda_\nu\right\}_{0<\nu<1}$ is tight.

	\item[(iii)] The restriction of $\lambda_\nu$ to 
	$\bigotimes_{i\in \mathbb{N}}\mathcal{B}_i$ 
	is $\mathbb{P}_{A_\nu}$.
\end{itemize}
\end{lem}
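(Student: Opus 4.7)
The proof divides along the three labelled parts, with the first two being essentially routine set-theoretic verifications and the third containing the only genuine subtlety. For part (i), the four inclusions will be established in turn. The containment $\Sigma(\mathcal{K}) \subset \mathcal{B}_f$ is a standard fact due to Jakubowski \cite{Jakubowski:1997aa}: via the injection $\mathfrak{X} \hookrightarrow [-1,1]^{\mathbb{N}}$ furnished by the countable separating family $\{h_{i,n}\}$, every compact subset of $\mathfrak{X}$ is metrisable and hence $\mathcal{B}_f$-measurable. The inclusion $\mathcal{B}_f \subset \bigotimes_i \mathcal{B}_i$ is immediate, since each $h_{i,n} = f_{i,n} \circ \pi_i$ is built from a coordinate projection composed with a continuous function on a single factor. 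That $\bigotimes_i \mathcal{B}_i \subset \mathcal{B}(\mathfrak{X})$ follows because the product topology is generated by preimages $\pi_i^{-1}(U)$ of open sets, so each cylinder over a Borel set lies in $\mathcal{B}(\mathfrak{X})$; a monotone-class argument then delivers the whole inclusion. The final inclusion $\mathcal{B}(\mathfrak{X}) \subset \mathcal{B}_\ast(\mathfrak{X})$ holds by definition.

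For (ii), the plan is to verify that $\eta_\nu$ is tight on $\mathcal{B}_f$ uniformly in $\nu$, and then invoke Lemma \ref{thm:a_2}. Fix $\kappa>0$ and use the tightness of each marginal family $\{\mu_{i,\nu}\}_{\nu}$ to select compact sets $K_i \subset Z_i$ satisfying $\mu_{i,\nu}(K_i^c) < \kappa/2^i$ for all $\nu$. By Tychonoff's theorem, $K := \prod_i K_i$ is compact in $\mathfrak{X}$, and by part (i) it lies in $\mathcal{B}_f$. A union bound gives $\eta_\nu(K^c) \leq \sum_i \mathbb{P}(A_{i,\nu} \in K_i^c) < \kappa$, uniformly in $\nu$. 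Lemma \ref{thm:a_2} then delivers a unique Radon extension $\lambda_\nu$ of $\eta_\nu$ to $\mathcal{B}_\ast(\mathfrak{X})$. The same compact sets $K$ witness tightness of the extended family $\{\lambda_\nu\}_{\nu}$, because $\lambda_\nu$ agrees with $\eta_\nu$ on $K$ (which is $\mathcal{B}_f$-measurable).

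For (iii), I will identify $\lambda_\nu$ with $\mathbb{P}_{A_\nu}$ on $\bigotimes_i \mathcal{B}_i$ by a two-sided inner-regularity argument. Fix $V \in \bigotimes_i \mathcal{B}_i$; by (i) this set also lies in $\mathcal{B}_\ast(\mathfrak{X})$. Since $\lambda_\nu$ is Radon, $\lambda_\nu(V) = \sup\{\lambda_\nu(K) : K \subset V, \, K \text{ compact}\}$. Every such compact $K$ belongs to $\mathcal{B}_f$ by (i), so $\lambda_\nu(K) = \eta_\nu(K) = \mathbb{P}_{A_\nu}(K) \leq \mathbb{P}_{A_\nu}(V)$; passing to the supremum yields $\lambda_\nu(V) \leq \mathbb{P}_{A_\nu}(V)$. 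Applying the identical argument to $V^c \in \bigotimes_i \mathcal{B}_i$ gives $\lambda_\nu(V^c) \leq \mathbb{P}_{A_\nu}(V^c)$. Since $\lambda_\nu$ and $\mathbb{P}_{A_\nu}$ are both probability measures, summing the two inequalities yields $1 \leq 1$, which forces equality in both.

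The principal subtlety is concentrated in (iii). The measure $\mathbb{P}_{A_\nu}$ on $\bigotimes_i \mathcal{B}_i$ is defined intrinsically as a pushforward and is not a priori inner regular with respect to compact sets, while $\lambda_\nu$ is defined on the strictly larger algebra $\mathcal{B}_\ast(\mathfrak{X})$ via the Radon extension mechanism of Lemma \ref{thm:a_2}. A naive attempt to equate the two measures through agreement on a $\pi$-system generating $\bigotimes_i \mathcal{B}_i$ is obstructed by the fact that $\mathcal{B}_f$ need not generate $\bigotimes_i \mathcal{B}_i$ in a quasi-Polish (as opposed to Polish) setting. The decisive device is the pairing of inner regularity of $\lambda_\nu$ with the complementary inequality applied to $V^c$, which upgrades one-sided approximation to a bona fide identification and circumvents the absence of inner regularity for $\mathbb{P}_{A_\nu}$ on the product algebra.
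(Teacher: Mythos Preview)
Your proof is correct and follows essentially the same approach as the paper's own argument: the same chain of inclusions in (i) with the same justifications, the Tychonoff-plus-union-bound construction of compact sets in (ii) followed by the Radon extension of Lemma~\ref{thm:a_2}, and in (iii) the inner-regularity bound $\lambda_\nu(V)\le \mathbb{P}_{A_\nu}(V)$ paired with its complementary version on $V^c$ to force equality. Your closing paragraph correctly isolates where the genuine content lies.
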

        
\begin{rem}
Part (iii) of Lemma \ref{thm:a_4} can be summed 
up in the assertion that the diagram below commutes:
$$
\begin{tikzcd}
\mathbb{P}_{A_\nu} \arrow[r, "{\rm id }"] 
\arrow[d, "|_{\mathcal{B}_f}"] 
& {\lambda_\nu}\Big|_{\bigotimes_i \mathcal{B}_i} 
\\ \eta_\nu \arrow[r, "\mathrm{ext}"] & 
\lambda_\nu \arrow[u, "|_{\bigotimes_i \mathcal{B}_i}"]                      
\end{tikzcd}
$$
Here, $\mathrm{ext}$ denotes the 
extension to $\mathcal{B}_\ast(\X)$.
\end{rem}

\begin{proof}
We divide the proof into three natural steps.

\medskip

\noindent \textit{Claim (i).} Given any Borel 
set $B \subset [-1,1]$,
\begin{align*}
	h_{i,n}^{-1}(B)
	& =\left\{h_{i,n}\in B\right\}
	=\left\{\pi_i\in f_{i,n}^{-1}(B)\right\} 
	\\ & 
	= Z_1\times\cdots \times Z_{i-1}\times f_{i,n}^{-1}(B)
	\times Z_{i+1}\times \cdots
	\in \bigotimes_{i\in\N} \mathcal{B}_{i}.
\end{align*}
By construction, we infer 
$\mathcal{B}_f\subset \bigotimes_{i\in\N} \mathcal{B}_{i} 
\subset \mathcal{B}(\X)$. The inclusion 
$\mathcal{B}(\X) \subset \mathcal{B}_\ast(\X)$ 
is justified in the proof of Lemma \ref{thm:a_2}. 
The final inclusion $\Sigma(\mathcal{K})\subset \mathcal{B}_f$ 
is recorded in \cite[Section 3]{Brzezniak:2016wz}; it follows 
from the fact that the topology of $Z$ and the topology 
induced by the separating sequence 
$\left\{h_{i,n}\right\}_{(i,n)\in \mathbb{N}^2}$ 
coincide on compact subsets.
   
\medskip

\noindent \textit{Claim (ii).} For each fixed $i$, by 
tightness of the laws $\left\{\mu_{i,\nu}\right\}_{\nu}$ 
of $\left\{ A_{i,\nu}\right\}_{\nu}$, 
for each $\eps>0$, there exists a compact 
set $Z_{i,\eps}\subset Z_i$ such that
$$
\sup_\nu \mathbb{P}_{A_{i,\nu}}
\left(Z_{i,\eps}^c\right)< 2^{-i}\eps.
$$
Set $Z_\eps=\prod_{i\in\N}Z_{i,\varepsilon}$,
which is a compact subset of $\X$ by the Tychonoff theorem. 
Moreover, $Z_\varepsilon$ belongs to $\mathcal{B}_f$. 
By the inclusion
$$
\bk{\prod_{i\in\N}Z_{i,\varepsilon}}^c 
\subset \bigcup_i \X_1\times\cdots
\times\X_{i-1}\times 
Z_{i,\varepsilon}^c\times\X_{i+1}\times\cdots
$$
and the sub-additivity of measures, we 
deduce that, uniformly in $\nu$,
\begin{align*}
	\mathbb{P}_{A_\nu}\left(Z_\varepsilon^c\right) 
	& \leq \sum_{i=1}^\infty 
	\mathbb{P}_{A_\nu}\left(\X_1\times\cdots
	\times\X_{i-1}\times Z_{i,\varepsilon}^c
	\times\X_{i+1}\times\cdots\right)
	\\ & = \sum_{i=1}^\infty 
	\mathbb{P}_{A_{i,\nu}}\left(Z_{i,\varepsilon}^c\right)
	<\varepsilon,
\end{align*}
where $\mathbb{P}_{A_\nu}$ is the law 
of $A_\nu$ on $\bigotimes_{i\in\N} \mathcal{B}_{i}$. 
It follows that  $\left\{\mathbb{P}_{A_\nu}\right\}_{0<\nu<1}$ 
is tight on $\bigotimes_{i\in\N} \mathcal{B}_{i}$ 
and, \textit{a fortiori}, 
$\left\{\mathbb{P}_{A_\nu}\big|_{\mathcal{B}_f}
\right\}_{0<\nu<1}$ is tight on $\mathcal{B}_f$.

\medskip

\noindent \textit{Claim (iii).} Set $\Bar{\lambda}_\nu=\lambda_\nu
\big|_{\bigotimes_{i\in\N}\mathcal{B}_{i}}$ and let 
$F\in\bigotimes_{i\in\N}\mathcal{B}_{i}$ be arbitrary. 
By definition, for any compact $K\in\mathcal{K}$, 
$$
\Bar{\lambda}_\nu(K) = \lambda_\nu(K)
=\eta_\nu (K)=\mathbb{P}_{A_\nu}(K).
$$
In particular, if $K\subset F$, then 
$\mathbb{P}_{A_\nu}(F) \geq \mathbb{P}_{A_\nu}(K)$ and 
$\mathbb{P}_{A_\nu}(F)\geq \Bar{\lambda}_\nu(K)
=\lambda_\nu(K)$. Therefore,
$$
\mathbb{P}_{A_\nu}(F)\geq 
\sup_{K\in\mathcal{K},K\subset F} 
\Bar{\lambda}_\nu(K)
=\sup_{K\in\mathcal{K},K\subset F} 
\lambda_\nu(K). 
$$
Since $\lambda_\nu$ is Radon on $\mathcal{B}_\ast(\X)$,
$$
\mathbb{P}_{A_\nu}(F)
\geq \lambda_\nu(F)=\bar{\lambda}_\nu(F), 
\quad F\in \bigotimes_{i\in\N}\mathcal{B}_{i}.
$$
Using the arbitrariness of $F$ by considering 
$\mathfrak{X}\backslash F$ in place of $F$, this 
majorisation implies $\mathbb{P}_{A_\nu}
=\Bar{\lambda}_\nu=\lambda_\nu
\big|_{\bigotimes_{i\in\N}\mathcal{B}_{i}}$.
Nevertheless, observe that this does 
not imply that $A_\nu$ becomes either 
$\mathcal{F}/\mathcal{B}_\ast(\X)$ measurable 
or $\mathcal{F}/\mathcal{B}(\mathfrak{X})$ measurable.
\end{proof}

The next remark is important and 
used extensively throughout the paper.  

\begin{rem}\label{rem:cont-map-measurable}
Even though $A_\nu$ is not 
$\mathcal{F}/\mathcal{B}(\mathfrak{X})$ measurable 
in general, we still have the following 
crucial fact: because 
$\left\{\prob_{A_\nu}\right\}_{0<\nu<1}$ is tight, 
as soon as we assume that the original probability space
$\bigl(\Omega,\mathcal{F},\mathbb{P}\bigr)$ 
is complete, it follows that 
$$
f\circ A_\nu \quad \text{is} 
\quad \text{$\mathcal{F}/\mathcal{B}(\R^k)$ 
measurable},
$$
for any continuous function $f$ 
from $\mathfrak{X}$ to $\R^k$, see 
\cite[page 170]{Jakubowski:1997aa} 
for further details. 
\end{rem}

\subsection{The Skorokhod--Jakubowski theorem}\label{sec:SJThm_appendix}

We recall the following result due 
to Jakubowski \cite[Theorem 2]{Jakubowski:1997aa}.

\begin{thm}[Jakubowski]\label{thm:Jakubowski}
Let $Z$ be a quasi-Polish space. Consider 
a sequence $Y_j:\bigl(\Omega,\mathcal{F},\mathbb{P}\bigr) 
\to \bigl(Z,\mathcal{B}_Z\bigr)$ of random mappings 
with a tight sequence of laws $\mu_j$, $j\in\N$. 
Then there exist a subsequence $\left\{Y_{j_k}\right\}_{k\in \N}$ 
and $Z$-valued random variables $V_0,V_1,V_2,\dots$, 
defined on $\bigl([0,1],\mathcal{B}([0,1]),
\mathrm{Leb}\bigr)$, where $\mathrm{Leb}$ is 
the Lebesgue measure, such that
$$
Y_{j_k}\sim V_k,\quad k\in \N,
\qquad
V_k(\xi)\tok V_0(\xi)\quad 
\text{for a.e.~$\xi\in [0,1]$.}
$$
\end{thm}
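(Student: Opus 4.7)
The plan is to reduce the statement to the classical Skorokhod representation theorem on the compact metrisable (hence Polish) space $Y := [-1,1]^{\mathbb{N}}$, and then lift the almost sure convergence on $Y$ back to $Z$ by exploiting the fact that the $\{f_n\}$-topology coincides with the original topology of $Z$ on compact subsets.

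First, define the canonical embedding $\Phi: Z \to Y$ by $\Phi(z) = (f_n(z))_{n\in\N}$. Since $\{f_n\}$ separates points, $\Phi$ is injective; since each $f_n$ is continuous, $\Phi$ is continuous. Pushing forward, set $\nu_j := \Phi_*\mu_j$, a sequence of Borel probability measures on the Polish space $Y$. I would then verify that tightness of $\{\mu_j\}$ on $Z$ transfers to tightness of $\{\nu_j\}$ on $Y$, with the stronger conclusion that any weak limit point of $\{\nu_j\}$ concentrates on the Borel set $\Phi(Z) \subset Y$ (Borelness of $\Phi(Z)$ follows from the quasi-Polish Kuratowski--Lusin--Souslin theorem, Lemma \ref{thm:LS4qP}). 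Indeed, if $K_m \subset Z$ are compact with $\sup_j \mu_j(K_m^c) \to 0$ as $m\to\infty$, then $\Phi(K_m)$ is compact in $Y$ (continuous image of a compact set), $\Phi(K_m) \subset \Phi(Z)$, and $\sup_j \nu_j(\Phi(K_m)^c) \to 0$.

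Second, apply Prokhorov's theorem on the Polish space $Y$ to extract a subsequence $\nu_{j_k} \weakstar \nu_\infty$, where $\nu_\infty$ is concentrated on $\Phi(Z)$ by the previous paragraph. The classical Skorokhod representation theorem then supplies $Y$-valued random variables $\tilde{Y}_0, \tilde{Y}_1, \tilde{Y}_2, \ldots$ on $\bigl([0,1],\mathcal{B}([0,1]),\mathrm{Leb}\bigr)$ with $\tilde{Y}_k \sim \nu_{j_k}$, $\tilde{Y}_0 \sim \nu_\infty$, and $\tilde{Y}_k(\xi) \to \tilde{Y}_0(\xi)$ for almost every $\xi\in [0,1]$. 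In particular, for a.e.~$\xi$, all the $\tilde{Y}_k(\xi)$ lie in a single compact set $\Phi(K_m(\xi))$ for some $m(\xi)$, which one extracts via a Borel--Cantelli argument using the tightness quantitative estimate $\sum_m \sup_j \mu_j(K_m^c) < \infty$ (choosing the $K_m$ suitably).

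Third, lift to $Z$. The restriction $\Phi|_{K_m} : K_m \to \Phi(K_m)$ is a continuous bijection between compact Hausdorff spaces, hence a homeomorphism. Define the $Z$-valued random variables $V_k(\xi) := \Phi^{-1}(\tilde{Y}_k(\xi))$ wherever $\tilde{Y}_k(\xi) \in \Phi(Z)$, and arbitrarily (say, at a fixed base point) on the remaining null set. Measurability of $V_k$ follows from the fact that $\Phi^{-1}$ restricted to each compact $\Phi(K_m)$ is continuous, together with standard extension arguments on the measurable cover $\bigcup_m \Phi(K_m)$ (which carries full $\nu_\infty$-mass and full $\nu_{j_k}$-mass along the subsequence). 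By construction, $V_k \sim \mu_{j_k}$, i.e., $V_k \sim Y_{j_k}$, and for a.e.~$\xi$ the points $V_k(\xi)$ eventually sit in a common compact $K_{m(\xi)}$ on which the $\{f_n\}$-topology and the original topology agree; therefore convergence $\tilde{Y}_k(\xi) \to \tilde{Y}_0(\xi)$ in $Y$ translates into $V_k(\xi) \to V_0(\xi)$ in $Z$.

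The main obstacle is the third step: carefully defining $V_k$ as a genuine $Z$-valued random variable (rather than just a map defined on a full-measure set) and checking its measurability, since $\Phi^{-1}$ is only defined on $\Phi(Z) \subset Y$ and $Z$ itself need not be Borel in $Y$ under the full $\{f_n\}$-topology. This is exactly where the quasi-Polish structure is used, both through Lemma \ref{thm:LS4qP} (to see $\Phi(K_m)$ as a Borel subset of $Y$) and through the coincidence of topologies on compact sets (which makes $\Phi^{-1}|_{\Phi(K_m)}$ continuous, hence Borel). Once this is handled, the a.s.~convergence $V_k \to V_0$ in $Z$ follows routinely from the classical Skorokhod conclusion on $Y$.
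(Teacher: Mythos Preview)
The paper does not give a proof of this theorem; it is simply recalled from \cite[Theorem~2]{Jakubowski:1997aa}, so there is no in-paper argument to compare against. Your overall strategy---embed via $\Phi=(f_n)_n$ into the compact Polish cube $Y=[-1,1]^{\mathbb N}$, apply the classical Skorokhod theorem there, then pull back using that $\tau$ and $\tau_f$ agree on compacta---is indeed the skeleton of Jakubowski's original proof.

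Two issues, however. The minor one: Lemma~\ref{thm:LS4qP} does not yield that $\Phi(Z)$ is Borel in $Y$. That lemma concerns a continuous injection of a \emph{Polish} space into a quasi-Polish one, whereas here $Z$ is only quasi-Polish, and in general $\Phi(Z)$ need not be Borel in $Y$. This is harmless for your argument, since you only need the $\sigma$-compact set $\bigcup_m \Phi(K_m)$ (a countable union of closed sets) to carry full mass for every $\nu_{j_k}$ and for $\nu_\infty$, and that follows directly from tightness and the Portmanteau theorem.

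The substantive gap is the Borel--Cantelli step. You claim that, for a.e.~$\xi$, all the $\tilde Y_k(\xi)$ lie in a single compact $\Phi(K_{m(\xi)})$, and that this follows from $\sum_m \sup_j \mu_j(K_m^c)<\infty$. It does not. To run Borel--Cantelli over $m$ you would need $\sum_m \mathbb P(A_m)<\infty$ with $A_m=\bigcup_{k\ge 1}\{\tilde Y_k\notin \Phi(K_m)\}$, but the only available estimate is the union bound $\mathbb P(A_m)\le \sum_k \mu_{j_k}(K_m^c)$, an infinite sum of terms each bounded by $\sup_j\mu_j(K_m^c)$; summability of the latter over $m$ is irrelevant. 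Running Borel--Cantelli instead over $k$ (with events $\{\tilde Y_k\notin\Phi(K_k)\}$) only gives $\tilde Y_k(\xi)\in\Phi(K_k)$ eventually, which places the tail of the sequence in ever-growing compacta and yields no $\tau$-relative compactness. Since the Skorokhod construction gives no joint control of the $\tilde Y_k$, there is no cheap repair. This is exactly the obstruction that makes Jakubowski's theorem nontrivial: $\Phi^{-1}$ is not $\tau$-continuous on all of $\Phi(Z)$, so a.s.\ convergence in $Y$ cannot be lifted to $(Z,\tau)$ by a direct pullback. Jakubowski's actual proof handles this step by a more delicate subsequence argument; you should consult \cite{Jakubowski:1997aa} for the details rather than rely on the Borel--Cantelli heuristic.
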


Recently, the Jakubowski theorem was used by many authors 
to prove existence of solutions to various classes 
of SPDEs, see Section \ref{sec:intro} for a 
few references. Here we only recall the first 
works \cite{Ondrejat:2010aa,Brzezniak:2013ab}.

The following simple but useful lemma is
deployed in the proof of Theorem \ref{thm:skorohod}.  

\begin{lem}[a.s.~representations of 
nonlinear compositions]\label{lem:SJ-nonlinear}
Let $Z,W$ be quasi-Polish spaces, and 
suppose $F: Z \to W$ is a Borel function. 
Consider a sequence $\left\{Y_j\right\}_{j=1}^\infty$ 
of $Z$-valued random variables on 
$\bigl(\Omega,\mathcal{F},\mathbb{P}\bigr)$. 
Denote by $\bigl(V_k,\widetilde{F}_k\bigr)$ 
the a.s. representations of $\bigl(Y_j,F(Y_j)\bigr)$, 
see Theorem \ref{thm:Jakubowski}. Then 
$$
\widetilde{F}_k=F(V_{n_k}), 
\quad \text{a.s.}, \quad 
k\in \N.
$$
\end{lem}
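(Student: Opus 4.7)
The plan is to argue that the joint laws of $(Y_j, F(Y_j))$ are concentrated on the graph of $F$, and that this concentration is preserved under the Jakubowski a.s.~representation, forcing the identification $\widetilde{F}_k = F(V_{n_k})$ pointwise almost surely. The product space $Z \times W$ is itself quasi-Polish by Lemma \ref{thm:a_0}, so Theorem \ref{thm:Jakubowski} applies to the sequence $(Y_j, F(Y_j))$ provided its joint laws are tight (which is guaranteed by the very existence of the representations $(V_k, \widetilde{F}_k)$).

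First I would introduce the graph
\begin{equation*}
	\Gamma_F = \{(z,w) \in Z \times W : w = F(z)\}
\end{equation*}
and show that $\Gamma_F$ is measurable with respect to the product $\sigma$-algebra $\mathcal{B}(Z) \otimes \mathcal{B}(W)$. To this end, observe that the map
$\phi : Z\times W \to W\times W$, $\phi(z,w) = (F(z),w)$, is $\mathcal{B}(Z)\otimes \mathcal{B}(W) / \mathcal{B}(W)\otimes \mathcal{B}(W)$-measurable because $F$ is Borel. Since $W$ is quasi-Polish, Lemma \ref{thm:a_1} guarantees that the diagonal $\Delta_{W\times W}$ lies in $\mathcal{B}(W)\otimes \mathcal{B}(W)$, and hence $\Gamma_F = \phi^{-1}(\Delta_{W\times W}) \in \mathcal{B}(Z)\otimes \mathcal{B}(W)$.

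Next, by construction, $(Y_j,F(Y_j))(\omega) \in \Gamma_F$ for every $\omega \in \Omega$, so
\begin{equation*}
	\mathbb{P}\bigl((Y_j,F(Y_j)) \in \Gamma_F\bigr) = 1.
\end{equation*}
Appealing to Lemma \ref{thm:a_3} and the equality of the joint laws furnished by Theorem \ref{thm:Jakubowski} on the product $\sigma$-algebra (the relevant $\sigma$-algebra containing $\Gamma_F$), we transfer this full-probability event to the new probability space and obtain
\begin{equation*}
	\mathbb{P}\bigl((V_k,\widetilde{F}_k) \in \Gamma_F\bigr) = 1
	\quad \Longleftrightarrow \quad
	\widetilde{F}_k = F(V_k) \text{ a.s.}, \quad k \in \N.
\end{equation*}

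The only nontrivial point in this plan is the measurability of $\Gamma_F$ in the product $\sigma$-algebra; this is where the quasi-Polish structure enters essentially, through Lemma \ref{thm:a_1}. Once $\Gamma_F$ is in $\mathcal{B}(Z)\otimes \mathcal{B}(W)$, everything else reduces to a routine transfer of a full-measure event via equality of laws, and no additional tightness or topological arguments are required.
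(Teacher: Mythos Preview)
Your proof is correct and follows essentially the same route as the paper's: both define the map $(z,w)\mapsto (F(z),w)$ (your $\phi$, the paper's $H$), pull back the diagonal $\Delta_{W\times W}$ via Lemma~\ref{thm:a_1} to obtain the graph $\Gamma_F\in\mathcal{B}(Z)\otimes\mathcal{B}(W)$, and then transfer the full-measure event through equality of laws. The paper presents the argument in two steps (measurability of $H$, then the transfer), but the substance is identical to what you wrote.
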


\begin{proof}
We divide the proof into two steps.

\medskip

\noindent \textit{Step 1}. Let $(Z,\mathcal{B}_Z)$ 
and $(W,\mathcal{B}_W)$ be quasi-Polish 
spaces with $\sigma$-algebras $\mathcal{B}_Z$ and 
$\mathcal{B}_W$. Consider a mapping $F:Z\to W$ that 
is $\mathcal{B}_Z / \mathcal{B}_W$ measurable. 
Define the mapping $H:Z\times W \to W\times W$ by 
$$
(z,w)\mapsto 
H(z,w)=\bigl(H_1(z,w),H_2(z,w)\bigr)=(F(z),w).
$$
Then $H$ is $\mathcal{B}_Z\otimes\mathcal{B}_W/
\mathcal{B}_W\otimes\mathcal{B}_W$ measurable. The 
validity of this claim comes from the 
$\mathcal{B}_Z\otimes\mathcal{B}_W/\mathcal{B}_W$ measurability 
of the coordinate mappings $(z,w)\stackrel{H_1}{\mapsto} F(z)$ and 
$(z,w)\stackrel{H_2}{\mapsto} w$, 
see, e.g., ~\cite[Lemma 1.9]{Kallenberg-book:2021}.

\medskip

\noindent \textit{Step 2}. Consider three random mappings
$U:\big(\Omega,\mathcal{F},\mathbb{P}\bigr)
\to (Z,\mathcal{B}_Z)$, 
$U':\bigl(\Omega',\mathcal{F}',\mathbb{P}'\bigr)
\to (Z,\mathcal{B}_Z)$, 
and $V:\bigl(\Omega',\mathcal{F}',\mathbb{P}')
\to (W,\mathcal{B}_W)$. Suppose $F:Z\to W$ 
is a (deterministic) mapping that is 
$\mathcal{B}_Z / \mathcal{B}_W$ measurable. 
If $\bigl(U,F(U)\bigr)\sim (U',V)$, then 
$V=F(U')$, a.s. It remains to prove 
this assertion, which implies the claim of the lemma. 

By Step 1 and the measurability of 
compositions of measurable mappings, we conclude that
$H(U,F(U))$ is $\mathcal{F}/\mathcal{B}_W\otimes\mathcal{B}_W$
and $H(U',V)$ is $\mathcal{F}'/\mathcal{B}_W\otimes\mathcal{B}_W$. 
Moreover, we have $H(U,F(U))\sim H(U',V)$. 
Since the diagonal $\Delta_{W\times W}$ belongs 
to $\mathcal{B}_W\otimes\mathcal{B}_W$, 
cf.~Lemma \ref{thm:a_1}, we obtain
$$
\mathbb{P}\bigl(\left\{\omega: H(U,F(U))
\in \Delta_{W\times W}\right\}\bigr) 
=\mathbb{P}'\bigl(\left\{\omega': H(U',V)
\in \Delta_{W\times W}\right\}\bigr).
$$
Trivially, $H(U,F(U))\equiv \bigl(F(U),F(U)\bigr)
\in\Delta_{W\times W}$, and whence
$$
1=\mathbb{P}'\bigl(\left\{\omega': H(U',V)
\in \Delta_{W\times W}\right\}\bigr)
=\mathbb{P}'\bigl(\left\{\omega':
\bigl(F(U'),V\bigr)\in 
\Delta_{W\times W}\right\}\bigr).
$$
This shows that $V=F(U')$, thereby 
ending the proof of the lemma. 
\end{proof}

\begin{rem}\label{rem:compact_relcompact}
Theorem \ref{thm:Jakubowski} applies to
tight sequences of probability measures, where tightness
implies that the global behaviour of the measures 
``concentrates" on a compact set. Since we are 
not generally working in a metric space setting, 
to prove that a subset $K$ is compact, one would a priori 
be required to use nets rather than sequences. 
However, an essential property of quasi-Polish spaces is 
that one can restrict considerations to sequences; as 
a matter of fact, a subset $K$ of a quasi-Polish space 
is compact if and only if it is sequentially 
compact \cite{Jakubowski:1997aa}.
\end{rem}

The coincidence of compactness and sequential compactness 
is not necessarily inherited by the relativised notions of 
relative compactness and relative sequential compactness. 
Using sequences is advantageous when assessing 
the precompactness of subsets, as per the application 
of the Skorokhod--Jakubowski theorem. 
In these situations, we want to know that the 
closure of relatively sequentially compact 
subsets is at least sequentially compact. 
Let us delve into supplementary structures that quasi-Polish 
spaces must possess, in order to ensure the 
coincidence of relative compactness and 
relative sequential compactness. 

We recall first that a subset $A$ of a 
Hausdorff space $(\X,\tau)$ is 
\begin{itemize}
	\item[a.] \textit{relatively  compact} if the closure 
	of $A$ is compact in $\X$;
	
	\item[b.]\textit{relatively countably compact} 
	if each sequence in $A$ has a cluster point in $\X$;
	
	\item[c.] \textit{relatively sequentially compact} if each 
	sequence in $A$ has a convergent sub-sequence with limit in $\X$.
\end{itemize}

The requisite additional structure is the following: 

\begin{defin}[{\cite[page 30]{Flo1980}}]\label{def:angelic}
A topological Hausdorff space $(\X,\tau)$ is \textit{angelic} 
if for every relatively countably compact 
set $A\subset\X$ the following holds:
\begin{enumerate}
	\item[(i)] $A$ is relatively compact;
	
	\item[(ii)] For each $x\in \overline{A}$ there 
	is a sequence in $A$ which converges to $x$.
\end{enumerate}
\end{defin}

Angelic spaces have several remarkable properties:

\begin{lem}[{\cite[Lemma 3.1, Theorem 3.3]{Flo1980}}]
In any angelic space, 
\begin{enumerate}
	\item[(i)] compact, countably compact and 
        sequentially compact subsets coincide;

        \item[(ii)] relatively compact, relatively 
        countably compact and relatively sequentially 
        compact subsets coincide.
\end{enumerate}
\end{lem}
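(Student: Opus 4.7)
The plan is to derive both statements directly from the two defining properties of an angelic space, by organising the standard implications between the three compactness notions and using the definition to fill in the missing ones. In any Hausdorff space one has the trivial chains: compact $\Rightarrow$ countably compact, sequentially compact $\Rightarrow$ countably compact, and the corresponding relativised chains. So in each part only the arrows going \emph{from} countable compactness need work, and these are precisely what the angelic hypothesis is designed to supply.

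I would start with (ii), which is the cleaner of the two. Let $A\subset\X$ be relatively countably compact. By Definition \ref{def:angelic}(i), $A$ is relatively compact, giving one direction for free. Conversely, if $A$ is relatively compact, then $\overline{A}$ is compact, hence countably compact, so every sequence in $A\subset\overline A$ has a cluster point in $\overline A\subset\X$; thus $A$ is relatively countably compact. For the sequential version, suppose $A$ is relatively countably compact and pick a sequence $(x_n)\subset A$. Let $B=\{x_n:n\in\N\}$, which is itself relatively countably compact, so by the countable-compactness assumption applied to $A$ there is a cluster point $x\in\X$ of $(x_n)$. By Definition \ref{def:angelic}(ii) applied to $B$, every point of $\overline{B}$, and in particular $x$, is the limit of a sequence drawn from $B$; a standard diagonal/tail argument—using that $x$ is a cluster point of $(x_n)_{n\ge N}$ for every $N$ and applying the sequential description of closures to each tail $\{x_n:n\ge N\}$—produces a genuine subsequence of $(x_n)$ converging to $x$. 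Thus $A$ is relatively sequentially compact. The reverse implication, relatively sequentially compact $\Rightarrow$ relatively countably compact, is immediate since a subsequential limit is a cluster point.

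For (i), let $K\subset\X$ be countably compact; the goal is $K$ compact and sequentially compact. Since $K\subset\X$ is in particular relatively countably compact, part (ii) yields $\overline{K}$ compact and $K$ relatively sequentially compact. It remains to show $K=\overline{K}$. Take $x\in\overline K$; by Definition \ref{def:angelic}(ii) there is a sequence $(y_n)\subset K$ with $y_n\to x$. By countable compactness of $K$ this sequence has a cluster point in $K$, and in a Hausdorff space the unique cluster point of a convergent sequence is its limit, so $x\in K$. Hence $K$ is closed, and therefore compact. Sequential compactness of $K$ then follows from the relative sequential compactness of $K$ together with $K$ being closed (any convergent subsequence has its limit in $\overline K=K$). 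Finally, compact $\Rightarrow$ countably compact and sequentially compact $\Rightarrow$ countably compact are standard, closing the three-way equivalence.

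The main obstacle is the step where a mere cluster point has to be upgraded to a subsequential limit: in a general Hausdorff space a cluster point of a sequence need not be a subsequential limit (the topology may fail to be first countable at that point). This is exactly where the angelic hypothesis is indispensable: Definition \ref{def:angelic}(ii) gives a Fréchet--Urysohn-type sequential description of closures on relatively countably compact sets, and the diagonal extraction applied to the tails $\{x_n:n\ge N\}$ must be carried out carefully to produce an actual subsequence rather than just a sequence from the range. Once this extraction lemma is in place, the rest of the argument is bookkeeping among the three compactness notions.
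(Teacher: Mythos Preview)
The paper does not supply a proof of this lemma; it is simply quoted from Floret \cite{Flo1980}. Your proposal thus goes beyond what the paper does, and the overall architecture---deriving both (i) and (ii) from the single extraction principle that in an angelic space every cluster point of a sequence lying in a relatively countably compact set is a subsequential limit---is exactly the standard route, and your derivation of (i) from (ii) via closedness of $K$ is clean.

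The only step that needs sharpening is the one you flag yourself and label a ``standard diagonal/tail argument''. A genuine diagonal extraction across the tails $B_N=\{x_n:n\ge N\}$ does not work in a non-metrizable space: there is no metric to control how close the $N$th approximant is to $x$, so nothing glues the sequences $(y^N_k)_k$ together into a single subsequence. The correct argument is simpler and uses at most one tail. After passing, if necessary, to a tail on which $x\neq x_n$ for all $n$, apply Definition~\ref{def:angelic}(ii) once to $B=\{x_n:n\in\N\}$ (which is relatively countably compact as a subset of $A$), obtaining $(z_k)\subset B$ with $z_k\to x$; write $z_k=x_{\phi(k)}$ for some $\phi:\N\to\N$. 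If $\phi$ has infinite range then, since any $\N$-valued sequence with unbounded range admits a strictly increasing subsequence, one finds $k_1<k_2<\cdots$ with $\phi(k_1)<\phi(k_2)<\cdots$, and $(x_{\phi(k_j)})_j$ is the desired subsequence of $(x_n)$. If $\phi$ has finite range then $(z_k)$ assumes only finitely many values, so some value $v=x_m$ repeats infinitely often and hence equals $x$ by Hausdorff uniqueness of limits, contradicting $x\neq x_m$. With this replacement for the ``diagonal'' step, your proof of (ii), and then (i), goes through as written.
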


Which spaces are angelic? A theorem of 
Eberlein and \v{S}mulian provides us 
with necessary conditions:

\begin{lem}[{\cite[Theorem 3.10]{Flo1980}}]
Let $(\X,\tau)$ be a locally convex metrizable space. 
Let $\tau_w$ denote its weak topology. 
Then $(\X,\tau_w)$ is angelic. Moreover, 
if ${\tau}_r$ is a regular topology 
finer than $\tau_w$, then $(\X,{\tau}_r)$ is angelic.
\end{lem}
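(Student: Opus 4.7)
The plan has two parts mirroring the structure of the statement. For the first assertion (the Eberlein--\v{S}mulian theorem in the metrizable locally convex setting), I would start from a $\tau_w$-relatively countably compact set $A \subset \X$ and show it is $\tau$-bounded by a uniform boundedness argument, using that every continuous linear functional is bounded on $A$. Next, I exploit metrizability by fixing a countable fundamental family $\{p_n\}$ of continuous seminorms generating $\tau$. Each polar $K_n := \{f \in \X^* : |f(x)| \leq p_n(x) \text{ for all } x\}$ is $\sigma(\X^*,\X)$-compact by Banach--Alaoglu, and together $\bigcup_n K_n$ is norming for $\X$. On the bounded set $A$, each $K_n$ is equipped with a metrizable weak-$*$ topology, so I may select countable dense subsets $\{f_{n,k}\} \subset K_n$.

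For $x \in \overline{A}^{\tau_w}$, the goal is to construct a sequence $(x_m) \subset A$ with $x_m \to x$ in $\tau_w$. I would do this by a diagonal procedure: at stage $N$, choose $x_N \in A$ so that $|f_{n,k}(x_N - x)| < 1/N$ for all $n,k \leq N$. The output sequence then converges pointwise to $x$ against $\bigcup_{n,k}\{f_{n,k}\}$. To upgrade pointwise convergence on a determining set to weak convergence, I would invoke Grothendieck's double-limit criterion: the iterated limits $\lim_m \lim_\ell f_\ell(x_m)$ and $\lim_\ell \lim_m f_\ell(x_m)$ must agree for any $(f_\ell) \subset K_n$, which holds precisely because $A$ is $\tau_w$-relatively countably compact and $K_n$ is $\sigma(\X^*,\X)$-compact. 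This interchange lemma is the true content of Eberlein--\v{S}mulian and the main technical obstacle; everything else is organisational.

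For the second assertion, let $\tau_r$ be a regular topology with $\tau_r \supseteq \tau_w$, and let $A \subset \X$ be $\tau_r$-relatively countably compact. Since the identity $(\X,\tau_r) \to (\X,\tau_w)$ is continuous, every $\tau_r$-cluster point is a $\tau_w$-cluster point, so $A$ is also $\tau_w$-relatively countably compact. By the first part, $\overline{A}^{\tau_w}$ is $\tau_w$-compact and every $x \in \overline{A}^{\tau_w}$ is the weak limit of a sequence in $A$. To conclude $\tau_r$-angelicity, I would apply a Pryce-type transfer principle: since $\tau_r$ is regular and finer than the angelic $\tau_w$, and every $\tau_r$-relatively countably compact set is $\tau_w$-relatively countably compact (by the argument just given), angelicity lifts from $\tau_w$ to $\tau_r$. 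The regularity of $\tau_r$ is used here to ensure that the sequential convergence obtained in $\tau_w$ can be upgraded via a neighbourhood basis argument to $\tau_r$-convergence on the relevant $\tau_w$-compact sets.

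The main obstacle is squarely in the first part, namely the double-limit interchange underlying the extraction of a weakly convergent subsequence. The metrizability of $\tau$ provides only the countable structure needed to set up the diagonalisation; the passage from pointwise convergence on a countable norming family to convergence on all of $\X^*$ is the nontrivial step. The second part is then a standard topological consequence, with regularity of $\tau_r$ serving only to propagate the sequential characterisation of closure from $\tau_w$ up to $\tau_r$.
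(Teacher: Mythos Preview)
The paper does not provide a proof of this lemma; it is stated as a direct citation of \cite[Theorem 3.10]{Flo1980} and used as a black box. Your proposal therefore cannot be compared against a proof in the paper, because there is none.

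That said, your sketch is a reasonable outline of the standard Eberlein--\v{S}mulian argument combined with the angelic transfer principle (often attributed to Pryce or Fremlin; see \cite[Lemma 3.1 and surrounding discussion]{Flo1980}). One point to flag: in the second part, your final sentence is vague about exactly how regularity of $\tau_r$ is used to upgrade $\tau_w$-sequential convergence to $\tau_r$-sequential convergence on $\tau_r$-relatively countably compact sets. The actual mechanism in the angelic lemma is not a direct ``upgrade'' of convergence but rather goes through the fact that in a regular space, relatively countably compact sets have countably compact closure, and on such closures the finer topology $\tau_r$ and the angelic $\tau_w$ must agree (since a continuous bijection from a countably compact space to a Hausdorff angelic space is a homeomorphism). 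If you were to write this out in full, that is the step requiring care.
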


The regularity of a topology plays a pivotal role in 
affirming the angelic nature of a space. For 
relatively sequentially compact subsets $A$ of 
regular spaces, the closure $\overline{A}$ 
maintains sequential compactness.

\medskip

As we conclude this section, we will give 
three pertinent examples. 
\begin{itemize}
	\item If $\X$ is a normed space, then $(\X,\tau_w)$ is angelic.

	\item $\X=L^{p_1}\bigl([0,T];L^{p_2}(\T)-w\bigr)$, 
	with $1<p_1,p_2<\infty$: Recall the topologies 
	$\tau_s$ and \eqref{eq:semi-norm},  denoted by $\tau_N$, on 
	the Bochner space $Z=L^{p_1}([0,T];L^{p_2}(\T))$. 
	Clearly, $(Z,\tau_s)$ is normed; therefore, 
	when endowed with $\tau_w$, it is angelic. 
	Moreover, $\tau_N$ is (completely) regular, 
	because it originates from a family of semi-norms, 
	i.e., it is locally convex. Since $\tau_w\subset \tau_N$, 
	$(Z,\tau_N)$ is angelic.
	
	\item $\X=C([0,T];Z-w)$ for a separable 
        Hilbert space $Z$, see Example \eqref{example:C_tH_x} 
        of Section \ref{sec:qPexamples}: Let $(\varphi_n)_n\subset Z$ 
        be such that $\norm{h}_Z=\sup_n\abs{\langle\varphi_n,h\rangle_Z}$ 
	for all $h\in Z$. Define the following semi-norms on $C([0,T];Z-w)$:
	$$
	\norm{f}_{\varphi_n} := \sup_{0\le t \le T}
	\abs{\left\langle\varphi_n,f(t)\right\rangle_Z},
	\quad f \in C([0,T];Z-w), \quad n\in \N.
	$$
	The locally convex topology $\tau_0$ generated by 
	the seminorms $\norm{\cdot}_{\varphi_n}$, $n\in \N$, is 
	(completely) regular. The topology $\tau_0$ is 
	metrizable \cite[Remark 4.2]{Brzezniak:2016wz}, 
	and thus $\big(C([0,T];Z-w),\tau_0\big)$ 
	is a locally convex metrizable space. 
	Denote by $\tau_w$ its weak topology. Then 
	$\big(C([0,T];Z-w,\tau_w\big)$ and 
	$\big(C([0,T];Z-w),\tau_0\big)$ are angelic, 
	because trivially $\tau_0\supset \tau_w$. 
	Furthermore, denote by $\tau$ the regular 
	locally convex topology generated by the 
	semi-norms $\norm{\cdot}_\phi$ in 
	Example \eqref{example:C_tH_x}. Since $\tau\supset\tau_0$, 
	the quasi-Polish space $\bigl(C([0,T];Z-w),\tau\bigr)$ is angelic.   
\end{itemize}
   
\section{Regularisation errors}
\label{sec:commutator-est}

In Section \ref{sec:wk_limits_section}, we derived 
the SPDE satisfied by $S(\tilde q)$, where $\tilde q$ 
solves the second-order transport-type SPDE \eqref{eq:tq-limit-SPDE} 
and $S:\R\to \R$ is a nonlinear function. 
This renormalisation step involved regularising the process 
$\tilde q$ by a spatial mollifier $J_\delta$, which
generates several error terms. Below we reproduce some
convergence results---but not their proofs---for
controlling these error terms. Analysing one of 
the (noise-related) terms requires a non-standard 
commutator estimate that goes beyond 
the DiPerna--Lions folklore lemma (see Proposition 
\ref{thm:commutator2} below). 
Similar estimates have been 
used recently in \cite{Punshon-Smith:2018aa} 
and \cite{Holden:2020aa}.

\begin{lem}[first order commutator errors]
\label{thm:commutator1}
Consider 
$$
w\in L^4\bigl(\Omega;L^\infty([0,T];H^1(\T))\bigr),
$$ 
and suppose $\sigma \in W^{1,\infty}(\T)$. Let $J_\delta$ 
be a standard Friedrichs mollifier in $x$, and set 
$w_\delta=w*J_\delta$. 
Define the error processes
\begin{equation*}
	\begin{aligned}
		& E^{(1)}_\delta=E^{(1)}_\delta(w,v)
		=\bigl(w \,\pd_x w \bigr)*J_\delta
		-w_\delta \,\pd_x w_\delta,
		\\
		& E^{(2)}_\delta= E^{(2)}_\delta(w)
		=\bigl( \sigma \,\pd_x w\bigr)
		*J_\delta -\sigma \,\pd_x w_\delta,
		\\ 
		& E^{(3)}_\delta=E^{(3)}_\delta(w) 
		=-\frac12 \bigl(\sigma\,
		\pd_x\bk{\sigma\,\pd_x w} \bigr)*J_\delta  
		+\frac12 \sigma\,\pd_x\bk{\sigma\,\pd_x w_\delta} .
	\end{aligned}
\end{equation*}
The following convergences hold:
\begin{equation}\label{eq:commutator_converge1}
	\begin{split}
		&\Ex \norm{\pd_x E^{(1)}_\delta}_{L^1([0,T]\times \T)}
		\todelta 0,
		\quad \Ex \norm{E^{(2)}_\delta}_{L^2([0,T];H^1(\T))}^2
		\todelta 0,
		\\ & \Ex \norm{E^{(3)}_\delta}_{L^2( [0,T] \times \T)}^2
		\todelta 0.
	\end{split}
\end{equation}
\end{lem}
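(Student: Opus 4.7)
The plan is to establish, pathwise in $\omega$, convergence to zero in the target norm for each commutator, together with a uniform-in-$\delta$ bound of the form $C\norm{w(\omega)}_{L^\infty_tH^1_x}^2$; the hypothesis $w \in L^4(\Omega; L^\infty_tH^1_x)$ then upgrades this to convergence in expectation by the Vitali convergence theorem. Pathwise, the 1D Sobolev embedding $H^1(\T) \hookrightarrow L^\infty(\T) \cap C^{1/2}(\T)$ yields $L^\infty_{t,x}$ control and H\"older continuity of $w$, while $\sigma \in W^{1,\infty}$ supplies the Lipschitz bound $\abs{\sigma(y)-\sigma(x)} \le \norm{\sigma'}_{L^\infty}\abs{y-x}$, which is $\lesssim \delta$ on $\supp J_\delta(x-\cdot)$.

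For $E^{(2)}_\delta$, the standard DiPerna--Lions representation
\[
E^{(2)}_\delta(x) = \int_{\T} J_\delta(x-y)[\sigma(y)-\sigma(x)]\pd_y w(y)\,\d y
\]
combined with the pointwise Lipschitz bound on $\sigma$ immediately yields $\norm{E^{(2)}_\delta(t)}_{L^2_x} \lesssim \delta\norm{\pd_x w(t)}_{L^2_x}$, settling the $L^2_{t,x}$ decay. For the $H^1$-piece $\pd_x E^{(2)}_\delta$, differentiate in $x$ and integrate by parts in $y$ to obtain a commutator involving $\pd_{xx}J_\delta$; this recombines with $\sigma'\pd_x w_\delta$ via DiPerna--Lions cancellations \cite{DiPerna:1989aa}, verified by density from the smooth-$\sigma$ case. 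The commutator $E^{(3)}_\delta$ reduces to the same circle of estimates: the distributional identity $\sigma\pd_x(\sigma\pd_x w)=\pd_x(\sigma^2\pd_x w) - \sigma\sigma'\pd_x w$ presents $E^{(3)}_\delta$ as $-\tfrac12\pd_x$ of a DL commutator with Lipschitz symbol $\sigma^2 \in W^{1,\infty}$ plus $\tfrac12$ of a DL commutator with bounded symbol $\sigma\sigma' \in L^\infty$, both of which converge in $L^2_{t,x}$ exactly as for $E^{(2)}_\delta$.

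The delicate case is $E^{(1)}_\delta$. The key algebraic step is $w\pd_x w = \tfrac12\pd_x(w^2)$, which yields $\pd_x E^{(1)}_\delta = \tfrac12\pd_{xx}R_\delta$ for $R_\delta := (w^2)_\delta - w_\delta^2$. The non-negative residue $R_\delta$ admits the variance-type representation
\[
R_\delta(x) = \int_{\T} J_\delta(x-y)[w(y)-w_\delta(x)]^2\, \d y.
\]
Differentiating twice under the integral, two simultaneous cancellations occur: the vanishing moments $\int \pd_x J_\delta = \int \pd_{xx}J_\delta = 0$ eliminate every contribution independent of $y$, and the defining identity $\int J_\delta(x-y)[w(y) - w_\delta(x)]\, \d y = 0$ removes the remaining linear-in-$w$ middle-order term; what survives is an integral against $\pd_{xx}J_\delta$ of the pure quadratic difference $[w(y)-w(x)]^2$, which, via the Taylor/Cauchy--Schwarz bound $\abs{w(y)-w(x)}^2 \le \abs{y-x}\int_x^y\abs{\pd_x w(\xi)}^2 \d\xi$, is controlled pointwise by $\norm{\pd_x w}_{L^2(B_\delta(x))}^2$; its $L^1_x$-norm vanishes as $\delta \downarrow 0$ by the absolute continuity of the $L^2$ integral of $\pd_x w$. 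The main obstacle is the precise organisation of these cancellations: naively, each of $\pd_{xx}(w^2)_\delta$ and $\pd_{xx}w_\delta^2$ has $L^2$ norm of order $\delta^{-1}$, and only their structural cancellation --- worked out in detail in \cite{HKP-viscous} --- renders $\pd_{xx}R_\delta$ small in $L^1_{t,x}$. The uniform-in-$\delta$ upper bounds needed for the final dominated-convergence argument follow from the same estimates without invoking the cancellations.
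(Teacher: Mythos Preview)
The paper does not actually prove this lemma; it simply cites \cite[Lemma 2.3]{Lions:NSI} for $E^{(1)}_\delta$, \cite[Lemma II.1]{DiPerna:1989aa} for $E^{(2)}_\delta$, and \cite[Lemma 7.1]{HKP-viscous} for $E^{(3)}_\delta$. Your sketches for $E^{(2)}_\delta$ and $E^{(3)}_\delta$ are consistent with those references and are fine at the level of a sketch.

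Your argument for $E^{(1)}_\delta$, however, misidentifies the cancellation. Differentiating the variance representation $R_\delta(x)=\int J_\delta(x-y)[w(y)-w_\delta(x)]^2\,\d y$ twice gives
\[
\pd_{xx}R_\delta(x)=\int_\T\pd_{xx}J_\delta(x-y)\bigl[w(y)-w_\delta(x)\bigr]^2\,\d y\;-\;2\bigl(\pd_x w_\delta(x)\bigr)^2,
\]
because the identity $\int J_\delta(x-y)[w(y)-w_\delta(x)]\,\d y=0$ removes the cross term only at the \emph{first} differentiation; at the second one the analogous integral is $\int\pd_x J_\delta(x-y)[w(y)-w_\delta(x)]\,\d y=\pd_x w_\delta(x)\neq 0$. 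The integral you claim ``survives'' does \emph{not} vanish in $L^1_x$: for smooth $w$ a direct computation gives $\int\pd_{xx}J_\delta(x-y)[w(y)-w(x)]^2\,\d y\to 2(\pd_x w)^2$, and your absolute-continuity estimate only shows this term is $O(\norm{\pd_x w}_{L^2}^2)$, not $o(1)$. It is precisely the term $-2(\pd_x w_\delta)^2$ that you dropped which cancels this nonzero limit.

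The correct organisation (this is Lions' computation, not \cite{HKP-viscous}) is to set $q=\pd_x w$, write $\pd_x E^{(1)}_\delta(x)=\int\pd_x J_\delta(x-y)[w(y)-w_\delta(x)]\,q(y)\,\d y-q_\delta(x)^2$, and split $q(y)=[q(y)-q_\delta(x)]+q_\delta(x)$. The $q_\delta(x)$-piece contributes exactly $q_\delta(x)\!\int\pd_x J_\delta(x-y)[w(y)-w_\delta(x)]\,\d y=q_\delta(x)^2$, cancelling the last term, and what genuinely remains is the bilinear remainder
\[
\pd_x E^{(1)}_\delta(x)=\int_\T \pd_x J_\delta(x-y)\bigl[w(y)-w_\delta(x)\bigr]\bigl[q(y)-q_\delta(x)\bigr]\,\d y.
\]
This object obeys the uniform bound $\norm{\pd_x E^{(1)}_\delta}_{L^1_x}\lesssim\norm{\pd_x w}_{L^2_x}^2$ (Cauchy--Schwarz in $(x,y)$, the $\delta^{-1}$ from $\pd_x J_\delta$ balanced by the $O(\delta)$ gain from $w-w_\delta$), and vanishes for smooth $w$; density together with your Vitali scheme then gives the result.
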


The first and second parts 
of \eqref{eq:commutator_converge1} come from 
\cite[Lemma 2.3]{Lions:NSI} and 
\cite[Lemma II.1]{DiPerna:1989aa}, respectively. 
For the final part, see 
\cite[Lemma 7.1]{HKP-viscous}.

To handle a regularisation error 
linked to the stochastic part of  
\eqref{eq:tq-limit-SPDE}, we need the 
following proposition, which is a consequence of
a second order commutator estimate found 
in \cite[Lemma 7.3]{HKP-viscous}:

\begin{prop}[It\^{o}--Stratonovich related error term
{\cite[Proposition 7.4]{HKP-viscous}}]
\label{thm:commutator2}
Let $S \in \dot{W}^{2,\infty}(\R)$ be 
such that $S'(r)=O(r)$ and 
$\sup_{r\in \R} {\abs{S''(r)}} <\infty$. 
 Let $w$, $w_\delta$, $E^{(2)}_\delta$, 
and $E^{(3)}_\delta$ be defined as in 
Lemma \ref{thm:commutator1}. For each
$\varphi \in C^\infty([0,T]\times \T)$, 
the following convergence holds:
\begin{equation*}
	\begin{aligned}
		\Ex \int_0^T\biggl| \, \int_\T &
		-\varphi S'(\pd_x w_\delta) \pd_x E^{(3)}_\delta 
		\\ & +\varphi  S''(\pd_x w_\delta) \bk{\frac12
		\,\abs{\pd_x E^{(2)}_\delta}^2
		+\pd_x \bk{\sigma \,\pd_x w_\delta}
		\,\pd_x E^{(2)}_\delta}
		\,\d x\biggr| \,\d t\todelta 0.
	\end{aligned}
\end{equation*}
\end{prop}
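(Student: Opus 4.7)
The plan is to reduce the integrand, via two algebraic identities and one integration by parts, to a combination of first-order DiPerna--Lions commutator errors controlled by Lemma \ref{thm:commutator1} together with two genuinely second-order commutator contributions handled by the non-standard estimate \cite[Lemma 7.3]{HKP-viscous}. The first algebraic input, obtained by setting $h := \sigma \pd_x w$ and adding and subtracting $\sigma(\pd_x h)*J_\delta$ inside the definition $-2 E^{(3)}_\delta = (\sigma \pd_x h) * J_\delta - \sigma \pd_x(\sigma \pd_x w_\delta)$, is the decomposition
\[
-2\, E^{(3)}_\delta = D_\delta + \sigma\, \pd_x E^{(2)}_\delta,
\qquad
D_\delta := (\sigma \pd_x h)*J_\delta - \sigma\,(\pd_x h)*J_\delta,
\]
where $D_\delta$ is a classical first-order commutator satisfying $\Ex \norm{D_\delta}_{L^2_{t,x}}^2 \todelta 0$ by the DiPerna--Lions lemma; the second identity is simply $\pd_x(\sigma \pd_x w_\delta) = \sigma \pd_{xx} w_\delta + (\pd_x \sigma) \pd_x w_\delta$, to be used inside the $S''$-bracket from the statement.

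Next, I would integrate by parts on $-\int_\T \varphi S'(\pd_x w_\delta)\, \pd_x E^{(3)}_\delta \, \d x$, letting $\pd_x$ fall onto $\varphi S'(\pd_x w_\delta)$ and applying the chain rule $\pd_x S'(\pd_x w_\delta) = S''(\pd_x w_\delta)\, \pd_{xx} w_\delta$. Substituting the decomposition of $E^{(3)}_\delta$ and combining with the expanded $S''$-bracket, the total integrand reorganises into seven pieces. Five of them converge to $0$ in $L^1(\Omega \times [0,T])$ by routine arguments, namely Cauchy--Schwarz in $(\omega,t,x)$, the uniform $L^4(\Omega; L^\infty_t H^1_x)$ bound for $w$, the growth bounds $|S'(r)| \lesssim |r|$ and $|S''| \lesssim 1$, and the $L^2_{t,x}$ decay of $E^{(2)}_\delta$, $\pd_x E^{(2)}_\delta$, $E^{(3)}_\delta$, $D_\delta$ supplied by Lemma \ref{thm:commutator1}. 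These easy pieces include the $\pd_x \varphi$ boundary-type contributions, the purely quadratic term $\tfrac12 \int \varphi S''(\pd_x w_\delta) |\pd_x E^{(2)}_\delta|^2 \, \d x$, and the $(\pd_x \sigma) \pd_x w_\delta \cdot \pd_x E^{(2)}_\delta$ contribution arising from the expansion of the $S''$-bracket.

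The two remaining pieces---and the main obstacle---are
\[
-\tfrac12 \int_\T \varphi S''(\pd_x w_\delta)\, \pd_{xx} w_\delta\, D_\delta \, \d x
\quad\text{and}\quad
\tfrac12 \int_\T \varphi S''(\pd_x w_\delta)\, \sigma\, \pd_{xx} w_\delta\, \pd_x E^{(2)}_\delta \, \d x,
\]
where the first factor is only $O(\delta^{-1})$ in $L^2_x$ while the second factor is a first-order commutator with qualitative $o(1)$ decay in $L^2_{t,x}$ but no quantitative rate; Cauchy--Schwarz alone therefore does not close the bound, and the convergence must exploit a cancellation between the blow-up of the mollified second derivative and the commutator-induced decay. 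Recognising these as the canonical ``mollified second derivative times first-order commutator'' pairings, and using the boundedness of the test weight $\varphi S''(\pd_x w_\delta)$ (since $\sup_r|S''(r)| < \infty$), the non-standard second-order commutator estimate \cite[Lemma 7.3]{HKP-viscous}---which is precisely tailored to this scenario in the regularity class provided by $\sigma \in W^{2,\infty}$ and $w \in L^4(\Omega; L^\infty_t H^1_x)$---yields the $L^1(\Omega \times [0,T] \times \T)$ convergence to $0$ of both pieces. Combined with the routine estimates for the other five pieces, this concludes the proof.
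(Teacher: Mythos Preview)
The paper does not actually prove this proposition; it states the result with the attribution \cite[Proposition 7.4]{HKP-viscous} and remarks only that it ``is a consequence of a second order commutator estimate found in \cite[Lemma 7.3]{HKP-viscous}.'' There is therefore no in-paper argument to compare against. Your proposal goes further than the paper by reconstructing a proof sketch, and the overall architecture---decompose $E^{(3)}_\delta$ in terms of a first-order commutator plus $\sigma\,\pd_x E^{(2)}_\delta$, integrate the $S'$-term by parts, peel off the benign pieces with Lemma~\ref{thm:commutator1}, and isolate the two genuinely second-order ``$\pd_{xx}w_\delta \times \text{commutator}$'' contributions that must be handled by \cite[Lemma 7.3]{HKP-viscous}---is consistent with the paper's single sentence pointing to that lemma as the key input.

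One point to tighten: you assert that $D_\delta \to 0$ in $L^2_{\omega,t,x}$ ``by the DiPerna--Lions lemma,'' but as written $D_\delta = (\sigma\,\pd_x h)*J_\delta - \sigma\,(\pd_x h)*J_\delta$ with $h=\sigma\pd_x w$ only in $L^2_x$, so $\pd_x h$ lives a priori in $H^{-1}_x$ and the standard commutator lemma does not apply verbatim. You should either rewrite $D_\delta$ in a form where the commuted function is $L^2$ (e.g.\ observe that $D_\delta$ has exactly the structure of $E^{(2)}_\delta$ with $w$ replaced by an $H^1$ antiderivative of $h$, and then invoke Lemma~\ref{thm:commutator1}), or appeal directly to the $E^{(3)}_\delta$ convergence in Lemma~\ref{thm:commutator1} together with your decomposition to deduce $D_\delta = -2E^{(3)}_\delta - \sigma\,\pd_x E^{(2)}_\delta \to 0$ in $L^2_{\omega,t,x}$. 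Either fix is routine, but the claim as stated needs one of them.
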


\section{Temporal continuity in $H^1$ for viscous equation}

We consider the viscous equation \eqref{eq:u_ch_ep}. 
Since $\ep > 0$ is fixed in this section, 
we suppress the $\ep$-subscript. 
In \cite[Proposition 7.8]{HKP-viscous}, the 
authors demonstrated that
$$
\lim_{t \to t_0} \Ex \norm{u(t) - u(t_0)}_{H^1(\T)}^2 = 0,
$$
for every $t_0$. Furthermore, they posited that $u$ 
belongs to the space $L^{p_0}_\omega C_tH^1_x$, 
with $p_0>4$ defined in \eqref{eq:u0-cond-Hmp0}, implying 
that $u$ is almost surely continuous on the interval $[0,T]$ 
with values in $H^1(\T)$. However, they did not 
provide a comprehensive proof to support this latter assertion. 
The aim of this appendix is to establish this 
temporal continuity assertion, which is utilised in this paper.

\medskip

\begin{lem}[temporal continuity in $H^1$ for viscous equation]
\label{thm:cauchy_CtH1_2}
Consider a solution $u$ to \eqref{eq:u_ch_ep} with initial 
condition $u_0$, as guaranteed by Theorem \ref{thm:bounds1}. 
Specifically, $u$ satisfies 
$u \in L^{p_0}\bigl(\Omega;L^\infty([0,T];H^1(\T))\bigr)
\cap L^2\bigl(\Omega; L^2([0,T];H^2(\T))\bigr)$ 
and, $\mathbb{P}$-a.s., $u \in C([0,T];H^1(\T)-w)$. 
Then we have the inclusion $u \in 
L^{\bar{p}}\bigl(\Omega;C([0,T];H^1(\T))\bigr)$, 
for any $\bar{p}<p_0$. 
\end{lem}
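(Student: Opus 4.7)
The strategy is to leverage the Hilbert-space criterion ``weak convergence plus convergence of norms implies strong convergence''. Since the paper has already established the weak temporal continuity $u \in C([0,T];H^1(\T)-w)$ almost surely, it suffices to prove that the real-valued map $t \mapsto \norm{u(t)}_{H^1(\T)}^2$ is a.s.~continuous on $[0,T]$. Pathwise strong continuity in $H^1(\T)$ will then follow immediately, and the $L^{p_0}$-integrability of the sup-norm will follow from the already-established bound $\Ex \norm{u}_{L^\infty([0,T];H^1(\T))}^{p_0} \leq C$ in \eqref{eq:energybound}, because for a function with continuous $H^1$-valued paths the essential supremum equals the supremum pointwise a.s.

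First I would invoke the energy balance \eqref{eq:energybalance} provided by Theorem \ref{thm:bounds1}, rewritten in the form
\begin{equation*}
	\norm{u(t)}_{H^1(\T)}^2 = \norm{u(0)}_{H^1(\T)}^2 + D(t) + M(t), \quad t\in[0,T],
\end{equation*}
where $D(t)$ collects the (Lebesgue) time-integrals — namely the dissipation $-2\ep\int_0^t\int_\T(|\pd_x u|^2+|\pd_{xx}^2 u|^2)\,\d x\,\d t'$ and the $\sigma$-deterministic quadratic-variation correction — and $M(t)=\int_0^t\int_\T\pd_x\sigma_\ep(u^2-|\pd_x u|^2)\,\d x\,\d W$. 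The a priori bounds $u\in L^2(\Omega;L^2([0,T];H^2(\T)))$ and $u\in L^{p_0}(\Omega;L^\infty([0,T];H^1(\T)))$ yield absolute continuity of $D(\cdot)$ in $t$, a.s. For the martingale part, a direct computation using $\sigma_\ep\in W^{2,\infty}(\T)$ and the 1D embedding $H^1(\T)\hookrightarrow L^\infty(\T)$ gives
\begin{equation*}
	\Ex\int_0^T\abs{\,\int_\T \pd_x \sigma_\ep (u^2-|\pd_x u|^2)\,\d x}^2 \d t \lesssim \Ex\norm{u}_{L^\infty([0,T];H^1(\T))}^4 < \infty,
\end{equation*}
so that $t\mapsto M(t)$ is a bona fide square-integrable continuous $\{\mathcal{F}_t\}$-martingale. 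Hence the right-hand side is a.s.~continuous on $[0,T]$, and therefore so is $t\mapsto \norm{u(t)}_{H^1(\T)}^2$.

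With pathwise norm continuity in hand, combine it with $u\in C([0,T];H^1(\T)-w)$ a.s.: for any $t_n\to t_0$, $u(t_n)\weak u(t_0)$ in $H^1(\T)$ and $\norm{u(t_n)}_{H^1(\T)}\to \norm{u(t_0)}_{H^1(\T)}$, so $u(t_n)\to u(t_0)$ strongly in $H^1(\T)$. This gives $u\in C([0,T];H^1(\T))$ a.s. For the $L^{p_0}$ integrability, continuity implies $\norm{u}_{C([0,T];H^1(\T))}=\norm{u}_{L^\infty([0,T];H^1(\T))}$ pointwise a.s., whence \eqref{eq:energybound} yields
\begin{equation*}
	\Ex\norm{u}_{C([0,T];H^1(\T))}^{p_0} = \Ex\norm{u}_{L^\infty([0,T];H^1(\T))}^{p_0} \leq C,
\end{equation*}
concluding the proof.

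The only mild subtlety I anticipate is ensuring that \eqref{eq:energybalance} holds on a \emph{single} full-measure event for \emph{all} $s,t\in[0,T]$ simultaneously, rather than merely for each fixed pair almost surely. This is handled as in the proof of Lemma \ref{lem:energy-rightcont}: one first establishes the identity on a countable dense set of times via \eqref{eq:energybalance}, then extends to all $t$ by exploiting continuity of both sides — the drift and martingale in $t$ as noted above, and the left-hand side via the weak continuity of $u$ together with weak lower semicontinuity of $\norm{\cdot}_{H^1(\T)}^2$ forcing equality on the extension. No genuine obstacle is expected; the argument is essentially a pathwise strengthening of already-available statements.
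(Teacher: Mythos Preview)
Your strategy is sound in outline and genuinely different from the paper's. The paper shows directly that the mollifications $u_\delta=u*J_\delta$ form a Cauchy sequence in $L^2(\Omega;C([0,T];H^1(\T)))$: It\^o's formula is applied to $\tfrac12\norm{u_\delta-u_\eta}_{H^1}^2$, producing eight drift terms and two martingale terms, each of which is shown to vanish as $\delta,\eta\downarrow0$ using $u\in L^2_{\omega,t}H^2_x \cap L^{p_0}_\omega L^\infty_t H^1_x$. Your route---energy balance furnishes norm continuity, then combine with weak continuity---is more economical because it recycles \eqref{eq:energybalance} rather than deriving a fresh It\^o identity.

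However, your resolution of the ``mild subtlety'' does not close. Weak lower semicontinuity of $\norm{\cdot}_{H^1}^2$ gives only $\norm{u(t_0)}_{H^1}^2\le R(t_0)$ at an arbitrary time $t_0$; it cannot force equality. Nothing you wrote rules out $u(t_n)\rightharpoonup u(t_0)$ in $H^1$ with $\norm{u(t_n)}_{H^1}^2=R(t_n)\to R(t_0)$ strictly exceeding $\norm{u(t_0)}_{H^1}^2$---this is precisely the failure mode you are trying to exclude. The additional $C_tL^2_x$ regularity does not help, since strong $L^2$ convergence plus weak $H^1$ convergence still permits a strict drop in the $\dot H^1$ seminorm at isolated times. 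Note also that Theorem~\ref{thm:bounds1}(i) is phrased ``for any $0\le s\le t\le T$, \ldots, $\tilde{\mathbb P}$-a.s.'', i.e., a fixed-time statement, and Lemma~\ref{thm:cauchy_CtH1_2} exists precisely because pathwise $C_tH^1_x$ regularity was left open in \cite{HKP-viscous}; invoking \eqref{eq:energybalance} as a process identity valid for all $t$ risks circularity.

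The gap is repairable, but the repair uses the same mollification ingredients as the paper. One clean fix: at level $\delta$, the identity $\norm{u_\delta(t)}_{H^1}^2=R_\delta(t)$ holds for \emph{all} $t$ a.s.\ (real-valued It\^o in $t$, then integrate in $x$). The right side converges in $C([0,T])$ a.s.\ along a subsequence (BDG for the martingale, dominated convergence for the drift), while for each fixed $t$ the left side converges to $\norm{u(t)}_{H^1}^2$ because $u(t)\in H^1$ from the assumed weak continuity. This delivers the identity for all $t$ on one full-measure set, after which your Hilbert-space argument goes through---but at that point you have essentially reproduced the paper's mollification work, organised around $\norm{u_\delta}_{H^1}^2$ rather than $\norm{u_\delta-u_\eta}_{H^1}^2$.
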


\begin{proof}
Let $\{J_\delta\}_{\delta > 0}$ be a spatial 
Friedrichs mollifier. We continue to employ the 
notation $f_\delta = f * J_\delta$. Using the a.s.~inclusion 
$u \in C_t(H_x^1-w)$, we find that the quantities $u_\delta$ and 
$q_\delta=\pd_x u_\delta$ exhibit time-continuity, pointwise in $x$. 
It then follows quite straightforwardly that $u_\delta$ belongs 
to the space $C_t H^1_x$ a.s., for each fixed $\delta>0$. 

For given $\delta, \eta > 0$, applying Itô's formula 
to the mollified version of the SPDE \eqref{eq:u_ch_ep} and 
its $x$-derivative, we obtain
\begin{equation}\label{eq:cauchy_interim1}
	\begin{aligned}
		\frac12 \norm{u_\delta - u_\eta}_{H^1_x}^2(t)
		& = \frac12  \norm{u_{0,\delta} - u_{0,\eta}}_{H^1_x}^2
		\\ &\qquad \,\, 
		+ \int_0^t \sum_{i=1}^8 I_i \,\d s
		+\int_0^t \bk{M_1 + M_2 }\,\d W, 
	\end{aligned}
\end{equation}
where
\begin{align*}
	I_1 & = \ep \int_{\T}\bk{u_\delta - u_\eta}\, 
	\pd_{xx}^2 \bk{u_\delta - u_\eta}\,\d x, 
	\\
	I_2 &=  - \int_{\T} \bk{u_\delta - u_\eta}\, 
	\bk{\bk{u\,\pd_x u + \pd_x P}_\delta 
	- \bk{u \,\pd_x u + \pd_x P}_\eta} \,\d x,
	\\
	I_3 & = \frac12 \int_{\T} \bk{u_\delta - u_\eta}
	\bk{\bk{\sigma \,\pd_x \bk{\sigma \,\pd_x u}}_\delta 
	-\bk{\sigma \,\pd_x \bk{\sigma \,\pd_x u}}_\eta }\,\d x,
	\\
	I_4 & = \frac12 \int_{\T} \abs{ \bk{\sigma \pd_x u}_\delta
	 -\bk{\sigma \pd_x u}_\eta }^2\,\d x, 
	 \\
	 I_5 & = \ep \int_{\T}\pd_x \bk{u_\delta - u_\eta}\, 
	 \pd_{xxx}^3 \bk{u_\delta - u_\eta}\,\d x, 
	 \\ 
	 I_6 & = -\int_{\T} \pd_x \bk{u_\delta - u_\eta}\, 
	 \pd_x \bk{\bk{u\,\pd_x u + \pd_x P}_\delta 
	-\bk{u \,\pd_x u + \pd_x P}_\eta} \,\d x,
	\\
	I_7 & = \frac12 \int_{\T} \pd_x\bk{u_\delta-u_\eta}\,\pd_x
	\bk{\bk{\sigma \,\pd_x \bk{\sigma \,\pd_x u}}_\delta
	-\bk{\sigma \,\pd_x \bk{\sigma \,\pd_x u}}_\eta }\,\d x,
	\\
	I_8 & = \frac12 \int_{\T} \abs{\pd_x\bk{\sigma \pd_x u}_\delta
	-\pd_x \bk{\sigma \pd_x u}_\eta }^2\,\d x, 
	\\ 
	M_1 &=  \int_{\T} \bk{u_\delta-u_\eta} 
	\bk{\bk{\sigma \,\pd_x u}_\delta
	-\bk{\sigma \,\pd_x u}_\eta}\,\d x,
	\\ M_2 & =  \int_{\T} \pd_x \bk{u_\delta-u_\eta} 
	\,\pd_x \bk{\bk{\sigma \,\pd_x u}_\delta
	-\bk{\sigma \,\pd_x u}_\eta}\,\d x. 
\end{align*}

Applying integration-by-parts, we can ascertain that 
$I_1, I_5 \le 0$, a.s.

We shall use repeatedly the fact that 
$u \in L^2_{\omega,t}H^2_x$, and thus, as $\delta \to 0$, \\
$\bigl(u_\delta,\, \pd_x u_\delta,\, \pd_{xx}^2 u_\delta\bigr) 
\to \bigl(u, \,\pd_x u,\,\pd_{xx}^2 u\bigr)$ in $L^2_{\omega,t,x}$. 
In addition, using that $\sigma \in W^{2,\infty}$, 
$\bigl(u_\delta, \bk{\sigma\, \pd_x u}_\delta, 
\pd_x\bk{\sigma\,\pd_x u}_\delta\bigr) 
\to \bigl(u,\sigma\,\pd_x u, \pd_x\bk{\sigma\,\pd_x u}\bigr)$ 
in $L^2_{\omega, t, x}$. 
The convergences just mentioned 
immediately imply that 
$\Ex \int_0^T I_4 + I_8 \,\d t
\xrightarrow{\delta, \eta \downarrow 0} 0$.

For $I_3$, we use the 
Cauchy--Schwarz inequality to get
\begin{equation}\label{eq:I3-tmp}
	\int_0^T \abs{I_3}\,\d t 
	\lesssim_\sigma \norm{u_\delta - u_\eta}_{L^2_{t,x}} 
	\norm{f_\delta - f_\eta}_{L^2_{t,x}},
\end{equation}
where $f=\sigma \,\pd_x \bk{\sigma \,\pd_x u}\in L^2_{\omega,t,x}$. 
On the right-hand side, we take an expectation.
Subsequently, we utilize the Cauchy--Schwarz inequality. 
Each of the factors of the term on the right-hand 
side tend to $0$ in $L^2(\Omega)$ as 
$\delta, \eta \downarrow 0$.  After an integration 
by parts, the integral $\int_0^T \abs{I_7} \, \d t$ can be bounded 
in a similar manner as stated in \eqref{eq:I3-tmp}, but with the first 
factor on the right-hand side replaced by 
$\norm{u_\delta-u_\eta}_{L^2_t H^2_x}$. 
Consequently, the integral can be treated 
using the same approach.
 
 For the integrals $I_2$ and $I_6$, we again  
apply the Cauchy--Schwarz inequality and 
use the fact that $u \in L^2_{\omega}L^2_tH^2_x 
\cap L^2_{\omega} L^\infty_t H^1_x$. 
To be more precise, adding $I_2$ and $I_6$,  and using 
the property that $K*f = f-\pd_{xx}^2 K* f$, we find
\begin{align*}
	\abs{I_2 + I_6} 
	& =\bigg| \int_{\T}\bk{u_\delta - u_\eta} 
	\bk{\bk{u\,\pd_x u}_\delta - \bk{u\,\pd_x u}_\eta} \,\d x
	\\ &\quad\quad 
	+  \int_{\T}\pd_x \bk{u_\delta - u_\eta} 
	\pd_x \bk{\bk{u\,\pd_x u}_\delta - \bk{u\,\pd_x u}_\eta} \,\d x
	\\ &\quad\quad 
	+ \int_{\T} \pd_x \bk{u_\delta - u_\eta} 
	\bk{\bk{u^2}_\delta - \bk{u^2}_\eta 
	+\frac12 \bk{q^2}_\delta - \frac12 \bk{q^2}_\eta}\,\d x \bigg|
	\\ & 
	\le \norm{u_\delta - u_\eta}_{H^2_x } 
	\norm{\bk{u\,\pd_x u}_\delta - \bk{u\,\pd_x u}_\eta}_{L^2_x }
	\\ & \quad\quad 
	+\norm{\pd_x\bk{u_\delta - u_\eta}}_{L^\infty_x }
	\norm{\bk{u^2}_\delta - \bk{u^2}_\eta 
	+\frac12 \bk{q^2}_\delta-\frac12 \bk{q^2}_\eta}_{L^1_x },
\end{align*}
and therefore
\begin{align*}
	& \Ex \int_0^T \abs{I_2 + I_6} \,\d t 
	\\ & \quad
	\le \Ex \bk{\norm{u_\delta - u_\eta}_{L^2_tH^2_x} 
	\norm{\bk{u\,\pd_x u}_\delta-\bk{u\,\pd_x u}_\eta}_{L^\infty_t L^2_x }}
	\\ &\quad\qquad
	+\Ex \Biggl(\norm{\pd_x\bk{u_\delta - u_\eta}}_{L^2_t L^\infty_x }
	\\ & \quad\qquad\qquad\qquad
	\times \norm{\bk{u^2}_\delta - \bk{u^2}_\eta 
	+\frac12 \bk{q^2}_\delta-\frac12 \bk{q^2}_\eta}_{L^2_t L^1_x }\Biggr)
	\\ & \quad
	\le \bk{\Ex\norm{u_\delta - u_\eta}_{L^2_tH^2_x}^2}^{1/2}
	\bk{\Ex\norm{\bk{u\,\pd_x u}_\delta
	-\bk{u\,\pd_x u}_\eta}_{L^\infty_t L^2_x }^2}^{1/2}
	\\ & \quad\qquad
	+ \bk{\Ex\norm{\pd_x\bk{u_\delta-u_\eta}}_{L^2_t L^\infty_x }^2}^{1/2}
	\\ & \quad\qquad \qquad \qquad
	\times\bk{\Ex \norm{\bk{u^2}_\delta-\bk{u^2}_\eta
	+\frac12 \bk{q^2}_\delta
	-\frac12 \bk{q^2}_\eta}_{L^2_t L^1_x }^2}^{1/2}.
\end{align*}
By the Lebesgue dominated convergence theorem,
the inclusions $u \in L^2_{\omega,t}H^2_x$ 
and $u^2, q^2 \in L^2_\omega L^\infty_t L^1_x 
\subset  L^2_{\omega,t} L^1_x$ imply that one factor 
in each summand above tends to zero as $\eta, \delta \downarrow 0$. 
At the same time, the remaining factor in each summand is bounded  
because of the inclusions $u \in L^{p_0}_{\omega} L^\infty_{t,x}$ 
(with $p_0>4$), $\pd_x u \in L^{p_0}_{\omega} L^\infty_t L^2_x$ (so that 
$u\,\pd_x u \in L^2_{\omega} L^\infty_t L^2_x$), 
and $u \in L^2_{\omega,t}H^2_x 
\subset L^2_{\omega,t}W^{1,\infty}_x$. 

Next, by the BDG inequality, 
\begin{align*}
	&\Ex \sup_{t \in [0,T]} \abs{\int_0^t M_1 \,\d W}
	\le \Ex \bk{\int_0^T \abs{M_1}^2\,\d s }^{1/2}
	\\ &\qquad 
	\lesssim_\sigma \Ex \bk{\norm{u_\delta-u_\eta}_{L^\infty_tL^2_x} 
	\norm{f_\delta-f_\eta}_{L^2_tH^1_x)}} 
	\\ & \qquad 
	\le \bk{\Ex \norm{u_\delta-u_\eta}_{L^\infty_tL^2_x }^2}^{1/2}
	\bk{\Ex \norm{f_\delta-f_\eta}_{L^2_tL^2_x}^2}^{1/2},
\end{align*}
where $f=\sigma \,\pd_x u\in L^2_{\omega,t}H^1_x$. 
Similarly, with $g=\pd_x \bk{\sigma \,\pd_x u} 
\in L^2_{\omega,t}L^2_x$,
\begin{align*}
	&\Ex \sup_{t \in [0,T]} \abs{\int_0^t M_2 \,\d W}
	\lesssim_\sigma 
	\bk{\Ex \norm{u_\delta-u_\eta}_{L^\infty_t H^1_x}^2}^{1/2}
	\bk{\Ex \norm{g_\delta - g_\eta}_{L^2_tL^2_x}^2}^{1/2}.
\end{align*}
One factor of each term on the right 
in the two inequalities above tend to nought 
whilst the other remains bounded. 

Consolidating our findings, we execute an integration 
of \eqref{eq:cauchy_interim1} over the interval $s \in [0,t]$. 
This is succeeded by taking the supremum over $t \in [0,T]$ and 
subsequently computing the expectation. 
With these steps, we arrive at 
\begin{align*}
	\Ex \norm{u_\delta - u_\eta}_{C_t H^1_x}^2 
	& \le\Ex \norm{u_{0,\delta}-u_{0,\delta}}_{H^1_x}^2 
	+\Ex\int_0^T 
	\abs{\, \sum_{i\notin \{1,5\}}I_i}\,\d t
	\\ & \qquad
	+\Ex  \sup_{t \in [0,T]}
	\abs{\int_0^t \left(M_1 + M_2\right)\,\d W}
 	\xrightarrow{\delta, \eta \downarrow 0} 0.
\end{align*}
This implies that  $\left\{u_\delta\right\}$ is a 
Cauchy sequence in $L^2_\omega C_t H^1_x$. 
The limit $U$ of $\left\{u_\delta\right\}$ in 
$L^2_\omega C_t H^1_x$ and $u$ must 
coincide $(\omega,t, x)$-a.e. Indeed, 
since $U \in L^2_\omega C_t H^1_x$, it is evident that 
$U \in L^2_\omega L^2_t H^1_x$. Furthermore, due to the 
fact that $p_0 > 2$, we can conclude that $u$ also 
belongs to $L^2_\omega L^2_t H^1_x$. Therefore, we can establish 
that $u = U$ for a.e.~$(\omega, t, x)$, indicating that they belong 
to the same equivalence class.
\end{proof}

\end{document}